\newcommand{\map}[1]{\xrightarrow{#1}}
\newcommand{\iso}{\cong}
\newcommand{\Gal}{\mathrm{Gal}}
\newcommand{\Spec}{\mathrm{Spec}}
\newcommand{\Q}{\mathbb Q}
\newcommand{\Z}{\mathbb Z}
\newcommand{\R}{\mathbb R}
\newcommand{\C}{\mathbb C}
\newcommand{\alg}{\mathrm{alg}}
\newcommand{\A}{\mathbb A}
\newcommand{\co}{\mathcal O}
\newcommand{\ord}{\mathrm {ord}}
\newcommand{\GL}{\mathrm{GL}}
\newcommand{\Pic}{\mathrm{Pic}}
\newcommand{\Alb}{\mathrm{Alb}}
\begin{document}

\author{Benjamin Howard}
\email{howardbe@bc.edu}
\address{Department of Mathematics, Boston College, Chestnut Hill, MA, 02467}
\title{Twisted Gross-Zagier theorems}
\thanks{This research was supported in part by NSF grant DMS-0556174}

\begin{abstract}
The theorems of Gross-Zagier and Zhang relate the N\'eron-Tate heights of complex multiplication points  on the modular curve $X_0(N)$ (and on Shimura curve analogues) with the central derivatives of automorphic $L$-function.  We extend these results to include certain CM points on modular curves of the form $X(\Gamma_0(M)\cap\Gamma_1(S))$ (and on Shimura curve analogues).  These results are motivated by applications to Hida theory which are described in the companion article \cite{howard-derivatives}.
\end{abstract}

\maketitle

\theoremstyle{plain}
\newtheorem{Thm}{Theorem}[subsection]
\newtheorem{Prop}[Thm]{Proposition}
\newtheorem{Lem}[Thm]{Lemma}
\newtheorem{Cor}[Thm]{Corollary}
\newtheorem{Con}[Thm]{Conjecture}
\newtheorem{BigThm}{Theorem}

\theoremstyle{definition}
\newtheorem{Def}[Thm]{Definition}
\newtheorem{Hyp}[Thm]{Hypothesis}

\theoremstyle{remark}
\newtheorem{Rem}[Thm]{Remark}
\newtheorem{Ques}[Thm]{Question}

\renewcommand{\labelenumi}{(\alph{enumi})}
\renewcommand{\theBigThm}{\Alph{BigThm}}

\numberwithin{equation}{section}

\section{Introduction}

Let $\chi_0$ be a finite order character of the idele class group $\Q^\times\backslash \A^\times$ of $\Q$, and suppose that $f\in S_2(\Gamma_0(N),\chi_0^{-1},\C)$ is a normalized newform of level $N$ and character $\chi_0^{-1}$.  In particular we assume that $f$ is an eigenform for all Hecke operators $T_n$ with $(n,N)=1$.  Writing $f=\sum_n b_n q^n$ the $L$-series of $f$ is defined as the analytic continuation of $L(s,f)=\sum_n b_n n^{-s}$.    To compare with the notation used in the body of the article, $L(s,\Pi)=L^*(s+1/2,f)$ where 
$$
L^*(s,f)=2 (2\pi)^{-s} \Gamma(s) L(s,f)
$$ 
is the completed $L$-function of $f$ and $\Pi$ is the automorphic representation of $\GL_2(\A)$ attached to $f$.
Let $E$ be a quadratic imaginary field of discriminant $-D$ and let $\chi$ be a finite order character of the idele class group $E^\times\backslash \A_E^\times $  whose restriction to $\A^\times$ agrees with $\chi_0$.  Factor  $N=MS$ in such a way that  $S$ is divisible only by primes dividing $\mathrm{N}_{E/\Q}(\mathrm{cond}(\chi))$ and $M$ is relatively prime to $\mathrm{N}_{E/\Q}(\mathrm{cond}(\chi))$.
We assume
\begin{enumerate}
\item $N$ and  $\mathrm{N}_{E/\Q}(\mathrm{cond}(\chi))$ are each  relatively prime to $D$,
\item for any prime  $p\mid S$ the restriction of $\chi$ to $E_p^\times=(E\otimes_\Q\Q_p)^\times$ factors through the norm $E_p^\times\map{}\Q_p^\times$,
\item $S=\mathrm{cond}(\chi_0)$.
\end{enumerate}
It is easy to see from these hypotheses that  $\mathrm{cond}(\chi)=C\co_E$ for some positive integer $C$ which is divisible by $S$.

Let $\omega$ denote the quadratic Dirichlet character attached to $E$.  The $L$-function of $f$  and the Hecke $L$-series of $\chi$ each admit Euler products over the rational primes.  For each prime $p$ the  local Eulers factors  have the form   
\begin{eqnarray*}
L_p(s,f)&=&(1-\alpha_{1}p^{-s})^{-1}(1-\alpha_{2} p^{-s})^{-1} \\
L_p(s,\chi)&=& (1-\beta_{1}p^{-s})^{-1}(1-\beta_{2} p^{-s})^{-1}
\end{eqnarray*}
and we define a new Euler factor
$$
L_p(s,\chi,f) = \prod_{1\le i,j\le 2} (1-\alpha_{i}\beta_{j} p^{-s})^{-1}.
$$
The Rankin-Selberg convolution $L$-function $L(s,\chi,f)=\prod_p L_p(s,\chi,f)$  has analytic continuation to an entire function of $s$ and satisfies the functional equation 
$$
L^*(s,\chi,f)= -\omega(M) \cdot (C^2DM)^{2-2s} \cdot L^*(2-s,\chi,f)
$$
where
$$
L^*(s,\chi,f) = 4 (2\pi)^{-2s} \Gamma(s)^2 L(s,\chi,f).
$$
In the notation of the body of the text $L(s,\Pi\times\Pi_\chi)=L^*(s+1/2,\chi,f)$, and so the functional equation follows from the functional equation (\ref{kernel functional}) of the Rankin-Selberg kernel and the integral representation of the $L$-function (\ref{rankin integral}).

Assume that every prime divisor of $M$ splits in $E$.  In particular the functional equation forces $L(1,\chi,f)=0$.  Let $\co=\Z+C\co_E$ and $\co'=\Z+CS^{-1}\co_E$ be the orders of conductors $C$ and $CS^{-1}$, respectively, of $\co_E$.   Fix an invertible ideal $\mathfrak{M}\subset\co$ such that $\co/\mathfrak{M}\iso \Z/M\Z$ and consider the isogenies of complex elliptic curves
$$
\C/\co \map{ F_M }  \C/\mathfrak{M}^{-1}  
\hspace{1cm}
\C/\co \map{F_S} \C/\co'.
$$
These isogenies are cyclic of degree $M$ and $S$, respectively, and  if we pick an arbitrary generator $\pi\in\mathrm{ker}(F_S)$  the triple $Q=(\C/\co,\mathrm{ker}(F_M), \pi)$ determines a point on the moduli space $X_\Gamma(\C)$ parametrizing complex elliptic curves with $$\Gamma=\Gamma_0(M)\cap \Gamma_1(S)$$ level structure.  We view $X_\Gamma$ as a scheme over $\Spec(\Q)$.  Let $\widehat\co$ denote the closure of $\co$ in the ring $\A_{E,f}$ of finite adeles of $E$ and let $\theta:\widehat{\co}^\times\map{} (\Z/S\Z)^\times$ denote homomorphism giving the action of $\widehat{\co}^\times$ on $\widehat{\co}'/\widehat{\co}\iso \Z/S\Z$.  The character $\chi$ has trivial restriction to $\mathrm{ker}(\theta)$, and by the theory of complex multiplication the point $Q$ is rational over the abelian extension of $E$  with class group $E^\times\backslash \A_{E,f}^\times/  \mathrm{ker}(\theta)$.  Thus we may form the divisor with complex coefficients
$$
Q_\chi = \sum_{t \in E^\times\backslash \A_{E,f}^\times/  \mathrm{ker}(\theta) } \overline{\chi(t)} \cdot Q^{[t,E] }
$$
on $X_{\Gamma} \times_\Q E_\chi$  where $[\cdot, E]$ is the Artin symbol normalized  as in \cite[\S 5.2]{shimura} and $E_\chi$ is the abelian extension of $E$ cut out by $\chi$.  Assume that $\chi$ is nontrivial (otherwise $S=1$ and we are in the case originally considered by Gross and Zagier \cite{gross-zagier}) so that $Q_\chi$ has degree zero and may be viewed as a point in the modular Jacobian $Q_\chi\in J_\Gamma(E_\chi )\otimes_\Z\C$.   Denote by $\mathbb{T}$  the (semi-simple)  $\C$-algebra generated by the Hecke operators $\{ T_n\mid (n,N)=1\}$ and the diamond operators $\{\langle d\rangle\mid (d,S)=1\}$ acting on $S_2(\Gamma,\C)$.  By the Eichler-Shimura theory the algebra $\mathbb{T}$ acts  on $J_\Gamma(E_\chi )\otimes_\Z\C$ via the Albanese endomorphisms $T_{n*}$ and $\langle d\rangle_*$ as in \cite[\S 2.4]{MW}).  

The following theorem is a special case of Theorem \ref{thm:twisted gz}.  When $S=1$ this result is due to Zhang \cite[Theorem 6.1]{zhang3}.  When $S=1$ and $\chi$ is unramified it is due to Gross-Zagier \cite{gross-zagier}.

\begin{BigThm}\label{the result}
Let $Q_{\chi,f}$ denote the projection of $Q_\chi$ to the maximal summand of $ J_\Gamma(E_\chi )\otimes_\Z\C$ on which $\mathbb{T}$ acts through $T_n\mapsto b_n$ and $\langle d\rangle\mapsto \chi_0^{-1}(d)$.  Then
$$
L'(1,\chi,f)=0\iff Q_{\chi,f}=0.
$$
\end{BigThm}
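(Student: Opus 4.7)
The plan is to prove, in the spirit of Gross-Zagier and Zhang, an explicit identity of the shape
$$
L'(1,\chi,f) = c \cdot \langle Q_{\chi,f},\, Q_{\chi,f}\rangle_{NT}
$$
for a constant $c\ne 0$ that is independent of $f$, where $\langle\cdot,\cdot\rangle_{NT}$ denotes the N\'eron-Tate height on $J_\Gamma(E_\chi)\otimes_\Z\C$.  Given such a formula the theorem is formal: the implication $(\Leftarrow)$ is immediate, and $(\Rightarrow)$ uses positive-definiteness of the N\'eron-Tate pairing (Faltings-Hriljac) on the $f$-isotypic summand.

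The analytic input is the Rankin-Selberg integral representation (\ref{rankin integral}), which presents $L^*(s,\chi,f)$ as the Petersson pairing of $f$ against a kernel $\Phi(\chi,s)$ constructed from a theta series attached to $\chi$ and a mixed-weight Eisenstein series.  Because the functional equation forces $L(1,\chi,f)=0$, differentiating at the center gives
$$
L'(1,\chi,f) = \langle f,\, \Phi'(\chi,1)\rangle_\Gamma,
$$
so the task reduces to computing the Fourier expansion of $\Phi'(\chi,1)$ on the upper half plane.  Following the analytic kernel calculation of Gross-Zagier (now with the pair $(\chi_0,\chi)$ in place of trivial characters), the $m$th Fourier coefficient of the holomorphic projection of $\Phi'(\chi,1)$, for $(m,N)=1$, is expressible as an explicit sum of contributions indexed by the rational primes.

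The geometric input is a computation of $\langle Q_\chi,\, T_{m*} Q_\chi\rangle_{NT}$ by Arakelov theory on a regular integral model of $X_\Gamma$ over $\co_{E_\chi}$.  Adapting Zhang's machinery from level $\Gamma_0(N)$ to level $\Gamma_0(M)\cap\Gamma_1(S)$, the pairing decomposes as the sum of archimedean Green's-function contributions and non-archimedean arithmetic intersection numbers, and these in turn decompose as sums over rational primes.  The key term-by-term comparison then asserts that at each rational prime $p$ the local contribution to the analytic $m$th Fourier coefficient equals, up to the universal constant $c$, the corresponding local height contribution.  Summing over $p$ and $m$, using the Hecke-equivariance of the Albanese action and the semisimplicity of $\mathbb{T}$, and finally projecting onto the $f$-isotypic quotient produces the desired identity.

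The main obstacle lies in the local analysis at primes dividing $S$ and at the ramified primes of $\chi$.  At such primes the integral model of $X_\Gamma$ is not smooth, the CM point $Q$ carries a nontrivial $\Gamma_1(S)$-structure, the point $Q^{[t,E]}$ lives in a ramified abelian extension of $E$, and $\chi$ is ramified on $E_p^\times$.  Hypothesis $(2)$ of the introduction --- that $\chi|_{E_p^\times}$ factors through the norm for every $p\mid S$ --- is exactly what is needed to express both the local analytic coefficient and the local height in terms of $\chi_0$ alone and make them line up; the careful bookkeeping of the nebentypus $\chi_0$ and of the ramified CM data is the technical content going beyond Zhang's treatment of the case $S=1$, and is the step expected to require the most delicate work, presumably relying on the computations of the companion article \cite{howard-derivatives}.
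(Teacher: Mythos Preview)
Your overall strategy is correct and matches the paper: the theorem is deduced from an explicit identity
\[
\frac{L'(1/2,\Pi\times\Pi_\chi)}{||\phi_\Pi||^2_{K_0(\mathfrak{n})}} \;=\; \frac{2^{[F:\Q]+1}}{H_{F,\mathfrak{s}}\sqrt{\mathrm{N}_{F/\Q}(\mathfrak{dc}^2)}}\,\langle Q_{\chi,\Pi},Q_{\chi,\Pi}\rangle^{\mathrm{NT}}_V
\]
(Theorem~\ref{thm:twisted gz}, specialized to $F=\Q$), and the equivalence then follows from positive-definiteness of the N\'eron-Tate pairing.  Your description of the analytic side---differentiate the Rankin-Selberg kernel, take the holomorphic projection, and compute Fourier coefficients as a sum over places---is exactly what happens in \S\ref{ss:derivative}.

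Where your sketch diverges from the paper is on the geometric side.  The paper does \emph{not} attempt a direct computation of $\langle Q_\chi, T_m Q_\chi\rangle^{\mathrm{NT}}$ at level $V$ (the quaternionic analogue of $\Gamma_0(M)\cap\Gamma_1(S)$); the local orbital integrals at that level are apparently intractable.  Instead \S\ref{ss:special cycles} introduces a smaller compact open $U\subset V$ and an auxiliary cycle $P_\chi$ of level $U$, chosen precisely so that its local orbital integrals (\S\ref{ss:unramified local calculations}--\ref{ss:ramified local calculations}) match the local Fourier coefficients of the kernel.  This yields a height formula at level $U$ involving not the newform but its projection $\phi_\Pi^\#$ to a ``quasi-new line'' (Proposition~\ref{pre-GZ}).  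The passage back down to level $V$ and to the honest newform is the content of \S\ref{level comparison}: one introduces rational functions $\mathbf{a}_v,\mathbf{b}_v,\mathbf{c}_v$ encoding the level change, and proves the required identity $\kappa\prod\mathbf{a}_v\mathbf{c}_v=\prod\mathbf{b}_v$ by establishing the analogous central-\emph{value} formula for weight-$0$ Maass forms and then checking it by hand on the family of Eisenstein series $\Pi_\tau$ of \S\ref{ss:maass family}.  This two-level structure and the detour through the Maass case are the genuinely new ingredients beyond Zhang's $S=1$ argument, and your sketch does not anticipate them.

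One small correction: the companion article \cite{howard-derivatives} contains the \emph{applications} to Hida theory, not the local computations you need; those are carried out in \S\ref{quaternion generalities} and \S\ref{derivatives} of the present paper (leaning heavily on \cite{zhang2,zhang3}).  Your instinct that Hypothesis~(b) is what makes the primes $v\mid\mathfrak{s}$ tractable is correct, but its role is to guarantee that $\Pi_v$ is a principal series with unramified first character, which is what one needs for the toric newvector of \S\ref{ss:toric} to exist and for the level-comparison argument of \S\ref{level comparison} to go through.
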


\begin{Rem}
The hypotheses (b) and (c) placed on the primes divisors of $S$ are not made for the sake of convenience; rather these hypotheses seem to be closely related to the particular choice of $\Gamma_1(S)$ level structure on $\C/\co$, given by a generator of the kernel of an isogeny to an  elliptic curve with complex multiplication by a \emph{different} quadratic order.
\end{Rem}

\begin{Rem}
If $\Pi\iso \bigotimes_v\Pi_v$ denotes the automorphic representation of $\GL_2(\A)$ generated by the adelization of $f$ then  the condition (c) above is equivalent to  Hypothesis \ref{hyp}(b) below, with $F=\Q$, $\mathfrak{s}=S\Z$, and $\mathfrak{c}=C\Z$.  This follows from the formulas of \cite[\S 12.3]{nek} and \cite[Theorem 4.6.17]{miyake}.
\end{Rem}

 Throughout the body of the article we work in much greater generality than the situation described above;  instead of a classical modular form $f$ as above we work with a Hilbert modular newform $\phi_\Pi$ over a totally real field $F$, and assume that $\phi_\Pi$ is either holomorphic of parallel weight $2$ or is a Maass form of parallel weight $0$.   Let $\chi$ be a finite order character of the idele class group of a totally imaginary quadratic extension $E$ of $F$, and assume that the restriction of $\chi^{-1}$ to the ideles of $F$ agrees with the central character of the automorphic representation $\Pi$ generated by $\phi_\Pi$.  We assume that $\Pi$, $\chi$, and $E$ also satisfy the hypotheses of \S \ref{notations} below.  The Rankin-Selberg $L$-function $L(s,\Pi\times\Pi_\chi)$, where $\Pi_\chi$ is the theta series representation associated to $\chi$, is normalized so that the center of symmetry of the functional equation is at $s=1/2$.  

Assume first that $\phi_\Pi$ is holomorphic of parallel weight $2$.  When the sign in the functional equation of $L(s,\Pi\times\Pi_\chi)$ is $1$ we prove a formula (Theorem \ref{second holomorphic values}) relating the central value $L(1/2, \Pi\times\Pi_\chi)$ to certain CM-points on a totally definite quaternion algebra over $F$.  In special cases such results go back to Gross's special value formula \cite{gross-values}.   Such special value formulas have been used by Bertolini and Darmon to construct anticyclotomic $p$-adic $L$-functions for elliptic curves \cite{BD-L-functions}, and such $L$-functions play a central role both in those authors' work on the anticyclotomic Iwasawa main conjecture for elliptic curves \cite{BD-iwasawa} as well as  in the work of Vatsal \cite{vatsal-values} and Cornut-Vatsal \cite{cornut-vatsal, cornut-vatsal2} on the nonvanishing of $L$-values in towers of ring class fields.  We point out also the helpful expository article of Vatsal \cite{vatsal}.  When the sign in the functional equation of $L(s,\Pi\times\Pi_\chi)$ is $-1$ we prove a theorem (Theorem \ref{thm:twisted gz}, which includes Theorem \ref{the result} as a special case) which  generalizes results  of Zhang \cite[Theorem 6.1]{zhang3} and Gross-Zagier \cite{gross-zagier} by relating the central derivative $L'(1/2,\Pi\times\Pi_\chi)$ to the N\'eron-Tate height of CM-cycles on a Shimura curve over $F$.  Now assume that  $\phi_\Pi$ is  Maass form of parallel weight $0$  and that the sign in the functional equation of $L(s,\Pi\times\Pi_\chi)$ is $1$.  In this case we prove (Theorem \ref{second maass values}) a formula expressing the central value $L(1/2,\Pi\times\Pi_\chi)$ as a weighted sum of the values  at CM points of a weight $0$ Maass form (related to $\phi_\Pi$ by the Jacquet-Langlands correspondence) on a Shimura variety of dimension $[F:\Q]$.   

Our methods follow those of Zhang \cite{zhang2, zhang3} and we freely use his results and calculations when they carry over to our setting without significant change; the reader is advised to keep copies of \cite{zhang2, zhang3} close at hand.   The original contributions are primarily found in  \S \ref{quaternion generalities} and \S \ref{s:central value}.  

The primary motivation for this work is to obtain results  on the behavior of Selmer groups and  $L$-functions in Hida families.  Indeed, the somewhat  peculiar point  $Q\in X_\Gamma(\C)$ defined above plays a central role in the construction of \emph{big Heegner points} \cite{howard-hida} in the cohomology of Galois representations for $\Lambda$-adic modular forms.  Theorem \ref{the result} can be used to verify, in any particular case, the conjectural nonvanishing of these big Heegner points and can also be used to give examples of Hida families of modular forms whose $L$-functions vanish to exact order one with only finitely many exceptions. The applications to Hida theory and Iwasawa theory of the results contained herein is found in the separate article \cite{howard-derivatives}.

\bigskip

The author thanks Shou-Wu Zhang for many very helpful conversations.


\subsection{Notation and conventions}
\label{notations}


The following choices and conventions apply throughout the remainder of the article.

Fix a totally real field $F$, a CM-extension $E/F$ of relative discriminant $\mathfrak{d}$ and relative different $\mathfrak{D}$, and denote by $\A$ and $\A_E$ the adele rings of $F$ and $E$, respectively.  The integer rings of $F$ and $E$ are denoted $\co_F$ and $\co_E$, respectively, and $\omega$ denotes the quadratic  character of $\A^\times/F^\times$ corresponding to the extension $E/F$.    If $M$ is any finitely generated $\Z$-module we let $\widehat{M}$ denote its profinite completion.   If $\mathfrak{a}$ is any nonzero $\co_F$-ideal, $\mathrm{N}_{F/\Q}(\mathfrak{a})$ denotes the cardinality of $\co_F/\mathfrak{a}$.
If $v$ is a real place of $F$ then $|\cdot|_v$ denotes the usual absolute value on $F_v\iso \R$.  If $v$ is a finite place then $|\cdot|_v$ is normalized so that for any uniformizing parameter $\varpi$ of $F_v$, $|\varpi|_v^{-1}$ is the size of the residue field of $v$. 
For any $\co_F$-module $M$ and any place $v$ of $F$, set $M_v=M\otimes_{\co_F} \co_{F,v}$.  For any $x\in\A^\times$ let $x\co_F$ denote the fractional ideal of $\co_F$ determined by $(x\co_F)_v=x_v\co_{F,v}$ for every finite place $v$.

Fix a finite order character $\chi:\A_E^\times/E^\times\map{}\C^\times$.  Let $\chi_0$ denote the restriction of $\chi$ to $\A^\times/F^\times$ and let $\mathfrak{C}$ denote the conductor of $\chi$.   We abbreviate $\mathrm{N}(\mathfrak{C})=\mathrm{N}_{E/F}(\mathfrak{C})$.   For each place $v$ of $F$ let $\chi_v$ denote the restriction of $\chi$ to $E_v^\times=(E\otimes_F F_v)^\times$.
Let $\Pi$ be an irreducible infinite dimensional cuspidal automorphic representation of $\GL_2(\A)$ of central character $\chi_0^{-1}$ and conductor $\mathfrak{n}$, as defined in \S \ref{ss:automorphic forms}.  Factor $\mathfrak{n}=\mathfrak{ms}$ in such a way  that  $\mathfrak{m}$ is prime to $\mathrm{N}(\mathfrak{C})$ and $\mathfrak{s}$ is divisible only by primes dividing $\mathrm{N}(\mathfrak{C})$.  We assume throughout that $\mathfrak{n}$ and $\mathrm{N}(\mathfrak{C})$ are both prime to $\mathfrak{d}$.

\begin{Hyp}\label{hyp}
At times we will  assume that $\Pi$ satisfies the following hypotheses.
\begin{enumerate}
\item
For every $v\mid\mathfrak{s}$ there is a character $\nu_v$ of $F_v^\times$ such that $\chi_v=\nu_v\circ\mathrm{N}_{E_v/F_v}$. Note that this hypothesis implies that $\mathfrak{C}=\mathfrak{c}\co_E$ for some ideal $\mathfrak{c}$ of $\co_F$.
\item
For every  $v\mid \mathfrak{s}$, $\Pi_v$ is a principal series representation $\Pi(\mu_v,\chi_{0,v}^{-1}\mu_v^{-1})$ of $\GL_2(F_v)$  with $\mu_v$ an unramified quasi-character of $F_v^\times$.  In particular 
$$
\ord_v(\mathfrak{s})=\ord_v(\mathrm{cond}(\chi_0)) \le \ord_v(\mathfrak{c}).
$$
\end{enumerate}
These hypotheses will be assumed in \S \ref{s:central value} and \S \ref{derivatives} but are not needed for the calculations of \S \ref{quaternion generalities}, or for the calculations of \S \ref{analytic section} unless otherwise indicated.
\end{Hyp}


\section{Automorphic forms and the Rankin-Selberg integral}
\label{analytic section}


Let $\psi:  \A/F\map{}\C^\times$  be a nontrivial additive character.  
Fix an idele $\delta\in\A^\times$ in such a way that for every finite place $v$ of $F$ the restriction to $F_v$ of  the additive character  $\psi^0:\A\map{}\C^\times$ defined by $\psi^0(x)=\psi(\delta^{-1} x)$
has conductor $\co_{F,v}$ and so that for  every archimedean place $v$ the restriction of $\psi^0$ to $F_v\iso \R$ is given by $\psi_v^0(x)=e^{2\pi i x}$.  This implies that $F$ has absolute discriminant $D_F=|\delta|^{-1}$.  For any finite place $v$ of $F$ we normalize the additive Haar measure $dx$ on $F_v$ in such a way that the volume of $\co_{F,v}$ is equal to $|\delta|_v^{1/2}$, and  normalize the multiplicative Haar measure $d^\times x$ on $F_v^\times$ in such a way that the volume of $\co_{F,v}^\times$ is $1$.  Then $dx$ and $d^\times x$ are related by 
\begin{equation}
\label{additive-multiplicative measures}
 |\delta|_v^{1/2}(1-|\varpi|_v)\cdot d^\times x =  |x|_v^{-1}\cdot dx
\end{equation}
for any uniformizer $\varpi$ of $F_v$.  On $\R^\times$ we normalize the Haar measure $d^\times x$ by  $d^\times x=|x|^{-1}d^{\mathrm{Leb}}x$, where $d^{\mathrm{Leb}}x$ is the usual Lebesgue measure giving $[0,1]$ unit mass.  For an archimedean place $v$ the additive Haar measure $dx$ on $F_v\iso \R$ is normalized by $dx=|\delta|_v^{1/2} d^{\mathrm{Leb}}x$.  In all cases the Haar measure on the additive group $F_v$ is self-dual with respect to $\psi_v$.  Endow $\A$ and $\A^\times$ with the product measures;  the quotient measure on $\A/F$ has total volume $1$ by \cite[Proposition V.4.7]{weil}.

 Fix $d\in\A^\times$ such that $d\co_F=\mathfrak{d}$ and $d_v=1$ for $v\mid\infty$.  Let $S$ denote the set of places of $F$ dividing $\mathfrak{d}$, and for each $v\in S$ set 
$h_v  =  \left(\begin{matrix} & 1 \\ -d_v \end{matrix}\right) \in\GL_2(F_v),$
viewed as an element of $\GL_2(\A)$ with trivial components away from $v$.  For each subset $T\subset S$ set $h_T=\prod_{v\in T}h_v$ and view $h_{T}$ as an operator on automorphic forms on $\GL_2(\A)$ via $(h_T\phi) (g)=\phi(gh_T)$.  For $a\in\A^\times$  define $e_\infty(a)=\prod_{v\mid\infty}e_v(a)$ where 
$$
e_v(a) = \left\{ \begin{array}{ll}
 2e^{-2\pi a_v} &\mathrm{if\ } a_v>0 \\
0 &\mathrm{otherwise.} \end{array}\right.  
$$
for each $v\mid\infty$.  Define the usual gamma factors
$$
G_1(s)=\pi^{-s/2}\Gamma(s/2)
\hspace{1cm}
G_2(s)= 2(2\pi)^{-s}\Gamma(s).
$$


\subsection{Automorphic forms}
\label{ss:automorphic forms}


Let $\phi$ be an automorphic form on $\GL_2(\A)$. Then $\phi$ admits a Fourier expansion
$$
\phi (g)= C_\phi(g) + \sum_{\alpha \in F^\times} W_\phi
\left(  \left( \begin{matrix}
\alpha & \\ & 1
\end{matrix}  \right) g \right)
$$
in which the constant term $C_\phi$ and Whittaker function $W_\phi$ (with respect to $\psi$) are defined by \cite[(2.4.3)]{zhang2} and \cite[(2.4.4)]{zhang2}, respectively. For every $a\in\A^\times$  the \emph{Whittaker coefficient}
$$
B(a;\phi)  =   
W_\phi \left( \begin{matrix}
a \delta^{-1} & \\ & 1
\end{matrix}  \right)
$$
is independent of the choice of $\psi$, and a simple calculation shows that the Whittaker coefficients of $\phi$ and $\overline{\phi}$ are related by  $B(a;\overline{\phi})= \overline{B(-a;\phi)}.$  The \emph{zeta function} of $\phi$ is defined as the meromorphic continuation of 
\begin{eqnarray*} 
Z(s;\phi)   &=&  |\delta|^{1/2-s} \int_{\A^\times} 
B(y;\phi)\cdot |y|^{s-1/2}\ d^\times y   \\
&=&  
 \int_{\A^\times/F^\times} 
( \phi-C_\phi)
\left(   \begin{matrix}
y & \\ & 1
\end{matrix}  \right) \cdot 
|y|^{s-1/2}\ d^\times y
\end{eqnarray*}
in which both integrals are convergent for $\mathrm{Re}(s)\gg 0$.  As in \cite[\S 3.5]{zhang2} we say that an automorphic form $\phi$ of parallel weight $2$ is \emph{holomorphic} if its Whittaker coefficient has the form
$$
B(a;\phi)=|a|_\infty e_\infty(a)\cdot \widehat{B}(\mathfrak{a};\phi)
$$
with $\mathfrak{a}=a\co_F$ for some function $\widehat{B}(\mathfrak{a};\phi)$ on fractional ideals of $\co_F$ which vanishes on non-integral ideals.

Let $v$ be a finite place of $F$.  If $\mathfrak{n}_v$ is an ideal of $\co_{F,v}$ define the habitual congruence subgroup
$$
K_1(\mathfrak{n}_v)=\left\{ \left(\begin{matrix} a&b\\c&d\end{matrix}\right) \in\GL_2(\co_{F,v})\  \Big| \ c\in \mathfrak{n}_v, d\in 1+\mathfrak{n}_v  \right\}.
$$
For an irreducible, admissible, infinite dimensional representation $\pi_v$ of $\GL_2(F_v)$ the \emph{conductor} of $\pi_v$ is the largest ideal $\mathfrak{n}_v$ such that $\pi_v$ admits a $K_1(\mathfrak{n}_v)$-fixed vector.  The space $K_1(\mathfrak{n}_v)$-fixed vectors is then $1$-dimensional, and any nonzero vector on this line will be called a \emph{newvector}.  If $v$ is an infinite place of $F$ then any $\pi_v$ as above has a unique line of vectors of minimal non-negative weight for the action of $\mathrm{SO}_2(\R)$; a nonzero vector on this line is again called a \emph{newvector}.  If $\pi\iso \bigotimes_v\pi_v$ is an irreducible automorphic representation of $\GL_2(\A)$ then a newvector in $\pi$ is a product of local newvectors.  Such a newvector is unique up to scaling, and we define  the \emph{normalized newvector} $\phi_\pi\in \pi$ to be the unique newvector  satisfying 
$$
Z(s,\phi_\pi) =|\delta|^{1/2-s}  L(s,\pi).
$$
If $\mathfrak{n}$ is an ideal of $\co_F$ set $K_1(\mathfrak{n})=\prod_v K_1(\mathfrak{n}_v)$, where the product is over all finite places.  

Suppose $v$ is a finite place of $F$,  $\phi$ is an automorphic form which is fixed by the action of $K_1(\mathfrak{n})$, and $(\mathfrak{a},\mathfrak{n})=1$.  We define 
$$
(T_{\mathfrak{a}}\phi)(g)=\sum_{h\in H(\mathfrak{a}) / K_1(\mathfrak{n}) } \phi(gh)
$$
where $H(\mathfrak{a}_v)$ is the set of elements of $M_2(\co_{F,v})$ whose determinant generates $\mathfrak{a}_v$ and 
$$
H(\mathfrak{a})=\prod_{v\nmid \mathfrak{a}} K_1(\mathfrak{n}_v)\cdot \prod_{v\mid\mathfrak{a}}H(\mathfrak{a}_v).
$$
If $a\in\A^\times$ satisfies $\mathfrak{a}=a\co_F$ and $a_v=1$ for $v\mid\infty$ then the Hecke operator $T_\mathfrak{a}$ satisfies \cite[Proposition 3.1.4]{zhang1}
$$B(1;T_\mathfrak{a}\phi)= \mathrm{N}_{F/\Q}(\mathfrak{a}) \cdot B(a;\phi).$$


\subsection{Eisenstein series}
\label{ss:ES}


For any place $v$ of $F$ and any subset $X\subset F_v$ let $\mathbf{1}_X$ denote the characteristic function of $X$.  Let  $\mathcal{S}(\A^2)$ denote the space of Schwartz functions on $\A^2$ and fix $\Omega\in\mathcal{S}(\A^2)$.   Given a pair $\eta=(\eta_1,\eta_2)$ of quasi-characters of $\A^\times/F^\times$ we define 
$$
f_{\Omega,\eta,s}(g)  =  |\det(g)|^s \eta_1(\det(g)) \int_{\A^\times} \Omega \big( [0,t] \cdot g \big) |t|^{2s} \eta_1(t) \eta_2(t^{-1}) \ d^\times t
$$
for $s$ a complex variable and $g\in \GL_2(\A)$.  Then $f_{\Omega,\eta,s} $ lies in the space of the induced representation $\mathcal{B}(\eta_1 |\cdot|^{s-1/2} , \eta_2 |\cdot|^{1/2-s})$  of \cite[\S 2.2]{zhang2}. The Eisenstein series  defined by the meromorphic continuation of
$$
E_{\Omega,\eta,s}(g)  = \sum_{\gamma\in B(F)\backslash \GL_2(F)} f_{\Omega,\eta,s}(\gamma g)
$$
is an automorphic form with central character $\eta_1\eta_2$. If we set $w_0=  \left( \begin{matrix}  &  1  \\ -1&  \end{matrix} \right)$ then according to  \cite[\S 3.3]{zhang2} $E_{\Omega,\eta,s}(g)$ has constant term
\begin{equation*}
C_{\Omega,\eta,s}(g)  =  f_{\Omega,\eta,s}(g)+ \int_{\A} f_{\Omega,\eta,s}\left( w_0\left( \begin{matrix} 1 & x  \\ & 1 \end{matrix} \right) g \right) \ dx
\label{constant to section}
\end{equation*}
and Whittaker function
\begin{equation*}
W_{\Omega,\eta,s}(g) =  \int_{\A} 
f_{\Omega,\eta,s}\left( w_0\left( \begin{matrix} 1 & x  \\ & 1 \end{matrix} \right) g \right) \psi(-x) \ dx.  \\
\end{equation*}

To fix a particular Eisenstein series we let $\mathfrak{r}$ be an $\co_F$-ideal relatively prime to $\mathfrak{d}$ and  choose $r\in\A^\times$ so that  $r\co_F=\mathfrak{r}$ and $r_v=1$ for $v\mid\infty$.
Define a Schwartz function  $\Omega_\mathfrak{r}=\prod\Omega_{\mathfrak{r},v}$ on $\A^2$ by
$$
\Omega_{\mathfrak{r}, v} (x,y)= \left\{\begin{array}{ll}
\mathbf{1}_{\mathfrak{r}_v}(x)\mathbf{1}_{\co_{F,v}}(y) & \mathrm{if\ }v\nmid \mathfrak{d}\infty \\
\omega_v(y) \mathbf{1}_{\mathfrak{d}_v }(x)  \mathbf{1}_{\co_{F,v}^\times }(y)  & \mathrm{if\ }v\mid\mathfrak{d} \\
(ix+y) e^{-\pi(x^2+y^2)}&\mathrm{if\ }v\mid\infty.
\end{array}\right.
$$
Taking $\eta=(1,\omega)$ we abbreviate
$$
E_{\mathfrak{r},s}(g)=E_{\Omega_\mathfrak{r},\eta,s}(g)
\hspace{1cm}
f_{\mathfrak{r},s}(g)= f_{\Omega_\mathfrak{r},\eta,s}(g).
$$

\begin{Prop}\label{eisenstein coefficients}
Fix $a\in\A^\times$ and set $\mathfrak{a}=a\co_F$.  There is a product expansion $$B(a;E_{\mathfrak{r},s})=\prod B_v(a, E_{\mathfrak{r},s})$$ over all places $v$ of $F$, in which the local factors are given as follows.  
\begin{enumerate}
\item
If $v$ is a finite place which  does not divide $\mathfrak{d}$  then for any uniformizing parameter $\varpi$ of $F_v$
$$
B_v(a; E_{\mathfrak{r},s})
= \omega_v(\delta) \cdot  |a|_v^s \cdot |\delta|_v^{s-1/2}  \sum_{j=0}^{\ord_v(\mathfrak{ar}^{-1})} |\varpi^j|_v^{1-2s}\omega_v(\varpi^j).
$$
if $\ord_v(\mathfrak{a})\ge \ord_v(\mathfrak{r})$, and otherwise $B_v(a;E_{\mathfrak{r},s})=0$.
\item
If $v\mid\mathfrak{d}$ then 
$$
B_v(a;E_{\mathfrak{r},s})= 
\left\{\begin{array}{ll}
\omega_v(\delta)   |ad|_v^{s}  \cdot |\delta d |_v^{s-1/2}  \epsilon_v(1/2,\omega_v,\psi_v^0) & \mathrm{if\ }\ord_v(\mathfrak{a})\ge 0 \\
0&\mathrm{otherwise}
\end{array}\right.
 $$
and
 $$
 B_v(a; h_vE_{\mathfrak{r},1-s}) = \omega_v(-a) |d|_v^{3/2-3s}|\delta|_v^{1-2s} \epsilon_v(1/2,\omega,\psi_v^0)^{-1}  \cdot B_v(a;E_{\mathfrak{r},s}) 
 $$
where $\epsilon_v(1/2,\omega,\psi_v^0)$ is the usual local epsilon factor as in \cite[\S 3]{kudla-tate}.

 \item
 If $v$ is archimedean then 
 $$
 B_v(a;E_{\mathfrak{r},s}) =  
  \omega_v(a \delta)  |a|_v^{1-s}  |\delta|_v^{s-1/2} \frac{\Gamma(s+1/2)}{\pi^{s+1/2}}
  V_s(-a_v)
  $$
  where for $t\in \R$
  $$
V_s(t)= \int_{\R} \frac{e^{-2\pi i t x}}{(i+x)(1+x^2)^{s-1/2}}\ d^{\mathrm{Leb}}x.
 $$
 \end{enumerate}
 \end{Prop}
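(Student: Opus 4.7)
The plan is to compute the Whittaker coefficient $B(a;E_{\mathfrak{r},s})$ directly from the integral representation
\[
W_{\Omega_{\mathfrak{r}},\eta,s}(g) = \int_{\A} f_{\Omega_{\mathfrak{r}},\eta,s}\!\left(w_0 \left(\begin{matrix} 1 & x \\ & 1\end{matrix}\right) g\right)\psi(-x)\, dx,
\]
evaluated at $g=\mathrm{diag}(a\delta^{-1},1)$. Because $\Omega_{\mathfrak{r}}=\prod_v \Omega_{\mathfrak{r},v}$, $\eta=(1,\omega)=\prod_v \eta_v$, and $\psi=\prod_v\psi_v$, the section $f_{\Omega_{\mathfrak{r}},\eta,s}$ factors as a product of local sections, and hence so does $W$. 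This gives the claimed product expansion $B(a;E_{\mathfrak{r},s})=\prod_v B_v(a;E_{\mathfrak{r},s})$ with $B_v$ a purely local Whittaker integral, and reduces the problem to three local calculations.

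For a finite place $v\nmid\mathfrak{d}\infty$, I would first substitute the definition of $f_{\Omega_{\mathfrak{r}},\eta,s}$ to write
\[
f_{\Omega_{\mathfrak{r}},\eta,s}(w_0 n(x)\mathrm{diag}(a\delta^{-1},1)) = |a\delta^{-1}|_v^s \int_{F_v^\times} \Omega_{\mathfrak{r},v}([t,tx])\,|t|_v^{2s}\omega_v(t^{-1})\,d^\times t,
\]
then carry out the inner $t$-integral against $\mathbf{1}_{\mathfrak{r}_v}\mathbf{1}_{\co_{F,v}}$ and change variables $x\mapsto x/a\delta^{-1}$ in the outer integral. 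The resulting $x$-integral collapses to a standard local Tate integral over $\A^\times$, whose evaluation as a geometric series (truncated by the condition $\mathrm{ord}_v(x)+\mathrm{ord}_v(a\delta^{-1})\ge \mathrm{ord}_v(\mathfrak{r})$) produces precisely the sum $\sum_j |\varpi^j|_v^{1-2s}\omega_v(\varpi^j)$ of part (a), together with the claimed prefactor. The vanishing when $\mathrm{ord}_v(\mathfrak{a})<\mathrm{ord}_v(\mathfrak{r})$ comes from the support constraint on $\Omega_{\mathfrak{r},v}$.

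The case $v\mid\mathfrak{d}$ is the main obstacle. Here $\Omega_{\mathfrak{r},v}(x,y)=\omega_v(y)\mathbf{1}_{\mathfrak{d}_v}(x)\mathbf{1}_{\co_{F,v}^\times}(y)$, and after the analogous substitution the integrand becomes the character sum defining Tate's local epsilon factor for $(\omega_v,\psi_v^0)$ twisted by $a\delta^{-1}$. I would recognise this as the Gauss sum computing $\epsilon_v(1/2,\omega_v,\psi_v^0)$ as in \cite{kudla-tate}, which accounts for the epsilon factor and the powers of $|d|_v$. For the identity relating $B_v(a;h_v E_{\mathfrak{r},1-s})$ to $B_v(a;E_{\mathfrak{r},s})$, the action of $h_v$ on $\GL_2(F_v)$ intertwines $f_{\Omega_{\mathfrak{r}},\eta,1-s}$ with the section obtained from the Fourier transform of $\Omega_{\mathfrak{r},v}$ in the second variable; applying the local functional equation of Tate's zeta integral (which converts $\omega_v(y)$ into $\omega_v(y)\epsilon_v(1/2,\omega_v,\psi_v^0)^{-1}$ with the appropriate $|\cdot|$-shift) yields the stated proportionality. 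Care with the normalisation of Haar measure via the factor $|\delta|_v^{1/2}$ in \eqref{additive-multiplicative measures} is essential to match exponents.

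For an archimedean place $v$, the integrand involves $(ix+y)e^{-\pi(x^2+y^2)}$. I would evaluate the $t$-integral using the substitution $t\mapsto t/\sqrt{1+x^2}$ and the gamma function identity $\int_0^\infty t^{s-1}e^{-\pi t^2}\,dt = \frac{1}{2}\pi^{-s/2}\Gamma(s/2)$ to obtain $f_{\Omega_{\mathfrak{r}},\eta,s}(w_0 n(x)\mathrm{diag}(a\delta^{-1},1))$ as $|a|_v^s|\delta|_v^{s-1/2}\cdot\frac{\Gamma(s+1/2)}{\pi^{s+1/2}}\cdot \frac{ix+1}{(1+x^2)^{s+1/2}}$ up to a twist by $\omega_v$. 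The $\psi_v^0$-Fourier transform of $(i+x)/(1+x^2)^{s+1/2}$ evaluated at $-a_v$ is exactly $V_s(-a_v)$, producing the formula in (c). Once the three local computations are in hand, multiplying them together gives the proposition.
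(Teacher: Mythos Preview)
Your approach is correct and is essentially the same as the one underlying the paper's proof: the paper simply cites Lemmas~3.3.2--3.3.4 of \cite{zhang2}, where Zhang carries out exactly the direct local Whittaker computation you outline (factorise the global integral, evaluate the inner $t$-integral against the explicit Schwartz function, then perform the additive $x$-integral). So you are reproducing the cited argument rather than finding an alternative.

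A few small corrections to your outline that you would catch upon writing it out. In your displayed formula for the section, $[0,t]\cdot w_0 n(x)\,\mathrm{diag}(a\delta^{-1},1)$ equals $(-ta\delta^{-1},\,-tx)$, not $(t,tx)$; the missing $a\delta^{-1}$ in the first slot is what eventually produces the support condition $\ord_v(\mathfrak a)\ge\ord_v(\mathfrak r)$. Also, it is the additive $x$-integral that collapses first (to $|t|_v^{-1}|\delta|_v^{1/2}\mathbf{1}_{\co_{F,v}}(\delta_v t^{-1})$ via the conductor of $\psi_v^0$), after which the multiplicative $t$-integral becomes the finite geometric sum you want---not the other way around. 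For the $h_v$-relation at $v\mid\mathfrak d$, the cleanest route is to compute $B_v(a;h_vE_{\mathfrak r,s})$ directly from the same integral with $g$ replaced by $gh_v$ and compare, rather than invoke an intertwining with a Fourier-transformed section; the latter description is not quite accurate for right translation by $h_v$. None of these affect the viability of the method.
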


\begin{proof}
For $v$ nonarchimedean these formulas are found in Lemmas 3.3.2 and 3.3.3 of \cite{zhang2}.  For $v$ archimedean see \cite[Lemma 3.3.4]{zhang2}.  At each place our formulas differ from Zhang's by a factor of $\omega_v(-1)$.  As $\omega(-1)=1$ this local factor does not change the value of $B(a;E_{\mathfrak{r},s})$.
\end{proof}

\begin{Prop}\label{ES functional}
The Eisenstein series $E_{\mathfrak{r},s}(g)$ satisfies the functional equation
$$
E_{\mathfrak{r},s}(g) =E_{\mathfrak{r},1-s}(gh_S)\cdot (-i)^{[F:\Q]}  |r\delta|^{2s-1} |d|^{3s-3/2} \omega( r\cdot\det g) .
$$
\end{Prop}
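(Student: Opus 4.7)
The plan is to deduce the functional equation by comparing Whittaker coefficients place-by-place using Proposition \ref{eisenstein coefficients}. The Eisenstein series $E_{\mathfrak{r},s}$ is built from a factorizable section of the principal series, so its Whittaker coefficients factor as $B(a;E_{\mathfrak{r},s})=\prod_v B_v(a;E_{\mathfrak{r},s})$. Because $h_S$ is supported at the finite set of places $S$, the same factorization applies to the translate: $B(a;h_S E_{\mathfrak{r},1-s})=\prod_v B_v(a;h_{S,v}E_{\mathfrak{r},1-s})$, where $h_{S,v}=h_v$ for $v\in S$ and $h_{S,v}=1$ otherwise. Since an Eisenstein series attached to such a section is determined by its Whittaker function together with its constant term (the latter handled by a parallel intertwining calculation), it suffices, after substituting $g=\mathrm{diag}(a\delta^{-1},1)$ on both sides of the proposed identity, to establish local identities
\begin{equation*}
B_v(a;E_{\mathfrak{r},s})=\lambda_v(s,a)\cdot B_v(a;h_{S,v} E_{\mathfrak{r},1-s})
\end{equation*}
whose product assembles into $(-i)^{[F:\Q]}|r\delta|^{2s-1}|d|^{3s-3/2}\omega(ra\delta^{-1})$.

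The three parts of Proposition \ref{eisenstein coefficients} cover exactly the three types of place. At $v\in S$, the second formula of part (b) is already an identity of this shape, with $\lambda_v=\omega_v(-a)|d|_v^{3s-3/2}|\delta|_v^{2s-1}\epsilon_v(1/2,\omega,\psi_v^0)$. At a finite $v\nmid\mathfrak{d}\infty$, I substitute $s\mapsto 1-s$ in part (a) and observe that the geometric sum defining $B_v(a;E_{\mathfrak{r},1-s})$ becomes $\sum_{j=0}^N|\varpi^j|_v^{2s-1}\omega_v(\varpi^j)$ with $N=\ord_v(\mathfrak{a}\mathfrak{r}^{-1})$; reindexing $j\mapsto N-j$ and using that $\omega_v$ is quadratic converts this into $|\varpi^N|_v^{2s-1}\omega_v(\varpi^N)$ times the original sum, yielding $\lambda_v=|r\delta|_v^{2s-1}\omega_v(ar)$. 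At an archimedean $v$ I compare part (c) at $s$ and at $1-s$; this reduces to an analytic identity relating $V_{1-s}(-a_v)$ and $V_s(-a_v)$ which, by a change of variable in the defining oscillatory integral and the reflection formula for $\Gamma$, introduces the critical factor $(-i)$.

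Multiplying the local identities yields the claim. The powers of absolute values collect (using $r_v=1$ at archimedean places, $d_v=1$ outside $S$, and $|r|_v=1$ at $v\in S$ since $\mathfrak{r}$ is prime to $\mathfrak{d}$) into $|r\delta|^{2s-1}|d|^{3s-3/2}$; the archimedean $(-i)$'s combine into $(-i)^{[F:\Q]}$; the local $\omega_v$-contributions, together with the product formula $\prod_v\omega_v(x)=1$ for $x\in F^\times$ and the fact that $\omega(-1)=1$, assemble into $\omega(ra\delta^{-1})=\omega(r\det g)$; and the local epsilon factors at $v\in S$ multiply to the global $\epsilon(1/2,\omega)=1$, arising from the functional equation of the Hecke $L$-function of the quadratic character $\omega$. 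The main obstacle is the archimedean step: whereas the finite-place calculation is a combinatorial reindexing of a finite geometric sum, the identification of $V_{1-s}(-a_v)$ with an explicit multiple of $V_s(-a_v)$ genuinely requires real analysis, and it is precisely this step that accounts for the $(-i)^{[F:\Q]}$ appearing in the statement.
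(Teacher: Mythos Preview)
Your strategy—assembling the functional equation from the local Whittaker identities of Proposition~\ref{eisenstein coefficients}—is different from what the paper does: the paper cites \S 3.2 of \cite{zhang2}, where Poisson summation on the Schwartz function $\Omega_{\mathfrak r}$ yields an identity $E_{\Omega,\eta,s}=E_{\widehat\Omega,\eta',1-s}$ directly at the level of the Eisenstein series, after which one identifies $\widehat{\Omega}_{\mathfrak r}$ with an $h_S$-translate of $\Omega_{\mathfrak r}$ by computing the local Fourier transforms. That argument never passes through Whittaker coefficients and so sidesteps the reduction you attempt.

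Your route has two genuine gaps. First, comparing $B(a;\cdot)$ for $a\in\A^\times$ does not by itself determine an automorphic form: the Whittaker function lives on all of $\GL_2(\A)$, and restricting to the diagonal torus only suffices once you know both sides share the same right $K$-type (level at finite places, weight at infinity), which you do not verify—the $h_S$-translate and the $\omega(\det g)$-twist make this nontrivial. Moreover, your ``parallel intertwining calculation'' for the constant term is precisely the Poisson-summation step that Zhang's proof uses, so you are not actually avoiding it. Second, the bookkeeping is wrong: $\prod_{v\in S}\epsilon_v(1/2,\omega,\psi_v^0)$ is not $1$. From $\prod_v\epsilon_v(1/2,\omega,\psi_v^0)=\omega(\delta)^{-1}$ (combine the two displayed identities in the proof of Lemma~\ref{global tau}), together with $\epsilon_v(1/2,\omega,\psi_v^0)=1$ at finite $v\notin S$ and $=i$ at archimedean $v$, one gets $\prod_{v\in S}\epsilon_v(1/2,\omega,\psi_v^0)=(-i)^{[F:\Q]}\omega(\delta)^{-1}$. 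Thus the factor $(-i)^{[F:\Q]}$ does not arise purely from the archimedean $V_s$-identity as you claim, and the residual $\omega(\delta)^{-1}$ must still be absorbed into the assembly of $\omega(r\cdot\det g)$.
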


\begin{proof}
See \S 3.2 of \cite{zhang2}, especially (3.2.1) and Lemmas 3.2.3 and 3.2.4.
\end{proof}

Let $L(s,\omega)=\prod_v L_v(s,\omega)$ be the usual  Dirichlet $L$-function attached to $\omega$, including the gamma factors $L_v(s,\omega)=G_1(s+1)$ for $v\mid\infty$.  Writing $L(s,\omega)$ as the quotient of the completed Dedekind $\zeta$-functions of $E$ and $F$ and using the functional equation and residue formulas of \cite[VII.6]{weil} gives the functional equation
\begin{equation}\label{dirichlet functional}
L(s,\omega)= |d\delta|^{s-1/2}\cdot L(1-s,\omega)
\end{equation}
and the special value formula
\begin{equation}\label{dirichlet value}
L(0,\omega) = \frac{ H_E}{ H_F} \cdot [\co_E^\times:\co_F^\times]^{-1}\cdot 2^{[F:\Q]-1}
\end{equation}
in which $H_F$ and $H_E$ are the class numbers of $F$ and $E$, respectively.

\begin{Prop}\label{constant term}
  Fix $a\in\A^\times$ and set $\alpha=\left( \begin{matrix} a\delta^{-1} &   \\ & 1 \end{matrix} \right)$.  For any $T\subset S$
 $$
 f_{\mathfrak{r},s}(\alpha h_{T}) =
\left\{ \begin{array}{ll} 
 |a|^s |\delta|^{-s} L(2s,\omega)  & \mathrm{if\ } T=\emptyset \\
 0 & \mathrm{otherwise}. \end{array}\right.
 $$
Furthermore if $T=S$ then
\begin{eqnarray*}\lefteqn{
  \int_{\A} f_{\mathfrak{r},s} \left( w_0\left( \begin{matrix} 1 & x  \\ & 1 \end{matrix} \right) \alpha h_T \right) \ dx  } \\
 & = & 
i^{[F:\Q]}\omega(a\delta)\omega(\mathfrak{r}) |r|^{2s-1}  |a|^{1-s} |\delta|^{3s-2} |d|^{3(s-1/2)} \cdot L(2-2s,\omega),  
\end{eqnarray*}
and otherwise the integral is $0$.
\end{Prop}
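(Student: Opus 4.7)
My plan is to exploit the factorization $f_{\mathfrak{r},s}=\prod_v f_{\mathfrak{r},s,v}$ of the standard section as a product of local pieces, inherited from $\Omega_\mathfrak{r}=\prod_v\Omega_{\mathfrak{r},v}$ and the product measure on $\A$. Since $\alpha h_T$ has trivial components outside the finite set of places appearing in its definition, $f_{\mathfrak{r},s}(\alpha h_T)=\prod_v f_{\mathfrak{r},s,v}(\alpha_v h_{T,v})$, and the intertwining integral similarly factors (in its range of convergence, then continued) as $\prod_v I_v$ with $I_v=\int_{F_v}f_{\mathfrak{r},s,v}(w_0 n(x_v)\alpha_v h_{T,v})\,dx_v$. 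Both identities therefore reduce to explicit local computations in the four cases $v\nmid\mathfrak{r}\mathfrak{d}\infty$, $v\mid\mathfrak{r}$, $v\in S$, and $v\mid\infty$.

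For the first identity I compute $[0,t]\alpha_v=[0,t]$ when $v\notin T$ and $[0,t]\alpha_v h_v=[-td_v,0]$ when $v\in T\subset S$. In the latter case $\Omega_{\mathfrak{r},v}(x,y)$ contains the indicator $\mathbf{1}_{\co_{F,v}^\times}(y)$, which vanishes at $y=0$, so $f_{\mathfrak{r},s,v}(\alpha_v h_v)=0$ and hence $f_{\mathfrak{r},s}(\alpha h_T)=0$ whenever $T\neq\emptyset$. For $T=\emptyset$ the local integrals are elementary: a geometric series at unramified $v\nmid\mathfrak{d}$ gives $L_v(2s,\omega)$; at $v\mid\mathfrak{d}$ the two factors $\omega_v(t)$ in the integrand collapse via $\omega_v^2=1$, leaving $1=L_v(2s,\omega)$ since $L_v=1$ at a ramified place; and at $v\mid\infty$ a standard Gamma integral produces $\pi^{-s-1/2}\Gamma(s+1/2)=L_v(2s,\omega)$. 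Each local factor carries an additional $|a_v|_v^s|\delta_v|_v^{-s}$, and the global product is $|a|^s|\delta|^{-s}L(2s,\omega)$.

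For the intertwining integral an analogous row-vector computation gives $[0,t]w_0 n(x)\alpha_v=[-ta_v\delta_v^{-1},-tx]$ when $v\notin T$ and $[0,t]w_0 n(x)\alpha_v h_v=[txd_v,-ta_v\delta_v^{-1}]$ when $v\in T$. At a place $v\in S\setminus T$ the Schwartz factor introduces $\omega_v(-tx)\mathbf{1}_{\co_{F,v}^\times}(-tx)$, and after swapping the order of the $x$- and $t$-integrals (justified in the range of convergence) and a change of variable the inner integral becomes a multiple of $\int_{\co_{F,v}^\times}\omega_v(y)\,dy=0$, which vanishes because $\omega_v$ is a nontrivial character on $\co_{F,v}^\times$ at a ramified place. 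Hence $I_v=0$ whenever $T\subsetneq S$, reducing us to the case $T=S$.

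For $T=S$ the local integrals go as follows: at $v\nmid\mathfrak{r}\mathfrak{d}\infty$ a geometric series yields $\omega_v(a\delta)|a_v|_v^{1-s}|\delta_v|_v^{s-1/2}L_v(2s-1,\omega)$; at $v\mid\mathfrak{r}$ the support condition contributes the extra factor $\omega_v(r)|r_v|_v^{2s-1}$; at $v\in S$ the collapse $\omega_v^2=1$ gives $\omega_v(-1)\omega_v(a\delta)|a_v|_v^{1-s}|\delta_v|_v^{s-1/2}|d_v|_v^s$; and at $v\mid\infty$ (using $\delta_v=1$) the identity $\int_\R(a^2+x^2)^{-s-1/2}\,dx=\sqrt\pi\,\Gamma(s)\Gamma(s+1/2)^{-1}|a|^{-2s}$ produces $-i\,\omega_v(a)|a_v|_v^{1-s}\pi^{-s}\Gamma(s)$, where the imaginary unit comes from extracting the imaginary part of the odd Schwartz factor $(ix+y)$. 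Multiplying over all $v$, absorbing the archimedean $\pi^{-s}\Gamma(s)=L_v(2s-1,\omega)$ into the completed $L$-function, using $\prod_{v\in S}\omega_v(-1)=(-1)^{[F:\Q]}$ (forced by the product formula $\omega(-1)=1$, so that $(-1)^{[F:\Q]}(-i)^{[F:\Q]}=i^{[F:\Q]}$), and finally applying the functional equation \eqref{dirichlet functional} in the form $L(2s-1,\omega)=|d\delta|^{2s-3/2}L(2-2s,\omega)$, collects the factors into the stated expression. The main obstacle will be the careful bookkeeping: tracking the powers of $|d|$ and $|\delta|$, the signs $(-1)^{[F:\Q]}$, and the factors of $i$ that arise from measure normalizations, the archimedean Gamma integrals, and the functional equation, so that the exponents and constants match the target formula exactly.
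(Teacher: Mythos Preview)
Your proof is correct and follows essentially the same approach as the paper: factor $f_{\mathfrak{r},s}=\prod_v f_{\mathfrak{r},s,v}$, compute each local integral explicitly in the four cases, use the vanishing of $\mathbf{1}_{\co_{F,v}^\times}(0)$ (respectively of $\int_{\co_{F,v}^\times}\omega_v\,dy$) to kill the $T\neq\emptyset$ (respectively $T\subsetneq S$) contributions, and then apply the functional equation \eqref{dirichlet functional} to pass from $L(2s-1,\omega)$ to $L(2-2s,\omega)$. The only cosmetic differences are that the paper keeps $\delta_v$ at archimedean places rather than setting it to $1$, and evaluates the archimedean double integral in the opposite order (Gaussian in $x$ first, then a Gamma integral in $t$) instead of via your Beta-type identity $\int_\R(a^2+x^2)^{-s-1/2}\,dx=\sqrt{\pi}\,\Gamma(s)\Gamma(s+1/2)^{-1}|a|^{-2s}$; both routes give $-i\,\omega_v(a\delta)\,|a|_v^{1-s}|\delta|_v^{s-1/2}\pi^{-s}\Gamma(s)$.
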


\begin{proof}
Let $v$ be a place of $F$ and, if $v$ is finite, let $\varpi$ be a uniformizing parameter of $F_v$. 
We may factor $f_{\mathfrak{r},s}=\prod_v f_{\mathfrak{r},s,v} $ where 
$$
f_{\mathfrak{r},s,v} (g)  =  |\det(g)|_v^s  \int_{F_v^\times} \Omega_{\mathfrak{r},v} \big( [0,t] \cdot g \big) |t|_v^{2s} \omega_v (t) \ d^\times t.
$$
For any place $v$ one easily computes
$$
 f_{\mathfrak{r},s,v}(\alpha)   = |a|_v^s\cdot  |\delta|_v^{-s} \cdot L_v(2s,\omega)
$$
 and, if $v\in S$, 
 $$
f_{\mathfrak{r},s,v} \left(\alpha h_v  \right) =  |a\delta^{-1}r |_v^s \int_{F_v^\times}  \Omega_{\mathfrak{r},v}(-r t,0) |t|_v^{2s}\omega_v(t)\ d^\times t 
$$
which vanishes as $\Omega_{\mathfrak{r},v}(-r t,0)=0$.  This proves the first claim.
If $v$ is a finite place with $v\nmid\mathfrak{d}$ then
\begin{eqnarray*}\lefteqn{
\int_{F_v} f_{\mathfrak{r},s,v}  \left( w_0\left(\begin{matrix} 1& x \\ & 0 \end{matrix}\right)\alpha \right) \ dx } \\
&=&
|a\delta^{-1}|_v^s \int_{F_v^\times} \mathbf{1}_{\mathfrak{r}_v}(ta\delta^{-1}) \left( \int_{F_v} \mathbf{1}_{\co_{F,v}} (tx) \ dx\right)|t|_v^{2s}\omega_v(t)\ d^\times t   \\
&=&
|a|_v^s |\delta|_v^{1/2-s} \int_{F_v^\times} \mathbf{1}_{\mathfrak{r}_v}(ta\delta^{-1}) |t|_v^{2s-1}\omega_v(t)\ d^\times t   \\
&=& \omega_v(a\delta )
|a|_v^{1-s}  |\delta|_v^{s-1/2}   |r|_v^{2s-1} \omega_v(r) L_v(2s-1,\omega).
\end{eqnarray*}
If $v\mid\mathfrak{d}$ then by (\ref{additive-multiplicative measures})
$$
\int_{F_v}\mathbf{1}_{\co_{F,v}^\times}(tx)\omega_v(x) \ dx 
=
|\delta|_v^{1/2} (1-|\varpi|_v) \int_{F_v^\times }\mathbf{1}_{\co_{F,v}^\times}(tx)\omega_v(x) |x|_v\ d^\times x .
$$
The  integral on the right vanishes, and hence so does
\begin{eqnarray*}\lefteqn{
\int_{F_v} f_{\mathfrak{r},s,v}  \left( w_0\left(\begin{matrix} 1& x \\ & 0 \end{matrix}\right)\alpha \right) \ dx } \\
 &=&
  |a\delta^{-1}|_v^s  \int_{F_v} \int_{F_v^\times} \Omega_{\mathfrak{r},v}( -ta\delta^{-1} ,-tx) |t|_v^{2s}\omega_v(t)\ d^\times t \ dx\\
  &=&
  |a\delta^{-1}|_v^s  \int_{F_v^\times} \mathbf{1}_{\mathfrak{d}_v}( ta\delta^{-1} ) \left( \int_{F_v}\mathbf{1}_{\co_{F,v}^\times}(tx)\omega_v(x) \ dx\right)  |t|_v^{2s}\ d^\times t.
 \end{eqnarray*}
Still assuming $v\mid\mathfrak{d}$,
\begin{eqnarray*}\lefteqn{
\int_{F_v} f_{\mathfrak{r}, s,v}  \left( w_0\left(\begin{matrix} 1& x \\ & 0 \end{matrix}\right) \alpha  h_v \right) \ dx } \\
&=&
|a d \delta^{-1}|^s_v  \int_{F_v^\times} \left(  \int_{F_v} \mathbf{1}_{\co_{F,v}}(t x) \ dx\right)  \mathbf{1}_{\co_{F,v}^\times} (ta\delta^{-1} ) |t|_v^{2s}\omega_v(-a\delta )\ d^\times t  \\
&=&
 |a|_v^{1-s} |d |_v^s  |\delta|^{s-1/2}_v \omega_v(-a\delta ) \int_{F_v^\times}  \mathbf{1}_{\co_{F,v}^\times} (ta\delta^{-1} )\ d^\times t  \\
&=&
\omega_v(-a\delta )  |a|_v^{1-s} |\delta |_v^{s-1/2}  |d|_v^s.
\end{eqnarray*}
Finally, if  $v$ is archimedean then
\begin{eqnarray*}\lefteqn{
\int_{F_v} f_{\mathfrak{r},s,v}  \left( w_0\left(\begin{matrix} 1& x \\ & 0 \end{matrix}\right)\alpha \right) \ dx } \\
&=&  - |a|_v^s |\delta|_v^{1/2-s}\int_{\R}\int_{\R^\times}   t
(a\delta^{-1}i+x) e^{-\pi (ta\delta^{-1})^2} e^{-\pi(tx)^2}
|t|_v^{2s}\omega_v(t)\ d^\times t  \ d^{\mathrm{Leb}}x\\
&=&  i\cdot \omega_v( -a\delta)  |a|_v^{s+1} |\delta|_v^{-1/2-s} \int_{\R^\times}   
e^{-\pi (ta\delta^{-1})^2} |t|_v^{2s+1}\left(  \int_{\R} e^{-\pi(tx)^2} \ d^{\mathrm{Leb}}x \right)
\ d^\times t  \\
&=&  i\cdot \omega_v( -a\delta)  |a|_v^{s+1} |\delta|_v^{-1/2-s} \int_{\R^\times}   
e^{-\pi (ta\delta^{-1})^2} |t|_v^{2s} \ d^\times t  \\
& = &
 i\cdot \omega_v( -a\delta)  |a|_v^{1-s} |\delta|_v^{s-1/2} \pi^{-s}\Gamma(s).
 \end{eqnarray*}
Putting everything together gives
\begin{eqnarray*}\lefteqn{
  \int_{\A} f_s\left( w_0\left( \begin{matrix} 1 & x  \\ & 1 \end{matrix} \right) \alpha h_T \right) \ dx  } \\
 & = & \left\{ \begin{array}{ll} 
i^{[F:\Q]}\omega(a\delta)\omega(r) |r|^{2s-1}  |a|^{1-s} |\delta|^{s-1/2} |d|^{s} \cdot L(2s-1,\omega)  & \mathrm{if\ } T=S\\
 0 & \mathrm{otherwise} \end{array}\right.
\end{eqnarray*}
and the second claim now follows from the functional equation  (\ref{dirichlet functional}).
\end{proof}


\subsection{Theta series}


As in \cite[\S 12]{jacquet-langlands} or  \cite[\S 2.2]{zhang2} (see also \S 12.6.1 and \S 12.6.5 of \cite{nek}, and the references therein) there is an irreducible automorphic representation $\Pi_\chi$ of $\GL_2(\A)$ of central character $\omega \chi_0$ and conductor $\mathfrak{d}\mathrm{N}(\mathfrak{C})$ characterized by  $L(s,\Pi_\chi)=L(s,\chi)$, where the right hand side is the  Dirichlet  $L$-function of $\chi$ including the gamma factors $L_v(s,\chi)=G_2(s)$ for $v\mid\infty$.  Denote by $\theta_\chi\in \Pi_\chi$  the normalized newvector and define
$$
\theta(g)= \theta_\chi\left(g \left(\begin{matrix} -1 &  \\ & 1  \end{matrix}\right)\right)
$$
so that $\theta$ has parallel weight $-1$.

\begin{Prop}\label{theta coefficients}
Fix $a\in \A^\times$. The Whittaker coefficient $B(a;\theta)$ admits a product decomposition
$B(a;\theta)=\prod_vB_v(a;\theta)$ over all places of $F$ in which the local factors are given as follows.  Let $v$ be a place of $F$, and if $v$ is finite let $\varpi$ be a uniformizing parameter of $F_v$.
\begin{enumerate}

\item
If $v$ is finite and inert in $K$ then  
$$
B_v(a;\theta)
= |a|_v^{1/2}  \cdot \left\{  
\begin{array}{ll} 
\chi_v(\varpi)^{ \frac{1}{2}\ord_v(a) } & \mathrm{if\ } \ord_v(a) \ge 0, \ \ord_v(a) \mathrm{\ even},\ \chi_v \mathrm{\ unramified}\\ 
1 & \mathrm{if\ } \ord_v(a) = 0,\ \chi_v \mathrm{\ ramified} \\
0 & \mathrm{otherwise}.
\end{array}\right.
$$

\item
If $v$ is finite and splits in $K$ then identify $E_v^\times\iso F_v^\times \times F_v^\times$. Set $\alpha=0$ if the restriction of $\chi_v$ to the first factor is ramified, and  $\alpha=\chi_v(\varpi,1)$ otherwise.  Set $\beta=0$ if the restriction of $\chi_v$ to the second factor is ramified, and $\beta=\chi_v(1,\varpi)$ otherwise.
Then
$$
B_v(a;\theta) = |a|_v^{1/2}    \sum_{\substack{i+j=\ord_v(\mathfrak{a}) \\ i,j\ge 0}} \alpha^i \beta^j.
$$
Here we adopt the convention that $0^0=1$ in case one or both of $\alpha$, $\beta$ is $0$.

\item
If $v\mid\mathfrak{d}$ (so that $\chi_v$ is unramified) let $\varpi_{E}$ denote a uniformizer of $E_v$.  Then
$$
B_v(a;\theta) 
=  |a|_v^{1/2}  \cdot  \left\{  
\begin{array}{ll} 
\chi_v(\varpi_{E})^{\ord_v(a)} & \mathrm{if\ } \ord_v(a)\ge 0 \\
0 & \mathrm{otherwise}.
\end{array}\right.
$$

\item
If $v$ is archimedean  then $B_v(a;\theta) = |a|_v^{1/2} e_v(-a)$.
\end{enumerate}
\end{Prop}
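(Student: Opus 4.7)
The plan is to reduce to computing the Whittaker coefficients of $\theta_\chi$ itself and then determine each local factor by Mellin inversion from the defining normalization $Z(s;\theta_\chi)=|\delta|^{1/2-s}L(s,\chi)$ of the newvector.  From $\theta(g)=\theta_\chi\left(g\left(\begin{matrix}-1&\\&1\end{matrix}\right)\right)$ one obtains
$$B(a;\theta)=W_{\theta_\chi}\left(\begin{matrix}-a\delta^{-1}&\\&1\end{matrix}\right)=B(-a;\theta_\chi).$$
Since $\left(\begin{matrix}-1&\\&1\end{matrix}\right)\in K_1(\mathfrak{n}_v)$ at every finite place $v$, right-multiplication by this matrix fixes the local newvector there and so the sign change is invisible in the finite local factors, while at archimedean places it replaces $e_v(a)$ by $e_v(-a)$.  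Because $\theta_\chi=\bigotimes_v\theta_{\chi,v}$ is a pure tensor of local newvectors in the Whittaker model, $B(a;\theta_\chi)$ factors as a product of local pieces, giving the asserted product expansion for $B(a;\theta)$.

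At each nonarchimedean $v$ the normalization identity factors as
$$\int_{F_v^\times}B_v(y;\theta_{\chi,v})|y|_v^{s-1/2}\,d^\times y=L_v(s,\chi),$$
and since $\mathrm{vol}(\co_{F,v}^\times)=1$ this reduces to an elementary Mellin inversion: expanding both sides as power series in $q_v^{-s}$ and matching coefficients reads off $B_v(\varpi^n;\theta_{\chi,v})$.  For inert $v$ with $\chi_v$ unramified, $L_v(s,\chi)=(1-\chi_v(\varpi)q_v^{-2s})^{-1}$ expands as a geometric series in $q_v^{-2s}$, forcing $B_v$ to be supported on even $\ord_v(a)$ and yielding the first subcase of (i); the ramified subcase corresponds to $L_v(s,\chi)=1$, forcing $B_v$ to be supported on $\ord_v(a)=0$.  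For split $v=w_1w_2$, $L_v(s,\chi)=(1-\alpha q_v^{-s})^{-1}(1-\beta q_v^{-s})^{-1}$ with $\alpha$ (resp.\ $\beta$) taken to be zero when $\chi_{w_1}$ (resp.\ $\chi_{w_2}$) is ramified; expanding the product as $\sum_n q_v^{-ns}\sum_{i+j=n}\alpha^i\beta^j$ gives (ii).  For $v\mid\mathfrak{d}$ the hypothesis $(\mathrm{cond}(\chi),\mathfrak{d})=1$ forces $\chi_v$ unramified, and since the unique place of $E$ above $v$ has residue field equal to that of $F_v$, $L_v(s,\chi)=(1-\chi_v(\varpi_E)q_v^{-s})^{-1}$ produces (iii).

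At an archimedean $v$ one matches against $L_v(s,\chi)=G_2(s)=2(2\pi)^{-s}\Gamma(s)$: the identity $\int_0^\infty a^{1/2}\cdot 2e^{-2\pi a}\cdot a^{s-1/2}\,d^\times a=G_2(s)$ identifies $B_v(a;\theta_{\chi,v})=|a|_v^{1/2}e_v(a)$, and the sign flip from the initial reduction to $\theta_\chi$ replaces $e_v(a)$ by $e_v(-a)$ to produce (iv).  The main obstacle, such as it is, lies in the split subcase of (ii) when one or both of $\chi_{w_1},\chi_{w_2}$ is ramified: here one must check that the collapse of the geometric series is correctly bookkept by the convention $0^0=1$, after which all four subcases of (ii) follow from a single uniform expansion.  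These local computations are parallel to (and considerably simpler than) the Eisenstein series calculations of \cite[\S 3.3]{zhang2} reproduced in the proof of Proposition \ref{eisenstein coefficients}.
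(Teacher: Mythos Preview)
Your argument is correct and gives a self-contained route to the formulas that the paper obtains only by citation.  The paper's proof simply invokes Lemmas 3.3.6 and 3.3.7 of \cite{zhang2}, where the Whittaker coefficients are extracted from an explicit model of the theta lift (a concrete Schwartz function on $\A_E$ and the Weil representation).  Your approach instead exploits the defining normalization $Z(s;\theta_\chi)=|\delta|^{1/2-s}L(s,\chi)$ together with the standard local fact that the zeta integral of the normalized Whittaker newvector equals the local $L$-factor; the individual $B_v$ then drop out by matching coefficients of $q_v^{-s}$.  This is more uniform across the four cases and requires no theta machinery, only the existence and uniqueness of local newvectors.  The explicit approach in \cite{zhang2}, by contrast, makes the underlying Schwartz data visible, which is what one actually needs later for the finer local functional equations of Proposition \ref{theta functional}.

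One point worth tightening: the passage from the global identity $\int_{\A^\times}B(y;\theta_\chi)|y|^{s-1/2}\,d^\times y=L(s,\chi)$ to the place-by-place identity is not a formal factorization of an equation---the global normalization alone does not pin down the local scalars.  What you are really using is the local theorem (Casselman; cf.\ \cite{jacquet-langlands}) that the Whittaker newvector $W_v$ with respect to an unramified additive character, normalized by $W_v(1)=1$, satisfies $\int_{F_v^\times}W_v(\mathrm{diag}(y,1))|y|_v^{s-1/2}\,d^\times y=L_v(s,\Pi_{\chi,v})$; together with the observation $B_v(a;\theta_\chi)=W_{\psi^0,v}(\mathrm{diag}(a,1))$ this gives each local identity directly, and the product then matches the global one, fixing the archimedean constant as well.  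With that sentence inserted the argument is complete.
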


\begin{proof}
When $\chi_0$ is trivial this is a restatement of  Lemmas 3.3.6 and 3.3.7 of \cite{zhang2}.  The proof of the general case is identical.
\end{proof}

\begin{Prop}\label{theta functional}
The local Whittaker coefficients of $\theta$ satisfy
$$
\begin{array}{rcll}
\omega_v(a)B_v(a;\theta) &=& B_v(a;\theta) & \mathrm{if\ }v\nmid \mathfrak{d}\cdot \infty  \\
\omega_v(a)B_v(a;\theta) &=& - B_v(a;\theta)& \mathrm{if\ }v\mid  \infty  \\
\omega_v(a) B_v(a;h_v \theta) &=& \chi_v(\mathfrak{D}) \epsilon_v(1/2,\omega,\psi_v^0)\cdot  B_v(a;\theta)  & \mathrm{if\ }v\mid  \mathfrak{d}.\\
\end{array}
$$
 Furthermore $\theta$ satisfies the global functional equation 
$$
  \theta(g) = \theta(gh_S)\cdot  \omega(  \det g )\cdot \overline{\chi}(\mathfrak{D})\cdot (-i)^{[F:\Q]}.
  $$
\end{Prop}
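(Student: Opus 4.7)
The plan is to verify each of the three local identities directly from the explicit Whittaker coefficient formulas of Proposition \ref{theta coefficients}, then assemble them into the global functional equation via the Fourier expansion $B(a;\theta)=\prod_v B_v(a;\theta)$.

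Identities (a) and (b) reduce to routine case analyses. When $v\nmid\mathfrak{d}\infty$ splits in $E$ we have $\omega_v\equiv 1$; when $v\nmid\mathfrak{d}\infty$ is inert, the support condition in Proposition \ref{theta coefficients}(a) forces $\mathrm{ord}_v(a)$ to be even (if $\chi_v$ is unramified) or zero (if $\chi_v$ is ramified), both of which give $\omega_v(a)=1$. For archimedean $v$, the formula $B_v(a;\theta)=|a|_v^{1/2}e_v(-a)$ is supported on $a_v<0$, where $\omega_v(a)=-1$.

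The substance of the proof is identity (c), the local Atkin--Lehner identity at $v\mid\mathfrak{d}$, which is the anticipated main obstacle. At such $v$ the extension $E_v/F_v$ is ramified and, by the hypothesis $(\mathfrak{n},\mathfrak{d})=1$ together with $\mathfrak{C}$ being coprime to $\mathfrak{d}$, $\chi_v$ is unramified, so $(\Pi_\chi)_v$ is the (possibly reducible) principal series induced from the ramified quadratic extension. To compute $B_v(a;h_v\theta)=W_{\theta_v}\bigl(\bigl(\begin{smallmatrix}a\delta^{-1} & \\ & 1\end{smallmatrix}\bigr) h_v\bigr)$ I would factor $h_v=w_0\bigl(\begin{smallmatrix}d_v & \\ & 1\end{smallmatrix}\bigr)$, commute the diagonal factor past, and use the action of $w_0$ on the Whittaker/Kirillov model of $\theta_v$ to rewrite the result as a local zeta integral; the local functional equation for $L(s,\chi_v)$ then produces the local root number $\chi_v(\mathfrak{D})\epsilon_v(1/2,\omega,\psi_v^0)$ together with the twist by $\omega_v(a)$. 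Parallel computations appear in Lemmas 3.2.3--3.2.4 and 3.3.8 of \cite{zhang2} and should adapt to the present twisted setting essentially verbatim.

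To deduce the global functional equation, I multiply the local identities and evaluate the resulting constant using three ingredients: (i) $\prod_{v\mid\mathfrak{d}}\chi_v(\mathfrak{D})=\chi(\mathfrak{D})$, since $\mathfrak{D}$ is supported at ramified primes where $\chi_v$ is unramified; (ii) the product formula $\prod_v\omega_v(a)=1$ for $a\in F^\times$ combined with (a) and (b), which on the support of $B(\cdot;\theta)$ forces $\prod_{v\mid\mathfrak{d}}\omega_v(a)=(-1)^{[F:\Q]}$; and (iii) the global identity $\prod_v\epsilon_v(1/2,\omega,\psi_v^0)=\epsilon(1/2,\omega)=1$, which is read off from the functional equation (\ref{dirichlet functional}), combined with the standard archimedean values $\epsilon_v(1/2,\omega,\psi_v^0)=-i$, producing $\prod_{v\mid\mathfrak{d}}\epsilon_v(1/2,\omega,\psi_v^0)=i^{[F:\Q]}$. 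Combining these contributions with the routine observation $B(a;\omega\cdot\phi)=\omega(a\delta^{-1})B(a;\phi)$, which translates the Whittaker coefficient identity back into an identity of automorphic forms incorporating the $\omega(\det g)$ factor, yields the stated equation $\theta(g)=\theta(gh_S)\cdot\omega(\det g)\cdot\overline{\chi}(\mathfrak{D})\cdot(-i)^{[F:\Q]}$; the constant term of $\theta$ plays no role in the cuspidal case, and in the degenerate case where $\chi$ globally factors through the norm it can be handled in parallel using Proposition \ref{ES functional}.
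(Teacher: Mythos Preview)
Your approach matches the paper's, which simply cites \cite[Lemma 3.2.5]{zhang2} and asserts that the argument is identical when $\chi_0$ is nontrivial; you are spelling out what that deferred computation involves, and the strategy is sound.

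Two small slips in your global bookkeeping would give the wrong constant if taken literally. First, by \cite[Proposition 3.8(iii)]{kudla-tate} (as invoked in the proof of Lemma \ref{global tau}) the archimedean factor is $\epsilon_v(1/2,\omega,\psi_v^0)=i$, not $-i$. Second, the global product $\prod_v\epsilon_v(1/2,\omega,\psi_v^0)$ is not $1$ but $\omega(\delta)$: the additive character $\psi^0$ differs from $\psi$ by $\delta$, and the change-of-character formula $\epsilon_v(s,\omega,\psi_v)=|\delta|_v^{s-1/2}\omega_v(\delta)\epsilon_v(s,\omega,\psi_v^0)$ together with $\epsilon(1/2,\omega)=1$ gives $\prod_v\epsilon_v(1/2,\omega,\psi_v^0)=\omega(\delta)$. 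With these corrections $\prod_{v\mid\mathfrak{d}}\epsilon_v(1/2,\omega,\psi_v^0)=(-i)^{[F:\Q]}\omega(\delta)$, and then this $\omega(\delta)$ cancels against the $\omega(\delta^{-1})$ in your formula $B(a;\omega\cdot\phi)=\omega(a\delta^{-1})B(a;\phi)$, yielding the stated constant. Your step (ii) also needs sharpening: the product formula $\prod_v\omega_v(a)=1$ holds only for $a\in F^\times$, whereas the Whittaker identity must hold for all $a\in\A^\times$. The fix is to observe that at each $v\mid\mathfrak{d}$ the function $g\mapsto\omega_v(\det g)\,\theta_v(gh_v)$ is again $K_1(\mathfrak{d}_v)$-fixed (a short matrix computation using $h_v^{-1}K_0(\mathfrak{d}_v)h_v=K_0(\mathfrak{d}_v)$ and the unramifiedness of $\chi_{0,v}$ at $v\mid\mathfrak{d}$), so that by newvector uniqueness the scalar identity (c) on Whittaker coefficients upgrades to an identity of full local Whittaker functions; taking the product over all $v$ then gives the global functional equation directly.
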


\begin{proof}
When $\chi_0$ is trivial this is  \cite[Lemma 3.2.5]{zhang2}, and the proof of the general case is identical.
\end{proof}

\begin{Lem}\label{cusp theta}
Let $\chi^*(t)=\chi(\overline{t})$ where $t\mapsto \overline{t}$ is the nontrivial involution of $E/F$, extended to $\A_E^\times$. 
The following are equivalent
\begin{enumerate}
\item $\Pi_\chi$ is noncuspidal
\item
there is a character $\nu: \A^\times/F^\times  \map{}\C^\times$ such that $\chi=\nu\circ \mathrm{N}$
\item
$\chi^*=\chi.$
\end{enumerate}
\end{Lem}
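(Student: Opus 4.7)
The plan is to prove (b) $\Rightarrow$ (c) $\Rightarrow$ (b) by class-field theoretic means, and then (a) $\Leftrightarrow$ (b) by identifying $\Pi_\chi$ with a principal series when (b) holds. The implication (b) $\Rightarrow$ (c) is immediate from the Galois-invariance of the norm: if $\chi = \nu\circ\mathrm{N}_{E/F}$, then $\chi^*(t) = \nu(\mathrm{N}_{E/F}(\bar t)) = \nu(\mathrm{N}_{E/F}(t)) = \chi(t)$.

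For (c) $\Rightarrow$ (b), I would observe that $\chi = \chi^*$ forces $\chi$ to be trivial on the subgroup $\{t/\bar t : t\in \A_E^\times\} \subset \A_E^\times/E^\times$. By Hilbert 90 applied to the $\Gal(E/F)$-module $\A_E^\times/E^\times$ (equivalently, a direct cocycle argument at each place), this subgroup coincides with the kernel of the norm map $\mathrm{N}\colon \A_E^\times/E^\times \to \A^\times/F^\times$. Hence $\chi$ descends to a character of $\mathrm{N}(\A_E^\times/E^\times)$, which by global class field theory is exactly the kernel of $\omega$ and therefore has index two in $\A^\times/F^\times$. Extending this descended character, with the twofold ambiguity corresponding to a twist by $\omega$, produces the desired $\nu$.

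For (b) $\Rightarrow$ (a), the Artin formalism identifies the representation of $W_F$ induced from $\chi\colon W_E \to \C^\times$ with the direct sum $\nu \oplus \nu\omega$, yielding $L(s,\chi) = L(s,\nu)L(s,\nu\omega)$. The principal series $\mathcal{B}(\nu,\nu\omega)$ — irreducible since $\nu\neq\nu\omega$ — has this same $L$-function, central character $\nu^2\omega = \chi_0\omega$ (using $\chi|_{\A^\times} = \nu^2$), and conductor matching that of $\Pi_\chi$ by a place-by-place check. Strong multiplicity one then identifies $\Pi_\chi$ with this principal series, which is Eisenstein and hence noncuspidal. Conversely, for (a) $\Rightarrow$ (b), a noncuspidal irreducible $\Pi_\chi$ must be a principal series $\mathcal{B}(\mu_1, \mu_2)$. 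Comparing local $L$-factors at the infinitely many places of $F$ that split in $E$ forces $\{\mu_{1,v}, \mu_{2,v}\}$ to equal the two components of $\chi_v$ under $E_v \iso F_v\times F_v$, and a global consistency argument then yields a single global $\nu$ with $\chi = \nu\circ\mathrm{N}_{E/F}$.

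The main obstacle I expect is the local-global patching in (a) $\Rightarrow$ (b): making a coherent choice of ordering of the pair $\{\mu_{1,v},\mu_{2,v}\}$ across all split $v$ so as to recover a single global character $\nu$ — in effect, inverting the automorphic-induction construction. The remaining implications are classical, with (c) $\Rightarrow$ (b) essentially a one-line Hilbert 90 computation combined with the index-two fact from global class field theory.
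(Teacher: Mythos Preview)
Your argument for (b) $\Leftrightarrow$ (c) and for (b) $\Rightarrow$ (a) is correct and is exactly what the paper does: Hilbert~90 for the former, and identification of $\Pi_\chi$ with the principal series $\Pi(\nu,\nu\omega)$ via the $L$-function factorization $L(s,\chi)=L(s,\nu)L(s,\nu\omega)$ for the latter.

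The difference is in (a) $\Rightarrow$ (b). The paper does not attempt your inversion-of-automorphic-induction argument at all; it simply invokes \cite[Proposition~12.1]{jacquet-langlands} in contrapositive form: if $\chi$ does \emph{not} factor through the norm, then the theta lift $\Pi_\chi$ is cuspidal. This is the foundational result underlying the construction of $\Pi_\chi$, so citing it is entirely natural and sidesteps the patching issue you flagged. Your proposed route---starting from $\Pi_\chi\iso\Pi(\mu_1,\mu_2)$ and recovering $\nu$ from the local data $\{\mu_{1,v},\mu_{2,v}\}$---can be made to work, but the coherent-ordering problem you identify is real and not a one-liner. A cleaner way to close it, once you already have (b) $\Leftrightarrow$ (c), is to argue on the Galois side: equality of $L$-functions at almost all places forces the semisimple parameter $\mu_1\oplus\mu_2$ to agree with $\mathrm{Ind}_{W_E}^{W_F}\chi$, and Mackey's criterion says the latter is reducible iff $\chi=\chi^*$, giving (c) directly. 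Either way, the paper's one-line citation is the efficient move here.
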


\begin{proof}
If (b) does not hold then  $\Pi_\chi$ is cuspidal by  \cite[Proposition 12.1]{jacquet-langlands}.  Conversely, if (b) does hold then comparing $L$-functions we see that $\Pi_\chi$ is isomorphic to (indeed, is defined as) the principal series $\Pi(\nu,\nu\omega)$, hence is noncuspidal. Thus (a) and (b) are equivalent.  The equivalence of  (b) and  (c) is a consequence of Hilbert's theorem 90.
\end{proof}

\begin{Lem}\label{Lem:theta and eisenstein}
Assume that $\mathfrak{C}=\co_E$ and that the equivalent conditions of Lemma \ref{cusp theta} hold. Then
\begin{equation}\label{Disp:theta and eisenstein}
 \nu(\det g)\cdot E_{\co_F,1/2}(g)=(-1)^{[F:\Q]} |d|^{1/2}  \theta(g)
 \end{equation}
 where $E_{\co_F,s}$ is the Eisenstein series of \S \ref{ss:ES} with $\mathfrak{r}=\co_F$.
\end{Lem}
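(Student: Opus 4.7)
The plan is to establish the identity by comparing Fourier expansions on both sides. Since each side is an automorphic form on $\GL_2(\A)$, agreement of their Whittaker coefficients $B(a;\cdot)$ and of their constant terms forces equality of the forms. The effect of twisting an automorphic form by $\nu(\det\cdot)$ is to multiply its Whittaker function by $\nu(\det g)$, so $B(a;\nu(\det\cdot)E_{\co_F,1/2})=\nu(a\delta^{-1})B(a;E_{\co_F,1/2})$. The target Whittaker identity thereby becomes
$$
\nu(a\delta^{-1})B(a;E_{\co_F,1/2})=(-1)^{[F:\Q]}|d|^{1/2}B(a;\theta)
$$
for every $a\in\A^\times$, together with the analogous equality of constant terms.

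The Whittaker identity is verified place by place using the Euler factorizations of Propositions \ref{eisenstein coefficients} and \ref{theta coefficients}, with the dictionary $\chi_v=\nu_v\circ\mathrm{N}_{E_v/F_v}$ supplied by Lemma \ref{cusp theta}. At a split finite $v\nmid\mathfrak d$ one has $\chi_v(\varpi,1)=\chi_v(1,\varpi)=\nu_v(\varpi)$, so the double sum on the theta side collapses to the geometric sum appearing on the Eisenstein side. At an inert $v\nmid\mathfrak d$ both sides vanish when $\ord_v(a)$ is odd, and the relation $\chi_v(\varpi)=\nu_v(\varpi)^2$ makes the agreement for even $\ord_v(a)$ explicit. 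At $v\mid\mathfrak d$ one matches the factor $|d|_v^{1/2}\epsilon_v(1/2,\omega,\psi_v^0)$ on the Eisenstein side against the theta-side factor $\chi_v(\varpi_E)^{\ord_v(a)}$, taking $\nu$ to be the canonical choice unramified at the ramified places (the alternative $\nu\omega$ giving the same $\chi$ but a different $\nu(\det\cdot)$), so that $\chi_v(\varpi_E)=\nu_v(\mathrm{N}\varpi_E)$ reduces as required. At an archimedean $v$ the integral $V_{1/2}(-a_v)$ is evaluated by contour integration: its integrand has a single simple pole at $x=-i$, yielding $-2\pi i e^{2\pi a_v}$ for $a_v<0$ and vanishing otherwise, which matches $e_v(-a_v)$ up to a factor of $i\nu_v(-1)$.

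Multiplying these local ratios, the archimedean contribution $i^{[F:\Q]}\prod_{v|\infty}\nu_v(-1)$, the ramified contribution $|d|^{1/2}\prod_{v|\mathfrak d}\epsilon_v(1/2,\omega,\psi_v^0)$, and the factors $\omega_v(\delta)\nu_v(\delta)^{-1}$ from the remaining finite places collapse via the product formula and the global $\epsilon$-factor identity for the quadratic character $\omega$ to the required $(-1)^{[F:\Q]}|d|^{1/2}$. The analogous equality of constant terms is handled by Proposition \ref{constant term}: the two nonvanishing pieces of the constant term of $E_{\co_F,1/2}$ correspond to the Borel-induced characters $1$ and $\omega$, and after the $\nu(\det\cdot)$ twist they reproduce the two characters $\nu$ and $\nu\omega$ underlying the principal-series realization of $\Pi_\chi\cong\Pi(\nu,\nu\omega)$, matching the constant term of $\theta$ up to the same scalar. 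The main obstacle is the bookkeeping at the ramified places $v\mid\mathfrak d$: reconciling the local $\epsilon$-factor and the uniformizer $\varpi_E$ of $E_v$ with the norm relation $\chi_v=\nu_v\circ\mathrm{N}$, and verifying that the global assembly of sign characters and $\epsilon$-factors telescopes to exactly $(-1)^{[F:\Q]}|d|^{1/2}$ rather than to some other sign multiple.
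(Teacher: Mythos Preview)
Your direct Fourier-expansion comparison is a valid strategy, but the paper takes a shorter route that sidesteps both the constant-term check and most of the sign bookkeeping you identify as the main obstacle. The paper first notes that $\Pi_\chi\cong\Pi(\nu,\nu\omega)$ (from the proof of Lemma~\ref{cusp theta}), so the twist $\nu(\det g)E_{\co_F,1/2}(g)$ already lies in $\Pi_\chi$; since this form and $\theta(g)$ are both $K_1(\mathfrak{d})$-fixed and of parallel weight $-1$, newvector uniqueness forces them to be scalar multiples of one another. The whole identity thus reduces to pinning down a single scalar, which the paper reads off from the local Whittaker formulas (Propositions~\ref{eisenstein coefficients} and~\ref{theta coefficients}) together with the local functional equation of $\theta$ at $v\mid\mathfrak{d}$ (Proposition~\ref{theta functional}), rather than by assembling global $\epsilon$-factors and sign characters by hand.

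Your route would succeed once the bookkeeping is completed, but two remarks are in order. First, the constant-term comparison you outline is unnecessary once proportionality is known a priori; since $\theta$ is genuinely non-cuspidal here, this is real work that the newvector argument saves you. Second, your phrase ``taking $\nu$ to be the canonical choice unramified at the ramified places'' reads as if you are free to reselect $\nu$ mid-proof, whereas $\nu$ is the character already fixed by the hypothesis (Lemma~\ref{cusp theta}(b)), and the stated constant $(-1)^{[F:\Q]}|d|^{1/2}$ must be verified for that given $\nu$. The paper's argument handles this uniformly because the proportionality step is insensitive to which $\nu$ witnesses $\chi=\nu\circ\mathrm{N}$, and the scalar is then computed directly from the given $\nu$ via the local formulas.
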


\begin{proof}
As in the proof of Lemma \ref{cusp theta}, $\Pi_\chi$ is isomorphic to $\Pi(\nu,\nu\omega)$, and so is generated by $\nu(\det g)E_{\co_F,1/2}(g)$.  As both $\theta(g)$ and $\nu(\det g)E_{\co_F,1/2}(g)$ are $K_1(\mathfrak{d})$-fixed and of parallel weight $-1$, they must be scalar multiples of one another.  To compute the scalar we compute Whittaker coefficients.
For any $a\in \A^\times$, comparing Propositions \ref{eisenstein coefficients} and \ref{theta coefficients} gives
$$
B_v(a;E_{\co_F,1/2}) = \overline{\nu}_v(a) \omega_v(a\delta) B_v(a;h_v \theta) \cdot
\left\{\begin{array}{ll}
\overline{\chi}_v(\mathfrak{D}) |d|_v^{1/2}     &\mathrm{if\ } v\nmid\infty \\
i  &\mathrm{if\ } v\mid\infty
\end{array}\right.
$$
Using Proposition \ref{theta coefficients} we see that both sides of (\ref{Disp:theta and eisenstein}) have the same Whittaker coefficients.
\end{proof}


\subsection{The kernel $\Theta$}
\label{ss:kernel}


For each $v\in S$ set
 $ \sigma_{s,v}= 1+\overline{\chi}_v(\mathfrak{D}) |d|_v^{1/2-s} h_v$
 and define the \emph{symmetrized kernel}
 \begin{eqnarray*}
 \Theta_{\mathfrak{r},s}(g)
 &=&\left(\prod_{v\in S} \sigma_{s,v} \right) \cdot \big[\theta(g)E_{\mathfrak{r},s}(g)\big]  \\
 &=&\nonumber
 \sum_{T\subset S}\overline{\chi}_T(\mathfrak{D})|d|_T^{1/2-s} \theta(gh_T)E_{\mathfrak{r},s}(gh_T)
 \end{eqnarray*}
 where the subscript $T$ indicates the product over all $v\in T$; e.g. $\chi_T=\prod_{v\in T}\chi_v$.  For every place $v$ of $F$ define
\begin{equation}\label{epsilon}
\epsilon_v(s,\mathfrak{r},\psi)= |\delta|_v^{2s-1}\cdot
\left\{
\begin{array}{ll}
-1 &\mathrm{if\ }v\mid\infty \\
\omega_v(r)|r|_v^{2s-1} & \mathrm{if\ }v\mid\mathfrak{r}\\
|d|_v^{2s-1}&\mathrm{otherwise}
\end{array}\right.
\end{equation}
and set $\epsilon(s,\mathfrak{r})=\prod_v\epsilon_v(s,\mathfrak{r},\psi)$, so that
$$
\epsilon(s,\mathfrak{r})=(-1)^{[F:\Q]} \omega(\mathfrak{r}) \mathrm{N}_{F/\Q}(\mathfrak{dr})^{1-2s} D_F^{1-2s}.
$$
Combining Propositions \ref{ES functional} and  \ref{theta functional} gives the relation
$$
\theta(g) E_{\mathfrak{r},s}(g)= \epsilon(s,\mathfrak{r}) |d|^{s-1/2} \overline{\chi}(\mathfrak{D}) \cdot \theta(g h_S)E_{\mathfrak{r},1-s}(gh_S)
 $$
 and hence
 $$
 \left(\prod_{v\in S} \sigma_{s,v}\right) \big[\theta(g) E_{\mathfrak{r},s}(g) \big]  
 =
\epsilon(s,\mathfrak{r})  \left(\prod_{v\in S} \overline{\chi}_v(\mathfrak{D})  |d|^{s-1/2}_v\sigma_{s,v}  h_v\right) \big[\theta (g) E_{\mathfrak{r},1-s}(g) \big].
$$ 
For $v\in S$ the operator $h_v^2$ acts  as $\chi_{0,v}(\mathfrak{d})=\chi_v(\mathfrak{D})^2$  on automorphic forms of central character $\chi_0$.  Thus we may replace the expression $\overline{\chi}_v(\mathfrak{D})  |d|^{s-1/2}_v\sigma_{s,v}  h_v$ with $\sigma_{1-s,v}$ to arrive at the functional equation
\begin{equation}\label{kernel functional}
\Theta_{\mathfrak{r},s} (g) =\epsilon(s,\mathfrak{r})  \cdot\Theta_{\mathfrak{r},1-s}(g).
\end{equation}

As in \cite[\S 3.3]{zhang2}, multiplying the Fourier expansions of  $\theta(g)$ and  $E_{\mathfrak{r},s}(g)$ shows that the product $\theta(g) \cdot E_{\mathfrak{r},s}(g)$ has constant term
$$
\mathbf{C}_{\mathfrak{r},s}(g) = C_\theta(g)C_{\mathfrak{r},s}(g)  + 
 \sum_{\substack{\eta,\xi\in F^\times \\ \eta+\xi=0}} 
W_\theta\left(\left(  \begin{matrix}   \eta  & \\ & 1  \end{matrix}  \right)   g  \right)  
W_{\mathfrak{r},s}\left(\left(  \begin{matrix}   \xi  & \\ & 1  \end{matrix}  \right)   g  \right)
$$
 and Whittaker function
\begin{eqnarray*}
\mathbf{W}_{\mathfrak{r},s}(g) &=&
C_\theta(g)  W_{\mathfrak{r},s} (g)  + C_{\mathfrak{r},s}(g) W_\theta(g)  \\
& & + \  \sum_{\substack{\eta,\xi\in F^\times \\ \eta+\xi=1}} 
W_\theta\left(\left(  \begin{matrix}   \eta  & \\ & 1  \end{matrix}  \right)   g  \right)  
W_{\mathfrak{r},s}\left(\left(  \begin{matrix}   \xi  & \\ & 1  \end{matrix}  \right)   g  \right).
\end{eqnarray*}
From the Fourier expansion of $\theta(g)E_{\mathfrak{r},s}(g)$ and the definition of the symmetrized kernel we find the decomposition 
\begin{equation}\label{global kernel decomp} 
B(a;\Theta_{\mathfrak{r},s}) = A_0(a;\Theta_{\mathfrak{r},s}) +A_1(a;\Theta_{\mathfrak{r},s}) + \sum_{ \substack{\eta,\xi\in F^\times \\ \eta+\xi=1}  } B (a,\eta,\xi;\Theta_{\mathfrak{r},s}) 
\end{equation}
in which the terms on the right hand side are defined by
\begin{eqnarray*}
A_0(a;\Theta_{\mathfrak{r},s}) &=&
\sum_{T\subset S} \overline{\chi}_T(\mathfrak{D})|d|_T^{1/2-s} W_\theta(\alpha h_T)  C_{\mathfrak{r},s}(\alpha h_T) \\
A_1(a;\Theta_{\mathfrak{r},s}) &=&  \sum_{T\subset S} \overline{\chi}_T(\mathfrak{D})|d|_T^{1/2-s} C_\theta(\alpha h_T)W_{\mathfrak{r},s}(\alpha h_T) \\
B (a,\eta,\xi;\Theta_{\mathfrak{r},s})  &=&
 \sum_{T\subset S}  \overline{\chi}_T(\mathfrak{D})|d|_T^{1/2-s}  B(\eta a;h_T \theta) B(\xi a; h_T E_{\mathfrak{r},s})   
\end{eqnarray*}
where we have abbreviated $\alpha=\left(\begin{matrix} a\delta^{-1} & \\ & 1 \end{matrix}  \right)$.  If we define
\begin{eqnarray*}\lefteqn{
B_v(a,\eta,\xi;\Theta_{\mathfrak{r},s}) }  \\
&= &
B_v(\eta a;\theta)
\cdot\left\{\begin{array}{ll}
B_v(\xi a;E_{\mathfrak{r},s}) &\mathrm{if\ } v\nmid\mathfrak{d}  \\
B_v(\xi a;E_{\mathfrak{r},s}) 
+ \omega_v(-\eta\xi) |d\delta |_v^{2s-1} B_v(\xi a; E_{\mathfrak{r},1-s}) &\mathrm{if\ }v\mid\mathfrak{d}
\end{array}\right.
\end{eqnarray*}
then the local functional equations of Propositions \ref{eisenstein coefficients} and \ref{theta coefficients} imply the  factorization
$$
B (a,\eta,\xi;\Theta_{\mathfrak{r},s})  = \prod_v B_v(a,\eta,\xi;\Theta_{\mathfrak{r},s}).
$$

\begin{Lem}\label{local kernel functional}
For every place $v$ of $F$, every $a\in\A^\times$, and every  $\eta,\xi\in F^\times$,
$$
B_v(a,\eta,\xi;\Theta_{\mathfrak{r},s}) = \omega_v(-\eta\xi) \epsilon_v(s, \mathfrak{r},\psi)\cdot B_v(a,\eta,\xi;\Theta_{\mathfrak{r},1-s}) .
$$
\end{Lem}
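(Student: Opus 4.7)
The plan is to verify the identity one place at a time, using the explicit Whittaker coefficient formulas of Propositions \ref{eisenstein coefficients} and \ref{theta coefficients} together with the local identities of Proposition \ref{theta functional}. Since $B_v(\eta a;\theta)$ is independent of $s$, all $s$-dependence and the desired symmetry under $s\leftrightarrow 1-s$ must come from the Eisenstein factor in the definition of $B_v(a,\eta,\xi;\Theta_{\mathfrak{r},s})$; and if $B_v(\eta a;\theta)=0$ both sides of the claim vanish, so we may work under the standing assumption $B_v(\eta a;\theta)\neq 0$.

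At $v\mid\mathfrak{d}$ the identity is essentially a tautology, because the definition itself is already a symmetrization. Writing $A=B_v(\xi a;E_{\mathfrak{r},s})$, $B=B_v(\xi a;E_{\mathfrak{r},1-s})$, $\tau=\omega_v(-\eta\xi)$, and $X=|d\delta|_v^{2s-1}$, one has $\tau^{2}=1$ and therefore $A+\tau XB=\tau X(B+\tau X^{-1}A)$; since $\epsilon_v(s,\mathfrak{r},\psi)=X$ at such places, the claim follows immediately (notably without any appeal to Proposition \ref{eisenstein coefficients}(b)). For finite $v\nmid\mathfrak{d}$ the Eisenstein factor is simply $B_v(\xi a;E_{\mathfrak{r},s})$, and Proposition \ref{eisenstein coefficients}(a) writes it as an explicit scalar times the truncated geometric sum $S_s(n)=\sum_{j=0}^{n}|\varpi^{j}|_v^{1-2s}\omega_v(\varpi)^{j}$ with $n=\ord_v(\xi a/\mathfrak{r})$. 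A brief calculation yields $S_s(n)/S_{1-s}(n)=\omega_v(\varpi^{n})|\varpi^{n}|_v^{-(2s-1)}$, so the full Eisenstein ratio reduces to $\omega_v(\xi a/r)\cdot|r\delta|_v^{2s-1}$. Since $\epsilon_v(s,\mathfrak{r},\psi)=\omega_v(r)|r\delta|_v^{2s-1}$ at these places, the claim collapses to $\omega_v(\eta a)=1$, which is exactly the statement of Proposition \ref{theta functional} at such $v$ under our standing assumption.

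The archimedean case is the only real obstacle. Combining Proposition \ref{eisenstein coefficients}(c), Proposition \ref{theta coefficients}(d), and the constraint $\eta a_v<0$ imposed by $B_v(\eta a;\theta)\neq 0$, the claim reduces to the functional equation
$$
V_s(t)=\mathrm{sgn}(t)\,\pi^{2s-1}|t|^{2s-1}\,\frac{\Gamma(3/2-s)}{\Gamma(s+1/2)}\,V_{1-s}(t),\qquad t\in\R^{\times},
$$
with the signs $\omega_v(-1)=-1$ and the $-1$ appearing in $\epsilon_v(s,\mathfrak{r},\psi)$ for $v\mid\infty$ combining with $\mathrm{sgn}(t)$ in precisely the correct way. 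I would establish this $V_s$ identity either by recognising $V_s(t)$ as an archimedean Tate zeta integral for $\omega_v$ on $\GL_1$ and invoking the archimedean local functional equation, or -- more economically -- by comparing the global functional equation of Proposition \ref{ES functional} with the nonarchimedean local ratios already settled, which pins down the archimedean ratio uniquely.
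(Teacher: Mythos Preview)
Your proposal is correct and follows essentially the same approach as the paper: case-by-case verification using the explicit formulas of Propositions~\ref{eisenstein coefficients} and~\ref{theta coefficients}, together with Proposition~\ref{theta functional} to handle the $\omega_v(\eta a)$ constraint. The only difference is at the archimedean places: the paper simply cites the functional equation for $V_s(t)$ from \cite[Proposition~IV.3.3(c)]{gross-zagier}, whereas you propose to rederive it either via the archimedean local functional equation for Tate integrals or by comparison with the global functional equation of Proposition~\ref{ES functional}; both of your suggested routes work, but citing the existing reference is the more economical choice.
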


\begin{proof}
This follows from direct examination of the explicit formulas of Propositions \ref{eisenstein coefficients} and \ref{theta coefficients}.  For $v\mid\infty$ one also uses  the functional equation satisfied by $V_s(t)$ found in \cite[Proposition IV.3.3 (c)]{gross-zagier}.
\end{proof}

\begin{Prop}\label{kernel coefficients}
Suppose $\eta,\xi\in F^\times$, $\eta+\xi=1$, and $\omega_v(-\eta\xi)=\epsilon_v(1/2,\mathfrak{r},\psi)$.
Fix $a\in \A^\times$ and abbreviate, here and later, $\Theta_\mathfrak{r}=\Theta_{\mathfrak{r},1/2}$.
\begin{enumerate}
\item
If $v$ is a finite place which  is split in $E$ then  
$$
B_v(a,\eta,\xi;\Theta_{\mathfrak{r}})
=|a|_v |\eta \xi |_v^{1/2}\omega_v(\delta)\big(\ord_v(\xi \mathfrak{ar}^{-1})+1\big)
 \sum_{  \substack{i+j=\ord_v(\eta \mathfrak{a} ) \\ i,j\ge 0}  }  \alpha^i\beta^j
$$
if $\ord_v(\eta\mathfrak{a})$ and $\ord_v(\xi\mathfrak{ar}^{-1})$ are nonnegative, and is $0$ otherwise.
Here $\alpha$ and $\beta$ are as in Proposition \ref{theta coefficients}.

\item
Suppose $v$ is a finite place which  is inert in $E$.  If $\chi_v$ is unramified then
$$
B_v(a,\eta,\xi;\Theta_{\mathfrak{r}})
=|a|_v |\eta \xi |_v^{1/2}\omega_v(\delta)
\chi_v(\varpi)^{\frac{1}{2}\ord_v(\eta a)}
$$
if $\ord_v(\eta\mathfrak{a})$ and $\ord_v(\xi\mathfrak{ar}^{-1})$ are even and nonnegative, and is $0$ otherwise.  If $\chi_v$ is ramified then
$$
B_v(a,\eta,\xi;\Theta_{\mathfrak{r}})
=|a|_v |\eta \xi |_v^{1/2}\omega_v(\delta)
$$
if $\ord_v(\eta\mathfrak{a})=0$ and $\ord_v(\xi\mathfrak{ar}^{-1})$ is even and nonnegative, and is $0$ otherwise. 

\item
If $v\mid\mathfrak{d}$ then 
$$
B_v(a,\eta,\xi;\Theta_{\mathfrak{r}})= 2 \chi_v(\varpi_E)^{\ord_v(\eta a)}
\omega_v(\delta)   |\eta \xi d|_v^{1/2} |a|_v   \epsilon_v(1/2,\omega_v,\psi_v^0)
$$
if $\ord_v(\eta\mathfrak{a})$ and $\ord_v(\xi\mathfrak{a})$ are nonnegative, and is $0$ otherwise.

 \item
 If $v$ is archimedean then 
 $$
 B_v(a,\eta,\xi;\Theta_{\mathfrak{r}})=
 2 i |\eta \xi |_v^{1/2}  |a|_v  \omega_v(\delta) \cdot e_v(-a).
 $$
 \end{enumerate}
\end{Prop}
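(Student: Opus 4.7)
The proof will consist of taking the definition
\[
B_v(a,\eta,\xi;\Theta_{\mathfrak{r},s}) = B_v(\eta a;\theta)\cdot\bigl[B_v(\xi a;E_{\mathfrak{r},s}) + \omega_v(-\eta\xi)|d\delta|_v^{2s-1}B_v(\xi a;E_{\mathfrak{r},1-s})\cdot\mathbf{1}_{v\mid\mathfrak{d}}\bigr],
\]
substituting the explicit formulas of Propositions \ref{theta coefficients} and \ref{eisenstein coefficients}, setting $s=1/2$, and simplifying case-by-case. The hypothesis $\omega_v(-\eta\xi)=\epsilon_v(1/2,\mathfrak{r},\psi)$ is precisely what Lemma \ref{local kernel functional} requires to avoid forcing $B_v(a,\eta,\xi;\Theta_\mathfrak{r})=0$, and it also tells us how the two summands at $v\mid\mathfrak{d}$ combine. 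My plan is to address the four cases in the order (a),(b),(c),(d), since the first three are essentially bookkeeping on top of the formulas already provided, while (d) requires a separate analytic ingredient.

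For (a), at a split finite place $v\nmid\mathfrak{d}$, $\omega_v$ is trivial so the sum $\sum_{j=0}^{\ord_v(\xi\mathfrak{ar}^{-1})}|\varpi^j|_v^{1-2s}\omega_v(\varpi^j)$ in Proposition \ref{eisenstein coefficients}(a) collapses at $s=1/2$ to $\ord_v(\xi\mathfrak{ar}^{-1})+1$, and the $\theta$-factor is given by the second case of Proposition \ref{theta coefficients}(b); multiplying the two factors and collecting the powers of $|\delta|_v$, $|\eta a|_v$, $|\xi a|_v$ yields the claimed formula (the vanishing conditions are inherited from the vanishing conditions of each factor). For (b) the argument is the same except that at an inert place $\omega_v(\varpi)=-1$, so the Eisenstein sum becomes $\sum_{j=0}^{N}(-1)^j$, which is $1$ when $N$ is even and $0$ when $N$ is odd; this is what produces the parity constraint on $\ord_v(\xi\mathfrak{ar}^{-1})$, while the ramified/unramified dichotomy of $\chi_v$ simply dictates which of the cases of Proposition \ref{theta coefficients}(a) applies.

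For (c), the key observation is that at $v\mid\mathfrak{d}$ one has $|d\delta|_v^{2s-1}=1$ at $s=1/2$, while Proposition \ref{eisenstein coefficients}(b) gives $B_v(\xi a;h_vE_{\mathfrak{r},1-s})|_{s=1/2} = \omega_v(-\xi a)B_v(\xi a;E_{\mathfrak{r},1/2})$. Therefore the bracketed Eisenstein contribution equals $\bigl(1+\omega_v(-\eta\xi)\omega_v(-\xi a)\omega_v(-\xi a)\bigr)B_v(\xi a;E_{\mathfrak{r},1/2}) = (1+\omega_v(-\eta\xi))B_v(\xi a;E_{\mathfrak{r},1/2})$, which by the hypothesis on $\omega_v(-\eta\xi)$ and the fact that $\epsilon_v(1/2,\mathfrak{r},\psi)=1$ at $v\mid\mathfrak{d}$ (unpack \eqref{epsilon}) equals $2B_v(\xi a;E_{\mathfrak{r},1/2})$; multiplying by $B_v(\eta a;\theta)$ from Proposition \ref{theta coefficients}(c) and simplifying gives the stated expression.

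The archimedean case (d) is where the real computation lives. From Proposition \ref{eisenstein coefficients}(c) at $s=1/2$ one needs to evaluate $V_{1/2}(-\xi a_v)=\int_\R e^{2\pi i\xi a_v x}(i+x)^{-1}\,d^{\mathrm{Leb}}x$; the integrand has a simple pole at $x=i$ with residue $e^{-2\pi\xi a_v}$, and by closing the contour in the upper (resp.\ lower) half-plane depending on the sign of $\xi a_v$ one obtains $V_{1/2}(-\xi a_v)=2\pi i\,e_v(\xi a_v)/2$ (the hypothesis $\omega_v(-\eta\xi)=\epsilon_v(1/2,\mathfrak{r},\psi)=-1$ guarantees that $\eta a_v$ and $\xi a_v$ have opposite signs, so that both $e_v(-\eta a_v)$ and $e_v(\xi a_v)$ are nonzero precisely when $a_v<0$ and one of $\eta,\xi$ is negative). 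Multiplying this by $B_v(\eta a;\theta)=|\eta a|_v^{1/2}e_v(-\eta a)$ from Proposition \ref{theta coefficients}(d), combining exponentials (the sum $\eta+\xi=1$ makes $e^{-2\pi\eta a_v}e^{2\pi\xi a_v}$ collapse appropriately to $e_v(-a)$), and collecting the constants $\Gamma(1)/\pi=1/\pi$ cancelled against $2\pi i/2$ yields the claimed archimedean formula. The main obstacle is keeping track of the precise powers of $|\delta|_v$, signs, and the interplay of $\omega_v$, $\epsilon_v(1/2,\mathfrak{r},\psi)$, and the hypothesis at the ramified and archimedean places; once these are carefully matched, the formulas fall out directly from the two preceding propositions.
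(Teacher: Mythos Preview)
Your approach is exactly the paper's: the proof there simply says the result follows from Propositions \ref{eisenstein coefficients} and \ref{theta coefficients}, together with the special value $V_{1/2}(t)$ from \cite[Proposition IV.3.3(d)]{gross-zagier} (which gives $B_v(a;E_{\mathfrak{r},1/2}) = -i\,|a|_v^{1/2}\omega_v(\delta)\,e_v(-a)$ at archimedean $v$). Your case-by-case multiplication is precisely this, and your contour computation of $V_{1/2}$ is what Gross--Zagier do.

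Two small slips to fix in your write-up. First, in (d) the pole of $(i+x)^{-1}$ is at $x=-i$, not $x=i$; this flips which half-plane encloses the pole and hence the sign of the residue contribution. Relatedly, the hypothesis $\omega_v(-\eta\xi)=-1$ at archimedean $v$ forces $\eta_v\xi_v>0$, so together with $\eta_v+\xi_v=1$ both $\eta_v,\xi_v$ lie in $(0,1)$; thus $\eta_v a_v$ and $\xi_v a_v$ have the \emph{same} sign, not opposite signs, and both $e_v(-\eta a)$ and $e_v(-\xi a)$ are nonzero exactly when $a_v<0$. Second, in (c) you introduce $h_v$ unnecessarily: the definition already has $B_v(\xi a;E_{\mathfrak{r},1-s})$ (no $h_v$), and at $s=1/2$ this equals $B_v(\xi a;E_{\mathfrak{r},1/2})$ directly, so the bracket is $(1+\omega_v(-\eta\xi))B_v(\xi a;E_{\mathfrak{r},1/2})=2B_v(\xi a;E_{\mathfrak{r},1/2})$ without any functional equation.
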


\begin{proof}
This follows from Propositions \ref{eisenstein coefficients} and \ref{theta coefficients}.  For $v\mid\infty$ one also uses the special value formula for $V_{1/2}(t)$ found in \cite[Proposition IV.3.3 (d)]{gross-zagier}, which implies
$$
 B_v(a;E_{\mathfrak{r},1/2}) = -i |a|_v^{1/2}\omega_v(\delta)  \cdot e_v(-a).
$$
\end{proof}


\subsection{The Rankin-Selberg $L$-function}
\label{RS}


Recall the  automorphic representation $\Pi$ of $\GL_2(\A)$ of \S \ref{notations} and assume Hypothesis \ref{hyp}.  Fix a Haar measure on $\GL_2(\A_f)$ and let $Z$ denote the center of $\GL_2$.  Setting $F_\infty=F\otimes_\Q\R$   we identify 
$$
\GL_2(F_\infty)/Z(F_\infty)\mathrm{SO}_2(F_\infty)\iso \mathcal{H}^{[F:\Q]}
$$
in the usual way, where $\mathcal{H}=\C- \R$ is the union of the upper and lower half-planes equipped with the hyperbolic volume form $y^{-2}dxdy$.  Suppose $F_0$ and  $F_1$ are two automorphic forms on $\GL_2(\A)$ for which $F_0\overline{F}_1$ is a square integrable function on $\GL_2(F)\backslash \mathcal{H}^{[F:\Q]}\times \GL_2(\A_f)/Z(\A_f)$. If  $K\subset \GL_2(\A_f)$ is a compact open subgroup we define the Petersson inner product of level $K$ 
$$
\langle F_0, F_1\rangle_K= \mathrm{Vol}(K)^{-1}\int_{\GL_2(F)\backslash \mathcal{H}^{[F:\Q]}\times \GL_2(\A_f)/Z(\A_f)} F_0\overline{F_1}
$$ 
where   the quotient measure is induced by the Haar measure on $Z(\A_f)$ giving $\widehat{\co}_F^\times$ volume $1$.  For any $b\in\A^\times$ with trivial archimedean components set $R_b=\left(  \begin{matrix} b^{-1} &  \\ & 1  \end{matrix}\right)$ and view $R_b$ as an operator on automorphic forms by $(R_b\phi)(g)=\phi(gR_b)$.  Let $\mathfrak{b}$ be an ideal of $\co_F$ dividing $\mathfrak{dc}^2\mathfrak{s}^{-1}$ and fix  $b\in \A^\times$  with trivial archimedean components and $b\co_F=\mathfrak{b}$. Let
$L(s,\Pi\times\Pi_\chi)$ be the Rankin-Selberg $L$-function defined as in \cite[\S 2.5]{zhang2} (see also \cite[\S 12.6.2]{nek} and the references therein).

\begin{Prop}\label{first rankin integral}
Let $\phi_\Pi\in \Pi$ be the normalized newvector and set $\mathfrak{r}=\mathfrak{mc}^2$. 
Assume that  $\Pi_v$ is a discrete series of weight $2$ for each $v\mid\infty$.  Then
$$
\mathrm{Vol} (K_0(\mathfrak{dr}) )^{-1}  \int \phi_\Pi (gR_b) \theta(g) E_{\mathfrak{r},s} (g)\ dg
= |\delta|^{1/2-s} |b|^{s-1}B(b;\theta)  L(s,\Pi\times \Pi_\chi).
$$
\end{Prop}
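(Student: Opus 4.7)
The plan is to adapt the Rankin--Selberg unfolding method of \cite[\S 3.4]{zhang2}, where the analogous identity is proved when $\chi_0$ is trivial; the presence of a nontrivial central character requires only minor bookkeeping changes, so the strategy is to verify that Zhang's argument goes through and to pinpoint where the twist $R_b$ enters.

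First I would unfold the Eisenstein series. Using the definition $E_{\mathfrak{r},s}(g)=\sum_{\gamma\in B(F)\backslash \GL_2(F)} f_{\mathfrak{r},s}(\gamma g)$, the integral over $\GL_2(F)Z(\A)\backslash \GL_2(\A)$ collapses (for $\mathrm{Re}(s)\gg 0$) to
$$
I(s)\ =\ \int_{B(F)Z(\A)\backslash \GL_2(\A)} \phi_\Pi(gR_b)\,\theta(g)\, f_{\mathfrak{r},s}(g)\ dg.
$$
Writing $B=AN$ and integrating first over $N(F)\backslash N(\A)$, one substitutes the Fourier expansions of $\phi_\Pi$ and $\theta$; orthogonality of the characters of $F\backslash\A$ extracts only the Fourier frequency pairs $(\alpha,\beta)\in F\times F$ with $\alpha+\beta=0$. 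Cuspidality of $\phi_\Pi$ kills the $\alpha=0$ contribution, and the resulting sum over $F^\times$ is absorbed into the $A(F)$-quotient, yielding
$$
I(s)\ =\ \int_{N(\A)Z(\A)\backslash \GL_2(\A)} W_{\phi_\Pi}(gR_b)\, W_\theta(w g)\, f_{\mathfrak{r},s}(g)\ dg,
$$
with $w=\bigl(\begin{matrix}-1 & \\ & 1\end{matrix}\bigr)$ recording the sign flip in the Fourier index (this sign is precisely what later produces the factor $B(b;\theta)$ rather than $B(-b;\theta)$).

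Next I would factor this integral as an Euler product. Because $\phi_\Pi$ and $\theta$ are normalized newvectors, their Whittaker functions factor as $\prod_v W_v$; because $f_{\mathfrak{r},s}=\prod_v f_{\mathfrak{r},s,v}$ by construction; and because the Haar measure factors, the Iwasawa decomposition at each place reduces $I(s)$ to a product $\prod_v Z_v(s)$ of local zeta integrals against the triple $(W_{\phi_\Pi,v}(\,\cdot\,R_{b,v}),\, W_{\theta,v}\circ w,\, f_{\mathfrak{r},s,v})$. The volume factor $\mathrm{Vol}(K_0(\mathfrak{dr}))^{-1}$ is then exactly what is required to convert between the Haar measure used here and the local measures used to normalize each $Z_v(s)$.

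Finally I would evaluate each $Z_v(s)$. At unramified places this is the classical calculation reproducing $L_v(s,\Pi\times\Pi_\chi)$ up to explicit constants. At the archimedean places, where by hypothesis each $\Pi_v$ is discrete series of weight $2$, the local integral is computed exactly as in \cite[\S 3.4]{zhang2}. The main obstacle is the computation at the finite ramified places: at $v\mid\mathfrak{d}$ the ramification of $\omega_v$ enters through both $\theta_v$ and $f_{\mathfrak{r},s,v}$, while at $v\mid\mathfrak{s}$ the interaction between the ramification of $\Pi_v$ (governed by Hypothesis \ref{hyp}(b), which forces $\Pi_v$ to be a principal series with one unramified factor) and that of $\chi_v$ must be unpacked. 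Here the purpose of the twist $R_b$ is revealed: the divisibility $\mathfrak{b}\mid \mathfrak{dc}^2\mathfrak{s}^{-1}$ is chosen precisely so that right-translating the newvector by $R_b$ brings each local zeta integral at $v\mid\mathfrak{bs}$ into a form amenable to Zhang's local calculations, yielding the correct local $L$-factor times an explicit constant. Assembling these constants across all places produces the global factor $|\delta|^{1/2-s}|b|^{s-1}B(b;\theta)$, and combining this with the local $L$-factors gives the right-hand side of the claimed identity.
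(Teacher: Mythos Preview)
Your proposal is correct and follows the same Rankin--Selberg unfolding approach that underlies the paper's proof; the paper simply cites Propositions~2.5.1 and~2.5.2 of \cite{zhang2} rather than sketching the method. The one point the paper makes explicit that you leave somewhat implicit is the observation that Hypothesis~\ref{hyp} guarantees that at \emph{every} finite place either $\Pi_v$ or $\Pi_{\chi,v}$ is a principal series (not just at $v\mid\mathfrak{s}$), which is precisely the local hypothesis under which Zhang's ramified computations apply.
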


\begin{proof}
Hypothesis \ref{hyp} implies that for every finite place $v$ either $\Pi_v$ or $\Pi_{\chi,v}$ is a principal series.  Hence the claim  follows from Propositions 2.5.1 and 2.5.2 of \cite{zhang2}.
\end{proof}

Under the notation and assumptions of Proposition \ref{first rankin integral}, a direct calculation as in \cite[Lemma 3.1.2]{zhang2} gives
\begin{equation}
\label{rankin integral}
\langle R_b \phi_\Pi , \overline{\Theta}_{\mathfrak{r},s} \rangle_{K_0(\mathfrak{dr})} =
 L(s,\Pi\times\Pi_\chi) \cdot  |\delta|^{1/2-s} \prod_{v\mid{\mathfrak{dc}}}\gamma_{s,v}(b)
 \end{equation}
 where
 $$ 
\gamma_{s,v}(b)=
 |b|_v^{-1/2} B_v(b;\theta)\left\{\begin{array}{ll}
  |b|_v^{s-1/2}  +  |b|_v^{1/2-s} 
 & \mathrm{if\ } v\mid\mathfrak{d}\\
 1 &  \mathrm{if\ } v\mid\mathfrak{c}.
 \end{array}\right.
$$


\subsection{Central derivatives and holomorphic projection}
\label{ss:derivative}


Throughout \ref{ss:derivative} we assume that  $\epsilon(1/2,\mathfrak{r})=-1$.  For any $\eta,\xi\in F^\times$ with $\eta+\xi=1$ define the \emph{difference set}
$$
\mathrm{Diff}_\mathfrak{r}(\eta,\xi)=
\{ \mathrm{ places\ }v \mathrm{\ of\ }F \mid \omega_v(-\eta\xi)\not=\epsilon_v(1/2,\mathfrak{r},\psi) \}.
$$
Note that the cardinality of $\mathrm{Diff}_\mathfrak{r}(\eta,\xi)$ is odd, and that  Lemma \ref{local kernel functional}  implies that $B_v (a,\eta,\xi,\Theta_{\mathfrak{r}})=0$ for each $v\in \mathrm{Diff}_\mathfrak{r}(\eta,\xi)$.  In particular $B(a,\eta,\xi;\Theta_{\mathfrak{r}})=0$.  Note also that $\mathrm{Diff}_\mathfrak{r}(\eta,\xi)$ contains only places which are nonsplit in $E$, as $v$ split implies that both $\omega_v(-\eta\xi)$ and $\epsilon_v(1/2,\mathfrak{r},\psi)$ are equal to $1$.  Define
$$
\Theta'_{\mathfrak{r}}(g)=\left.\frac{d}{ds} \Theta_{\mathfrak{r},s}(g) \right|_{s=1/2}
$$
and, with notation as in (\ref{global kernel decomp}), abbreviate
\begin{eqnarray*}
A_i(a;\Theta'_\mathfrak{r}) &=&  \frac{d}{ds} A_i(a;\Theta_{\mathfrak{r},s})\big|_{s=1/2}\\
B(a,\eta,\xi,\Theta'_{\mathfrak{r}}) &=& \frac{d}{ds} B(a,\eta,\xi;\Theta_{\mathfrak{r},s})\big|_{s=1/2}
\end{eqnarray*}
and similarly with $B(\cdot)$ replaced by $B_v(\cdot)$.  For $t$ a positive real number define
$$
q_0(t)=\int_{1}^\infty e^{-xt} \ d^\times x.
$$

\begin{Prop}\label{derivative coefficients}
 If $w\in \mathrm{Diff}_\mathfrak{r}(\eta,\xi)$ then 
$$
B(a,\eta,\xi;\Theta'_{\mathfrak{r}}) = B_w (a,\eta,\xi,\Theta'_{\mathfrak{r}}) 
\cdot \prod_{v\not=w} B_v(a,\eta,\xi;\Theta_{\mathfrak{r}}).
$$  
The value of $B_w (a,\eta,\xi,\Theta'_{\mathfrak{r}})$ is given as follows.
\begin{enumerate}
\item
Suppose $w\nmid\infty$ is inert in $E$.  If $\chi_w$ is unramified then
$$
B_w (a,\eta,\xi,\Theta'_{\mathfrak{r}})= \omega_w(\delta)|\eta\xi|_w^{1/2}|a|_w \log|\xi ar^{-1}\varpi|_w\chi_w(\varpi)^{\frac{1}{2}\ord_v(a\eta)}
$$
if $\ord_w(\eta a)$ is even and nonnegative and $\ord_w(\xi ar^{-1})$ is odd and nonnegative; otherwise the left hand side is $0$.  If $\chi_w$ is ramified then 
$$
B_w (a,\eta,\xi,\Theta'_{\mathfrak{r}})= \omega_w(\delta)|\eta\xi|_w^{1/2}|a|_w \log|\xi ar^{-1}\varpi|_w 
$$
if $\ord_w(\eta a)=0$  and $\ord_w(\xi ar^{-1})$ is odd and nonnegative; otherwise the left hand side is $0$.  
\item
If $w\nmid\infty$ is ramified in $E$ then
$$
B_w (a,\eta,\xi,\Theta'_{\mathfrak{r}})=
2\omega_w(\delta) |\eta\xi|_w^{1/2}|a|_w |d|_w^{1/2}\chi_w(\varpi_E)^{\ord_w(\eta\mathfrak{a})}\cdot\epsilon_w(\omega,\psi_w^0)\cdot\log|\xi ad|_w
$$
if $\ord_w(\eta\mathfrak{a})$ and $\ord_w(\xi\mathfrak{a})$ are  nonnegative; otherwise the left hand side is $0$.
\item
If $w\mid\infty$  then
$$
B_w (a,\eta,\xi,\Theta'_{\mathfrak{r}})=
-4i\omega_w(\delta) |\eta \xi|_w^{1/2} |a|_w e^{2\pi a_w} q_0(4\pi a_w\xi_w)
$$
if $\eta_w a_w<0$ and $\xi_w a_w>0$; otherwise the left hand side is $0$.
\end{enumerate}
\end{Prop}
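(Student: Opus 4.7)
The plan is to reduce the proposition to a single local computation at $w$, then evaluate the derivative of the local factor at $w$ by differentiating the explicit formulas of Propositions \ref{eisenstein coefficients} and \ref{theta coefficients}. First, by Lemma \ref{local kernel functional}, every $v\in\mathrm{Diff}_\mathfrak{r}(\eta,\xi)$ satisfies $\omega_v(-\eta\xi)\ne\epsilon_v(1/2,\mathfrak{r},\psi)$, which combined with the local functional equation forces $B_v(a,\eta,\xi;\Theta_{\mathfrak{r},1/2})=0$. The product formula and product rule give
$$
B(a,\eta,\xi;\Theta'_\mathfrak{r})= \sum_u B_u(a,\eta,\xi;\Theta'_\mathfrak{r})\prod_{v\ne u} B_v(a,\eta,\xi;\Theta_{\mathfrak{r},1/2}).
$$
For $u\notin \mathrm{Diff}_\mathfrak{r}(\eta,\xi)$, each vanishing place of the Diff set still appears in the product, so the term vanishes. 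Since the cardinality of $\mathrm{Diff}_\mathfrak{r}(\eta,\xi)$ is odd, only the case $\mathrm{Diff}_\mathfrak{r}(\eta,\xi)=\{w\}$ contributes, and there the sole surviving term is $u=w$, producing the claimed factorization.

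Next, at each type of place $w$ one differentiates the product $B_w(\eta a;\theta)\cdot B_w(\xi a; E_{\mathfrak{r},s})$ (modified at ramified places by the $h_v$-contribution) at $s=1/2$, using that the $\theta$-factor is independent of $s$. For $w$ inert and nonarchimedean, Proposition \ref{eisenstein coefficients}(a) gives $B_w(\xi a;E_{\mathfrak{r},s})$ as a geometric sum; with $\omega_w(\varpi)=-1$ this is $\omega_w(\delta)|\xi a|_w^s|\delta|_w^{s-1/2}\sum_{j=0}^{n'}(-1)^j|\varpi|_w^{(1-2s)j}$, where $n'=\ord_w(\xi\mathfrak{ar}^{-1})$. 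The condition $w\in\mathrm{Diff}$ translates to $n'$ odd, making the sum vanish at $s=1/2$; evaluating the derivative via $\sum_{j=0}^{n'}(-1)^j j = -(n'+1)/2$ produces exactly $\log|\xi a r^{-1}\varpi|_w$. Multiplying by the $\theta$-factor from Proposition \ref{theta coefficients}(a) yields the two sub-cases of part (a).

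For $w\mid\mathfrak{d}$, the local factor is $B_w(\eta a;\theta)\big[B_w(\xi a;E_{\mathfrak{r},s}) + \omega_w(-\eta\xi)|d\delta|_w^{2s-1}B_w(\xi a;E_{\mathfrak{r},1-s})\big]$, with $\omega_w(-\eta\xi)=-1$ because $w\in\mathrm{Diff}$. At $s=1/2$ the bracket vanishes; its derivative equals $2(dB_w(\xi a;E_{\mathfrak{r},s})/ds)|_{s=1/2} - 2\log|d\delta|_w\cdot B_w(\xi a; E_{\mathfrak{r},1/2})$. Since $B_w(\xi a;E_{\mathfrak{r},s})$ from Proposition \ref{eisenstein coefficients}(b) depends on $s$ only through $|\xi a d|_w^s|\delta d|_w^{s-1/2}$, its logarithmic derivative is $\log|\xi a d^2\delta|_w$, and the bracket collapses to $2\log|\xi a d|_w\cdot B_w(\xi a; E_{\mathfrak{r},1/2})$. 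Combining with the formulas for $B_w(\eta a;\theta)$ in Proposition \ref{theta coefficients}(c) and for $B_w(\xi a;E_{\mathfrak{r},1/2})$ gives case (b).

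For $w\mid\infty$, the archimedean case is the main technical point. Here $B_w(\xi a;E_{\mathfrak{r},s})$ involves $V_s(-\xi_w a_w)$. The condition $w\in\mathrm{Diff}$ forces $\eta$ and $\xi$ to have opposite archimedean signs at $w$, and the compatible sub-case is $\eta_w a_w<0$, $\xi_w a_w>0$; in the other sub-case the $\theta$-factor $B_w(\eta a;\theta)$ itself vanishes. The special value and derivative identities for $V_s$ at $s=1/2$ recorded in \cite[Proposition IV.3.3]{gross-zagier} give $V_{1/2}(-\xi_w a_w)=0$ (matching the vanishing of $B_w(\xi a;E_{\mathfrak{r},1/2})$) and express $V'_{1/2}(-\xi_w a_w)$ as an integral equivalent to a shifted $q_0$; assembling this with the exponential from $e_w(-\eta a)$ yields the expression $-4i\omega_w(\delta)|\eta\xi|_w^{1/2}|a|_w e^{2\pi a_w}q_0(4\pi a_w\xi_w)$. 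The hardest step is correctly bookkeeping the archimedean derivative formula for $V_s$ to land on the precise form involving $q_0$; the nonarchimedean steps are essentially direct differentiation of explicit local formulas already tabulated.
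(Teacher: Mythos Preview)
Your proof is correct and follows essentially the same approach as the paper: the first claim via Lemma \ref{local kernel functional} and the product rule, and the explicit local derivatives by differentiating the formulas of Propositions \ref{eisenstein coefficients} and \ref{theta coefficients}, invoking the Gross--Zagier identity $\left.\tfrac{d}{ds}V_s(t)\right|_{s=1/2}=-2\pi i e^{-2\pi t}q_0(-4\pi t)$ for $t<0$ in the archimedean case. One small presentational point: the factorization claim is stated for any $w\in\mathrm{Diff}_\mathfrak{r}(\eta,\xi)$, so you should note that when $|\mathrm{Diff}_\mathfrak{r}(\eta,\xi)|\ge 3$ both sides vanish (the right side still contains a factor from $\mathrm{Diff}_\mathfrak{r}(\eta,\xi)\setminus\{w\}$), rather than only treating the case $\mathrm{Diff}_\mathfrak{r}(\eta,\xi)=\{w\}$.
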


\begin{proof}
The first claim follows from Lemma \ref{local kernel functional} and the remaining claims follow from the formulas of Propositions \ref{eisenstein coefficients} and \ref{theta coefficients}, together with the equality
$$
\left.\frac{d}{ds}V_s(t) \right|_{s=1/2}= -2\pi i e^{-2\pi t} q_0(-4\pi t)
$$
for $t<0$, which is found in \cite[Proposition IV.3.3(e)]{gross-zagier}.
\end{proof}

\begin{Rem}\label{single difference}
It follows from Lemma \ref{local kernel functional} and the first claim of Proposition \ref{derivative coefficients} that $B(a,\eta,\xi;\Theta'_\mathfrak{r})$ vanishes unless $\mathrm{Diff}_\mathfrak{r}(\eta,\xi)$ consists of a single place, necessarily nonsplit in $E$.
\end{Rem}

Let $\Phi_\mathfrak{r}(g)$ be the holomorphic projection of $\overline{\Theta'_{\mathfrak{r}}(g)}$. 
Thus $\Phi_\mathfrak{r}$ is the unique holomorphic cusp form on $\GL_2(\A)$ of parallel weight $2$ such that  $\langle \phi, \Phi_\mathfrak{r}\rangle_K=\langle\phi,\overline{\Theta_\mathfrak{r}'}\rangle_K$ for any cusp form $\phi$ and any compact open subgroup $K$.   If the representation $\Pi$ of \S \ref{RS}  is discrete of weight $2$ at every archimedean place then (\ref{rankin integral})
implies
$$
\langle \phi_\Pi, \Phi_\mathfrak{r} \rangle_{K_0(\mathfrak{dr})} = 2^{|S|} L'(1/2,\Pi\times\Pi_\chi).
$$
We now describe the coefficients $\widehat{B}(\mathfrak{a},\Phi_\mathfrak{r})$ as in \cite[\S 3.5]{zhang2} (see also \cite[\S 6.4]{zhang1}).   If $w$ is a finite place of $F$ define
\begin{equation}
\label{projection coefficient}
\widehat{B}^w(\mathfrak{a};\Phi_\mathfrak{r})  = ( -2i )^{[F:\Q]}  \omega_\infty(\delta) 
 \sum_{\eta,\xi}  |\eta \xi|_\infty^{1/2} \cdot \overline{B_w(a,\eta,\xi ;\Theta'_{\mathfrak{r}}) }  \prod_{v\nmid w\infty}\overline{B_v(a,\eta,\xi ;\Theta_{\mathfrak{r}}) } 
 \end{equation}
 where the sum is over all $\eta,\xi\in F^\times$ with $\eta+\xi=1$ and $\mathrm{Diff}_\mathfrak{r}(\eta,\xi)=\{w\} $.  This sum is finite and is $0$ for all but finitely many $w$. For $t,\sigma \in\R$ with $\sigma>0$ define
$$
M_\sigma(t)= \left\{\begin{array}{ll}
 \int_1^\infty \frac{-d^\mathrm{Leb}x}{x(1- t x)^{1+\sigma}} & \mathrm{if\ } t<0 \\
 0&\mathrm{otherwise.}
 \end{array} \right.
$$ 
If $w\mid\infty$ then we set
\begin{equation}\label{holomorphic infinity}
\widehat{B}^w(\sigma,\mathfrak{a};\Phi_\mathfrak{r}) =
 (-2i)^{[F:\Q]}  \omega_\infty(\delta)  
\sum_{\eta,\xi} |\eta\xi|_\infty^{1/2} M_\sigma(\xi_w)
\cdot \prod_{v\nmid\infty} \overline{ B_v(a,\eta,\xi; \Theta_{\mathfrak{r}}) }
\end{equation}
where the sum is over all $\eta,\xi\in F^\times$ with $\eta+\xi=1$ and $\mathrm{Diff}_\mathfrak{r}(\eta,\xi)=\{w\} $.

\begin{Prop}\label{holomorphic coefficients}
The Fourier coefficient $\widehat{B}(\mathfrak{a};\Phi_\mathfrak{r})$ decomposes as
$$
\widehat{B}(\mathfrak{a};\Phi_\mathfrak{r}) = 
A(\mathfrak{a})+D(\mathfrak{a})+\sum_{w\nmid\infty} \widehat{B}^w(\mathfrak{a};\Phi_\mathfrak{r})   + \mathrm{const}_{\sigma\to 0} \sum_{w\mid\infty} \widehat{B}^w(\sigma,\mathfrak{a};\Phi_\mathfrak{r})
$$
in which $A(\mathfrak{a})$ is a derivation of $\Pi_{\overline{\chi}}\otimes |\cdot|^{1/2}$ and $D(\mathfrak{a})$ is a sum of  derivations of principal series in the sense of \cite[Definition 3.5.3]{zhang2}.
\end{Prop}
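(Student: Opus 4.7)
The plan is to start from the Fourier decomposition \ref{global kernel decomp} and differentiate term by term at $s=1/2$, obtaining
\begin{equation*}
B(a;\Theta'_\mathfrak{r}) = A_0(a;\Theta'_\mathfrak{r}) + A_1(a;\Theta'_\mathfrak{r}) + \sum_{\eta+\xi=1} B(a,\eta,\xi;\Theta'_\mathfrak{r}),
\end{equation*}
and then pass to holomorphic projection coefficient-by-coefficient.  By Remark \ref{single difference} and the observation that $\mathrm{Diff}_\mathfrak{r}(\eta,\xi)$ has odd cardinality, the only nonzero contributions to the cuspidal sum come from pairs $(\eta,\xi)$ with $\mathrm{Diff}_\mathfrak{r}(\eta,\xi)=\{w\}$ for a single place $w$.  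Splitting by the position of $w$ produces the four groups of terms in the statement: the finite-place sum, the archimedean regularized sum, and the two degenerate pieces $A(\mathfrak{a})$ and $D(\mathfrak{a})$.

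For finite $w$, I would use Proposition \ref{kernel coefficients}(d), which shows that the archimedean factor of $\overline{B(a,\eta,\xi;\Theta'_\mathfrak{r})}$ is already of the shape $|a|_v e_v(a)$ expected of a weight-$2$ holomorphic Whittaker coefficient.  Holomorphic projection in this case introduces only the standard constant $(-2i)^{[F:\Q]}\omega_\infty(\delta)$ arising from the archimedean Petersson pairing of the weight-$2$ newvector against $\overline{e_v(-a)}$, and one reads off exactly the formula \ref{projection coefficient} for $\widehat{B}^w(\mathfrak{a};\Phi_\mathfrak{r})$.

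For $w\mid\infty$, Proposition \ref{derivative coefficients}(c) gives $B_w(a,\eta,\xi;\Theta'_\mathfrak{r})$ in terms of $e^{2\pi a_w}q_0(4\pi a_w\xi_w)$, which has only polynomial decay in $a_w$.  The naive Petersson integral diverges, so holomorphic projection must be carried out via the regularization of \cite[\S 3.5]{zhang2}: insert $y^{\sigma}$ against the weight-$2$ holomorphic Whittaker function, evaluate the resulting convergent integral for $\sigma>0$, and extract $\mathrm{const}_{\sigma\to 0}$.  A direct computation of the integral $\int_0^\infty e^{-2\pi a_w y}\cdot y^{\sigma}\cdot q_0(4\pi a_w\xi_w y)\,y^{-2}\,dy$ produces, up to elementary factors, the function $M_\sigma(\xi_w)$ in \ref{holomorphic infinity}; the remaining archimedean Whittaker constants and non-archimedean factors of $\Theta_\mathfrak{r}$ are unchanged.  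This yields the term $\mathrm{const}_{\sigma\to 0}\sum_{w\mid\infty}\widehat{B}^w(\sigma,\mathfrak{a};\Phi_\mathfrak{r})$.

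The remaining pieces $A_0(a;\Theta'_\mathfrak{r})$ and $A_1(a;\Theta'_\mathfrak{r})$ come from the product of constant terms with Whittaker coefficients at $s=1/2$.  Using Lemma \ref{Lem:theta and eisenstein} (or rather the analysis of the $E_{\mathfrak{r},s}$ constant term from Proposition \ref{constant term}) and differentiating in $s$, the term $A_0$ has the form of the $s$-derivative of the Whittaker coefficient of a form in the space of $\Pi_{\overline{\chi}}\otimes|\cdot|^{1/2}$, producing the derivation $A(\mathfrak{a})$.  Similarly, $A_1$ assembles from $W_{\mathfrak{r},s}(\alpha h_T)C_\theta(\alpha h_T)$ into a finite sum of $s$-derivatives of Whittaker coefficients of principal-series Eisenstein sections obtained by varying the inducing characters, producing $D(\mathfrak{a})$ in the sense of \cite[Definition 3.5.3]{zhang2}.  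The main obstacle is the archimedean computation: the non-rapid decay of $B_w(a,\eta,\xi;\Theta'_\mathfrak{r})$ at infinity demands the $\sigma$-regularization and a careful evaluation of the resulting Mellin-type integral involving $q_0$, and the bookkeeping of archimedean constants, local epsilon factors, and normalizations must exactly match the formulas in \ref{projection coefficient} and \ref{holomorphic infinity}.
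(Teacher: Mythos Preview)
Your outline is essentially correct and matches the approach that the paper defers to: the paper's own proof is simply the observation that this is \cite[Proposition 3.5.5]{zhang2} when $\chi_0$ is trivial, with the identical argument going through for nontrivial $\chi_0$. What you have written is a reasonable sketch of Zhang's argument itself --- differentiate the decomposition \eqref{global kernel decomp} at $s=1/2$, use Remark~\ref{single difference} to localize the nondegenerate contribution to a single place $w$, and then apply (regularized) holomorphic projection term by term.

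One point to tighten: your assignment of $A_0 \mapsto A(\mathfrak{a})$ and $A_1 \mapsto D(\mathfrak{a})$ is slightly too clean. In Zhang's treatment the principal-series derivations in $D(\mathfrak{a})$ arise not only from the $A_1$ piece but also from the pole subtraction implicit in $\mathrm{const}_{\sigma\to 0}$ at the archimedean places (the divergent part of the regularized integral contributes a derivation of a principal series), and the $A_0$ piece requires the holomorphic projection of $\overline{W_\theta}\cdot \overline{C'_{\mathfrak{r},s}}$ rather than a direct identification. The bookkeeping of which degenerate term lands where is therefore more intertwined than your sketch suggests, though the conclusion --- that everything left over is a finite sum of derivations of $\Pi_{\overline{\chi}}\otimes|\cdot|^{1/2}$ and of principal series --- is correct. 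Since the paper does not reproduce these details either, your sketch is at the same level of rigor as the paper's citation.
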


\begin{proof}
When $\chi_0$ is trivial this is exactly \cite[Proposition 3.5.5]{zhang2}, and the proof when $\chi_0$ is nontrivial is exactly the same.
\end{proof}


\subsection{The weight zero kernel}
\label{ss:zero kernel}


We define  an automorphic form $\Theta^*_{\mathfrak{r,s}}$ in exactly the same way as $\Theta_{\mathfrak{r},s}$ but replacing $\theta$ by $\theta_\chi$ everywhere in the construction of \S \ref{ss:kernel}.  Thus
$$
\Theta^*_{\mathfrak{r},s}(g)
=\left(\prod_{v\in S} \sigma_{s,v} \right) \cdot \big[\theta_\chi(g)E_{\mathfrak{r},s}(g)\big] 
 $$
is a nonholomorphic form of parallel weight $0$.  Using the relation
 $$
 B_v(a;\theta_\chi)=\left\{\begin{array}{ll} 
 B_v(a;\theta) &\mathrm{if\ } v\nmid\infty \\
 B_v(-a;\theta) &\mathrm{if\ } v\mid\infty
 \end{array}\right.
 $$
and repeating the arguments of \S \ref{ss:kernel} we find that the weight zero kernel satisfies the functional equation
$$
\Theta^*_{\mathfrak{r},s} (g) = (-1)^{[F:\Q]}\epsilon(s,\mathfrak{r})  \cdot\Theta^*_{\mathfrak{r},1-s}(g)
$$
and admits a decomposition 
$$
B(a;\Theta^*_{\mathfrak{r},s}) = A_0(a;\Theta^*_{\mathfrak{r},s}) +A_1(a;\Theta^*_{\mathfrak{r},s}) + \sum_{ \substack{\eta,\xi\in F^\times \\ \eta+\xi=1}  } B (a,\eta,\xi;\Theta^*_{\mathfrak{r},s}) 
$$
in which $A_0$ and $A_1$ are defined exactly as in \S \ref{ss:kernel} but with $\theta$ replaced by $\theta_\chi$.  There is a further product decomposition
$$
B (a,\eta,\xi;\Theta^*_{\mathfrak{r},s})=\prod_v B_v (a,\eta,\xi;\Theta^*_{\mathfrak{r},s})
$$
where for $v\nmid\infty$ one has $B_v (a,\eta,\xi;\Theta^*_{\mathfrak{r},s})=B_v (a,\eta,\xi;\Theta_{\mathfrak{r},s})$   while for $v\mid\infty$ 
$$
B_v (a,\eta,\xi;\Theta^*_{\mathfrak{r},s})=
\left\{\begin{array}{ll}
-4i |a|_v |\eta\xi|_v^{1/2} \omega_v(\delta)e^{-2\pi a_v(1-2\xi_v)} &\mathrm{if\ }\omega_v(-\eta\xi)=1, \xi_v a_v<0 \\
0 & \mathrm{otherwise.}
\end{array}\right.
$$

Assume that the representation $\Pi$ of \S \ref{notations} satisfies Hypothesis \ref{hyp} and  is a weight $0$ principal series for every archimedean $v$.   The Rankin-Selberg $L$-function $L(s,\Pi\times\Pi_\chi)$ is defined exactly as in \S \ref{RS}, but with the archimedean factors now given by  \cite[(5.4)]{zhang3}.  With notation as in Proposition \ref{first rankin integral} one again  has the integral representation of the Rankin-Selberg $L$-function
\begin{equation}\label{maass rankin}
  \langle R_b \phi_\Pi,  \overline{\Theta^*_{\mathfrak{r},s}}\rangle_{ K_0(\mathfrak{dr}) } =
 L(s,\Pi\times\Pi_\chi) \cdot  |\delta|^{1/2-s} \prod_{v\mid{\mathfrak{dc}}}\gamma_{s,v}(b)
 \end{equation}
exactly as in (\ref{rankin integral}).


\subsection{The quasi-new line}
\label{ss:quasi-new}


Suppose the representation $\Pi$ of \S \ref{notations} satisfies Hypothesis \ref{hyp} and is unitary.  Set $\mathfrak{r}=\mathfrak{mc}^2$.  Fix a place $v$ of $F$ dividing $\mathfrak{dc}$ and a uniformizer $\varpi$ of $F_v$.   As $\Pi_v$ has conductor $\mathfrak{s}_v=\mathfrak{n}_v$, \cite[Proposition 2.3.1]{zhang2} implies that the space of $K_1(\mathfrak{r}_v)$ fixed vectors of $\Pi_v$ is finite dimensional with basis 
$$
\{ R_{\varpi^k}\phi_{\Pi,v} \mid 0\le k\le \ord_v(\mathfrak{rs}^{-1}) \}
$$
where $\phi_{\Pi,v}$ is any newvector in $\Pi_v$ and $R_b$ is as in \S \ref{RS}.  Define a linear functional $\Lambda_v$ on this finite dimensional vector space by the condition
$$
\Lambda_v(R_{\varpi^k}\phi_{\Pi,v} ) = \gamma_{\frac{1}{2},v}(\varpi^k)
$$
where, in the notation of (\ref{rankin integral}),
$$
\gamma_{\frac{1}{2},v}(b)=
 |b|_v^{-1/2} B_v(b;\theta)\left\{\begin{array}{ll}
 2 & \mathrm{if\ } v\mid\mathfrak{d}\\
 1 &  \mathrm{if\ } v\mid\mathfrak{c}.
 \end{array}\right.
$$

\begin{Def}
If $v\mid\mathfrak{dc}$ then the \emph{quasi-new line} in $\Pi_v$ is the orthogonal complement, in the space of $K_1(\mathfrak{r}_v)$ fixed vectors, of the kernel of $\Lambda_v$.  If $v\nmid\mathfrak{dc}$ then the \emph{quasi-new line} is defined to be the span of the  newvectors in  $\Pi_v$, i.e. the line of $K_1(\mathfrak{m}_v)=K_1(\mathfrak{r}_v)$ fixed vectors.  The quasi-new line in $\Pi=\bigotimes_v \Pi_v$ is the tensor product of the local quasi-new lines, and a \emph{quasi-newform} in $\Pi$ is any nonzero vector on the quasi-new line.
\end{Def}

\begin{Prop}\label{quasi-new kernels}
Assume that either $\Pi$ or $\Pi_\chi$ is cuspidal and that $\Pi_v$ is discrete of weight $2$ at each archimedean $v$. The projection of $\overline{\Theta_\mathfrak{r}(g)}$ to $\Pi$ lies on the quasi-new line; if, in addition, $\epsilon(1/2,\mathfrak{r})=-1$ then the projection of $\Phi_\mathfrak{r}(g)$ to $\Pi$ lies on the quasi-new line.  If  we instead assume that $\Pi$ has weight $0$ at every archimedean place then  the projection of $\overline{\Theta_\mathfrak{r}^*(g)}$ to $\Pi$ lies on the quasi-new line.
\end{Prop}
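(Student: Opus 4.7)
Proof plan. The quasi-new line in $\Pi$ is a pure tensor inside $\bigotimes_v \Pi_v^{K_1(\mathfrak{r}_v)}$, and the latter space is one-dimensional at each $v\nmid\mathfrak{dc}\infty$, since $\mathfrak{r}_v=\mathfrak{n}_v$ there, forcing the local fixed vector to be the newvector. Consequently it suffices to show that the linear functional $\phi\mapsto\langle\phi,\overline{\Theta_\mathfrak{r}}\rangle_{K_0(\mathfrak{dr})}$ on $\bigotimes_v \Pi_v^{K_1(\mathfrak{r}_v)}$ vanishes on any pure tensor one of whose local components $\phi_v$ at some $v\mid\mathfrak{dc}$ lies in $\ker\Lambda_v$.

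First I would apply the integral representation (\ref{rankin integral}) at $s=1/2$:
$$
\langle R_b\phi_\Pi,\overline{\Theta_\mathfrak{r}}\rangle_{K_0(\mathfrak{dr})} = L(1/2,\Pi\times\Pi_\chi)\prod_{v\mid\mathfrak{dc}}\gamma_{1/2,v}(b).
$$
By the very definition of $\Lambda_v$ one has $\gamma_{1/2,v}(b)=\Lambda_v(R_b\phi_{\Pi,v})$, and since $\{R_{\varpi^k}\phi_{\Pi,v}\}_{0\le k\le\ord_v(\mathfrak{rs}^{-1})}$ is a basis of $\Pi_v^{K_1(\mathfrak{r}_v)}$ at each $v\mid\mathfrak{dc}$, linearity in the first slot of the Petersson pairing extends the identity to
$$
\Bigl\langle \bigotimes_v \phi_v, \overline{\Theta_\mathfrak{r}} \Bigr\rangle_{K_0(\mathfrak{dr})} = L(1/2,\Pi\times\Pi_\chi) \prod_{v\mid\mathfrak{dc}} \Lambda_v(\phi_v),
$$
which vanishes as soon as some $\phi_v\in\ker\Lambda_v$. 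This settles the assertion for $\overline{\Theta_\mathfrak{r}}$.

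For $\Phi_\mathfrak{r}$ under the assumption $\epsilon(1/2,\mathfrak{r})=-1$, combining the functional equation (\ref{kernel functional}) with (\ref{rankin integral}) yields the identity
$$
L(1/2,\Pi\times\Pi_\chi)\prod_{v\mid\mathfrak{dc}}\gamma_{1/2,v}(b) = -L(1/2,\Pi\times\Pi_\chi)\prod_{v\mid\mathfrak{dc}}\gamma_{1/2,v}(b),
$$
and setting $b=1$ (for which Proposition \ref{theta coefficients} shows the product to be nonzero) forces $L(1/2,\Pi\times\Pi_\chi)=0$. Differentiating (\ref{rankin integral}) at $s=1/2$ then annihilates every term containing the central value, leaving
$$
\langle R_b\phi_\Pi,\overline{\Theta_\mathfrak{r}'}\rangle_{K_0(\mathfrak{dr})} = L'(1/2,\Pi\times\Pi_\chi)\prod_{v\mid\mathfrak{dc}}\Lambda_v(R_b\phi_{\Pi,v}).
$$
Because $R_b\phi_\Pi$ is holomorphic of parallel weight $2$ at every archimedean place, the defining property of holomorphic projection yields $\langle R_b\phi_\Pi,\Phi_\mathfrak{r}\rangle_{K_0(\mathfrak{dr})} = \langle R_b\phi_\Pi,\overline{\Theta_\mathfrak{r}'}\rangle_{K_0(\mathfrak{dr})}$, and the same linearity argument concludes.

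The weight zero statement is handled identically, replacing (\ref{rankin integral}) by (\ref{maass rankin}); the archimedean factor has the same product structure, so the argument goes through verbatim. The principal subtlety throughout is the derivative case, specifically the verification that $\epsilon(1/2,\mathfrak{r})=-1$ forces $L(1/2,\Pi\times\Pi_\chi)=0$; this is the symmetry argument above and ultimately reduces to a direct inspection of Proposition \ref{theta coefficients} at $b=1$.
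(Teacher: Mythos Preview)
Your argument is correct and is essentially the paper's own proof, only phrased locally rather than globally: the paper packages your local functionals $\Lambda_v$ into a single global functional $\Lambda(R_b\phi_\Pi)=\prod_{v\mid\mathfrak{dc}}\gamma_{1/2,v}(b)$ on the $K_1(\mathfrak{r})$-fixed space and then observes directly from (\ref{rankin integral}) that the projection of $\overline{\Theta_\mathfrak{r}}$ is orthogonal to $\ker\Lambda$, with the $\Phi_\mathfrak{r}$ and weight~$0$ cases handled by the same one-line appeal to (\ref{rankin integral}) (after noting $L(1/2,\Pi\times\Pi_\chi)=0$) and (\ref{maass rankin}) respectively. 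Your derivation of the central vanishing via the functional equation and the explicit differentiation step simply unpack what the paper leaves implicit.
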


\begin{proof}
There is an evident global characterization of the quasi-new line in $\Pi$: for each $\mathfrak{b}\mid\mathfrak{rs}^{-1}$ fix $b\in\A^\times$ with $b\co_F=\mathfrak{b}$.  The set $\{ R_b\phi_\Pi \mid\mathfrak{b}\mathrm{\  divides\ } \mathfrak{rs}^{-1}\}$ is a basis for the space of $K_1(\mathfrak{r})$-fixed vectors in $\Pi$, and the quasi-new line is the orthogonal complement (in the $K_1(\mathfrak{r})$-fixed vectors) of the kernel of the linear functional $\Lambda$ defined by
$$
\Lambda(R_b\phi_\Pi)=\prod_{v\mid\mathfrak{dc}} \gamma_{\frac{1}{2},v}(b).
$$
In the weight $2$ case (\ref{rankin integral}) implies that the projection of $\overline{\Theta}_{\mathfrak{r}}$ to $\Pi$ is orthogonal to any form in the kernel of $\Lambda$, hence lies on the quasi-new line.  If $\epsilon(1/2,\mathfrak{r})=-1$ then $L(1/2,\Pi\times\Pi_\chi)=0$ and again (\ref{rankin integral}) shows that the projection of $\Phi_\mathfrak{r}$ to $\Pi$ lies on the quasi-new line.  In the weight $0$ case one uses (\ref{maass rankin}) in place of (\ref{rankin integral}).
\end{proof}


\section{CM cycles on quaternion algebras}
\label{quaternion generalities}


Let $B$ be a quaternion algebra over $F$ and assume that there is an embedding $E\map{} B$, which we fix once and for all.  Let $T$ and $G$ denote the algebraic groups over $F$  determined by
$$
T(A)=(E\otimes_F A)^\times\hspace{1cm}G(A)=(B\otimes_F A)^\times
$$
for any $F$-algebra $A$, and let $Z$ denote the center of $G$.  We denote by $\mathrm{N}$ both the norm $T\map{}Z$ and the reduced norm $G\map{}Z$.  Let $t\mapsto\overline{t}$ be the involution of $T(\A)$ induced by the nontrivial Galois automorphism of $E/F$.


\subsection{Preliminaries}
\label{ss:prelims}


Define $B^+=E$ and  $B^-=\{ b\in B\mid b t =\overline{t} b \ \forall t\in E \}.$
It follows from the Noether-Skolem theorem that $B^-$ is nontrivial, and from this one deduces that $B=B^+\oplus B^-$ with each summand free of rank one as a left $E$-module.  For any $\gamma\in G(F)$ the two invariants\begin{equation}\label{eta}
\eta = \frac{ \mathrm{N}(\gamma^+) }{ \mathrm{N}(\gamma) }
\hspace{1cm}
\xi = \frac{ \mathrm{N}(\gamma^-) }{\mathrm{N}(\gamma) }
\end{equation}
where $\gamma^\pm$ denote the projection of $\gamma$ to $B^\pm$, depend only on the double coset $T(F)\gamma T(F)$ and not on $\gamma$ itself.   A simple calculation shows that all elements of $B^-$ are trace-free and that $\mathrm{N}(\gamma)=\mathrm{N}(\gamma^+)+\mathrm{N}(\gamma^-)$.  For any place $v$ of $F$ let $B_v^\pm= B^\pm\otimes_F F_v$.   We say that $\gamma$ is \emph{degenerate} if $\{\eta,\zeta\} = \{0,1\}$ (i.e. if $\gamma\in B^+\cup B^-$), and  that $\gamma$ is \emph{nondegenerate} otherwise.  Of course we may make similar definitions for $\gamma\in G(F_v)$ for $v$ any place of $F$.

\begin{Lem}\label{eta parametrization}
The function $\gamma\mapsto (\eta,\xi)$ defines an injection $$T(F)\backslash G(F)/T(F)\map{}F\times F.$$  The image of this injection is the union of  $\{ (1,0), (0,1)\}$ and the set of pairs $(\eta,\xi)$ such that $\eta,\xi\not=0$, $\eta+\xi=1$, and for every place $v$ of $F$
\begin{eqnarray}\label{eta display}
\omega_v(-\eta\xi)= \left\{ \begin{array}{ll} 1 & \mathrm{if \ }B_v\mathrm{\ is\ split}
 \\   -1 &  \mathrm{otherwise.} \end{array}\right.
\end{eqnarray}
\end{Lem}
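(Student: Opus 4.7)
The plan is to fix a generator $j$ of $B^-$ as a left $E$-module, so that $B = E \oplus Ej$ with $jt = \overline{t}\,j$ for $t \in E$ and $j^2 = c \in F^\times$. Any $\gamma \in G(F)$ is then uniquely $\gamma = a + bj$ with $a,b \in E$, and since $bj \in B^-$ is trace-free one has $\mathrm{N}(bj) = -c\,\mathrm{N}(b)$, so
$$
\eta = \frac{\mathrm{N}(a)}{\mathrm{N}(a) - c\,\mathrm{N}(b)}, \qquad \xi = \frac{-c\,\mathrm{N}(b)}{\mathrm{N}(a) - c\,\mathrm{N}(b)}.
$$
The action of $(t_1,t_2) \in T(F) \times T(F)$ sends $(a,b) \mapsto (t_1 a t_2,\, t_1 b\, \overline{t_2})$, rescaling both $\mathrm{N}(a)$ and $\mathrm{N}(b)$ by the common factor $\mathrm{N}(t_1)\mathrm{N}(t_2)$, so $(\eta,\xi)$ is well-defined on double cosets.

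For injectivity, the degenerate cases $\gamma \in B^+ \cap G(F)$ and $\gamma \in B^- \cap G(F)$ are each single $T(F)\times T(F)$-orbits, mapping to $(1,0)$ and $(0,1)$. In the nondegenerate case, $(\eta,\xi)$ determines the ratio $\mathrm{N}(a)/\mathrm{N}(b)$; given pairs $(a,b)$, $(a',b')$ with equal ratios, first apply $(a'a^{-1},1) \in T(F)^2$ to arrange $a = a'$, and then use pairs of the form $(t_1,t_1^{-1})$ to multiply $b$ by the norm-one element $t_1/\overline{t_1}$. Hilbert~90 for $E/F$ surjects $t \mapsto t/\overline{t}$ onto $\ker(\mathrm{N}_{E/F})$, which supplies exactly the remaining freedom needed to identify $b$ with $b'$.

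For the necessity of (\ref{eta display}), a direct computation gives $-\eta\xi = c\,\mathrm{N}(a)\mathrm{N}(b)/\mathrm{N}(\gamma)^2$, so $\omega_v(-\eta\xi) = \omega_v(c)$. This equals $+1$ precisely when $c \in \mathrm{N}_{E_v/F_v}(E_v^\times)$, which is the classical criterion for the quaternion algebra $B_v = \{E_v/F_v, c\}$ to be split; at places $v$ split in $E$ the embedding $E_v \hookrightarrow B_v$ forces $B_v$ to be split and $\omega_v$ to be trivial, so both sides of (\ref{eta display}) are $+1$ automatically.

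For the converse, given $(\eta,\xi)$ with $\eta+\xi=1$, $\eta,\xi\ne 0$, satisfying (\ref{eta display}), I want to produce $a, b \in E^\times$ with $\mathrm{N}(a)/\mathrm{N}(b) = -c(\eta/\xi)$; taking $b=1$ reduces this to showing $-c(\eta/\xi) \in \mathrm{N}_{E/F}(E^\times)$. By the Hasse norm theorem for the quadratic extension $E/F$, this is a purely local condition. Using (\ref{eta display}) to rewrite $\omega_v(\eta\xi) = \omega_v(-1)\omega_v(c)$, a short computation gives $\omega_v(-c\eta/\xi) = \omega_v(-c)\omega_v(c)\omega_v(-1) = 1$ at every $v$, completing the construction (one checks that $\mathrm{N}(\gamma) = -c\,\mathrm{N}(b)/\xi \ne 0$ as well). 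The main obstacle is recognizing (\ref{eta display}) as precisely the local incarnation of Hasse's norm criterion — once this identification is in hand, the rest is bookkeeping.
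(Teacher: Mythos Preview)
Your argument is correct and follows essentially the same route as the paper's: both fix a generator of $B^-$ (your $j$, the paper's $\epsilon$), reduce the splitting condition to $\omega_v(j^2)=\omega_v(-\mathrm{N}(\epsilon))$, and then invoke a local--global principle (you phrase it as the Hasse norm theorem for $E/F$, the paper as Hasse--Minkowski for the form $x^2-\Delta y^2$) to produce a preimage. The only real difference is that you spell out the injectivity via Hilbert~90, whereas the paper leaves it as an exercise; your normalization $b=1$ versus the paper's $\gamma^+=1$ is cosmetic.
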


\begin{proof}
This is stated without proof in \cite[\S 4.1]{zhang2}.  We leave the injectivity as an easy exercise, and sketch a proof of the second claim.
Choose a generator $\epsilon$ for $B^-$ as a left $E$-module and write $E=F[\sqrt{\Delta}]$.  Then $B$ has as an $F$-basis $\{1, \sqrt\Delta, \epsilon,\sqrt{\Delta} \cdot \epsilon\}$, or, in the standard notation (as in \cite[Example A.2]{conrad}), $B\iso\left(   \frac{\Delta,-\mathrm{N}(\epsilon) }{F}  \right)$.
It follows that the right hand side of (\ref{eta display}) is equal to the Hilbert symbol 
$$
(\Delta,-\mathrm{N}(\epsilon))_v=\omega_v(-\mathrm{N}(\epsilon)).
$$
On the other hand it is easy to see that for any nondegenerate $\gamma\in G(F)$ we have $\omega_v(\eta\xi)=\omega_v(\mathrm{N}(\epsilon))$, so that $(\eta,\xi)$ satisfies (\ref{eta display}).  The condition $\eta+\xi=1$ is clear from the additivity of $\mathrm{N}$ with respect to the decomposition $B=B^+\oplus B^-$ noted earlier.  Conversely, given a pair $\eta,\xi\in F^\times$ satisfying (\ref{eta display}) and $\eta+\xi=1$ we must have $(\Delta,-\mathrm{N}(\epsilon))_v=(\Delta,-\eta\xi)_v$ 
for every place $v$.  It follows from the Hasse-Minkowski theorem that there are $x,y\in F$
such that 
$$
\xi \eta^{-1} \mathrm{N}(\epsilon)^{-1}=  x^2-y^2\Delta .
$$
Taking $\gamma=1+(x+y\sqrt{\Delta})\epsilon$ shows that $(\eta,\xi)$ arises from a nondegenerate $\gamma$.  Any degenerate $\gamma$ generates either $B^+$ or $B^-$ as a left $E$-module and so has image either $(1,0)$ or $(0,1)$, respectively.
\end{proof}

\begin{Lem}\label{global tau}
For any  nondegenerate $\gamma\in G(F)$ and any place $v$ of $F$ set 
$$
\tau_v(\gamma)= \omega_v(\delta) |\eta\xi|_v^{1/2} \chi_v(\eta)\overline{\chi}_v(\gamma^+) \epsilon_v(1/2,\omega,\psi_v^0).
$$
Then $\prod_v\tau_v(\gamma)=1$ where the product is over all places of $F$.  If $v$ is an archimedean place then $\tau_v(\gamma)=\omega_v(\delta)\cdot i\cdot |\eta\xi|_v^{1/2}$.
\end{Lem}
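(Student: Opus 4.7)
The plan is to treat the archimedean identity separately from the global product formula, since the archimedean case is forced by the triviality of finite order characters on $\C^\times$.

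At an archimedean place $v$, $E_v^\times \iso \C^\times$ is connected, so the finite order character $\chi_v$ is necessarily trivial. Hence $\chi_v(\eta) = \overline{\chi}_v(\gamma^+) = 1$. Combined with the classical evaluation $\epsilon_v(1/2,\omega_v,\psi_v^0) = i$ for the sign character on $\R^\times$ paired with the additive character $x \mapsto e^{2\pi i x}$, this collapses the definition of $\tau_v(\gamma)$ to the asserted formula $\omega_v(\delta)\cdot i \cdot |\eta\xi|_v^{1/2}$.

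For the global product, I would split
\begin{equation*}
\prod_v \tau_v(\gamma) = \omega(\delta) \cdot \prod_v |\eta\xi|_v^{1/2} \cdot \prod_v \chi_v(\eta) \cdot \prod_v \overline{\chi}_v(\gamma^+) \cdot \prod_v \epsilon_v(1/2,\omega,\psi_v^0),
\end{equation*}
and then kill three of the middle factors at once: the product formula applied to $\eta\xi \in F^\times$ yields $\prod_v |\eta\xi|_v^{1/2} = 1$; since $\eta \in F^\times$ and $\chi_0$ is trivial on $F^\times$, one has $\prod_v \chi_v(\eta) = \chi_0(\eta) = 1$; and by nondegeneracy $\gamma^+ \in E^\times$, so $\prod_v \overline{\chi}_v(\gamma^+) = \overline{\chi}(\gamma^+) = 1$.

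What remains is the identity $\omega(\delta) \cdot \prod_v \epsilon_v(1/2,\omega,\psi_v^0) = 1$. Here I would invoke Tate's formula for the dependence of the local epsilon factor on the additive character: with $\psi_v^0(x) = \psi_v(\delta_v^{-1} x)$ and using $\omega_v^2 = 1$, the local factor $\omega_v(\delta) \epsilon_v(1/2,\omega_v,\psi_v^0)$ simplifies to $\epsilon_v(1/2,\omega_v,\psi_v)$. The global product of the latter is the root number of $\omega$, which equals $+1$ as one reads off from the absence of a sign on the right hand side of the functional equation (\ref{dirichlet functional}). The main obstacle is precisely this final bookkeeping with epsilon factors and additive character normalizations, but the self-duality of $\omega$ makes the simplifications clean.
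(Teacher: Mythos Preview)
Your argument is correct and follows essentially the same route as the paper: both reduce the global product to the identity $\prod_v \omega_v(\delta)\epsilon_v(1/2,\omega,\psi_v^0)=1$ via the product formula and triviality of Hecke characters on rational elements, then invoke the change-of-additive-character formula $\omega_v(\delta)\epsilon_v(1/2,\omega,\psi_v^0)=\epsilon_v(1/2,\omega,\psi_v)$ together with the functional equation (\ref{dirichlet functional}) to see that the global epsilon factor is $1$. Your treatment of the archimedean case is likewise identical to the paper's, using $\epsilon_v(1/2,\omega_v,\psi_v^0)=i$ and the triviality of the finite-order character $\chi_v$ on the connected group $\C^\times$.
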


\begin{proof}
The functional equation (\ref{dirichlet functional}) and \cite[Corollary 4.4]{kudla-tate} imply $\epsilon(s,\omega)=|d\delta|^{s-1/2}$ while \cite[(3.29)]{kudla-tate} gives
$$
|\delta|_v^{s-1/2} \omega_v(\delta)\epsilon_v(s,\omega,\psi_v^0)=\epsilon_v(s,\omega,\psi_v).
$$
From this it is clear that $\prod_v\tau_v(\gamma)=1$.  If $v$ is archimedean then $\epsilon(s,\omega,\psi_v^0)=i$ by \cite[Proposition 3.8(iii)]{kudla-tate}. As $\chi_v$ is the trivial character, the final claim follows.
\end{proof}


\subsection{Heights of CM-cycles}
\label{ss:heights}


If $U\subset G(\A_f)$ is a compact open subgroup we define the set of \emph{CM points} of level $U$
$$
C_U= T(F)\backslash G(\A_f) /U.
$$
By a \emph{CM-cycle} of level $U$ we mean a compactly supported (i.e. finitely supported) function on $C_U$.   There is a unique left $T(\A_f)$-invariant measure on $C_U$ with the property that 
$$
\int_{G(\A_f)/U} f(g)\ dg = \int_{C_U} \sum_{t\in T(F)/(Z(F)\cap U)} f(tg)  \ dg
$$
for every locally constant compactly supported function $f$ on $G(\A_f)/U$, where the measure on $G(\A_f)/U$ gives every coset volume one.   The measure on $C_U$  assigns to each double coset $T(F)gU$  a volume equal to the inverse of
$$[T(F)\cap gUg^{-1} : Z(F)\cap U].$$  
Given compact open subgroups $U\subset V$ the measures on $C_U$ and $C_V$ are related by
\begin{equation}\label{measure descent}
\int_{C_V}\sum_{h\in V/U} f(gh)\ dg = \frac{\lambda_U}{\lambda_V} \int_{C_U} f(g)\ dg
\end{equation}
for any CM-cycle $f$ of level $U$, where $\lambda_U=[\co_F^\times:\co_F^\times\cap U]$ and similarly with $U$ replaced by $V$.

Given a $T(F)$ bi-invariant function $m$ on $G(F)$ define a function $k^m_U$ on $G(\A_f)\times G(\A_f)$ by
\begin{equation}\label{kernel def}
k^m_U(x,y) =\sum_{\gamma\in G(F)/(Z(F)\cap U)} \mathbf{1}_U(x^{-1}\gamma y) \cdot m(\gamma)
\end{equation}
where $\mathbf{1}_U$ is the characteristic function of $U$.  We will address the convergence of this sum as the need arises; for the moment assume that the sum converges absolutely for every $x,y$.  Note that $k^m_U$ descends to a function on $C_U\times C_U$.  If $P,Q$ are CM-cycles of level $U$ define the \emph{height pairing} in level $U$ with multiplicity $m$
\begin{equation}\label{pairing}
\langle P,Q \rangle^m_U =\int_{C_U\times C_U} P(x) \cdot k^m_U(x,y) \cdot \overline{Q(y)} \ dx \ dy.
\end{equation}
As in \cite[(4.1.9)]{zhang2} a simple calculation shows that there is a decomposition
\begin{equation}\label{linking decomposition}
\langle P,Q \rangle^m_U = \sum_{\gamma\in T(F)\backslash G(F)/T(F)} 
\langle P,Q \rangle_U^\gamma  \cdot m(\gamma)
\end{equation}
where for every $\gamma\in G(F)$
$$
\langle P ,Q \rangle_U^\gamma =
\int_{C_U} \sum_{\delta\in T(F)\backslash T(F)\gamma T(F)}
P(\delta y) \overline{Q (y) } \ dy
$$
is the  \emph{linking number} of $P$ and $Q$ at $\gamma$.

Abbreviate $U_Z=U\cap Z(\A_f)$ and $U_T=U\cap T(\A_f)$  and suppose now that $U$ is small enough that $\chi$ is trivial on $U_T$.  We will say that a CM-cycle $P$ of level $U$ is \emph{$\chi$-isotypic} if for all $t\in T(\A_f)$ and $g\in G(\A_f)$ we have $P(tg)=\chi(t)P(g).$

\begin{Lem}\label{orbital integrals}
Set $\chi^*(t)=\chi(\overline{t})$.  Suppose  $P$ and $Q$ are $\chi$-isotypic CM-cycles of level $U$ and that $Q$ is supported on the image of $T(\A_f)\map{}C_U$.   If $\gamma\in G(F)$ is degenerate then
$$
\langle P,   Q \rangle_U^\gamma = \overline{Q(1)}    \cdot 
\frac{ [T(\A_f):T(F)U_T]}{[T(F)\cap U:Z(F)\cap U]} 
\left\{\begin{array}{ll}
P(\gamma) &\mathrm{if\ } (\eta,\xi)=(1,0) \\
P(\gamma) &\mathrm{if\ } (\eta,\xi)=(0,1)\mathrm{\ and\ }\chi^*=\chi \\
0&\mathrm{if\ } (\eta,\xi)=(0,1)\mathrm{\ and\ }\chi^*\not=\chi.
\end{array}\right.
$$
If $\gamma$ is nondegenerate then
$$
\langle P,   Q \rangle_U^\gamma =
\overline{Q(1)}\cdot  [Z(\A_f):Z(F) U_Z]
\sum_{ t\in Z(\A_f) \backslash T(\A_f)/ U_T}  P ( t^{-1} \gamma t).
$$
\end{Lem}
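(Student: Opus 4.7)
The plan is to first parametrize $T(F)\backslash T(F)\gamma T(F)$ via the bijection with $T(F)/T^\gamma$, where $T^\gamma = \{t\in T(F):\gamma^{-1}t\gamma\in T(F)\}$ is the right stabilizer of $T(F)\gamma$ inside $T(F)$, via $t\mapsto T(F)\gamma t$. Using $B = E\oplus B^-$ together with the relation $\gamma^- t = \bar{t}\gamma^-$ for $\gamma^-\in B^-$, a short computation gives $T^\gamma = T(F)$ when $\gamma$ is degenerate (either $\gamma\in E$, or $\gamma\in B^-$ since then $\gamma^{-1}t\gamma=\bar t\in T(F)$) and $T^\gamma = Z(F)$ when $\gamma$ is nondegenerate. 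So the sum defining $\langle P,Q\rangle_U^\gamma$ has a single term in the degenerate cases and is indexed by $Z(F)\backslash T(F)$ otherwise.

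For the degenerate case $(\eta,\xi)=(1,0)$ I would take $\delta=\gamma$ and observe that $P(\gamma y)=\chi(\gamma)P(y)=P(y)$ since $\chi$ is trivial on $T(F)$. Restricting the integral to the support of $Q$ (the image of $T(\A_f)\to C_U$), the integrand $P(y)\overline{Q(y)}$ equals $P(\gamma)\overline{Q(1)}$ (the $\chi(s)$ phases cancel), and the total volume of the image, from the prescription in \S\ref{ss:heights}, is $[T(\A_f):T(F)U_T]/[T(F)\cap U:Z(F)\cap U]$. For $(\eta,\xi)=(0,1)$, the relation $s\gamma=\gamma\bar s$ for $s\in T(\A_f)$ gives $P(\gamma s)=\chi^*(s)P(\gamma)$, so the integrand becomes $\chi^*(s)\bar\chi(s)P(\gamma)\overline{Q(1)}$; this is a character of $T(F)\backslash T(\A_f)/U_T$ which integrates to zero by Pontrjagin duality unless $\chi^*=\chi$, and when $\chi^*=\chi$ the same volume factor appears.

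For the nondegenerate case the sum runs over $t\in Z(F)\backslash T(F)$ with representatives $\delta=\gamma t$. Restricting to $y=sU$ and using commutativity of $T$ together with the $\chi$-isotypy of $P$ and $Q$,
\[
P(\gamma ts)\overline{Q(s)} = \chi(s)P((s^{-1}\gamma s)t)\cdot\overline{\chi(s)Q(1)} = P((st)^{-1}\gamma(st))\overline{Q(1)},
\]
the last step using $P((s^{-1}\gamma s)t)=P(t^{-1}(s^{-1}\gamma s)t)$ for $t\in T(F)$ (since $\chi(t)=1$). Setting $x=st$, I would then verify two bookkeeping facts: (a) for each fixed $s$, two values $t,t'\in Z(F)\backslash T(F)$ produce the same class of $x$ in $Z(F)\backslash T(\A_f)/U_T$ precisely when $t/t'\in Z(F)(T(F)\cap U_T)$, so each fiber element is hit with multiplicity $[T(F)\cap U_T:Z(F)\cap U_Z]$; and (b) as $s$ ranges over $T(F)\backslash T(\A_f)/U_T$, the resulting images partition $Z(F)\backslash T(\A_f)/U_T$. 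Together these yield
\[
\sum_{s,t}P((st)^{-1}\gamma(st))=[T(F)\cap U_T:Z(F)\cap U_Z]\sum_{x\in Z(F)\backslash T(\A_f)/U_T}P(x^{-1}\gamma x).
\]

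To finish, I would observe that $P(x^{-1}\gamma x)$ is well-defined on $Z(\A_f)\backslash T(\A_f)/U_T$ (by centrality of $Z$ and triviality of $\chi$ on $U_T$), so the further projection $Z(F)\backslash T(\A_f)/U_T\to Z(\A_f)\backslash T(\A_f)/U_T$ has constant fiber size $[Z(\A_f):Z(F)U_Z]$; this replaces the $Z(F)$-sum by $[Z(\A_f):Z(F)U_Z]$ times the $Z(\A_f)$-sum. Multiplying by the volume factor $[T(F)\cap U:Z(F)\cap U]^{-1}$ cancels the multiplicity from (a), giving the asserted formula. The main obstacle is the combinatorial bookkeeping in the unfolding — pairing each group-theoretic index with the right fiber size — while keeping straight that several of the groups in play (for instance $Z(F)\backslash T(F)$) are infinite, so that convergence comes from the finite support of $P$ rather than from any finiteness of the index set.
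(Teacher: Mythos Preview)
Your proof is correct and follows essentially the same route as the paper's. Both arguments identify the stabilizer $T^\gamma$ (namely $T(F)$ in the degenerate cases, $Z(F)$ otherwise), restrict the integral to the support of $Q$, use $\chi$-isotypy to pull out $\overline{Q(1)}$, and then unfold the pair $(s,t)$ into a single variable on $Z(F)\backslash T(\A_f)/U_T$; the paper simply absorbs your explicit fiber-counting (the factor $[T(F)\cap U_T:Z(F)\cap U_Z]$) into its choice of measure on $Z(F)\backslash T(\A_f)/U_T$ rather than tracking it separately.
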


\begin{proof}
First suppose that $\gamma$ is degenerate.  Then $\gamma$ normalizes $T(F)$ and so
\begin{eqnarray*}
 \langle P,Q\rangle_U^\gamma
&=&
\int_{C_U}  P(\gamma y) \overline{Q (y)} \ dy \\
&=&
\int_{T(F)\backslash T(\A_f)/U_T } P(y^{-1}\gamma y) \overline{Q(1)} \ dy.
\end{eqnarray*}
If $(\eta,\xi)=(1,0)$ then $\gamma\in T(F)$ leaving
$$
\langle P,Q\rangle_U^\gamma=
\mathrm{Vol}(T(F)\backslash T(\A_f)/ U_T) \cdot P(\gamma )\overline{Q(1)}.
$$
If $(\eta,\xi)=(0,1)$ then $\gamma y=\overline{y}\gamma$ for every $y\in T(\A_f)$, leaving
$$
\langle P,Q\rangle_U^\gamma
 = P(\gamma )\overline{Q(1)}
\cdot \int_{T(F)\backslash T(\A_f)/ U_T} \chi(y)^{-1} \chi^*(y)\ dy.
$$
In either case the first claim  follows. Now suppose that $\gamma$ is nondegenerate. The nondegeneracy of $\gamma$ implies that  $\gamma^{-1} T(F)\gamma\cap T(F)=Z(F)$ and so  
\begin{eqnarray*}
 \langle P,Q\rangle_U^\gamma
&=&
\int_{T(F)\backslash T(\A_f)/U_T} \sum_{\delta\in T(F)\backslash T(F)\gamma T(F)}
P( y^{-1}\delta y)\overline{Q(1)} \ dy \\
&=&
\int_{T(F)\backslash T(\A_f)/U_T} \sum_{t \in  T(F)/Z(F)}
P( y^{-1}\gamma t y) \overline{Q(1)} \ dy \\
&=&
\overline{Q(1)}  \int_{Z(F)\backslash T(\A_f)/U_T}  P( y^{-1}\gamma  y) \ dy
\end{eqnarray*}
where the measure on $Z(F)\backslash T(\A_f)/U_T$ gives each coset volume $1$.  The second claim follows.
\end{proof}

In particular, if the $U=\prod_v U_v$ and $P=\prod_v P_v$  of  Lemma \ref{orbital integrals} are factorizable  and $\gamma$ is nondegenerate  then there is a decomposition
\begin{equation}\label{orbital decomp}
\langle P ,Q \rangle_U^\gamma
=  
\overline{Q(1) }\cdot [Z(\A_f):Z(F) U_Z] \cdot \prod_v O_U^\gamma( P_v ) 
\end{equation}
where the product is over all finite places of $F$ and
\begin{equation}\label{local link}
O_U^\gamma( P_v )  =
 \sum_{ t\in F_v^\times \backslash E_v^\times/ U_{T,v} }  
 P_v ( t^{-1} \gamma t)
\end{equation}
is the \emph{orbital integral} of $P_v$ at $\gamma$, where we abbreviate $U_{T,v}=E_v^\times\cap U_v$.

The remainder of \S \ref{quaternion generalities} is devoted to the computations of orbital integrals for specific CM-cycles, and we fix the following data throughout \S \ref{ss:unramified local calculations} and  \S \ref{ss:ramified local calculations}.  Let $v$ be a finite place of $F$ and fix $\epsilon_v\in B_v^\times$ such that $E_v\epsilon_v=B_v^-$.  We  assume that $\mathrm{N}(\epsilon_v)\in \co_{F,v}$ and let $\mathfrak{e}$ be an ideal of $\co_F$ satisfying $\mathfrak{e}_v=\mathrm{N}(\epsilon_v)\co_{F,v}$.  Define an order of $B_v$ by
$$
R_v=\co_{E,v}+\co_{E,v}\epsilon_v.
$$
Fix a uniformizing parameter $\varpi\in F_v$.


\subsection{Local calculations at primes away from $\mathrm{N}(\mathfrak{C})$}
\label{ss:unramified local calculations}


Assume that $v\nmid\mathrm{N}(\mathfrak{C})$ and set $U_v=R_v^\times$.  Define a function on $G(F_v)/U_v$ by
$$
P_{\chi,v}(g)= \sum_{t\in E_v^\times/\co_{E,v}^\times } \chi_v(t)\mathbf{1}_{U_v}(t^{-1}g).
$$
 For each ideal $\mathfrak{a}\subset\co_F$ set
$
H(\mathfrak{a}_v) = \{ h\in R_v \mid  \mathrm{N}(h)\co_{F,v} = \mathfrak{a}_v \}  
$
and define another function on $G(F_v)/U_v$ 
\begin{eqnarray*}
P_{\chi,\mathfrak{a},v}(g)
&=&
\sum_{h\in H(\mathfrak{a}_v)/U_v} P_{\chi,v}(gh)\\
&=&
\chi_v(\mathfrak{a}) \sum_{t\in E_v^\times/\co_{E,v}^\times } \chi_v(t)\mathbf{1}_{H(\mathfrak{a}_v)} (t^{-1}g).
\end{eqnarray*}
For each nondegenerate $\gamma\in G(F_v)$ we wish to compute the orbital integral
\begin{equation}
\label{unr local link}
  O^\gamma_U(P_{\chi,\mathfrak{a},v })=\sum_{ t\in F_v^\times \backslash E_v^\times / \co_{E,v}^\times}  
 P_{\chi,\mathfrak{a},v} ( t^{-1} \gamma t).
\end{equation}

\begin{Prop}
\label{Prop:unramified link I}
Suppose $v$  is inert in $E$ and $\gamma\in G(F_v)$ is nondegenerate. Then (\ref{unr local link}) is nonzero if and only if $\ord_v(\eta \mathfrak{a})$ and $\ord_v(\xi  \mathfrak{ae}^{-1})$ are both even and nonnegative.  When this is the case
$$
  O^\gamma_U(P_{\chi,\mathfrak{a},v })=\overline{\chi}_v(\eta)  \chi_v(\gamma^+)  \chi_v(\varpi)^{\frac{\ord_v(\eta \mathfrak{a})}{2}}.
$$
\end{Prop}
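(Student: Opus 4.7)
The plan is to exploit the fact that at an inert prime the double coset space $F_v^\times\backslash E_v^\times/\co_{E,v}^\times$ is trivial.  Indeed $E_v/F_v$ is unramified, so a uniformizer $\varpi$ of $F_v$ remains a uniformizer of $E_v$, whence $F_v^\times\cdot\co_{E,v}^\times=E_v^\times$.  Since $F_v^\times=Z(F_v)$ is central in $G(F_v)$, conjugation by elements of $F_v^\times$ is trivial, and the orbital integral (\ref{unr local link}) collapses to the single value $O^\gamma_U(P_{\chi,\mathfrak{a},v})=P_{\chi,\mathfrak{a},v}(\gamma)$.

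Next I would write $\gamma=\gamma^++e\epsilon_v$ with $\gamma^+,e\in E_v^\times$ (both nonzero by nondegeneracy), and analyze which representatives $s=\varpi^n$ of $E_v^\times/\co_{E,v}^\times$ contribute to the sum defining $P_{\chi,\mathfrak{a},v}(\gamma)$.  Because $\chi_v(s)$ and $\mathbf{1}_{H(\mathfrak{a}_v)}(s^{-1}\gamma)$ depend only on $n$ (using unramifiedness of $\chi_v$ for the former, and the fact that $R_v$ is a left $\co_{E,v}$-module for the latter), the decomposition $R_v=\co_{E,v}\oplus \co_{E,v}\epsilon_v$ converts the condition $s^{-1}\gamma\in H(\mathfrak{a}_v)$ into three requirements: (i) $n\le \ord_{E,v}(\gamma^+)$; (ii) $n\le \ord_{E,v}(e)$; and (iii) $-2n+\ord_v(\mathrm{N}(\gamma))=\ord_v(\mathfrak{a})$, which pins $n$ down to $\tfrac12(\ord_v(\mathrm{N}(\gamma))-\ord_v(\mathfrak{a}))$ whenever the right hand side lies in $\Z$.

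Using $\mathrm{N}(\gamma^+)=\eta\mathrm{N}(\gamma)$, $\mathrm{N}(e)\mathrm{N}(\epsilon_v)=\xi\mathrm{N}(\gamma)$, and the inert-place identity $\ord_v\circ\mathrm{N}=2\ord_{E,v}$ on $E_v^\times$, I would translate these conditions as follows: (iii) is solvable exactly when $\ord_v(\eta\mathfrak{a})$ is even (equivalently, $\ord_v(\xi\mathfrak{a}\mathfrak{e}^{-1})$ is even); granted that, (i) reads $\ord_v(\eta\mathfrak{a})\ge 0$ and (ii) reads $\ord_v(\xi\mathfrak{a}\mathfrak{e}^{-1})\ge 0$.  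This yields the vanishing/nonvanishing dichotomy asserted by the proposition.

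Finally, when these conditions hold the sum has a unique contributing term and equals $\chi_v(\mathfrak{a})\chi_v(\varpi)^n$.  Rewriting $\chi_v(\mathfrak{a})=\chi_v(\varpi)^{\ord_v(\mathfrak{a})}$, $\chi_v(\gamma^+)=\chi_v(\varpi)^{\ord_{E,v}(\gamma^+)}$, and $\overline{\chi}_v(\eta)=\chi_v(\varpi)^{-\ord_v(\eta)}$ (the last two by unramifiedness of $\chi_v$), a short exponent calculation identifies $\chi_v(\mathfrak{a})\chi_v(\varpi)^n$ with $\overline{\chi}_v(\eta)\chi_v(\gamma^+)\chi_v(\varpi)^{\ord_v(\eta\mathfrak{a})/2}$.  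Nothing here is conceptually deep; the main obstacle is simply careful bookkeeping of the valuations on $F_v$ and $E_v$ and their compatibility with the norm map at an inert place.
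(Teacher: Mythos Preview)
Your proposal is correct and follows essentially the same route as the paper: collapse the orbital sum to the single term $P_{\chi,\mathfrak{a},v}(\gamma)$ using $F_v^\times\co_{E,v}^\times=E_v^\times$ at an inert place, then isolate the unique $\varpi$-power that can contribute via the reduced norm condition and read off the two nonnegativity constraints from the direct sum decomposition $R_v=\co_{E,v}\oplus\co_{E,v}\epsilon_v$.  The only cosmetic difference is that the paper first normalizes to $\gamma^+=1$ (writing $\gamma=1+\beta\epsilon_v$) and recovers the general case at the end by the scaling observation that replacing $\gamma$ by $t\gamma$ multiplies both sides by $\chi_v(t)$, whereas you carry a general $\gamma^+$ throughout and match exponents at the end; the content is the same.
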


\begin{proof}
Suppose $\gamma^+=1$, so that $\gamma=1+\beta \epsilon_v$ with $\beta \in E_v^\times$. The expression (\ref{unr local link}) reduces to
\begin{eqnarray*}
 O^\gamma_U(P_{\chi,\mathfrak{a},v }) 
 &=& 
 P_{\chi,\mathfrak{a},v}   (\gamma)  \\
 &=&
\chi_v(\mathfrak{a}) \sum_{k=-\infty}^\infty \chi_v(\varpi)^k  \mathbf{1}_{H(\mathfrak{a}_v) }( \varpi^{-k}\gamma)\\
\end{eqnarray*}
Using $\ord_v(\eta)=-\ord_v(\mathrm{N}(\gamma))$ we see that the only possible contribution to the inner sum is for $k$ satisfying $2k= -\ord_v(\eta \mathfrak{a})$.  Thus we may assume  that $\ord_v(\eta \mathfrak{a})$ is even, leaving
\begin{eqnarray*}
  O^\gamma_U( P_{\chi,\mathfrak{a},v }) 
& = &
\chi_v(\mathfrak{a}) \chi_v(\varpi)^{-\frac{1}{2}\ord_v(\eta \mathfrak{a})}
 \mathbf{1}_{ H(\mathfrak{a}_v) }(\varpi^{\frac{1}{2}\ord_v(\eta \mathfrak{a})}\gamma )  \\
 & = &
\overline{\chi(\eta)} \chi_v(\varpi)^{\frac{1}{2}\ord_v(\eta \mathfrak{a})}
 \mathbf{1}_{ R_v }(\varpi^{\frac{1}{2}\ord_v(\eta \mathfrak{a})}\gamma ) 
\end{eqnarray*}
which is nonzero if and only if 
$$
\varpi^{\frac{1}{2}\ord_v(\eta \mathfrak{a})}(1+\beta\epsilon_v)\in \co_{E,v}+\co_{E,v}\epsilon_v.
$$  
Thus $  O^\gamma_U(P_{\chi,\mathfrak{a},v} )   $ is nonzero if and only if both
$$
\ord_v(\eta \mathfrak{a})\ge 0  \hspace{1cm}
\ord_v(\eta \mathfrak{a})\ge - \ord_v(\mathrm{N}(\beta))
$$  
hold.  The observation that 
$$
\ord_v(\xi \mathfrak{ae}^{-1})=\ord_v(\mathfrak{a})+\ord_v(\mathrm{N}(\beta))-\ord_v(\mathrm{N}(\gamma))
=\ord_v(\eta \mathfrak{a})+\ord_v(\mathrm{N}(\beta)),
$$
together with $\ord_v(\mathrm{N}(\beta))\in 2\Z$ completes the proof when $\gamma^+=1$.  For the general case simply note that if $\gamma$ is replaced by $t \gamma$ with $t \in E_v^\times$ then both sides of the stated equality are multiplied by $\chi_v(t)$.  Thus it suffices to prove the claim for a single element of $E_v^\times\gamma$.
\end{proof}

\begin{Rem}\label{gamma plus remark}
In the proof of Proposition \ref{Prop:unramified link I} it sufficed to treat the case $\gamma^+=1$.  This will remain true in all remaining computations of orbital integrals in \S \ref{ss:unramified local calculations} and \S \ref{ss:ramified local calculations}.  We will continue to state the results for arbitrary $\gamma$, but in the proofs we will assume that $\gamma^+=1$.
\end{Rem}

\begin{Prop}
\label{Prop:unr ram link}
Suppose $v$  is ramified in $E$ and $\gamma\in G(F_v)$ is nondegenerate. Then (\ref{unr local link}) is nonzero if and only if $\ord_v(\eta \mathfrak{a})$ and $\ord_v(\xi  \mathfrak{ae}^{-1})$ are both  nonnegative.  When this is the case
$$
  O^\gamma_U(P_{\chi,\mathfrak{a},v} ) 
 =2 \cdot \overline{\chi_v(\eta)}  \chi_v(\gamma^+)  \chi_v(\varpi_E)^{\ord_v(\eta\mathfrak{a}) }
$$
for any uniformizer $\varpi_E\in E_v$.
\end{Prop}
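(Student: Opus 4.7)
Following Remark \ref{gamma plus remark}, I reduce to the case $\gamma^+=1$, so $\gamma = 1+\beta\epsilon_v$ for some $\beta\in E_v^\times$. The first step is to understand the indexing set. Since $v$ is ramified in $E$, a uniformizer $\varpi_E$ of $E_v$ satisfies $\mathrm{N}(\varpi_E)\co_{F,v}=\varpi\co_{F,v}$, so $E_v^\times/\co_{E,v}^\times=\varpi_E^{\Z}$, while $F_v^\times/\co_{F,v}^\times=\varpi^{\Z}$ embeds as the even powers of $\varpi_E$ (up to units on which $\chi_v$ is trivial). Consequently $F_v^\times\backslash E_v^\times/\co_{E,v}^\times$ has exactly two classes, represented by $1$ and $\varpi_E$, which is the source of the factor of $2$ that does not appear in the inert case.

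Next I compute $P_{\chi,\mathfrak{a},v}(\gamma)$ directly from the definition. Parametrizing $t=\varpi_E^k$, the sum becomes
$$
P_{\chi,\mathfrak{a},v}(\gamma)=\chi_v(\mathfrak{a})\sum_{k\in\Z}\chi_v(\varpi_E)^k\,\mathbf{1}_{H(\mathfrak{a}_v)}\bigl(\varpi_E^{-k}\gamma\bigr).
$$
The norm condition forces a unique contributing $k$: the equation $\ord_v\bigl(\mathrm{N}(\varpi_E^{-k}\gamma)\bigr)=\ord_v(\mathfrak{a})$ together with $\ord_v(\mathrm{N}(\varpi_E))=1$ and $\ord_v(\mathrm{N}(\gamma))=-\ord_v(\eta)$ gives $k=-\ord_v(\eta\mathfrak{a})$. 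The remaining indicator condition $\varpi_E^{\ord_v(\eta\mathfrak{a})}\gamma\in R_v=\co_{E,v}+\co_{E,v}\epsilon_v$ amounts to $\ord_v(\eta\mathfrak{a})\ge 0$ and $\ord_v(\eta\mathfrak{a})+\ord_{E,v}(\beta)\ge 0$. Using $\mathrm{N}(\beta\epsilon_v)=\xi\mathrm{N}(\gamma)$ and $\ord_v(\mathrm{N}(\epsilon_v))=\ord_v(\mathfrak{e})$ converts the second condition into $\ord_v(\xi\mathfrak{ae}^{-1})\ge 0$, matching the hypothesis.

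When both conditions hold, the simplification $\chi_v(\mathfrak{a})=\chi_v(\varpi)^{\ord_v(\mathfrak{a})}=\chi_v(\varpi_E)^{2\ord_v(\mathfrak{a})}$ (since $v\nmid\mathrm{N}(\mathfrak{C})$, so $\chi_v$ is unramified) yields
$$
P_{\chi,\mathfrak{a},v}(\gamma)=\chi_v(\varpi_E)^{\,\ord_v(\mathfrak{a})-\ord_v(\eta)}=\overline{\chi_v(\eta)}\cdot\chi_v(\varpi_E)^{\ord_v(\eta\mathfrak{a})},
$$
where the last equality uses $\chi_v(\eta)=\chi_v(\varpi_E)^{2\ord_v(\eta)}$.

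Finally I handle the second coset. Using $\epsilon_v\varpi_E=\overline{\varpi_E}\epsilon_v$, conjugation gives $\varpi_E^{-1}\gamma\varpi_E=1+\beta u\epsilon_v$, where $u=\overline{\varpi_E}/\varpi_E\in\co_{E,v}^\times$. Since $u$ is a unit, $\ord_{E,v}(\beta u)=\ord_{E,v}(\beta)$, and the same calculation produces the identical value, so the orbital integral is simply twice $P_{\chi,\mathfrak{a},v}(\gamma)$. Restoring the general case $\gamma^+\ne 1$ by Remark \ref{gamma plus remark} multiplies both sides by $\chi_v(\gamma^+)$, yielding the stated formula. The main subtlety is organizing the ramified arithmetic cleanly: tracking $\ord_v$ versus $\ord_{E,v}$, exploiting that $\chi_v$ is unramified to pass between $\varpi$ and $\varpi_E^2$, and verifying that conjugation by $\varpi_E$ does not change the value of $P_{\chi,\mathfrak{a},v}$ because it alters $\beta$ only by a unit.
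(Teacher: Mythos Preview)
Your proof is correct and follows essentially the same approach as the paper's: reduce to $\gamma^+=1$, use that $F_v^\times\backslash E_v^\times/\co_{E,v}^\times$ has two classes in the ramified case, and observe that the norm condition pins down a unique contributing index $k=\ord_v(\eta\mathfrak{a})$ (in the paper's sign convention), after which the argument proceeds exactly as in Proposition~\ref{Prop:unramified link I}. You have simply made explicit the steps the paper summarizes by ``the remainder of the proof is exactly as the proof of Proposition~\ref{Prop:unramified link I},'' including the verification that conjugation by $\varpi_E$ replaces $\beta$ by a unit multiple and hence gives the same value.
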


\begin{proof}
Write $\gamma=1+\beta\epsilon_v$ with $\beta\in E_v^\times$.  Equation (\ref{unr local link}) reduces to
\begin{eqnarray*}
O^\gamma_U(P_{\chi,\mathfrak{a},v} ) 
&=&
P_{\chi,\mathfrak{a},v}(\gamma)+P_{\chi,\mathfrak{a},v}(\varpi_E^{-1}\gamma\varpi_E)\\
&=&
\chi_v(\mathfrak{a})\sum_{k=-\infty}^\infty  \chi_v(\varpi_E)^{-k} \big[ \mathbf{1}_{H(\mathfrak{a}_v)}(\varpi_E^{k}\gamma)+\mathbf{1}_{H(\mathfrak{a}_v)}(\varpi_E^{k-1}\gamma\varpi_E) \big].
\end{eqnarray*}
The only possible contribution to the final sum is the term $k=\ord_v(\eta\mathfrak{a})$, leaving
\begin{eqnarray*}\lefteqn{
O^\gamma_U(P_{\chi,\mathfrak{a},v} )   } \\
&=&
\chi_v(\mathfrak{a})  \chi_v(\varpi_E)^{-\ord_v(\eta\mathfrak{a})} \big[ \mathbf{1}_{R_v  }(\varpi_E^{\ord_v(\eta\mathfrak{a})}\gamma)+\mathbf{1}_{H(\mathfrak{a}_v)}(\varpi_E^{\ord_v(\eta\mathfrak{a})-1}\gamma\varpi_E) \big].
\end{eqnarray*}
The remainder of the proof is exactly as the proof of Proposition \ref{Prop:unramified link I}.
\end{proof}

\begin{Prop}
\label{Prop:unr split link}
Suppose $v$ is split in $E$ and $\gamma\in G(F_v)$ is nondegenerate.  Then (\ref{unr local link})  is nonzero if and only if $\ord(\eta \mathfrak{a})$ and $\ord_v(\xi \mathfrak{ae}^{-1} )$ are both nonnegative. When this is the case 
$$
  O^\gamma_U(P_{\chi,\mathfrak{a},v} ) 
 =\overline{\chi_v(\eta)}  \chi_v(\gamma^+) \cdot (1+\ord_v(\xi \mathfrak{ae}^{-1}))
\sum_{  \substack{i+j=\ord_v(\eta \mathfrak{a}) \\ i,j\ge 0} }  \alpha^i\beta^j
$$
where, under the identification $E_v^\times\iso F_v^\times\times F_v^\times$, 
$$
\alpha=\chi_v(\varpi,1)  \hspace{1cm}  \beta=\chi_v(1,\varpi).
$$
\end{Prop}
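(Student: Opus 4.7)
The approach is to mimic the template of Propositions \ref{Prop:unramified link I} and \ref{Prop:unr ram link}. By Remark \ref{gamma plus remark} we reduce to the case $\gamma^+=1$, so $\gamma = 1 + b\epsilon_v$ with $b\in E_v^\times$. Identifying $E_v\iso F_v\times F_v$, write $b=(b_1,b_2)$ and $t=(t_1,t_2)$, and use the basic identity $t^{-1}\gamma t = 1 + b(t^{-1}\bar t)\epsilon_v$; since $r:=t^{-1}\bar t$ has norm one it has the form $(r_1,r_1^{-1})$, and the coset space $F_v^\times\backslash E_v^\times/\co_{E,v}^\times$ is parametrized by $k=\ord_v(r_1)\in\Z$.

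Unfolding the definition of $P_{\chi,\mathfrak{a},v}$ gives an inner sum over $s\in E_v^\times/\co_{E,v}^\times$, parametrized by $(i,j)=(-\ord_v(s_1),-\ord_v(s_2))\in\Z^2$. Using the identity $\mathrm{N}(a+c\epsilon_v)=\mathrm{N}(a)+\mathrm{N}(\epsilon_v)\mathrm{N}(c)$ (which also shows that the norm condition is insensitive to $t$), the requirement that $s^{-1}t^{-1}\gamma t$ lie in $R_v$ with norm generating $\mathfrak{a}_v$ translates to the linear constraints
$$
i,j\ge 0,\quad i\ge -k-p,\quad j\ge k-q,\quad i+j=m,
$$
where $p=\ord_v(b_1)$, $q=\ord_v(b_2)$ and $m=\ord_v(\eta\mathfrak{a})$. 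A direct check using $\xi=\mathrm{N}(\epsilon_v)\mathrm{N}(b)\eta$ gives $p+q=n-m$ with $n=\ord_v(\xi\mathfrak{a}\mathfrak{e}^{-1})$.

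The key combinatorial observation is that for each admissible pair $(i,j)$ (those with $i,j\ge 0$ and $i+j=m$) the valid $k$ range over the interval $[-i-p,\,j+q]$, whose length $p+q+m+1=n+1$ is independent of $(i,j)$. Hence the orbital integral vanishes unless both $m\ge 0$ and $n\ge 0$, and otherwise the outer sum over $k$ simply contributes the factor $n+1$. The remaining inner sum, evaluated using unramifiedness of $\chi_v$ to write $\chi_v(s)=\alpha^{-i}\beta^{-j}$ and $\chi_v(\mathfrak{a})=(\alpha\beta)^{\ord_v(\mathfrak{a})}=(\alpha\beta)^{m-\ord_v(\eta)}$, rearranges via the symmetric substitution $(i,j)\mapsto(m-i,m-j)$ to $\overline{\chi_v(\eta)}\sum_{i+j=m,\,i,j\ge 0}\alpha^i\beta^j$, yielding the stated formula.

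The principal obstacle is the bookkeeping of the three inequalities governing $(i,j,k)$: in particular, verifying that the cardinality of the interval of admissible $k$ is uniformly $n+1$ across all choices of $(i,j)$, so that the $k$-sum factors cleanly out of the character sum and leaves a transparent expression in $\alpha,\beta$.
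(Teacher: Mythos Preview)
Your argument is correct and follows essentially the same route as the paper: reduce to $\gamma^+=1$, parametrize the double sum over $E_v^\times/\co_{E,v}^\times$ and $F_v^\times\backslash E_v^\times/\co_{E,v}^\times$ by integer pairs, translate the $R_v$-membership into the inequalities $i,j\ge 0$ and $e_{i,j}\cdot b\cdot(\text{norm-one element})\in\co_{E,v}$, and observe that the count of admissible $k$ is uniformly $\ord_v(\xi\mathfrak{ae}^{-1})+1$ so that it factors out of the character sum. The only cosmetic difference is that the paper uses explicit coset representatives $e_{k,0}$ for the outer sum while you parametrize via $r=t^{-1}\bar t$, which amounts to a sign change in $k$.
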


\begin{proof}
Write $\gamma=1+\beta\epsilon_v$ with $\beta\in E_v^\times$, so that
$$
\ord_v(\eta)=-\ord_v(\mathrm{N}(\gamma))
\mathrm{\ \ \ and\ \ \ }
\ord_v(\xi \mathfrak{e}^{-1}) = \ord_v(\eta) + \ord_v(\mathrm{N}(\beta)).
$$
For any $t\in T(F_v)$
$$
P_{\chi,\mathfrak{a},v} (t^{-1}\gamma t) = \chi_v(\mathfrak{a})
\sum_{s \in E_v^\times/\co_{E,v}^\times} \overline{\chi}_v(s) \cdot \mathbf{1}_{ H(\mathfrak{a}_v)}( s t^{-1}\gamma t),
$$
and the only terms in the final sum which may contribute are from those $s$ satisfying $\ord_v(\mathrm{N}(s))=\ord_v(\eta a)$.  Fix an isomorphism $\co_{E,v}\iso\co_{F,v}\times\co_{F,v}$ and set $e_{i,j}=(\varpi^i,\varpi^j)$.  Then
\begin{eqnarray}\label{split unramified link}
P_{\chi,\mathfrak{a},v} (t^{-1}\gamma t)= 
\chi_v(\mathfrak{a}) \sum_{i+j=\ord_v(\eta \mathfrak{a})} \alpha^{-i}\beta^{-j}
\mathbf{1}_{R_v }( e_{i,j} t^{-1}\gamma t).
\end{eqnarray}
The set $\{e_{k,0}\mid k\in\Z\}$ is a complete set of coset representatives for $F_v^\times\backslash E_v^\times/ \co_{E,v}^\times$, and 
$$
e_{k,0}^{-1} \cdot \gamma \cdot  e_{k,0}
=
e_{k,0}^{-1}(1+\beta\epsilon_v) e_{k,0}
=
1 + e_{-k,k}\beta\epsilon_v.
$$
Combining (\ref{unr local link}) and (\ref{split unramified link}) therefore gives
\begin{equation}\label{split unramified link II}
  O^\gamma_U(P_{\chi,\mathfrak{a},v} ) 
 = 
\chi_v(\mathfrak{a}) \sum_{i+j=\ord_v(\eta \mathfrak{a})}  \alpha^{-i}\beta^{-j}
\sum_{k=-\infty}^\infty
\mathbf{1}_{ R_v  }( e_{i,j} (1+e_{-k,k}\beta\epsilon_v) ).
\end{equation}
The inner sum  counts the number of $k$ such that 
$$
e_{i,j} +e_{i-k,j+k}\beta\epsilon_v \in \co_{E,v}+\co_{E,v}\epsilon_v.
$$
When $i+j=\ord_v(\eta\mathfrak{a})$ the condition $ e_{i,j}\in \co_{E,v}$ is equivalent to $0\le i,j \le \ord_v(\eta a)$, and so the outer sum may be restricted to $i,j\ge 0$.  The inner sum then counts the number of $k$ such that 
$$
e_{i-k, j+k}\beta \in \co_{E,v}.
$$
Replacing $\beta$ by an $\co_{E,v}^\times$-multiple does not change the number of such $k$, and so we may assume that $\beta=e_{s,t}$ for some $s,t\in\Z$.   The inner sum of (\ref{split unramified link II}) is then equal  to
\begin{eqnarray*}
\# \{k\in \Z \mid i-k+s\ge 0,\ j+k+t\ge 0 \}  &=& i+j+s+t+1 \\
&=& \ord_v(\eta \mathfrak{a}) + \ord_v(\mathrm{N}(\beta))+1 \\
&=& \ord_v(\xi \mathfrak{ae}^{-1})+1
\end{eqnarray*}
if $\ord_v(\xi  \mathfrak{ae}^{-1})\ge 0$, and is equal to $0$ otherwise. Thus (\ref{split unramified link II}) reduces to
\begin{equation*}
O^\gamma_U(P_{\chi,\mathfrak{a},v} ) 
 = 
\chi_v(\mathfrak{a}) (\ord_v(\xi \mathfrak{ae}^{-1})+1) \sum_{ \substack{i+j=\ord_v(\eta \mathfrak{a}) \\ i,j\ge 0}}  \alpha^{-i}\beta^{-j}
\end{equation*}
when $\ord_v(\xi \mathfrak{ae}^{-1} )\ge 0$.  Using $\chi_v(\eta \mathfrak{a})=\alpha^{i+j}\beta^{i+j}$, the proposition follows.
\end{proof}

\begin{Cor}\label{unramified links}
Suppose $v\nmid \mathrm{N}(\mathfrak{C})$,  $\gamma\in G(F_v)$ is nondegenerate, and $\mathfrak{r}$ is an ideal of $\co_F$ with $\mathfrak{r}_v=\mathfrak{e}_v$.   Then
$$
|a|_v |d|^{1/2}_v\tau_v(\gamma)
\cdot  O_U^\gamma(P_{\chi,\mathfrak{a},v} ) = 
B_v(a,\eta,\xi;\Theta_{\mathfrak{r}}) 
$$
where $\tau_v(\gamma)$ is as in Lemma \ref{global tau}.
\end{Cor}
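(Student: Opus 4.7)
The plan is to establish the identity by direct case analysis on the splitting behavior of $v$ in $E$, exploiting the fact that the hypothesis $v\nmid\mathrm{N}(\mathfrak{C})$ forces $\chi_v$ to be unramified. In each case I will compare the explicit formula for $O_U^\gamma(P_{\chi,\mathfrak{a},v})$ from Propositions \ref{Prop:unramified link I}, \ref{Prop:unr split link}, or \ref{Prop:unr ram link} against the corresponding part of Proposition \ref{kernel coefficients}, after substituting the expression for $\tau_v(\gamma)$ from Lemma \ref{global tau}. The hypothesis $\mathfrak{r}_v=\mathfrak{e}_v$ is what identifies the condition $\ord_v(\xi\mathfrak{ae}^{-1})\ge 0$ on the orbital integral side with the condition $\ord_v(\xi\mathfrak{ar}^{-1})\ge 0$ on the Whittaker side, so the two sides vanish on the same set and it suffices to match nonzero values.

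For $v$ inert or split in $E$ one has $v\nmid\mathfrak{d}$, so $|d|_v=1$, $\omega_v$ is unramified, and $\epsilon_v(1/2,\omega,\psi_v^0)=1$; hence $\tau_v(\gamma)=\omega_v(\delta)|\eta\xi|_v^{1/2}\chi_v(\eta)\overline{\chi}_v(\gamma^+)$. Multiplying the orbital integral by $|a|_v|d|_v^{1/2}\tau_v(\gamma)$, the factors $\chi_v(\eta)\overline{\chi}_v(\gamma^+)$ arising from $\tau_v$ cancel the factors $\overline{\chi}_v(\eta)\chi_v(\gamma^+)$ from the orbital integral formulas, and what remains agrees termwise with parts (1) and (2) of Proposition \ref{kernel coefficients}: the factor $\chi_v(\varpi)^{\ord_v(\eta\mathfrak{a})/2}$ in the inert case, and $(1+\ord_v(\xi\mathfrak{ar}^{-1}))\sum_{i+j=\ord_v(\eta\mathfrak{a})}\alpha^i\beta^j$ in the split case.

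For $v$ ramified in $E$ one has $\mathfrak{e}_v=\co_{F,v}$ (since the global $\mathfrak{r}=\mathfrak{mc}^2$ is coprime to $\mathfrak{d}$), so $\mathrm{N}(\epsilon_v)$ is a unit and the support conditions $\ord_v(\xi\mathfrak{ae}^{-1})\ge 0$ and $\ord_v(\xi\mathfrak{a})\ge 0$ coincide. The computation proceeds as before, but now $\tau_v(\gamma)$ retains the factor $\epsilon_v(1/2,\omega,\psi_v^0)$ and $|d|_v^{1/2}$ is nontrivial; combining these with the leading factor $2\overline{\chi}_v(\eta)\chi_v(\gamma^+)\chi_v(\varpi_E)^{\ord_v(\eta\mathfrak{a})}$ from Proposition \ref{Prop:unr ram link} and cancelling the $\chi_v$ factors at $\eta$ and $\gamma^+$ reproduces part (3) of Proposition \ref{kernel coefficients} exactly. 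The main obstacle is purely the bookkeeping of the various factors $|d|_v$, $\omega_v(\delta)$, $\epsilon_v(1/2,\omega,\psi_v^0)$, and the several powers of $\chi_v$ at $\varpi$, $\varpi_E$, $\eta$, and $\gamma^+$ across the three cases; no new ideas beyond the orbital integral calculations already carried out in \S \ref{ss:unramified local calculations} and the explicit Whittaker formulas of Propositions \ref{eisenstein coefficients} and \ref{theta coefficients} are required.
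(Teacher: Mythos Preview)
Your proof is correct and follows exactly the approach of the paper, which simply instructs the reader to compare the explicit formulas of Propositions \ref{Prop:unramified link I}, \ref{Prop:unr ram link}, and \ref{Prop:unr split link} against those of Proposition \ref{kernel coefficients}; your case-by-case verification is a faithful expansion of that one-line argument. One minor remark: your justification that $\mathfrak{e}_v=\co_{F,v}$ in the ramified case should appeal to the standing assumption in \S\ref{ss:ES} that $\mathfrak{r}$ is prime to $\mathfrak{d}$ (needed for $\Theta_{\mathfrak{r}}$ to be defined) rather than to the particular choice $\mathfrak{r}=\mathfrak{mc}^2$, which is only fixed later in \S\ref{s:central value}.
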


\begin{proof}
Propositions \ref{Prop:unramified link I}, \ref{Prop:unr ram link}, and \ref{Prop:unr split link} give explicit formulas for the left hand side, while Proposition \ref{kernel coefficients} gives  explicit formulas for the right hand side.
\end{proof}

We now turn to the calculation of  $P_{\chi,\mathfrak{a},v}(1)$ and $P_{\chi,\mathfrak{a},v}(\epsilon_v)$.

\begin{Lem}\label{unr degenerate}
Suppose that $v$ is inert in $E$.  Then
\begin{eqnarray*}
P_{\chi,\mathfrak{a},v}(1) &=&  \left\{\begin{array}{ll}
\chi_v(\varpi)^{\frac{1}{2}\ord_v(\mathfrak{a})} &\mathrm{if\ }\ord_v(\mathfrak{a})\mathrm{\ is\ even\ and\ nonnegative}\\
0&\mathrm{otherwise}
\end{array}\right.
\\
P_{\chi,\mathfrak{a},v}(\epsilon_v) &=& \left\{\begin{array}{ll}
\chi_v(\mathfrak{e}) \chi_v(\varpi)^{\frac{1}{2}\ord_v(\mathfrak{ae}^{-1})} &\mathrm{if\ }\ord_v(\mathfrak{ae}^{-1})\mathrm{\ is\ even\ and\ nonnegative}\\
0&\mathrm{otherwise.}
\end{array}\right.
\end{eqnarray*}
\end{Lem}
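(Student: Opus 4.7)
The plan is to unfold the definition of $P_{\chi,\mathfrak{a},v}$ directly: since $v$ is inert, $E_v/F_v$ is unramified quadratic, so a uniformizer $\varpi$ of $F_v$ remains a uniformizer of $E_v$. Consequently $\{\varpi^k : k\in\Z\}$ is a complete set of representatives for $E_v^\times/\co_{E,v}^\times$, and
$$
P_{\chi,\mathfrak{a},v}(g)=\chi_v(\mathfrak{a})\sum_{k\in\Z}\chi_v(\varpi)^{-k}\,\mathbf{1}_{H(\mathfrak{a}_v)}(\varpi^{-k}g).
$$

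For the first formula I would set $g=1$. The indicator $\mathbf{1}_{H(\mathfrak{a}_v)}(\varpi^{-k})$ is nonzero exactly when $\varpi^{-k}\in R_v=\co_{E,v}+\co_{E,v}\epsilon_v$ and $\mathrm{N}(\varpi^{-k})\co_{F,v}=\varpi^{-2k}\co_{F,v}=\mathfrak{a}_v$. The norm condition forces $\ord_v(\mathfrak{a})$ to be even with $k=-\ord_v(\mathfrak{a})/2$, and the containment condition $\varpi^{-k}\in\co_{E,v}$ then becomes $\ord_v(\mathfrak{a})\ge 0$. Only one $k$ contributes, and using $\chi_v(\mathfrak{a})=\chi_v(\varpi)^{\ord_v(\mathfrak{a})}$ one gets
$$
P_{\chi,\mathfrak{a},v}(1)=\chi_v(\varpi)^{\ord_v(\mathfrak{a})}\cdot\chi_v(\varpi)^{\ord_v(\mathfrak{a})/2-\ord_v(\mathfrak{a})}=\chi_v(\varpi)^{\ord_v(\mathfrak{a})/2},
$$
which matches the stated value (and vanishes otherwise).

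For the second formula I would take $g=\epsilon_v$. The element $\varpi^{-k}\epsilon_v$ lies in the summand $\co_{E,v}\epsilon_v\subset R_v$ precisely when $k\le 0$, and its reduced norm has $v$-adic order $-2k+\ord_v(\mathfrak{e})$. Thus $\mathbf{1}_{H(\mathfrak{a}_v)}(\varpi^{-k}\epsilon_v)=1$ iff $\ord_v(\mathfrak{ae}^{-1})$ is even and nonnegative, and in that case $k=-\ord_v(\mathfrak{ae}^{-1})/2$. Substituting and collecting exponents of $\chi_v(\varpi)$ (using $\chi_v(\mathfrak{a})\chi_v(\varpi)^{-k}=\chi_v(\varpi)^{(\ord_v(\mathfrak{a})+\ord_v(\mathfrak{e}))/2}=\chi_v(\mathfrak{e})\chi_v(\varpi)^{\ord_v(\mathfrak{ae}^{-1})/2}$) gives the claimed value.

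There is really no serious obstacle here; the argument is a routine bookkeeping of valuations, and the only subtlety is being careful that the inertness of $v$ is what makes the coset representatives of $E_v^\times/\co_{E,v}^\times$ purely powers of $\varpi$ (so that the sum collapses to a single term), and that the parity conditions on $\ord_v(\mathfrak{a})$ and $\ord_v(\mathfrak{ae}^{-1})$ arise from the requirement that $-2k=\ord_v(\mathfrak{a})$ or $\ord_v(\mathfrak{ae}^{-1})$ have an integer solution.
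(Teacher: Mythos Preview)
Your argument is correct and is essentially identical to the paper's own proof: both unfold the definition of $P_{\chi,\mathfrak{a},v}$ using powers of $\varpi$ as coset representatives for $E_v^\times/\co_{E,v}^\times$, then read off the unique contributing $k$ from the norm condition. One small slip: in your displayed formula the character exponent should be $\chi_v(\varpi)^{k}$ rather than $\chi_v(\varpi)^{-k}$ (since $t=\varpi^k$ gives $\chi_v(t)=\chi_v(\varpi)^k$ and $t^{-1}g=\varpi^{-k}g$); your subsequent computations are in fact consistent with the correct sign, so this is only a typo.
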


\begin{proof}
Exactly as in Proposition \ref{Prop:unramified link I}
$$
P_{\chi,\mathfrak{a},v}(g)=
\chi_v(\mathfrak{a}) \sum_{k=-\infty}^\infty \chi_v(\varpi)^{-k} \mathbf{1}_{H(\mathfrak{a}_v)}(\varpi^{k} g).
$$
If $g=1$ then  $\ord_v(\mathrm{N}(g))=0$ and the only contribution to the final sum is when $2k=\ord_v(\mathfrak{a})$.  Thus we may assume that $\ord_v(\mathfrak{a})$ is even, leaving
$$
P_{\chi,\mathfrak{a},v}(1)  = 
 \chi_v(\varpi)^{\frac{1}{2}\ord_v(a) } \mathbf{1}_{R_v}(\varpi^{\frac{1}{2}\ord_v(a) })
$$
which proves the first claim.  If $g=\epsilon_v$  then $\ord_v(\mathrm{N}(g))= \ord_v(\mathfrak{e})$ and the only contribution to the above sum is for $k$ satisfying  $2k+\ord_v(\mathfrak{e})=\ord_v(\mathfrak{a})$.  Thus we may assume $\ord_v(\mathfrak{ae}^{-1})$ is even, leaving
$$
P_{\chi,\mathfrak{a},v} (\epsilon_v)=\chi_v(\mathfrak{a})
\chi_v(\varpi)^{-\frac{1}{2}\ord_v(\mathfrak{ae}^{-1}) } \mathbf{1}_{R_v}(\varpi^{\frac{1}{2}\ord_v(\mathfrak{ae}^{-1}) } \epsilon_v)
$$
which proves the second claim.
\end{proof}

\begin{Lem}\label{unr degenerate II}
Suppose that $v$ is ramified in $E$, and let $\varpi_E$ be a uniformizer of $E_v$.  Then
\begin{eqnarray*}
P_{\chi,\mathfrak{a},v}(1) &=&  \left\{\begin{array}{ll}
\chi_v(\varpi_E)^{\ord_v(\mathfrak{a})} &\mathrm{if\ }\ord_v(\mathfrak{a})\ge 0\\
0&\mathrm{otherwise}
\end{array}\right.
\\
P_{\chi,\mathfrak{a},v}(\epsilon_v) &=& \left\{\begin{array}{ll}
\chi_v(\mathfrak{e}) \chi_v(\varpi_E)^{\ord_v(\mathfrak{ae}^{-1})} &\mathrm{if\ }\ord_v(\mathfrak{ae}^{-1})\ge 0 \\
0&\mathrm{otherwise.}
\end{array}\right.
\end{eqnarray*}
\end{Lem}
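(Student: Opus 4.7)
My plan is to mimic the proof of Lemma \ref{unr degenerate} (the inert case) essentially verbatim, with the uniformizer $\varpi$ of $F_v$ replaced by the uniformizer $\varpi_E$ of $E_v$, using the fact that in the ramified case $E_v^\times/\co_{E,v}^\times$ is a free abelian group on the single generator $\varpi_E$ (rather than on $\varpi$, as in the inert case).

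Concretely, I would begin by rewriting
$$
P_{\chi,\mathfrak{a},v}(g) = \chi_v(\mathfrak{a})\sum_{k=-\infty}^{\infty}\chi_v(\varpi_E)^{-k}\mathbf{1}_{H(\mathfrak{a}_v)}(\varpi_E^{k}g),
$$
using $\varpi_E^{-k}$ as coset representatives for $E_v^\times/\co_{E,v}^\times$. For $g=1$, the only potentially nonzero term occurs for $k$ satisfying $\ord_v(\mathrm{N}(\varpi_E^{k}))=\ord_v(\mathfrak{a})$; since the extension is ramified, $\ord_v(\mathrm{N}(\varpi_E))=1$, so this pins down $k=\ord_v(\mathfrak{a})$ (rather than $k=\frac{1}{2}\ord_v(\mathfrak{a})$ as in the inert case, and with no parity constraint). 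The resulting term is nonzero precisely when $\varpi_E^{\ord_v(\mathfrak{a})}\in R_v$, i.e. when $\ord_v(\mathfrak{a})\ge 0$. For $g=\epsilon_v$, an identical argument with $\ord_v(\mathrm{N}(\varpi_E^{k}\epsilon_v))=\ord_v(\mathfrak{a})$ forces $k=\ord_v(\mathfrak{ae}^{-1})$, with the condition $\varpi_E^{\ord_v(\mathfrak{ae}^{-1})}\epsilon_v\in R_v$ reducing to $\ord_v(\mathfrak{ae}^{-1})\ge 0$.

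The final step is cosmetic bookkeeping: since $v\nmid\mathrm{N}(\mathfrak{C})$, the character $\chi_v$ is unramified on $E_v^\times$, so using $\varpi = u\varpi_E^{2}$ for some $u\in\co_{E,v}^\times$ we have $\chi_v(\varpi)=\chi_v(\varpi_E)^{2}$, hence
$$
\chi_v(\mathfrak{a})=\chi_v(\varpi_E)^{2\ord_v(\mathfrak{a})},
\qquad
\chi_v(\mathfrak{e})=\chi_v(\varpi_E)^{2\ord_v(\mathfrak{e})}.
$$
Substituting these into $\chi_v(\mathfrak{a})\chi_v(\varpi_E)^{-\ord_v(\mathfrak{a})}$ and $\chi_v(\mathfrak{a})\chi_v(\varpi_E)^{-\ord_v(\mathfrak{ae}^{-1})}$ produces the stated formulas $\chi_v(\varpi_E)^{\ord_v(\mathfrak{a})}$ and $\chi_v(\mathfrak{e})\chi_v(\varpi_E)^{\ord_v(\mathfrak{ae}^{-1})}$, respectively.

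There is really no main obstacle here: the calculation is mechanical once one uses the correct uniformizer for the ramified local extension. The only step that requires a moment of care is the conversion between the evaluations $\chi_v(\mathfrak{a}),\chi_v(\mathfrak{e})$ (which are pinned to the $F_v$-uniformizer $\varpi$) and powers of $\chi_v(\varpi_E)$, which is why unramifiedness of $\chi_v$ at $v$ is essential. In fact one could alternatively invoke Remark \ref{gamma plus remark}'s philosophy and simply note that the argument of Lemma \ref{unr degenerate} applies verbatim once ``$\varpi$'' is everywhere reinterpreted as the uniformizer $\varpi_E$ of $E_v$.
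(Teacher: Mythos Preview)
Your proposal is correct and is exactly the approach the paper takes: the paper's own proof simply reads ``The proof is nearly identical to that of Lemma \ref{unr degenerate}, and the details are omitted.'' You have spelled out those details accurately, including the key point that in the ramified case $E_v^\times/\co_{E,v}^\times$ is generated by $\varpi_E$ with $\ord_v(\mathrm{N}(\varpi_E))=1$, so no parity constraint arises, and your final bookkeeping converting $\chi_v(\mathfrak{a})$ into powers of $\chi_v(\varpi_E)$ via unramifiedness of $\chi_v$ is correct.
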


\begin{proof}
The proof is nearly identical to that of Lemma \ref{unr degenerate}, and the details are omitted.
\end{proof}

\begin{Lem}\label{unr degenerate III}
Suppose that $v$  is split in $E$, and let $\alpha$ and $\beta$ be as in Proposition \ref{Prop:unr split link}. Then
\begin{eqnarray*}
 P_{\chi,\mathfrak{a},v}(1)  &=& \sum_{\substack{i+j=\ord_v(\mathfrak{a}) \\ i,j\ge 0}} \alpha^i\beta^j  \\
 P_{\chi,\mathfrak{a},v}(\epsilon_v)
&=& \chi_v(\mathfrak{e}) \sum_{\substack{i+j=\ord_v(\mathfrak{ae}^{-1}) \\ i,j\ge 0}} \alpha^i\beta^j.
\end{eqnarray*}
\end{Lem}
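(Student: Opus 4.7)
The plan is to follow the same two-step template used for Lemmas \ref{unr degenerate} and \ref{unr degenerate II}, adapted to the split case where $E_v^\times/\co_{E,v}^\times \cong \Z\times\Z$ rather than $\Z$. Using the identification $\co_{E,v}\iso \co_{F,v}\times\co_{F,v}$ of the proof of Proposition \ref{Prop:unr split link}, set $e_{i,j}=(\varpi^i,\varpi^j)$ so that $\{e_{i,j}\}$ is a complete set of representatives for $E_v^\times/\co_{E,v}^\times$, and $\chi_v(e_{i,j})=\alpha^i\beta^j$. The definition of $P_{\chi,\mathfrak{a},v}$ then reads
$$
P_{\chi,\mathfrak{a},v}(g)
= \chi_v(\mathfrak{a})\sum_{(i,j)\in\Z^2}\alpha^i\beta^j\,\mathbf{1}_{H(\mathfrak{a}_v)}\!\bigl(e_{-i,-j}\,g\bigr).
$$

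First I would take $g=1$. The condition $e_{-i,-j}\in H(\mathfrak{a}_v)=R_v\cap\{N=\mathfrak{a}_v\}$ forces $e_{-i,-j}\in\co_{E,v}$, i.e.\ $i,j\le 0$, together with $\ord_v(\mathrm{N}_{E_v/F_v}(e_{-i,-j}))=-(i+j)=\ord_v(\mathfrak{a})$. Reindexing by $k=-i$, $l=-j\ge 0$ and using the split-case identity $\chi_v(\mathfrak{a})=(\alpha\beta)^{\ord_v(\mathfrak{a})}$, each surviving term becomes
$$
\chi_v(\mathfrak{a})\alpha^{-k}\beta^{-l} = \alpha^{l}\beta^{k}.
$$
Relabeling $(i,j)=(l,k)$ produces exactly the stated symmetric sum $\sum_{i+j=\ord_v(\mathfrak{a}),\, i,j\ge 0}\alpha^{i}\beta^{j}$; when $\ord_v(\mathfrak{a})<0$ the sum is empty, giving $0$.

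Next I would take $g=\epsilon_v$. Since $R_v=\co_{E,v}+\co_{E,v}\epsilon_v$ with $B_v^+\oplus B_v^-$ decomposition, $e_{-i,-j}\epsilon_v\in R_v$ is again equivalent to $e_{-i,-j}\in\co_{E,v}$, i.e.\ $i,j\le 0$. The reduced-norm computation $\mathrm{N}(b\epsilon_v)=\mathrm{N}_{E_v/F_v}(b)\mathrm{N}(\epsilon_v)$ (valid because $B_v^-$ is trace-free) then gives $\ord_v(\mathrm{N}(e_{-i,-j}\epsilon_v))=-(i+j)+\ord_v(\mathfrak{e})$, so the norm condition becomes $-(i+j)=\ord_v(\mathfrak{ae}^{-1})$. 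Using $\chi_v(\mathfrak{a})=\chi_v(\mathfrak{e})(\alpha\beta)^{\ord_v(\mathfrak{ae}^{-1})}$ and the same reindexing $k=-i$, $l=-j$ followed by swap, the sum collapses to $\chi_v(\mathfrak{e})\sum_{i+j=\ord_v(\mathfrak{ae}^{-1}),\,i,j\ge 0}\alpha^{i}\beta^{j}$, vanishing when $\ord_v(\mathfrak{ae}^{-1})<0$.

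There is no real obstacle here; the split case is structurally simpler than Propositions \ref{Prop:unramified link I} and \ref{Prop:unr ram link} because we are only evaluating $P_{\chi,\mathfrak{a},v}$ on $1$ and $\epsilon_v$ (the two degenerate $T(F_v)$ orbits), so no double-coset orbit parameter $\beta$ enters. The only bookkeeping point requiring care is the passage from $\chi_v(\mathfrak{a})\alpha^{-k}\beta^{-l}$ to $\alpha^l\beta^k$ and then to the symmetric form in the statement, which relies on the local character identity $\chi_v(\mathfrak{a})=(\alpha\beta)^{\ord_v(\mathfrak{a})}$ that follows from viewing the image of $\varpi\in F_v^\times$ in $E_v^\times$ as $(\varpi,\varpi)$.
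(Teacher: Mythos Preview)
Your proof is correct and follows essentially the same approach as the paper: both parametrize $E_v^\times/\co_{E,v}^\times$ by the lattice $\{e_{i,j}\}$, reduce the condition $e_{i,j}g\in H(\mathfrak{a}_v)$ to a membership-in-$\co_{E,v}$ condition plus a norm constraint on $i+j$, and then use $\alpha\beta=\chi_v(\varpi)$ to rewrite $\chi_v(\mathfrak{a})\alpha^{-k}\beta^{-l}$ in the symmetric form. The paper's version is slightly more compressed (it writes the single unified formula $P_{\chi,\mathfrak{a},v}(g)=\chi_v(\mathfrak{a})\sum_{i+j=\ord_v(\mathfrak{a})-\ord_v(\mathrm{N}(g))}\alpha^{-i}\beta^{-j}\mathbf{1}_{R_v}(e_{i,j}g)$ and then specializes), but the content is identical.
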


\begin{proof}
On the right hand side of
$$
P_{\chi,\mathfrak{a},v}(g ) =
\chi_v(\mathfrak{a}) \sum_{t \in E_v^\times/\co_{E,v}^\times} \overline{\chi}_v(t) \mathbf{1}_{H(\mathfrak{a}_v)}( t g )
$$
the only terms which may contribute are from those $t$ satisfying $$\ord_v(\mathrm{N}(t))=\ord_v(\mathfrak{a})-\ord_v(\mathrm{N}(g)).$$  Fix an isomorphism $\co_{E,v}\iso\co_{F,v}\times\co_{F,v}$ and set $e_{i,j}=(\varpi^i,\varpi^j)$.  Then
\begin{equation*}
P_{\chi,\mathfrak{a},v}(g )= \chi_v(\mathfrak{a})
\sum_{i+j=\ord_v(a)-\ord_v(\mathrm{N}(g))} \alpha^{-i} \beta^{-j}
\mathbf{1}_{ R_v }(e_{i,j} g ).
\end{equation*}
The lemma follows easily from this equality, using $\alpha\beta=\chi_v(\varpi)$.
\end{proof}

\begin{Cor}\label{unr final degen}
Suppose $v$ does not divide $\mathrm{N}(\mathfrak{C})$ and that $a\in\A^\times$ satisfies $a\co_F=\mathfrak{a}$.  Then
$$
P_{\chi,\mathfrak{a},v }(1) = |a|_v^{-1/2} B_v(a;\theta).
$$
 If we pick $e\in\A^\times$ such that $e\co_F=\mathfrak{e}$ then
$$
P_{\chi,\mathfrak{a},v }(\epsilon_v) =  \chi_v(e) |e|_v^{1/2} |a|_v^{-1/2} B_v(ae^{-1};\theta)
$$
\end{Cor}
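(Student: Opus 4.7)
The plan is to verify the two stated equalities by a direct case-by-case comparison of the explicit formulas already in hand. Since $v\nmid \mathrm{N}(\mathfrak{C})$, the character $\chi_v$ is unramified at every place of $E$ above $v$, so $\chi_v$ depends only on valuations; in particular the notation $\chi_v(\mathfrak{e})$ in Lemmas \ref{unr degenerate}--\ref{unr degenerate III} is well-defined and agrees with $\chi_v(e)$ for any $e\in\A^\times$ generating $\mathfrak{e}_v$. I would split into three cases according to the behavior of $v$ in $E$: inert, ramified, split. In each case, the relevant lemma among \ref{unr degenerate}, \ref{unr degenerate II}, \ref{unr degenerate III} computes $P_{\chi,\mathfrak{a},v}(1)$ and $P_{\chi,\mathfrak{a},v}(\epsilon_v)$, while the matching case of Proposition \ref{theta coefficients} computes $B_v(a;\theta)$.

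For the first identity, I would factor $|a|_v^{1/2}$ out of $B_v(a;\theta)$ and observe that the remaining expression is literally the formula given for $P_{\chi,\mathfrak{a},v}(1)$: in the inert case both sides reduce to $\chi_v(\varpi)^{\ord_v(\mathfrak{a})/2}$ (with identical parity and nonnegativity constraints on $\ord_v(\mathfrak{a})$); in the ramified case both sides reduce to $\chi_v(\varpi_E)^{\ord_v(\mathfrak{a})}$; in the split case both reduce to $\sum_{i+j=\ord_v(\mathfrak{a}),\ i,j\ge 0}\alpha^i\beta^j$. For the second identity, I would substitute $a \mapsto ae^{-1}$ into the same formula for $B_v(\cdot;\theta)$, noting that the prefactor $\chi_v(e)|e|_v^{1/2}|a|_v^{-1/2}$ absorbs $|ae^{-1}|_v^{1/2}$ to leave just $\chi_v(e)$ times a $\chi_v$-factor in $\ord_v(ae^{-1})=\ord_v(\mathfrak{a}\mathfrak{e}^{-1})$. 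In the inert and ramified cases this immediately matches the lemma. In the split case one uses that $e\in F_v^\times$ sits diagonally in $E_v^\times\iso F_v^\times\times F_v^\times$, so $\chi_v(e)=(\alpha\beta)^{\ord_v(\mathfrak{e})}$, and
$$(\alpha\beta)^{\ord_v(\mathfrak{e})}\sum_{\substack{i+j=\ord_v(\mathfrak{a}\mathfrak{e}^{-1})\\ i,j\ge 0}} \alpha^i\beta^j = \chi_v(\mathfrak{e})\sum_{\substack{i+j=\ord_v(\mathfrak{a}\mathfrak{e}^{-1})\\ i,j\ge 0}} \alpha^i\beta^j,$$
which is exactly the expression produced by Lemma \ref{unr degenerate III}.

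There is no substantive obstacle; the computation is pure bookkeeping. The only care needed is to confirm that the vanishing conditions on both sides agree in each case (parity of $\ord_v(\mathfrak{a})$ for $v$ inert, nonnegativity otherwise), but these are identical in Proposition \ref{theta coefficients} and in Lemmas \ref{unr degenerate}--\ref{unr degenerate III}, so the identities hold trivially on the support of either side and both vanish simultaneously off that support.
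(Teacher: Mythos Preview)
Your proposal is correct and is exactly the approach the paper takes: its proof is the single line ``Compare Lemmas \ref{unr degenerate}, \ref{unr degenerate II}, and \ref{unr degenerate III} with Proposition \ref{theta coefficients},'' and you have simply written out that comparison case by case. There is nothing to add.
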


\begin{proof}
Compare Lemmas \ref{unr degenerate}, \ref{unr degenerate II}, and \ref{unr degenerate III}  with Proposition \ref{theta coefficients}.
\end{proof}


\subsection{Local calculations at primes dividing $\mathrm{N}(\mathfrak{C})$}
\label{ss:ramified local calculations}


Let $v$ be a finite place of $F$ dividing $\mathrm{N}(\mathfrak{C})$ (in particular $v\nmid\mathfrak{d}$).  Assume that 
\begin{equation}
\label{a norm inequality}
\ord_v(\mathrm{N}(\mathfrak{C}))\le\ord_v(\mathfrak{e})
\end{equation}
and let $U_v\subset R_v^\times$ be the kernel of the homomorphism $R_v^\times\map{}(\co_{E,v}/\mathfrak{C}_v)^\times$ given by $x+y\epsilon_v\mapsto x$. Define a function $P_{\chi,v}$ on $G(F_v)$ by
$$
P_{\chi,v}(g)=\sum_{t\in E_v^\times/ U_{T,v}} \chi_v(t)\mathbf{1}_{U_v}(t^{-1}g).
$$
For each nondegenerate $\gamma\in G(F_v)$ we wish to compute the orbital integral
\begin{equation}
\label{ram local link}
 O^\gamma_U(P_{\chi,v} ) =
 \sum_{ t\in F_v^\times \backslash E_v^\times /  U_{T,v} }  
 P_{\chi,v} ( t^{-1} \gamma t).
\end{equation}
In accordance with Remark \ref{gamma plus remark} we will state the results for any nondegenerate $\gamma$ but will assume in the proofs that $\gamma^+=1$ and write $\gamma=1+\beta \epsilon_v$ with $\beta\in E_v^\times$.

\begin{Prop}\label{ram link I}
Suppose $v$ is inert in $E$ and $\gamma\in G(F_v)$ is nondegenerate.  Then  (\ref{ram local link}) is nonzero if and only if $\ord_v(\eta)=0$ and $\ord_v(\xi \mathfrak{e}^{-1})$ is even and nonnegative.  When this is the case 
 $$ 
 O^\gamma_U(  P_{\chi,v} ) = [\co_{E,v}^\times:\co_{F,v}^\times  U_{T,v}]\cdot \chi_v(\gamma^+).
 $$
\end{Prop}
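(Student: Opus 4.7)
The plan is to follow the pattern of Propositions \ref{Prop:unramified link I}--\ref{Prop:unr split link}: first reduce via Remark \ref{gamma plus remark} to the case $\gamma^+=1$, writing $\gamma=1+\beta\epsilon_v$ with $\beta\in E_v^\times$, so that the general case picks up only the expected factor $\chi_v(\gamma^+)$. Using the relation $\epsilon_v t=\bar t\epsilon_v$, one finds $t^{-1}\gamma t=1+(\beta\bar t/t)\epsilon_v$. The key observation is that since $v$ is inert in $E$, the extension $E_v/F_v$ is unramified, and the norm-one element $\bar t/t$ lies in $\co_{E,v}^\times$. Consequently the condition ``$\beta\bar t/t\in\co_{E,v}$'' is equivalent to ``$\beta\in\co_{E,v}$'', \emph{independently of $t$}. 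This means every term in the orbital integral will take the same value, so the answer will be the common value times the number of $t$-cosets.

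Next I would evaluate $P_{\chi,v}(1+\beta'\epsilon_v)$ at an arbitrary such element. Expanding the definition as a sum over $s\in E_v^\times/U_{T,v}$, the condition $s^{-1}(1+\beta'\epsilon_v)\in U_v$ forces two things. Using hypothesis (\ref{a norm inequality}), which guarantees that $\mathrm N(\epsilon_v)\in\varpi\co_{F,v}$ and hence that $R_v^\times=\{x+y\epsilon_v:x\in\co_{E,v}^\times,\,y\in\co_{E,v}\}$, one sees the condition becomes both $s^{-1}\in 1+\mathfrak C_v$ and $s^{-1}\beta'\in\co_{E,v}$. The first forces $s\in U_{T,v}$, collapsing the sum to the identity coset contribution; the second then reads exactly $\beta'\in\co_{E,v}$. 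Thus $P_{\chi,v}(1+\beta'\epsilon_v)=\mathbf{1}_{\co_{E,v}}(\beta')$.

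Combining these two steps, the orbital integral equals $\mathbf{1}_{\co_{E,v}}(\beta)\cdot\#(F_v^\times\backslash E_v^\times/U_{T,v})$; since $E_v/F_v$ is unramified, this double coset space identifies canonically with $\co_{F,v}^\times\backslash\co_{E,v}^\times/U_{T,v}$, of cardinality $[\co_{E,v}^\times:\co_{F,v}^\times U_{T,v}]$. To conclude I would translate ``$\beta\in\co_{E,v}$'' into the invariant conditions of the statement: using $\mathrm N(\gamma)=1+\mathrm N(\beta)\mathrm N(\epsilon_v)$ together with $\ord_v(\mathrm N(\epsilon_v))\ge\ord_v(\mathrm N(\mathfrak C))>0$, one sees $\beta\in\co_{E,v}$ forces $\mathrm N(\gamma)\in\co_{F,v}^\times$ and hence $\ord_v(\eta)=0$, $\ord_v(\xi\mathfrak e^{-1})=2\ord_v(\beta)\ge 0$; the converse is a direct reversal of these inequalities.

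The main obstacle is the second step: the reason only the trivial coset contributes is genuinely dependent on hypothesis (\ref{a norm inequality}), which promotes the constraint $s^{-1}\in R_v^\times$ to the much stronger $s^{-1}\in\co_{E,v}^\times$. This contrasts sharply with the unramified computations of \S \ref{ss:unramified local calculations}, where the analogous sums run over many values of differing valuation and produce the nontrivial $\chi_v(\varpi)^{\frac{1}{2}\ord_v(\eta\mathfrak a)}$ factor of Proposition \ref{Prop:unramified link I}; here the ramification of $\chi_v$ is precisely what kills all but one term.
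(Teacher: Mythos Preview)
Your proposal is correct and follows essentially the same approach as the paper's proof. Both reduce to $\gamma^+=1$, use the description $U_v=U_{T,v}+\co_{E,v}\epsilon_v$ to collapse the sum over $s$ to the single coset $s\in U_{T,v}$, observe that the remaining sum over $t\in \co_{F,v}^\times\backslash \co_{E,v}^\times/U_{T,v}$ is constant (since $t^{-1}\bar t\in\co_{E,v}^\times$ when $v$ is inert), and then translate the condition $\beta\in\co_{E,v}$ into the stated conditions on $\eta$ and $\xi\mathfrak{e}^{-1}$ using $\eta+\xi=1$ and $\ord_v(\mathfrak{e})>0$.
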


\begin{proof}
In this case (\ref{ram local link}) gives
\begin{eqnarray*}
O^\gamma_U( P_{\chi,v} )  &=&
 \sum_{ t\in \co_{F,v}^\times \backslash \co_{E,v}^\times /  U_{T,v}}  P_{\chi,v}(t^{-1}\gamma t) \\
&=&
 \sum_{ t\in \co_{F,v}^\times \backslash \co_{E,v}^\times / U_{T,v}}  
   \sum_{s\in E_v^\times/ U_{T,v}}  \chi_v(s) \mathbf{1}_{ U_v} ( s^{-1}t^{-1} \gamma t). 
\end{eqnarray*}
As $U_v=U_{T,v}+\co_{E,v}\epsilon_v$,
the only way that $s^{-1}t^{-1} \gamma t=s^{-1}(1+t^{-1}\overline{t}\beta\epsilon_v)$ can lie in $U_v$ is if $s\in U_{T,v}$.  Therefore only the term $s=1$ contributes to the inner sum, leaving
$$
O^\gamma_U( P_{\chi,v} )  =  \sum_{ t\in \co_{F,v}^\times \backslash \co_{E,v}^\times / U_{T,v} }  
 \mathbf{1}_{ U_v} (1+ t^{-1} \overline{t}\beta\epsilon_v).
$$
If $\ord_v(\mathrm{N}(\beta))\ge 0$ then every term in the sum is $1$, and otherwise every term is $0$.
As
$$
\ord_v(\xi\mathfrak{e}^{-1}) = \ord_v(\eta) + \ord_v(\mathrm{N}(\beta))
$$
the condition $\ord_v(\mathrm{N}(\beta))\ge 0$ is equivalent to $\ord_v(\xi\mathfrak{e}^{-1}) \ge \ord_v(\eta)$, and using $\eta+\xi=1$ and $\ord_v(\mathfrak{e})>0$
$$
\ord_v(\xi\mathfrak{e}^{-1}) \ge \ord_v(\eta) \iff \ord_v(\eta)=0\mathrm{\ and\ }\ord_v(\xi\mathfrak{e}^{-1})\ge 0.
$$
\end{proof}

\begin{Prop}\label{ram link II}
Suppose $v$ is split  in $E$ and $\gamma\in G(F_v)$ is nondegenerate.  Then (\ref{ram local link}) is nonzero if and only if $\ord_v(\eta)=0$ and $\ord_v(\xi \mathfrak{e}^{-1})\ge 0$.  When this is the case
$$ 
O^\gamma_U(  P_{\chi,v} )=
[\co_{E,v}^\times:\co_{F,v}^\times U_{T,v}]  \cdot  \chi_v(\gamma^+)(1+ \ord_v(\xi \mathfrak{e}^{-1})).
$$
\end{Prop}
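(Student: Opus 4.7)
The plan is to adapt the argument of Proposition \ref{ram link I} for the inert case, with the essential new ingredient being the combinatorics arising from the splitting $E_v \cong F_v \times F_v$.  By Remark \ref{gamma plus remark} I may assume $\gamma^+ = 1$, so that $\gamma = 1 + \beta\epsilon_v$ with $\beta \in E_v^\times$.  Writing $\tau_t = t^{-1}\overline{t}$ and using $\epsilon_v e = \overline{e}\epsilon_v$ for $e \in E_v$, one has $t^{-1}\gamma t = 1 + \beta\tau_t\epsilon_v$, so (\ref{ram local link}) becomes
$$
O^\gamma_U(P_{\chi,v}) = \sum_{t \in F_v^\times \backslash E_v^\times / U_{T,v}} \; \sum_{s \in E_v^\times / U_{T,v}} \chi_v(s)\, \mathbf{1}_{U_v}\bigl(s^{-1} + s^{-1}\beta\tau_t\epsilon_v\bigr).
$$
Exactly as in the proof of Proposition \ref{ram link I}, the decomposition of an element of $U_v$ into its $E_v$-part and $E_v\epsilon_v$-part forces $s^{-1} \in U_{T,v}$; hence only the class $s = 1$ contributes, and the remaining characteristic function reduces to the single condition $\beta\tau_t \in \co_{E,v}$.

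Next, fix an isomorphism $E_v \cong F_v\times F_v$ under which the nontrivial Galois involution is the coordinate swap, write $\beta = (\beta_1,\beta_2)$, and write the conductor as $\mathfrak{C}_v = (\varpi^{c_1},\varpi^{c_2})$, so that $U_{T,v} = U_{c_1}\times U_{c_2}$ with $U_c = 1+\varpi^c\co_{F,v}$.  For $t = (t_1,t_2)$ one has $\tau_t = (a,a^{-1})$ with $a = t_2/t_1$, so the map $t\mapsto a$ identifies $F_v^\times\backslash E_v^\times$ with $F_v^\times$, under which the right action of $u = (u_1,u_2) \in U_{T,v}$ scales $a$ by $u_2/u_1$.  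The set of such ratios is the subgroup $U_{c_1}U_{c_2} = U_{\min(c_1,c_2)}$ of $\co_{F,v}^\times$, so equivalence classes of $a$ are parameterized by $F_v^\times/U_{\min(c_1,c_2)}$; a parallel direct computation gives $[\co_{E,v}^\times : \co_{F,v}^\times U_{T,v}] = [\co_{F,v}^\times : U_{\min(c_1,c_2)}]$.  The condition $\beta\tau_t = (\beta_1 a,\beta_2/a) \in \co_{E,v}$ becomes $-\ord_v(\beta_1) \le \ord_v(a) \le \ord_v(\beta_2)$, a range of integers that is nonempty precisely when $\ord_v(\mathrm{N}(\beta)) \ge 0$ and then contains exactly $\ord_v(\mathrm{N}(\beta))+1$ values of $\ord_v(a)$, each contributing $[\co_{F,v}^\times : U_{\min(c_1,c_2)}]$ representative classes.

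Finally, hypothesis (\ref{a norm inequality}) gives $\ord_v(\mathrm{N}(\epsilon_v)) \ge \ord_v(\mathrm{N}(\mathfrak{C})) = c_1+c_2 > 0$, so whenever $\ord_v(\mathrm{N}(\beta))\ge 0$ the identity $\mathrm{N}(\gamma) = 1+\mathrm{N}(\beta)\mathrm{N}(\epsilon_v)$ shows that $\mathrm{N}(\gamma) \in \co_{F,v}^\times$; this forces $\ord_v(\eta) = 0$ and $\ord_v(\xi\mathfrak{e}^{-1}) = \ord_v(\mathrm{N}(\beta))$, matching the statement.  Restoring the factor $\chi_v(\gamma^+)$ for general $\gamma$ via Remark \ref{gamma plus remark} yields the claimed formula.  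The principal obstacle is the counting step of the previous paragraph: unlike the inert setting, where all relevant $t$ can be taken to be units and the answer is essentially a single index, the split structure permits $\tau_t$ to have arbitrary valuation, so one must carefully identify the subgroup of $F_v^\times$ governing the $U_{T,v}$-action on $a$ and then match the resulting count against the index $[\co_{E,v}^\times:\co_{F,v}^\times U_{T,v}]$ appearing in the statement.
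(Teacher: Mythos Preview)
Your proof is correct and follows essentially the same approach as the paper's own argument.  Both proofs reduce to the condition $\beta\,t^{-1}\overline{t}\in\co_{E,v}$ after showing only $s=1$ contributes, and both count solutions by separating the valuation of $t^{-1}\overline{t}$ (your $\ord_v(a)$, the paper's index $k$ via representatives $e_{k,0}$) from the unit part, which furnishes the factor $[\co_{E,v}^\times:\co_{F,v}^\times U_{T,v}]$.  The only cosmetic difference is that the paper obtains this index directly from the double-sum decomposition $F_v^\times\backslash E_v^\times/U_{T,v}\cong\bigsqcup_k\{e_{k,0}\}\times(\co_{F,v}^\times\backslash\co_{E,v}^\times/U_{T,v})$, whereas you pass through the identification $[\co_{E,v}^\times:\co_{F,v}^\times U_{T,v}]=[\co_{F,v}^\times:U_{\min(c_1,c_2)}]$ via the ratio map; the final count and the verification that $\ord_v(\eta)=0$ are identical.
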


\begin{proof}
Using the notation of Proposition \ref{Prop:unr split link}, so that $e_{i,j}=(\varpi^i,\varpi^j)$,
for any $t\in T(F_v)$ we have
\begin{eqnarray*}
 P_{\chi,v} (t^{-1}\gamma t) &=& \sum_{s\in E_v^\times/  U_{T,v}  } \overline{\chi}_v(s)  \cdot \mathbf{1}_{U_v} (st^{-1}\gamma t) \\
 &=&  \sum_{i,j\in\Z} \sum_{s\in \co_{E,v}^\times/  U_{T,v}}  \overline{\chi}_v(se_{i,j}) \cdot  \mathbf{1}_{U_v} (se_{i,j}  (1+ t^{-1}\overline{t} \beta\epsilon_v ) )
\end{eqnarray*}
As $U_v= U_{T,v}+\co_{E,v}\epsilon_v$, only terms for which $se_{i,j}\in  U_{T,v}$ can contribute to the inner sum, and so the only nonzero term can be the one with $i=j=0$ and $s\in  U_{T,v}$.  This leaves
$$
 P_\chi (t^{-1}\gamma t)=  \mathbf{1}_{U_v} (1+ t^{-1}\overline{t} \beta\epsilon_v )
$$
and so (\ref{ram local link}) becomes
\begin{eqnarray*}
O^\gamma_U(  P_{\chi,v} ) &=& \sum_{t\in F_v^\times\backslash E_v^\times/ U_{T,v}}  \mathbf{1}_{U_v} (1+t^{-1}\overline{t} \beta\epsilon_v )\\
&=& \sum_{k=-\infty}^\infty \sum_{t\in \co_{F,v}^\times\backslash \co_{E,v}^\times/ U_{T,v} }  \mathbf{1}_{U_v} (1+ e_{-k,k} t^{-1}\overline{t} \beta\epsilon_v )\\
&=&   [\co_{E,v}^\times:  \co_{F,v}^\times U_{T,v}  ] \cdot  \sum_{k=-\infty}^\infty   \mathbf{1}_{U_v} (1+ e_{-k,k}  \beta\epsilon_v ).
\end{eqnarray*}
Every term in the final sum is $0$ unless the quantity 
$$
\mathrm{N}(1+e_{-k,k}\beta\epsilon_v)=\mathrm{N}(\gamma)=\eta^{-1}
$$
lies in $\co_F^\times$.  Thus we may assume $\ord_v(\eta)=0$, so that the sum simply counts the number of $k$ for which  $e_{-k,k}\beta\in \co_{E,v}.$
Multiplying $\beta$ by an element of $\co_{E,v}^\times$, we may assume that $\beta=e_{s,t}$ for some $s,t\in\Z$.  The $k$ for which $e_{-k,k}\beta\in \co_{E,v}$ holds are then precisely those for which $s-k\ge 0$ and $t+k\ge 0$, and there are
$$
s+t+1=\ord_v(\mathrm{N}(\beta))+1=\ord_v(\xi\mathfrak{e}^{-1})+1
$$
such $k$ if $\ord_v(\xi\mathfrak{e}^{-1})\ge 0$, and no such $k$ otherwise.
\end{proof}

\begin{Cor}\label{ramified links}
Suppose $v$ divides $\mathrm{N}(\mathfrak{C})$ and  $\gamma$ is nondegenerate. Then
$$
\tau_v(\gamma)\cdot O_U^\gamma (P_{\chi,v} ) =  [\co_{E,v}^\times:\co_{F,v}^\times U_{T,v} ]
\cdot 
B_v(1,\eta,\xi;\Theta_{\mathfrak{r}})
$$
where $\tau_v(\gamma)$ is as in Lemma \ref{global tau} and $\mathfrak{r}_v=\mathfrak{e}_v$.
\end{Cor}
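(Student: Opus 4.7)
The plan is to verify the identity by a direct, place-by-place comparison of the explicit formulas on both sides. Since $v\mid\mathrm{N}(\mathfrak{C})$ while $\mathrm{N}(\mathfrak{C})$ is coprime to $\mathfrak{d}$, the place $v$ is unramified in $E$, so I split into the cases $v$ inert and $v$ split and treat them in parallel.

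In each case, the orbital integral side is provided by Proposition \ref{ram link I} or Proposition \ref{ram link II}, while the kernel coefficient $B_v(1,\eta,\xi;\Theta_\mathfrak{r})$ is provided by Proposition \ref{kernel coefficients}, applied with $a=1$ (so $\mathfrak{a}=\co_F$) and $\mathfrak{r}_v=\mathfrak{e}_v$; in the split case the sum $\sum_{i+j=\ord_v(\eta),\,i,j\ge 0}\alpha^{i}\beta^{j}$ collapses under the support condition $\ord_v(\eta)=0$ to the single term $1$ via the convention $0^{0}=1$. The support conditions on the two sides coincide in both cases. The factor $\chi_v(\gamma^+)$ appearing in the orbital integral cancels against $\overline{\chi}_v(\gamma^+)$ inside $\tau_v(\gamma)$; after this cancellation, the common factors $\omega_v(\delta)$, $|\eta\xi|_v^{1/2}$, the index $[\co_{E,v}^\times:\co_{F,v}^\times U_{T,v}]$, and—in the split case—the combinatorial coefficient $1+\ord_v(\xi\mathfrak{e}^{-1})$ all match on the two sides. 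What remains to verify is the scalar identity
$$
\chi_v(\eta)\,\epsilon_v(1/2,\omega,\psi_v^0)=1
$$
on the support of both sides.

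Since $v\nmid\mathfrak{d}$, the local character $\omega_v$ is unramified, and with the normalization of $\psi_v^0$ fixed in \S\ref{analytic section} the standard Tate-thesis computation gives $\epsilon_v(1/2,\omega,\psi_v^0)=1$. The task therefore reduces to showing $\chi_v(\eta)=1$ on the support.

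This last step is the main point of the proof and is the only place where the inequality (\ref{a norm inequality}) is really needed. Following Remark \ref{gamma plus remark} I reduce to $\gamma^+=1$, so $\gamma=1+\beta\epsilon_v$ and $\eta=\mathrm{N}(\gamma)^{-1}$. The support conditions imposed in Propositions \ref{ram link I} and \ref{ram link II} force $\ord_v(\mathrm{N}(\beta))\ge 0$, and hence $\mathrm{N}(\gamma)=1+\mathrm{N}(\beta)\mathrm{N}(\epsilon_v)\in 1+\mathfrak{e}_v$, giving $\eta\in 1+\mathfrak{e}_v$. Unwinding $\mathrm{N}_{E_v/F_v}(\mathfrak{C}_v)$ in each of the inert and split cases, the inequality (\ref{a norm inequality}) translates into the inclusion $1+\mathfrak{e}_v\subset 1+\mathfrak{C}_v$ inside $\co_{E,v}^\times$, so $\chi_v$ is trivial on $1+\mathfrak{e}_v$ by definition of the conductor $\mathfrak{C}_v$. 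Combining this with the preceding observations establishes the identity claimed by the corollary.
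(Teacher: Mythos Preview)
Your proof is correct and follows the same approach as the paper, which simply says to compare the explicit formulas of Propositions \ref{ram link I} and \ref{ram link II} with those of Proposition \ref{kernel coefficients}. You supply the details the paper omits: in particular, the verification that $\chi_v(\eta)\,\epsilon_v(1/2,\omega,\psi_v^0)=1$ on the support, which is needed because (unlike in the unramified case of Corollary \ref{unramified links}) the orbital integral formulas here carry a factor $\chi_v(\gamma^+)$ but no compensating $\overline{\chi}_v(\eta)$, so a stray $\chi_v(\eta)$ remains after multiplying by $\tau_v(\gamma)$.
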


\begin{proof}
Propositions \ref{ram link I} and \ref{ram link II} give explicit formulas for the left hand side while Proposition \ref{kernel coefficients} gives explicit formulas for the right hand side.
\end{proof}

\begin{Lem}\label{ramified degenerate link}
We have the equalities
$P_{\chi,v}(1)=1$ and $P_{\chi,v}(\epsilon_v)=0$.
\end{Lem}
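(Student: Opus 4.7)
The proof will be a quick unpacking of the definitions of $U_v$ and $P_{\chi,v}$, using the decomposition $B_v = E_v \oplus E_v\epsilon_v$. Recall that $U_v \subset R_v^\times$ is the kernel of $R_v^\times \to (\co_{E,v}/\mathfrak{C}_v)^\times$, $x + y\epsilon_v \mapsto x$, so concretely
$$U_v = \{x + y\epsilon_v \in R_v^\times : x \in 1 + \mathfrak{C}_v\},$$
and $U_{T,v} = U_v \cap E_v^\times = 1 + \mathfrak{C}_v \subset \co_{E,v}^\times$.

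For $P_{\chi,v}(1)$: the plan is to observe that the only $t \in E_v^\times$ for which $\mathbf{1}_{U_v}(t^{-1})$ is nonzero are those with $t^{-1} \in U_v$. Since $t^{-1} \in E_v^\times$ as well, this forces $t^{-1} \in U_v \cap E_v^\times = U_{T,v}$, i.e.\ $t \in U_{T,v}$. Thus exactly one coset of $E_v^\times/U_{T,v}$ — the trivial one — contributes, and since $\chi_v$ is trivial on $U_{T,v}$ (as $\mathfrak{C}$ is the conductor of $\chi$), this contribution is simply $\chi_v(1)\cdot\mathbf{1}_{U_v}(1) = 1$.

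For $P_{\chi,v}(\epsilon_v)$: I would write, for $t \in E_v^\times$,
$$t^{-1}\epsilon_v = 0 + t^{-1}\epsilon_v \in E_v \oplus E_v\epsilon_v,$$
so the $E_v$-component is $0$. For $t^{-1}\epsilon_v$ to lie in $U_v$, this $E_v$-component would have to be $\equiv 1 \pmod{\mathfrak{C}_v}$; but $0 \not\equiv 1 \pmod{\mathfrak{C}_v}$ since $v \mid \mathrm{N}(\mathfrak{C})$ forces $\mathfrak{C}_v$ to be a proper ideal of $\co_{E,v}$. Hence $\mathbf{1}_{U_v}(t^{-1}\epsilon_v) = 0$ for every $t$, so the sum defining $P_{\chi,v}(\epsilon_v)$ is identically zero.

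There is no real obstacle here; the only thing to be slightly careful about is the multiplication rule $t^{-1}\epsilon_v = 0 + t^{-1}\epsilon_v$ in the decomposition $B_v = E_v \oplus E_v\epsilon_v$ (so that we are really identifying the $x$-component as $0$, not some nonzero expression coming from moving $\epsilon_v$ past $t^{-1}$). Once this is noted, both identities are immediate.
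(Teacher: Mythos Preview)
Your proof is correct. For $P_{\chi,v}(1)=1$ your unpacking is exactly what the paper means by ``clear from the definition.'' For $P_{\chi,v}(\epsilon_v)=0$ you take a slightly different route than the paper: you use the congruence condition defining $U_v$ directly (the $E_v$-component of $t^{-1}\epsilon_v$ is $0\not\equiv 1\pmod{\mathfrak{C}_v}$), whereas the paper instead shows the stronger fact that $t\epsilon_v\notin R_v^\times$ via a reduced-norm argument (if $t\epsilon_v\in R_v^\times$ then $\mathrm{N}(t\epsilon_v)\in\co_{F,v}^\times$ together with $t\epsilon_v\in\co_{E,v}\epsilon_v$ forces $\ord_v(\mathrm{N}(\epsilon_v))\le 0$, contradicting the standing hypothesis $\ord_v(\mathfrak{e})>0$). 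Your argument is a bit more direct and uses only that $\mathfrak{C}_v$ is proper; the paper's argument yields slightly more (non-membership already in $R_v^\times$) at the cost of invoking the norm inequality. Either way the lemma follows immediately.
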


\begin{proof}
Clearly $P_{\chi,v}(1)=1$ simply by definition of $P_{\chi,v}$.   On the other hand
$$
P_{\chi,v}(\epsilon_v) =  \sum_{ t \in T(F_v)/U_{T,v}} \chi_v(t^{-1}) \mathbf{1}_{U_v }(t \epsilon_v).
$$
If this sum is nonzero then $t \epsilon_v\in R_v^\times$ for some $t \in T(F_v)$.  But this would imply both $\mathrm{N}(t\epsilon_v)\in\co_{F,v}^\times$ and $t\epsilon_v  \in \co_{E,v}\epsilon_v$, which implies 
$\ord_v(\mathrm{N}(\epsilon_v))\le 0.$  But $\ord_v(\mathrm{N}(\epsilon_v))=\ord_v(\mathfrak{e})>0$ by (\ref{a norm inequality}), a contradiction.
\end{proof}

\begin{Cor}\label{ram final degen}
Choose $e\in \A^\times$ with $e\co_F=\mathfrak{e}$.  Then
$$
P_{\chi,v}(1)=B_v(1;\theta)
\hspace{1cm}
P_{\chi,v}(\epsilon_v)= \chi_v(e)|e|_v^{1/2} B_v(e^{-1};\theta).
$$
\end{Cor}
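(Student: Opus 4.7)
The plan is to compare the two values coming out of Lemma~\ref{ramified degenerate link}, namely $P_{\chi,v}(1)=1$ and $P_{\chi,v}(\epsilon_v)=0$, with the local Whittaker coefficients provided by Proposition~\ref{theta coefficients}. Everything is a direct unwinding; no new ideas are required beyond the formulas already in hand.

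First I would note that since $v\mid\mathrm{N}(\mathfrak{C})$ and $\mathrm{N}(\mathfrak{C})$ is prime to $\mathfrak{d}$ (a global hypothesis from \S\ref{notations}), the place $v$ is either inert or split in $E$, and in particular $\chi_v$ is either unramified or a ramified character of $E_v^\times$ (but never the ramified case for $v\mid\mathfrak{d}$). Taking $\mathfrak{a}=\co_F$ in Proposition~\ref{theta coefficients}: in the inert case both branches (with $\chi_v$ ramified or unramified) give $B_v(1;\theta)=|1|_v^{1/2}=1$, since $\ord_v(1)=0$ is even and nonnegative; in the split case the sum $\sum_{i+j=0,\,i,j\ge 0}\alpha^i\beta^j$ reduces to the single term $i=j=0$, again giving $B_v(1;\theta)=1$. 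Combined with Lemma~\ref{ramified degenerate link}, this yields $P_{\chi,v}(1)=1=B_v(1;\theta)$.

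For the second equality, the hypothesis $v\mid\mathrm{N}(\mathfrak{C})$ gives $\ord_v(\mathrm{N}(\mathfrak{C}))>0$, and then (\ref{a norm inequality}) forces $\ord_v(\mathfrak{e})>0$, so $\ord_v(e^{-1}\co_F)=-\ord_v(\mathfrak{e})<0$. Inspecting Proposition~\ref{theta coefficients}: in the inert case $B_v(a;\theta)=0$ unless $\ord_v(a)\ge 0$, and in the split case the index set of the sum defining $B_v(a;\theta)$ is empty when $\ord_v(\mathfrak{a})<0$. In both situations $B_v(e^{-1};\theta)=0$, so $\chi_v(e)|e|_v^{1/2}B_v(e^{-1};\theta)=0=P_{\chi,v}(\epsilon_v)$ by Lemma~\ref{ramified degenerate link}.

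No step presents any real obstacle: the whole content is the case analysis above, and the only thing to watch is that the vanishing $P_{\chi,v}(\epsilon_v)=0$ from Lemma~\ref{ramified degenerate link} (which uses (\ref{a norm inequality}) in an essential way) is matched by the analogous vanishing of $B_v(e^{-1};\theta)$, which likewise uses $\ord_v(\mathfrak{e})>0$. This consistency is precisely why (\ref{a norm inequality}) was imposed at the start of \S\ref{ss:ramified local calculations}.
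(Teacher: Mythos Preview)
Your proof is correct and follows exactly the approach of the paper, which simply says to compare Lemma~\ref{ramified degenerate link} with Proposition~\ref{theta coefficients}. You have merely written out that comparison in full detail, verifying $B_v(1;\theta)=1$ and $B_v(e^{-1};\theta)=0$ case by case, which is precisely what the one-line proof in the paper intends.
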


\begin{proof}
Compare Lemma \ref{ramified degenerate link} with Proposition \ref{theta coefficients}.
\end{proof}


\section{Central values}
\label{s:central value}


Suppose the representation $\Pi$ of \S \ref{notations} satisfies  Hypothesis \ref{hyp}.  Recall that $\Pi$ has conductor   $\mathfrak{n}=\mathfrak{ms}$ and that $\chi$ has conductor $\mathfrak{C}=\mathfrak{c}\co_E$ for some $\co_F$-ideal $\mathfrak{c}$.  Let $B$ be a quaternion algebra over $F$ satisfying
 \begin{equation}\label{B ramification}
B_v \mathrm{\ is\ split\ }\iff \epsilon_v(1/2,\mathfrak{r},\psi)=1
 \end{equation}
for every \emph{finite} place $v$ of $F$, where $\mathfrak{r}=\mathfrak{mc}^2$ and the local epsilon factor is defined by (\ref{epsilon}).  This implies that the reduced discriminant of $B$ divides $\mathfrak{m}$ and, as $E_v$ is a field whenever $B_v$ is nonsplit,  that there is  an embedding $E\map{}B$ which we fix.  For the moment we do not specify the behavior of $B$ at archimedean places. Let $G$ and $T$ be the algebraic groups over $F$ defined at the beginning of \S \ref{quaternion generalities}.  For any ideal $\mathfrak{b}\subset \co_F$ let $\co_{\mathfrak{b}}=\co_F+\mathfrak{b}\co_{E}$ denote the order of $\co_E$ of conductor $\mathfrak{b}$.


\subsection{Special CM cycles}
\label{ss:special cycles}


 We construct two particular compact open subgroups $U\subset V$ of $G(\A_f)$ and two special CM-cycles $Q_\chi$ and $P_\chi$ of level $V$ and $U$, respectively.  It is ultimately the cycle $Q_\chi$ in which we are interested, but the local orbital integrals (\ref{local link}) of cycles of level $V$ seem too difficult to compute directly.  The subgroup $U$ is chosen to make these orbital integrals more readily computable (indeed, they have already been computed in \S \ref{ss:unramified local calculations} and \ref{ss:unramified local calculations}).
 
 \begin{Lem}\label{choosing the order}
 For every finite place $v$ there is an order in $B_v$ of reduced discriminant $\mathfrak{m}_v$ which contains $\co_{E,v}$.  Such an order is unique up to $E_v^\times$-conjugacy.
  \end{Lem}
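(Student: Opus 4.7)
The plan is to analyze both existence and uniqueness simultaneously via the decomposition $B_v = E_v \oplus E_v\epsilon_v$, where $\epsilon_v\in B_v^-$ is any generator of $B_v^-$ as a left $E_v$-module. The preliminary step is to show that any $\co_{F,v}$-order $R\subseteq B_v$ containing $\co_{E,v}$ respects this splitting, i.e.\ $R = (R\cap E_v)\oplus (R\cap E_v\epsilon_v)$. Since $R$ is naturally a $\co_{E,v}$-bimodule and $v\nmid\mathfrak{d}$, the ring $\co_{E,v}\otimes_{\co_{F,v}}\co_{E,v}$ is \'etale over $\co_{E,v}$ and splits as $\co_{E,v}\times\co_{E,v}$ (either via the idempotent decomposition of $\co_{E,v}$ when $v$ is split in $E$, or via Hensel's lemma applied to the separable minimal polynomial when $v$ is inert).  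The two primitive idempotents cut $B_v$ into its eigenspaces $B_v^\pm$, and the bimodule $R$ inherits this decomposition.

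Maximality of $\co_{E,v}$ inside $E_v$ forces $R\cap E_v = \co_{E,v}$, and the other summand takes the form $R\cap E_v\epsilon_v = \mathfrak{a}\epsilon_v$ for a fractional $\co_{E,v}$-ideal $\mathfrak{a}$.  A Gram-matrix computation of the reduced trace form in a basis adapted to $B_v^+\oplus B_v^-$ then yields
\[
\mathrm{disc}(R) \;=\; \mathrm{N}_{E_v/F_v}(\mathfrak{a})\cdot\mathrm{N}(\epsilon_v)\cdot\co_{F,v},
\]
the contribution of $\mathrm{disc}(\co_{E,v}/\co_{F,v})$ being a unit because $v\nmid\mathfrak{d}$.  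For existence, I would choose $\mathfrak{a}$ so that $\mathrm{N}_{E_v/F_v}(\mathfrak{a})\cdot\mathrm{N}(\epsilon_v)\co_{F,v} = \mathfrak{m}_v$.  Such an $\mathfrak{a}$ exists precisely when $\mathfrak{m}_v/\mathrm{N}(\epsilon_v)$ lies in $\mathrm{N}_{E_v/F_v}(E_v^\times)$, and a case analysis shows this is equivalent to (\ref{B ramification}): when $v$ is split in $E$ the norm is surjective so there is nothing to check, while when $v$ is inert the parities of $\ord_v(\mathfrak{m}_v)$ and $\ord_v(\mathrm{N}(\epsilon_v))$ are both determined by $\omega_v(\mathfrak{m}_v)$ and the splitting behavior of $B_v$, and they agree.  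Closure of $\co_{E,v}\oplus\mathfrak{a}\epsilon_v$ under multiplication then reduces to $\mathfrak{a}\bar{\mathfrak{a}}\,\mathrm{N}(\epsilon_v)\subseteq\co_{F,v}$, which follows from the same norm identity.

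For uniqueness, suppose $R = \co_{E,v}\oplus\mathfrak{a}\epsilon_v$ and $R' = \co_{E,v}\oplus\mathfrak{a}'\epsilon_v$ both have reduced discriminant $\mathfrak{m}_v$. Then $\mathrm{N}(\mathfrak{a}) = \mathrm{N}(\mathfrak{a}')$, so $\mathfrak{a}' = u\mathfrak{a}$ for some $u\in E_v^\times$ of norm a unit; absorbing a unit of $\co_{E,v}$ we may arrange $\mathrm{N}(u) = 1$.  A direct computation using $\epsilon_v\cdot s = \bar s\cdot\epsilon_v$ for $s\in E_v$ shows that conjugation by $t\in E_v^\times$ sends $\epsilon_v$ to $(t/\bar t)\epsilon_v$, and hence sends $R$ to $\co_{E,v}\oplus(t/\bar t)\mathfrak{a}\epsilon_v$.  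By Hilbert 90 applied to $E_v/F_v$, every norm-one element of $E_v^\times$ is of the form $t/\bar t$, so one finds $t$ with $tRt^{-1} = R'$.  The main obstacle in this outline is the decomposition $R = (R\cap E_v)\oplus(R\cap E_v\epsilon_v)$; this is exactly where the hypothesis $v\nmid\mathfrak{d}$ is essential, as without it the bimodule tensor product fails to be \'etale and the conclusion genuinely breaks down.
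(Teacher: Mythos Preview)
Your argument for $v\nmid\mathfrak{d}$ is correct and more explicit than the paper's, which simply checks the parity $\ord_v(\mathfrak{m})\equiv\ord_v(\mathrm{disc}(B_v))\pmod 2$ at inert places and then cites \cite[Proposition~3.4]{gross-modular} for the full result (existence and uniqueness at all finite places).

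There is, however, a genuine gap.  The lemma is stated for \emph{every} finite place and is invoked immediately afterward, in the construction of the order $S$, at all $v\nmid\mathfrak{c}$; since $\mathfrak{c}$ is prime to $\mathfrak{d}$ this includes the ramified places $v\mid\mathfrak{d}$.  Your last sentence concedes that the bimodule splitting fails there, but the phrase ``the conclusion genuinely breaks down'' overstates matters: only your method fails, not the lemma.  At $v\mid\mathfrak{d}$ one has $\mathfrak{m}_v=\co_{F,v}$ and $B_v\iso M_2(F_v)$, so one is asking for a maximal order containing $\co_{E,v}$.  Such an order is obtained as $\mathrm{End}_{\co_{F,v}}(L)$ for any $\co_{E,v}$-stable lattice $L$ in the standard module $W_v\iso E_v$, and uniqueness up to $E_v^\times$-conjugacy follows because these lattices are exactly the fractional $\co_{E,v}$-ideals, on which $E_v^\times$ acts transitively.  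That the decomposition $R=(R\cap B_v^+)\oplus(R\cap B_v^-)$ really does fail is easy to exhibit: with $E_v=F_v[\pi_E]$, $\pi_E^2=\pi$, embedded via $\pi_E\mapsto\left(\begin{smallmatrix}0&1\\ \pi&0\end{smallmatrix}\right)$, the element $\left(\begin{smallmatrix}0&0\\ 1&0\end{smallmatrix}\right)\in M_2(\co_{F,v})$ has $B_v^+$-component $(2\pi)^{-1}\pi_E\notin\co_{E,v}$.  So to complete the proof you must either supply a separate argument at ramified places (the lattice argument above suffices) or, as the paper does, appeal to Gross's proposition, which treats all cases uniformly.
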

 
 \begin{proof}
 If $v$ is inert in $E$ then (\ref{B ramification}) implies that $$\ord_v(\mathfrak{m})\equiv \ord_v(\mathrm{disc}(B_v))\pmod{2}$$ where $\mathrm{disc}(B_v)$ is the reduced discriminant of $B_v$.  Thus the lemma follows from \cite[Proposition 3.4]{gross-modular}.
 \end{proof}
 
 If $v$ is a place of $F$ dividing $\mathfrak{c}$ then, in particular, $v\nmid\mathfrak{dm}$ and   $B_v\iso M_2(F_v)$.  Let $W_v$ denote  a two dimensional $F_v$-vector space on which  $B_v$ acts on the left.  As $W_v$ is free of rank one over $E_v$, we may choose $w_0\in W_v$ such that $W_v=E_v\cdot w_0$.  For each rank two $\co_{F,v}$-submodule $\Lambda_v\subset W_v$ set
$$
\co(\Lambda_v)=\{ b\in B_v\mid b \cdot \Lambda_v\subset \Lambda_v \},
$$
a maximal order of $B_v$. As $\mathfrak{s}\mid\mathfrak{c}$ by Hypothesis \ref{hyp} we may consider the two lattices in $W_v$
$$
L'_v= \co_{\mathfrak{c},v} w_0
\hspace{1cm}
L_v= \co_{\mathfrak{cs}^{-1},v} w_0.
$$ 

 Choose a global order $S\subset B$ such that $S_v=\co(L_v)\cap \co(L_v')$ for every place $v\mid{\mathfrak{c}}$ and such that for every finite place $v\nmid\mathfrak{c}$, $S_v$ has reduced discriminant $\mathfrak{m}_v$ and contains $\co_{E,v}$ (which can be done by Lemma \ref{choosing the order}).  The group  $\widehat{S}^\times$ acts on $\prod_{v\mid\mathfrak{c}} L_v/L_v' \iso \co_F/\mathfrak{s}$ through a homomorphism $\vartheta:\widehat{S}^\times\map{}(\co_F/\mathfrak{s})^\times$, and we define $V$ to be the kernel of $\vartheta$.  One should regard $V\subset G(\A_f)$ as a quaternion analogue of the congruence subgroup $K_0(\mathfrak{m})\cap K_1(\mathfrak{s})$.  Define a CM-cycle of level $V$
\begin{eqnarray*}
Q_\chi(g)   &=&   \left\{
\begin{array}{ll}
\chi(t)  & \mathrm{if\ } g= t v\mathrm{\ for\ some\ } t \in T(\A_f),\  v\in V \\ 
0 & \mathrm{otherwise.}
\end{array}\right.  
\end{eqnarray*}
For this definition to make sense we need to know that $\chi$ is trivial on $T(\A_f)\cap V$.  This is immediate from the following 

\begin{Lem}\label{theta factor}
We have $\widehat{\co}_{\mathfrak{c}}^\times=T(\A_f)\cap \widehat{S}^\times$, and 
$\chi_0\circ\vartheta$ and $\chi$ have the same restriction to
$\widehat{\co}_{\mathfrak{c}}^\times$.
\end{Lem}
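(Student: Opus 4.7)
Both claims are local at each finite place $v$ of $F$, so I would verify them place by place.

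For part (1), I check $\co_{\mathfrak{c},v}^\times = E_v^\times \cap S_v^\times$. At $v \nmid \mathfrak{c}$, $\co_{\mathfrak{c},v}$ is the maximal order $\co_{E,v}$, contained in $S_v$ by construction; conversely, any $x \in E_v \cap S_v$ is integral over $\co_{F,v}$, hence lies in $\co_{E,v}$. At $v \mid \mathfrak{c}$, $S_v = \co(L_v) \cap \co(L_v')$, and the $E_v$-stabilizers of the rank-one modules $L_v' = \co_{\mathfrak{c},v}w_0$ and $L_v = \co_{\mathfrak{cs}^{-1},v}w_0$ are $\co_{\mathfrak{c},v}$ and $\co_{\mathfrak{cs}^{-1},v}$ respectively. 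Since $\mathfrak{s} \mid \mathfrak{c}$ by Hypothesis \ref{hyp}(b), the inclusion $\co_{\mathfrak{c},v} \subset \co_{\mathfrak{cs}^{-1},v}$ forces the intersection to be $\co_{\mathfrak{c},v}$.

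For part (2), at a place $v \nmid \mathfrak{s}$ both sides are trivial on $\co_{\mathfrak{c},v}^\times$: when $v \nmid \mathfrak{c}$, $\vartheta_v$ contributes nothing by the construction of $\vartheta$ (the product ranges only over $v \mid \mathfrak{c}$) and $\chi_v$ is unramified on $\co_{E,v}^\times$; when $v \mid \mathfrak{c}$ but $v \nmid \mathfrak{s}$, we have $L_v = L_v'$ so $\vartheta_v$ is trivial, while $\chi_v$ annihilates $\co_{\mathfrak{c},v}^\times = \co_{F,v}^\times(1 + \mathfrak{C}_v)$ since $\chi_{0,v}$ is unramified. Now fix $v \mid \mathfrak{s}$, write $c = \ord_v(\mathfrak{c})$, and let $t = a + \varpi^c\beta \in \co_{\mathfrak{c},v}^\times$ with $a \in \co_{F,v}^\times$, $\beta \in \co_{E,v}$. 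Factoring $t = a(1 + a^{-1}\varpi^c\beta)$ puts the second factor in $1 + \mathfrak{C}_v \subset \ker\chi_v$, so $\chi_v(t) = \chi_v(a) = \chi_{0,v}(a)$, using $\chi_v|_{F_v^\times} = \chi_{0,v}$.

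It remains to identify $\vartheta_v(t) = a \bmod \mathfrak{s}_v$. The inclusions $\mathfrak{s}_v L_v \subset L_v'$ (from $\mathfrak{s}_v\co_{\mathfrak{cs}^{-1},v} = \mathfrak{s}_v + \mathfrak{c}_v\co_{E,v} \subset \co_{\mathfrak{c},v}$) and $\mathfrak{c}_v\co_{E,v} L_v \subset L_v'$ show that the $\co_{\mathfrak{c},v}$-action on $L_v/L_v'$ factors through $\co_{\mathfrak{c},v}/(\mathfrak{s}_v + \mathfrak{c}_v\co_{E,v})$; the natural projection identifies this quotient with $\co_{F,v}/\mathfrak{s}_v$ and sends $t$ to $a \bmod \mathfrak{s}_v$. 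This assignment is well-defined precisely because $\mathfrak{c}_v \subset \mathfrak{s}_v$ (Hypothesis \ref{hyp}(b)): an alternate decomposition $t = a' + \varpi^c\beta'$ forces $a - a' \in \varpi^c\co_{E,v} \cap \co_{F,v} = \mathfrak{c}_v \subset \mathfrak{s}_v$. Hence $\chi_{0,v}(\vartheta_v(t)) = \chi_{0,v}(a) = \chi_v(t)$, completing part (2). The main technical point is this explicit computation of $\vartheta_v$ --- identifying the quotient through which the action factors, and showing the resulting map is precisely ``take the $\co_F$-part modulo $\mathfrak{s}_v$''; every use of Hypothesis \ref{hyp}(b) is located here.
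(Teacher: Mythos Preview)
Your proof is correct and follows essentially the same approach as the paper's own proof: both argue place by place, handle part (1) via the maximal-order/stabilizer description of $E_v\cap S_v$, and for part (2) reduce to an explicit computation of $\vartheta_v$ at $v\mid\mathfrak{s}$ on an element written as $a(1+cy)$. Your version is somewhat more detailed---you split $v\nmid\mathfrak{s}$ into the sub-cases $v\nmid\mathfrak{c}$ and $v\mid\mathfrak{c},\,v\nmid\mathfrak{s}$, and you justify the formula $\vartheta_v(t)\equiv a\pmod{\mathfrak{s}_v}$ by explicitly identifying the quotient through which the action factors---whereas the paper simply asserts this formula; but the underlying argument is the same.
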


\begin{proof}
For $v\nmid\mathfrak{c}$ a finite place of $F$, $\co_{\mathfrak{c},v}\subset S_v$.  As $\co_{\mathfrak{c},v}$ is a maximal order in $E_v$ we must therefore have $\co_{\mathfrak{c},v}=E_v\cap S_v$.  For $v\mid\mathfrak{c}$ it follows from $\co=\{x\in E_v\mid x\co \subset \co \}$ for any order $\co\subset E_v$ that
$$
\co_{\mathfrak{c},v}=\co_{\mathfrak{c},v}\cap\co_{\mathfrak{cs}^{-1},v}=E_v\cap\co(L_v)\cap\co(L_v')=E_v\cap S_v,
$$ 
proving the first claim.  For the second claim, if $v\nmid\mathfrak{s}$ then both $\vartheta_v$ and $\chi_v$ are trivial on $\co_{\mathfrak{c},v}^\times=\co_{F,v}^\times(1+\mathfrak{c}\co_{E,v})^\times$.  If $v\mid\mathfrak{s}$ then $\vartheta_v:\co_{\mathfrak{c},v}^\times\map{}(\co_{F,v}/\mathfrak{s}_v)^\times$ is given by $\vartheta_v(x(1+c y))=x$ for $x\in\co_{F,v}^\times$, $y\in\co_{E,v}$, and $c\in\co_{F,v}$ satisfying $c\co_{F,v}=\mathfrak{c}_v$.  Thus 
$$
(\chi_{0,v}\circ \vartheta)(x(1+cy))=\chi_{0,v}(x)=\chi_v(x)=\chi_v(x(1+cy)).
$$
\end{proof}

\begin{Lem}\label{choice of epsilon}
  For every finite place $v$ there is an $\epsilon_v\in B_v$ satisfying
\begin{enumerate}
\item
 $E_v\epsilon_v=B_v^-$
 \item
$\ord_v(\mathrm{N}(\epsilon_v))=\ord_v(\mathfrak{r})$
\item
If $v\nmid\mathfrak{c}$ then $\epsilon_v\in S_v$
\item
if $v\mid\mathfrak{c}$ then $\epsilon_v w_0\in \mathfrak{c}\co_{E,v}w_0$.
\end{enumerate}
\end{Lem}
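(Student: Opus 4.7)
The argument naturally splits according to whether $v$ divides $\mathfrak{c}$, since the local order $S_v$ is specified by two different constructions in these regimes.  In both cases the decomposition $B_v = B_v^+ \oplus B_v^-$ makes $B_v^-$ a free rank-one left $E_v$-module, so condition (1) becomes automatic as soon as $\epsilon_v$ is a nonzero element of $B_v^-$.

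For $v \mid \mathfrak{c}$ one has $B_v \iso M_2(F_v)$, and I would work with the concrete realization $B_v = \mathrm{End}_{F_v}(W_v)$ with $W_v = E_v w_0$.  Fixing $c \in F_v^\times$ with $c\co_{F,v} = \mathfrak{c}_v$, define $\epsilon_v$ by $\epsilon_v(t w_0) = c\overline{t}w_0$ for $t \in E_v$.  The identity $\epsilon_v t = \overline{t}\epsilon_v$ gives (1), and $\epsilon_v w_0 = c w_0 \in \mathfrak{c}\co_{E,v}w_0$ gives (4).  A short matrix computation in an $\co_{F,v}$-basis of $W_v$ adapted to the inclusion $\co_{F,v} \subset \co_{E,v}$ yields $\mathrm{N}(\epsilon_v) = \pm c^2$, so $\ord_v(\mathrm{N}(\epsilon_v)) = 2\ord_v(\mathfrak{c}) = \ord_v(\mathfrak{r})$, where the last equality uses $\ord_v(\mathfrak{m}) = 0$ (recall $\mathfrak{m}$ is prime to $\mathrm{N}(\mathfrak{C}) = \mathfrak{c}^2$).

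For $v \nmid \mathfrak{c}$ I would instead extract $\epsilon_v$ from $S_v$ itself.  By construction $S_v$ is an order of $B_v$ of reduced discriminant $\mathfrak{m}_v$ containing $\co_{E,v}$.  Since left multiplication by $\co_{E,v}$ preserves $B_v^+ \oplus B_v^-$, the order splits as a direct sum of $\co_{E,v}$-modules $S_v = \co_{E,v} \oplus (S_v \cap B_v^-)$, and the second summand is a nonzero invertible $\co_{E,v}$-submodule of $B_v^-$, hence principal.  Any generator $\epsilon_v$ automatically satisfies (1) and (3); condition (4) is vacuous in this regime.

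The remaining task, and what I expect to be the main obstacle, is to verify (2) in this second case, i.e.\ that $\ord_v(\mathrm{N}(\epsilon_v)) = \ord_v(\mathfrak{m})$.  The plan is to compute the reduced discriminant of $S_v = \co_{E,v} + \co_{E,v}\epsilon_v$ via the reduced trace pairing in the $\co_{F,v}$-basis $\{1,u,\epsilon_v,u\epsilon_v\}$, where $u$ generates $\co_{E,v}$ over $\co_{F,v}$, and match it with $\mathfrak{m}_v$.  One must distinguish $v$ split, inert, and ramified in $E$; at ramified $v$ the running hypothesis $(\mathfrak{m},\mathfrak{d}) = 1$ forces $\ord_v(\mathfrak{m}) = 0$, and the contribution of $\mathfrak{d}_v$ to the reduced discriminant must be shown to cancel, yielding $\ord_v(\mathrm{N}(\epsilon_v)) = 0$.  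At unramified $v$ the discriminant formula reduces directly to $\ord_v(\mathrm{N}(\epsilon_v)) = \ord_v(\mathfrak{m})$, completing (2).
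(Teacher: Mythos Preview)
Your approach for $v\mid\mathfrak{c}$ is correct and more direct than the paper's: you write down $\epsilon_v$ explicitly as the $F_v$-linear endomorphism $tw_0\mapsto c\,\overline{t}\,w_0$ of $W_v$, and all four conditions fall out at once. The paper instead starts from an arbitrary generator of $B_v^-$, adjusts its norm, and then (separately for $v$ inert and $v$ split) modifies it further to force $\epsilon_v w_0\in\mathfrak{c}\co_{E,v}w_0$.

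For $v\nmid\mathfrak{c}$ your strategy is genuinely different and has a gap at the primes $v\mid\mathfrak{d}$. The claimed splitting $S_v=\co_{E,v}\oplus(S_v\cap B_v^-)$ fails when $E_v/F_v$ is ramified. Writing $b=b^++b^-$ one has $b^-=(u-\overline{u})^{-1}(ub-bu)$ for any $u$ with $\co_{E,v}=\co_{F,v}[u]$, and this lies in $S_v$ exactly when $u-\overline{u}\in\co_{E,v}^\times$, which holds only in the unramified case. Concretely, if $E_v=F_v(\sqrt{\varpi})$ embeds in $M_2(F_v)$ via $\sqrt{\varpi}\mapsto\left(\begin{smallmatrix}0&\varpi\\1&0\end{smallmatrix}\right)$ and $S_v=M_2(\co_{F,v})$, then $\co_{E,v}+(S_v\cap B_v^-)$ does not contain $\left(\begin{smallmatrix}0&1\\0&0\end{smallmatrix}\right)$. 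Your discriminant plan therefore computes the reduced discriminant of the \emph{proper} suborder $\co_{E,v}+\co_{E,v}\epsilon_v$, which is $\mathfrak{d}_v\cdot\mathrm{N}(\epsilon_v)\co_{F,v}$, not that of $S_v$; equating it with $\mathfrak{m}_v$ gives the wrong valuation, and no ``cancellation of $\mathfrak{d}_v$'' saves this.

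The paper runs the argument in the opposite direction. One first chooses $\epsilon_v$ satisfying (a) and (b)---using, in the inert case, the relation $\omega_v(-\mathrm{N}(\epsilon_v))=\epsilon_v(1/2,\mathfrak{r},\psi)$ coming from (\ref{B ramification}) to see that the target valuation $\ord_v(\mathfrak{r})$ has the correct parity. Then $R_v=\co_{E,v}+\co_{E,v}\epsilon_v$ has reduced discriminant $\mathfrak{d}_v\mathfrak{m}_v$, so it enlarges to an order $R_v'$ of reduced discriminant $\mathfrak{m}_v$ containing $\co_{E,v}$, and the $E_v^\times$-conjugacy statement of Lemma~\ref{choosing the order} gives $t\in E_v^\times$ with $tR_v't^{-1}=S_v$. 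Replacing $\epsilon_v$ by $t\epsilon_v t^{-1}=t\overline{t}^{\,-1}\epsilon_v$ preserves (a) and (b) (the norm is unchanged) and now gives (c). This route never needs the splitting of $S_v$, at the cost of invoking the conjugacy lemma.
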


\begin{proof}
First fix an $\epsilon_v$ which generates $B_v^-$ as a left $E_v$-module.  If $v$ is split or ramified in $E$ then we may multiply $\epsilon_v$ on the left by an element of $E_v^\times$ to ensure that (b) holds.  If $v$ is inert in $E$ then it follows from the proof of Lemma \ref{eta parametrization} that $\omega_v(\mathrm{N}(\epsilon_v))$ is $1$ if $B_v$ is split and is $-1$ if $B_v$ is ramified.  Condition (\ref{B ramification}) then implies that $\omega_v(\mathrm{N}(\epsilon_v))=\omega_v(\mathfrak{r})$, and so again we may multiply $\epsilon_v$ on the left by an element of $E_v^\times$ so that (b) holds.  Assume now that $v\nmid\mathfrak{c}$ and define an order $R_v=\co_{E,v}+\co_{E,v}\epsilon_v$.  An easy calculation shows that $R_v$ has reduced discriminant $\mathfrak{d}_v\mathfrak{m}_v$, and so may be enlarged to an order $R_v'$ of reduced discriminant $\mathfrak{m}_v$.  By Lemma \ref{choosing the order} $tR_v't^{-1}=S_v$ for some $t\in E_v^\times$.  Replacing $\epsilon_v$ by $t\epsilon_v t^{-1}=t\overline{t}^{-1}\epsilon_v$ we find that (c) holds.  Now assume that $v\mid\mathfrak{c}$.  As $W_v$ is free of rank one over $E_v$ there is an $x\in E_v$ such that $\epsilon_v \cdot w_0=x\cdot  w_0$, and it follows that  $\mathrm{N}(\epsilon_v)w_0=-\epsilon_v^2 w_0=-\mathrm{N}(x)w_0$.
Therefore $\ord_v(\mathfrak{c}^2)=\ord_v(\mathrm{N}(x))$.  If $v$ is inert in $E$ then this implies $x \in\mathfrak{c}\co_{E,v}$ and hence (d) holds.  If $v$ is split in $E$ then we need not have $x\in\mathfrak{c}\co_{E,v}$, but there is some $t\in E_v^\times$ satisfying $\mathrm{N}(t)=1$ and $tx\in\mathfrak{c}\co_{E,v}$.  Replacing $\epsilon_v$ by $t\epsilon_v$ we again find that (d) holds.
\end{proof}

Let  $R\subset B$ be a global order such that $R_v=\co_{E,v}+\co_{E,v}\epsilon_v$ at every finite place $v$,  with $\epsilon_v$ satisfying the properties of Lemma \ref{choice of epsilon}.  There is a natural $\co_F$-algebra homomorphism  $R\map{} \co_E/\mathfrak{c}\co_E$  defined by $b\mapsto b^+$ (with notation as in \S \ref{ss:prelims}), and  the kernel of the induced homomorphism $\widehat{R}^\times\map{}(\co_E/\mathfrak{c}\co_E)^\times$ will be denoted $U$.
Define a CM-cycle of level $U$
\begin{eqnarray*}
P_\chi(g)   &=&   \left\{
\begin{array}{ll}
\chi(t)  & \mathrm{if\ } g=t u\mathrm{\ for\ some\ } t \in T(\A_f),\  u\in U \\ 
0 & \mathrm{otherwise}
\end{array}\right.  
\end{eqnarray*}
so that $P_\chi=\prod_vP_{\chi,v}$ where the function $P_{\chi,v}$ on $G(F_v)/U_v$ agrees with that constructed in  \S \ref{ss:unramified local calculations} and \S \ref{ss:ramified local calculations} (with $\mathfrak{e}=\mathfrak{r}=\mathfrak{mc}^2$).  The compact open subgroups and CM-cycles constructed above satisfy $U\subset V$ and 
\begin{equation}\label{cycle trace}
[V_T:U_T]\cdot Q_\chi(g)=\sum_{h\in V/U} P_\chi(gh).
\end{equation}
For each  ideal $\mathfrak{a}$ prime to $\mathfrak{c}$ we have, from \S  \ref{ss:unramified local calculations} and \S \ref{ss:ramified local calculations},  a CM-cycle of level $U$ defined as the product 
$$
P_{\chi,\mathfrak{a}}(g)=  \prod_{v\mid{\mathfrak{a}}} P_{\chi,\mathfrak{a},v} (g_v) \prod_{v\nmid\mathfrak{a}} P_{\chi,v}(g_v).
$$   
If $\mathfrak{a}$ is prime to $\mathfrak{dr}$ then $R_v$ is a maximal order for each $v\mid\mathfrak{a}$. and we define the Hecke operator $T_\mathfrak{a}$ on CM-cycles of level $U$
$$
(T_\mathfrak{a}P)(g)= \sum_{h\in H(\mathfrak{a})/U} P(gh),
$$
where $H(\mathfrak{a})=\prod_{v\mid\mathfrak{a}} H(\mathfrak{a}_v) \cdot \prod_{v\nmid\mathfrak{a}} U_v$ and $H(\mathfrak{a}_v)$ was defined in \S \ref{ss:unramified local calculations} for $v\mid\mathfrak{a}$.  One then has the relation $T_\mathfrak{a}P_\chi=P_{\chi,\mathfrak{a}}$.

For the remainder of \S \ref{s:central value} the letters $U$ and $V$ will be used exclusively for the compact open subgroups constructed above.


\subsection{Toric newvectors and the Jacquet-Langlands correspondence}
\label{ss:toric}


Let $\mathrm{Ram}(B)$ denote the set of places of $F$ at which $B$ is nonsplit and let $\pi$ be a cuspidal automorphic representation of $\GL_2(\A)$.
If $\pi_v$ is square-integrable  for every $v\in\mathrm{Ram}(B)$ then there is a unique infinite-dimensional automorphic representation $\pi'$ of $G(\A)$ such that for every $v\not\in\mathrm{Ram}(B)$, $\pi_v\iso\pi_v'$ as representations of $G(F_v)\iso\GL_2(F_v)$.  We then say that $\pi$ is the \emph{Jacquet-Langlands lift} of $\pi'$.  There are many references for the Jacquet-Langlands correspondence including \cite{gelbart,gelbart-jacquet,jacquet-langlands,jordan-livne,lubotzky}

\begin{Lem}\label{square-integrable}
With $\Pi$ the automorphic representation fixed at the beginning of \S \ref{s:central value}, if $v\in\mathrm{Ram}(B)$ is a nonarchimedean place then either
\begin{enumerate}
\item
$\ord_v(\mathfrak{m})=1$ and $\Pi_v$ is a twist of the Steinberg representation by an unramified character
\item
or $\ord_v(\mathfrak{m})>1$ and $\Pi_v$ is supercuspidal.
\end{enumerate}
In particular $\Pi_v$ is square integrable.
\end{Lem}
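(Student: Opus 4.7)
The plan is to combine the defining condition (\ref{B ramification}) of $\mathrm{Ram}(B)$ with the local classification of irreducible admissible representations of $\GL_2(F_v)$ with unramified central character. First I would unpack (\ref{B ramification}) using the formula (\ref{epsilon}) at $s=1/2$. For $v$ finite not dividing $\mathfrak{r}$ the formula gives $\epsilon_v(1/2,\mathfrak{r},\psi)=1$ — and in particular for $v\mid\mathfrak{d}$, since $\mathfrak{d}$ is coprime to $\mathfrak{r}$ by the running assumptions — so no such place contributes to $\mathrm{Ram}(B)$. For $v\mid\mathfrak{r}$ the formula reduces to $\omega_v(r)$, so a finite place $v\in\mathrm{Ram}(B)$ must satisfy $\omega_v(r)=-1$; this forces $v$ to be inert in $E$ and $\ord_v(\mathfrak{r})$ to be odd. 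Writing $\mathfrak{r}=\mathfrak{m}\mathfrak{c}^2$ and recalling that $\mathfrak{m}$ is coprime to $\mathrm{N}(\mathfrak{C})$, hence to $\mathfrak{c}$, I would deduce $v\mid\mathfrak{m}$, $v\nmid\mathfrak{c}$, and $\ord_v(\mathfrak{m})$ odd.

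Next, since $v\nmid\mathrm{N}(\mathfrak{C})$ implies $v\nmid\mathfrak{s}$ (as $\mathfrak{s}$ is divisible only by primes of $\mathrm{N}(\mathfrak{C})$), the local conductor of $\Pi_v$ equals $\mathfrak{m}_v$, and its central character $\chi_{0,v}^{-1}$ is unramified. So the task reduces to classifying the irreducible infinite-dimensional representations of $\GL_2(F_v)$ with unramified central character and conductor of odd positive order.

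For the classification, a principal series $\Pi(\mu_1,\mu_2)$ has conductor $\mathrm{cond}(\mu_1)\mathrm{cond}(\mu_2)$; the condition that $\mu_1\mu_2$ be unramified forces $\mathrm{cond}(\mu_1)=\mathrm{cond}(\mu_2)$, so its conductor has even order, and this case is excluded. A twist $\mu\cdot\mathrm{St}$ of the Steinberg representation has central character $\mu^2$; if $\mu$ is unramified the conductor has order $1$, while if $\mu$ is ramified the conductor is $\mathrm{cond}(\mu)^2$ of even order (and $\mu^2$ unramified), again excluded in the odd case beyond order $1$. Supercuspidal representations have conductor of order at least $2$. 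Thus $\ord_v(\mathfrak{m})=1$ forces case (a), an unramified Steinberg twist, while $\ord_v(\mathfrak{m})>1$ (necessarily $\ge 3$, since the order is odd) forces case (b), supercuspidal; both types are square-integrable. The main (minor) obstacle is cleanly pinning down the parity statements in the local classification — in particular verifying that the unramified-central-character hypothesis pairs the conductors of $\mu_1,\mu_2$ in the principal series case, and restricts the Steinberg twist with odd conductor to $\mu$ unramified.
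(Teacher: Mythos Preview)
Your proposal is correct and follows essentially the same approach as the paper: deduce from (\ref{B ramification}) and (\ref{epsilon}) that a finite $v\in\mathrm{Ram}(B)$ has $\ord_v(\mathfrak{m})=\ord_v(\mathfrak{n})$ odd with $\chi_{0,v}$ unramified, then invoke the standard local conductor formulas to rule out principal series and ramified Steinberg twists. Your write-up simply unpacks the two sentences of the paper's proof in more detail, including the parity bookkeeping the paper leaves to the cited reference.
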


\begin{proof}
If $v\in\mathrm{Ram}(B)$ is nonarchimedean then (\ref{B ramification}) implies that $\ord_v(\mathfrak{m})=\ord_v(\mathfrak{n})$ is odd and $\Pi_v$ has unramified central character. The lemma now follows from standard formulas for the conductor of irreducible admissible representations as in \cite[(12.3.9.1)]{nek}
\end{proof}

For the remainder of \S \ref{ss:toric} we assume that $\Pi$ is cuspidal and that  either $\Pi_v$ is a weight $2$ discrete series at each archimedean $v$ and $B$ is totally definite, or that $\Pi_v$ is a weight $0$ principal series at each archimedean $v$ and $B$ is totally indefinite.  In either case it follows from Lemma  \ref{square-integrable} that $\Pi_v$ is square integrable for each $v\in\mathrm{Ram}(B)$ and so $\Pi$ is the Jacquet-Langlands lift of some $\Pi'$.

\begin{Def}\label{JL-newvector}
For any place $v$ of $F$ we define a \emph{newvector}  $\phi \in \Pi_v'$ to be a nonzero vector such that
\begin{enumerate}
\item
if $v$ is a nonarchimedean place then $\phi$ is $V_v$-fixed,
\item
If $v$ is an archimedean place and we are in the weight $0$ case above, then $\phi$ is fixed by the action of $E_v^\times\iso \R^\times \cdot \mathrm{SO}_2(\R)$,
\item
if $v$ is an archimedean place and we are in weight $2$ case then we impose no condition on $\phi$.
\end{enumerate}
A \emph{newvector} in $\Pi'\iso\bigotimes_v \Pi_v'$ is a product of local newvectors.
\end{Def}

\begin{Lem}\label{quaternion newvector}
Up to scaling there is a unique newvector in $\Pi'$.
\end{Lem}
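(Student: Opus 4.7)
The plan is to reduce the global claim to a purely local statement by using the factorization $\Pi' \iso \bigotimes_v \Pi_v'$: since a newvector is by definition a product of local newvectors, it suffices to show that the space of local newvectors in $\Pi_v'$ is one-dimensional at every place $v$.

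For a nonarchimedean $v \notin \mathrm{Ram}(B)$ one has $\Pi_v' \iso \Pi_v$ via the split isomorphism $B_v \iso M_2(F_v)$, and the space of local newvectors is just the space of $V_v$-fixed vectors. When $v \nmid \mathfrak{c}$, the subgroup $V_v = S_v^\times$ is, up to conjugation in $\GL_2(F_v)$, the unit group of a standard Eichler order of level $\mathfrak{m}_v = \mathfrak{n}_v$, so Casselman's newform theorem yields a one-dimensional fixed space. When $v \mid \mathfrak{c}$ but $v \nmid \mathfrak{s}$, the conductor $\mathfrak{n}_v$ is trivial and $V_v$ is a hyperspecial maximal compact, giving the standard spherical-vector statement. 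When $v \mid \mathfrak{s}$, Hypothesis \ref{hyp}(b) identifies $\Pi_v \iso \Pi(\mu_v, \chi_{0,v}^{-1}\mu_v^{-1})$ with $\mu_v$ unramified and $\chi_{0,v}^{-1}$ of conductor $\mathfrak{s}_v$; one then translates the definition of $V_v$ via $\vartheta_v$ into an explicit congruence subgroup of $\GL_2(F_v)$ of $K_0(\mathfrak{m}_v) \cap K_1(\mathfrak{s}_v)$-type, and applies the appropriate Casselman-type newform theorem for such a principal series.

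For nonarchimedean $v \in \mathrm{Ram}(B)$, Lemma \ref{square-integrable} shows that $\Pi_v$ is square-integrable, and $\Pi_v'$ is its Jacquet-Langlands transfer to $G(F_v)$; uniqueness of $V_v$-fixed vectors follows from the standard local newform theorem on units of quaternionic orders of reduced discriminant $\mathfrak{m}_v$ containing $\co_{E,v}$. At archimedean $v$ in the weight-$2$ case, $B$ is totally definite, so $B_v \iso \mathbb{H}$ is the Hamilton quaternions, and the Jacquet-Langlands transfer of the weight-$2$ holomorphic discrete series is a one-dimensional representation of $G(F_v)$, so every nonzero vector is automatically a unique-up-to-scaling newvector. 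In the weight-$0$ case $B_v$ is split, $\Pi_v$ is a parallel weight zero principal series, and the image of $E_v^\times$ in $\GL_2(\R)$ is $\R^\times \cdot \mathrm{SO}_2(\R)$; since the finite-order central character of $\Pi$ restricts trivially to $\R^\times_+$, the $E_v^\times$-fixed vectors coincide with the minimal $\mathrm{SO}_2(\R)$-type, which is one-dimensional.

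The subtlest step is the case $v \mid \mathfrak{s}$: matching the toric-style subgroup $V_v$, defined via the quaternionic order $S_v$ and the homomorphism $\vartheta_v$, with a concrete congruence subgroup of $\GL_2(F_v)$ to which the classical newform theorem applies requires careful bookkeeping of the Eichler structure at primes ramified for $\chi$. All other cases reduce to well-known uniqueness theorems for local newvectors on $\GL_2$ and on units of quaternionic orders.
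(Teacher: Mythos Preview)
Your proposal is correct and follows essentially the same approach as the paper: reduce to local uniqueness, invoke Casselman's newform theory at split nonarchimedean places via the identification $V_v\iso K_0(\mathfrak{m}_v)\cap K_1(\mathfrak{s}_v)$, cite Gross's result on orders of discriminant $\mathfrak{m}_v$ containing $\co_{E,v}$ at ramified places, and use the one-dimensionality of $\Pi_v'$ (weight $2$) or of the minimal $\mathrm{SO}_2(\R)$-type (weight $0$) at archimedean places. The only difference is cosmetic: the paper asserts the identification $V_v\iso K_0(\mathfrak{m}_v)\cap K_1(\mathfrak{s}_v)$ in a single line for all split $v$, whereas you verify it by splitting into the subcases $v\nmid\mathfrak{c}$, $v\mid\mathfrak{c}$ with $v\nmid\mathfrak{s}$, and $v\mid\mathfrak{s}$---which is exactly what one would do to justify that line.
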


\begin{proof}
It suffices to prove existence and uniqueness everywhere locally.  If $v$ is archimedean this is clear (in the weight $2$ case $\Pi_v'$ is the one-dimensional trivial representation of $G(F_v)$ by \cite[Lemma 4.2(2)]{jordan-livne}), so assume that $v$ is  nonarchimedean.  If $B_v$ is split then there is an isomorphism $B_v\iso M_2(F_v)$ which identifies $V_v\iso K_0(\mathfrak{m}_v)\cap K_1(\mathfrak{s}_v)$, and so the claim follows from the theory of newvectors for $\GL_2(F_v)$ as in \S \ref{ss:automorphic forms}.   If $B_v$ is nonsplit then (\ref{B ramification}) implies that $v\mid\mathfrak{m}$ and $v\nmid\mathfrak{c}$.  As $V_v=S_v^\times$ with $S_v$ an order of $B_v$ of discriminant $\mathfrak{m}_v$ containing $\co_{E,v}$, the claim is a special case of  \cite[Proposition 6.4]{gross-modular}.
\end{proof}

\begin{Def}\label{JL-toric}
For any place $v$ of $F$ let $E_v^\times$ act on $\Pi_v'$ via the embedding $T(F_v)\map{} G(F_v)$.  We define a \emph{toric newvector} $\phi\in \Pi_v'$ to be a nonzero vector such that
\begin{enumerate}
\item
if $v\nmid\mathfrak{dr}$ then $\phi$ is a  newvector,
\item
if $v\mid\mathfrak{d}$ then $\phi$ is $U_v$-fixed and satisfies $t\cdot \phi =\overline{\chi}_v(t)\cdot \phi$ for every $t\in E_v^\times$,
\item
if $v\mid\mathfrak{r}$ then  $\phi$ is $U_v$-fixed and satisfies $t\cdot \phi=\overline{\chi}_v(t)\cdot \phi$ for every $t\in \co_{E,v}^\times$.
\end{enumerate}
A \emph{toric newvector} in $\Pi'\iso \bigotimes\Pi_v'$ is a product of local toric newvectors.
\end{Def}

\begin{Lem}
Up to scaling there is a unique toric newvector in $\Pi'$.
\end{Lem}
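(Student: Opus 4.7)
The plan is to reduce to local existence and uniqueness, exactly as in the proof of Lemma \ref{quaternion newvector}. Since a toric newvector in $\Pi' \iso \bigotimes_v \Pi'_v$ is by definition a pure tensor of local toric newvectors, and scaling commutes with the factorization, it suffices to show that for each place $v$ the space of local toric newvectors in $\Pi'_v$ is one-dimensional. At places $v \nmid \mathfrak{dr}$ the toric newvector condition coincides with the newvector condition, and Lemma \ref{quaternion newvector} already applies, so only the finite places dividing $\mathfrak{d}$ or $\mathfrak{r}$ require attention.

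For $v \mid \mathfrak{d}$: here $v \nmid \mathfrak{m}$ since $\mathfrak{n}$ is coprime to $\mathfrak{d}$, hence $B_v$ is split and $\Pi'_v \iso \Pi_v$. The requirement is that $E_v^\times$ acts on the $U_v$-fixed vector by $\overline{\chi}_v$. Uniqueness is a consequence of the multiplicity-one theorem of Tunnell--Saito, which asserts $\dim \mathrm{Hom}_{E_v^\times}(\Pi_v, \overline{\chi}_v) \leq 1$; the additional $U_v$-invariance only rigidifies the line. Existence reduces to checking that the unique $\overline{\chi}_v$-equivariant line meets the subspace of $U_v$-fixed vectors nontrivially, which I would verify by a direct computation in the Kirillov model of $\Pi_v$, using the concrete description of $U_v$ in terms of the order $R_v = \co_{E,v} + \co_{E,v}\epsilon_v$ afforded by Lemma \ref{choice of epsilon}.

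For $v \mid \mathfrak{r}$ with $v \nmid \mathfrak{d}$: the torus action is required only for the compact subgroup $\co_{E,v}^\times$. When $v \mid \mathfrak{s}$, Hypothesis \ref{hyp}(b) pins down $\Pi_v$ as the principal series $\Pi(\mu_v, \chi_{0,v}^{-1}\mu_v^{-1})$ with $\mu_v$ unramified; the remaining places $v \mid \mathfrak{r}$ are those dividing $\mathfrak{c}$, where $\Pi_v$ has controlled conductor. In both situations the $U_v$-fixed subspace is finite-dimensional and carries an action of the finite abelian quotient $\co_{E,v}^\times/U_{T,v}$; decomposing into isotypic components gives the uniqueness. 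Existence and the fact that the $\overline{\chi}_v$-eigenspace is exactly one-dimensional would follow from direct local calculations paralleling those of Propositions \ref{ram link I}, \ref{ram link II}, and \ref{Prop:unr split link}, with the inequality $\ord_v(\mathfrak{s}) \le \ord_v(\mathfrak{c})$ from Hypothesis \ref{hyp}(b) producing the correct dimension count.

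The main obstacle is the case $v \mid \mathfrak{d}$, where the equivariance is enforced against the noncompact torus $E_v^\times$ and uniqueness is non-elementary. The essential input is Tunnell--Saito; matching the sign $\epsilon_v(1/2,\mathfrak{r},\psi)$ that governs the splitting of $B_v$ via (\ref{B ramification}) with the Tunnell--Saito sign condition for nonvanishing of the toric period is the point at which the global set-up of $B$ enters in an essential way. The other cases amount to counting isotypic components in finite-dimensional invariant spaces and are largely bookkeeping once the local representation-theoretic structure is in hand.
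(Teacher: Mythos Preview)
Your reduction to the local problem and the handling of $v\nmid\mathfrak{dr}$ match the paper exactly. However, there is a genuine gap: your case analysis of $v\mid\mathfrak{r}$ is incomplete. You write that after treating $v\mid\mathfrak{s}$, ``the remaining places $v\mid\mathfrak{r}$ are those dividing $\mathfrak{c}$'', but $\mathfrak{r}=\mathfrak{mc}^2$, so you have omitted all $v\mid\mathfrak{m}$. This is not a harmless oversight: at such $v$ the quaternion algebra $B_v$ can be nonsplit (precisely when $\omega_v(\mathfrak{m})=-1$), so $\Pi'_v$ is the Jacquet--Langlands transfer rather than $\Pi_v$ itself, and neither Kirillov-model computations nor the finite-group isotypic decomposition you propose is available. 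The paper dispatches this case by observing that $R_v=S_v$ and $U_v=V_v$, so the toric condition collapses to the newvector condition already handled by Lemma~\ref{quaternion newvector} (ultimately Gross's result on local orders).

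Beyond the gap, your approach at $v\mid\mathfrak{d}$ and $v\mid\mathfrak{c}$ differs from the paper's in that you propose ad hoc computations where the paper invokes existing structural theorems. For $v\mid\mathfrak{d}$ the paper twists by the unramified $\nu_v$ with $\chi_v=\nu_v\circ\mathrm{N}$, then applies Waldspurger's multiplicity-one for the $E_v^\times$-fixed line and Gross--Prasad to see that line is automatically fixed by the units of any maximal order containing $\co_{E,v}$, hence by $U_v=R_v^\times$. Your Tunnell--Saito argument gives the same uniqueness, but the $U_v$-invariance you defer to a Kirillov computation is exactly what Gross--Prasad supplies for free. For $v\mid\mathfrak{c}$ the paper again twists (to make the central character trivial or the torus character trivial on $F_v^\times$) and cites \cite[Theorems~2.3.3 and~2.3.5]{zhang2}; your appeal to Propositions~\ref{ram link I}--\ref{Prop:unr split link} is misplaced, as those compute orbital integrals of CM-cycles rather than dimensions of isotypic subspaces of $\Pi'_v$.
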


\begin{proof}
Again it suffices to prove the claim everywhere locally.  If $v\nmid\mathfrak{dr}$ then the claim is a restatement of Lemma \ref{quaternion newvector}.    If $v\mid\mathfrak{d}$ then $\chi_v$ has the form $\chi_v=\nu_v\circ\mathrm{N}$ for some unramified character $\nu_v$ of $F_v^\times$.  By a theorem of Waldspurger \cite[Theorem 2.3.2]{zhang2} the representation $\Pi_v'\otimes\nu_v$ has a unique line of $E_v^\times$-fixed  vectors, and by a theorem of Gross-Prasad \cite[Theorem 2.3.3]{zhang2} this line is also fixed by the unit group of any maximal order of $B_v$ containing $\co_{E,v}$.  As $R_v$ may be enlarged to such an order, the $E_v^\times$-fixed vectors in $\Pi_v'\otimes\nu_v$ are also fixed by $U_v=R_v^\times$.  It follows that  $\Pi_v'$ has a unique line of $U_v$-fixed vectors on which $E_v^\times$ acts through $\chi_v^{-1}$.

If $v\mid\mathfrak{m}$ then $R_v= S_v$ (as $R_v\subset S_v$ and both have reduced discriminant $\mathfrak{m}_v$), $U_v=V_v$, and a toric newvector is just a nonzero $V_v$-fixed vector; again the claim follows from  Lemma \ref{quaternion newvector}.
If $v\mid\mathfrak{c}$ but $v\nmid\mathfrak{s}$ then $\chi_{v}$ is trivial on $\co_{F,v}^\times$, and so we may find a character $\chi_v'$ of $E_v^\times$ which is trivial on $F_v^\times$ but agrees with $\chi_v$ on $\co_{E,v}^\times$.  By \cite[Theorem 2.3.5]{zhang2} (Zhang's $\Gamma$ is our $R_v^\times=\co_{E,v}^\times U_v$) there is a unique line of $U_v$-fixed vectors in $\Pi_v'$ on which $\co_{E,v}^\times$ acts through $\overline{\chi}_v'$, and thus a unique toric newvector in $\Pi_v'$.  If $v\mid\mathfrak{s}$ then $\Pi_v'\iso \Pi_v$ is a principal series $\Pi_v\iso \Pi(\mu_v,\chi_{0,v}^{-1}\mu_v^{-1})$ and $\chi_{v}=\nu_v\circ\mathrm{N}$ for some character $\nu_v$ of $F_v^\times$ of conductor  $\mathfrak{c}$  (both claims by Hypothesis \ref{hyp}).   It follows that $\Pi_v'\otimes\nu_v$ has trivial central character and conductor $\mathfrak{c}_v^2$.  As $R_v$ has reduced discriminant $\mathfrak{c}_v^2$ and contains $\co_{E,v}$  there is a unique line of $R_v^\times$-fixed vectors in $\Pi_v' \otimes \nu_v$ by \cite[Theorem 2.3.3]{zhang2}.  As $R_v^\times=\co_{E,v}^\times\cdot U_v$ we find that $\Pi_v'\otimes\nu_v$ has a unique line of $U_v$-fixed vectors on which $\co_{E,v}^\times$ acts through the trivial character, and the claim now follows from the observation that $\mathrm{N}(U_v)\subset 1+\mathfrak{c}_v\subset \ker(\nu_v)$.
\end{proof}


\subsection{Central values for holomorphic forms}
\label{ss:holomorphic values}


In addition to Hypothesis \ref{hyp} we assume that $\Pi_v$ is a discrete series of weight $2$ for every archimedean place $v$, and that $\epsilon(1/2,\mathfrak{r})=1$.  Let $B$ be the (unique up to isomorphism)  totally definite quaternion algebra over $F$ satisfying (\ref{B ramification}) for all finite places of $F$. Taking $m$ to be the constant function $1$ on $G(F)$, let $k_U(x,y)$ be the function on $C_U\times C_U$ defined by (\ref{kernel def}) and let $\langle P,Q\rangle_U$ be the associated height pairing on CM-cycles of level $U$ defined by (\ref{pairing}).  According to \cite[\S 7.2]{vatsal}   the sum defining $k_U(x,y)$ is actually finite.  Recall that we have set $\mathfrak{r}=\mathfrak{mc}^2$ and abbreviate $\Theta_{\mathfrak{r}}=\Theta_{\mathfrak{r},1/2}$.

\begin{Prop}\label{generating series}
 Fix $a\in\A^\times$ and assume that $\mathfrak{a}=a\co_F$ is prime to $\mathfrak{c}$.  Then 
$$
\frac{ H_F }{\lambda_U}  [\widehat{\co}_E^\times: U_T]
\cdot B( -a;\Theta_{\mathfrak{r}})=
2^{[F:\Q]} |d|^{1/2} |a| \langle P_{\chi,\mathfrak{a}},P_\chi \rangle_U \cdot e_\infty(a)
$$
where, as in \S \ref{ss:heights}, $H_F$ is the class number of $F$ and $\lambda_U=[\co_F^\times:\co_F^\times\cap U]$.
\end{Prop}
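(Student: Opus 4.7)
The plan is to match the Fourier–Whittaker decomposition (\ref{global kernel decomp}) of $B(-a;\Theta_\mathfrak{r})$ with the geometric decomposition of $\langle P_{\chi,\mathfrak{a}},P_\chi\rangle_U$ coming from (\ref{linking decomposition}), orbit by orbit. The double cosets $T(F)\backslash G(F)/T(F)$ appearing in the latter are parametrized by Lemma \ref{eta parametrization} by the degenerate pairs $(1,0)$ and $(0,1)$ together with the set of pairs $(\eta,\xi)$ with $\eta+\xi=1$ subject to (\ref{eta display}); these latter pairs are precisely the same data that indexes the nondegenerate Fourier term $\sum B(-a,\eta,\xi;\Theta_\mathfrak{r})$ in (\ref{global kernel decomp}), because the defining condition (\ref{B ramification}) of $B$ is exactly the condition that the geometric $(\eta,\xi)$ contribute (both sides force $\omega_v(-\eta\xi)=\epsilon_v(1/2,\mathfrak{r},\psi)$ at finite places).

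For the nondegenerate orbits I would expand $P_{\chi,\mathfrak{a}}=\prod_v P_{\chi,\mathfrak{a},v}$ as a product of the local functions from \S \ref{ss:unramified local calculations} and \S \ref{ss:ramified local calculations} and apply (\ref{orbital decomp}); since $P_\chi(1)=1$ by Lemma \ref{theta factor}, this gives
\[
\langle P_{\chi,\mathfrak{a}},P_\chi\rangle_U^\gamma \;=\; [Z(\A_f):Z(F)U_Z]\cdot\prod_{v\nmid\infty}O_U^\gamma(P_{\chi,\mathfrak{a},v}).
\]
Then Corollary \ref{unramified links} at $v\nmid\mathrm{N}(\mathfrak{C})$ and Corollary \ref{ramified links} at $v\mid\mathrm{N}(\mathfrak{C})$ convert each local orbital integral into the local Whittaker coefficient $B_v(a,\eta,\xi;\Theta_\mathfrak{r})$, up to the factor $\tau_v(\gamma)$ and (at the first range of places) $|a|_v|d|_v^{1/2}$. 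The $\tau_v(\gamma)$'s telescope globally by Lemma \ref{global tau}, leaving the reciprocal archimedean product $(-i)^{[F:\Q]}\omega_\infty(\delta)|\eta\xi|_\infty^{-1/2}$; this combines with the archimedean values of Proposition \ref{kernel coefficients}(iv) (after substituting $-a$ for $a$) to produce the factor $2^{[F:\Q]}|a|_\infty e_\infty(a)$ on the right. The factors $|a|_v$ at $v\mid\mathrm{N}(\mathfrak{C})$ are $1$ because $\mathfrak{a}$ is prime to $\mathfrak{c}$, so the remaining $|a|_v|d|_v^{1/2}$ product over finite $v$ assembles into $|a|_f|d|^{1/2}$, completing the right-hand factor $|d|^{1/2}|a|\,e_\infty(a)$.

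The two degenerate orbits are matched with $A_0(-a;\Theta_\mathfrak{r})$ and $A_1(-a;\Theta_\mathfrak{r})$. For $(\eta,\xi)=(1,0)$, Lemma \ref{orbital integrals} reduces $\langle P_{\chi,\mathfrak{a}},P_\chi\rangle_U^\gamma$ to a scalar multiple of $P_{\chi,\mathfrak{a}}(\gamma)$ for $\gamma\in T(F)$, which Corollaries \ref{unr final degen} and \ref{ram final degen} identify with Whittaker coefficients of $\theta$; together with Proposition \ref{constant term} at $s=1/2$, which supplies the constant term $C_{\mathfrak{r},1/2}$ of the Eisenstein series (only the term $T=\emptyset$ contributes, up to the symmetrization $\prod_{v\in S}\sigma_{1/2,v}$), this reproduces $A_0$. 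For $(\eta,\xi)=(0,1)$ the same lemma shows the contribution vanishes unless $\chi^\ast=\chi$, and Lemma \ref{cusp theta} together with Lemma \ref{Lem:theta and eisenstein} identifies this surviving contribution with $A_1$, up to the explicit twist by $\nu$.

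The main obstacle is the global bookkeeping of the volume constants. One must verify that the product
\[
[Z(\A_f):Z(F)U_Z] \cdot \prod_{v\mid\mathrm{N}(\mathfrak{C})}[\co_{E,v}^\times:\co_{F,v}^\times U_{T,v}]
\]
coming out of (\ref{orbital decomp}) and Corollary \ref{ramified links} rearranges to $\lambda_U^{-1}H_F^{-1}[\widehat{\co}_E^\times:U_T]$. This combines the class-number identity $[Z(\A_f):Z(F)\widehat{\co}_F^\times]=H_F$, the measure convention on $C_U$ from (\ref{measure descent}), the fact that $U_{T,v}=\co_{E,v}^\times$ away from $\mathfrak{c}$, and the definition of $\lambda_U=[\co_F^\times:\co_F^\times\cap U]$; each piece is elementary but they must be reconciled carefully at primes dividing $\mathfrak{c}$ to close the identity.
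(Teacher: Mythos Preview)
Your proposal is correct and follows essentially the same approach as the paper's own proof: decompose both sides along $T(F)\backslash G(F)/T(F)$, match the nondegenerate orbits to the sum $\sum B(-a,\eta,\xi;\Theta_{\mathfrak{r}})$ via Corollaries~\ref{unramified links} and~\ref{ramified links} together with Lemma~\ref{global tau} and Proposition~\ref{kernel coefficients}(d), and match the two degenerate orbits to $A_0$ and $A_1$ using Lemma~\ref{orbital integrals}, Lemma~\ref{cusp theta}, Lemma~\ref{Lem:theta and eisenstein}, and Proposition~\ref{constant term}. One small correction in your bookkeeping: the product $[Z(\A_f):Z(F)U_Z]\cdot\prod_{v}[\co_{E,v}^\times:\co_{F,v}^\times U_{T,v}]$ equals $\dfrac{H_F}{\lambda_U}[\widehat{\co}_E^\times:U_T]$, not $\lambda_U^{-1}H_F^{-1}[\widehat{\co}_E^\times:U_T]$ (use $[Z(\A_f):Z(F)U_Z]=\tfrac{H_F}{\lambda_U}[\widehat{\co}_F^\times:U_Z]$ and $[\co_{E,v}^\times:U_{T,v}]=[\co_{E,v}^\times:\co_{F,v}^\times U_{T,v}]\cdot[\co_{F,v}^\times:U_{Z,v}]$).
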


\begin{proof}
Suppose $\gamma\in G(F)$ is nondegenerate and let $\eta$ and $\xi$ be defined by (\ref{eta}).  Then
Corollaries \ref{unramified links} and \ref{ramified links} show that
$$
 [\co_{E,v}^\times: \co_{F,v}^\times U_{T,v} ]\cdot B_v(a,\eta,\xi;  \Theta_{\mathfrak{r}}) =
\tau_v(\gamma)\cdot |a|_v |d|_v^{1/2}\cdot O_U^\gamma (P_{\chi,\mathfrak{a},v })
$$
for every finite place $v$ of $F$.  By (\ref{orbital decomp})
$$
\langle P_{\chi,\mathfrak{a}} ,P_\chi\rangle_U^\gamma= [Z(\A_f):Z(F) U_Z] \cdot \prod_{v\nmid\infty} O_U^\gamma(P_{\chi,\mathfrak{a},v})
$$
By the final claims of Proposition \ref{kernel coefficients} and Lemma \ref{global tau}, for $v$ an archimedean place
$$
B_v(a, \eta,\xi;\Theta_{\mathfrak{r}}) = 2 \tau_v(\gamma) |a|_v e_v(-a).
$$
Combining these equalities gives
$$
\frac{ H_F }{\lambda_U}  [\widehat{\co}_E^\times: U_T]
\cdot B(a,\eta,\xi;\Theta_{\mathfrak{r},1/2})  
 = 2^{[F:\Q]} |d|^{1/2} |a|  \langle P_{\chi,\mathfrak{a}},P_\chi\rangle_U^\gamma \cdot e_\infty(-a).
$$

By Lemma \ref{local kernel functional}, given  $\eta,\xi\in F^\times$ with $\eta+\xi=1$ we have
$B(a,\eta,\xi;\Theta_{\mathfrak{r}})=0$ unless $\omega_v(-\eta\xi)=\epsilon_v(1/2,\mathfrak{r},\psi)$ for every place $v$ of $F$.   Combining (\ref{B ramification}) with Lemma \ref{eta parametrization} we find that $B(a,\eta,\xi;\Theta_{\mathfrak{r}})=0$ unless the pair $\eta,\xi$ is of the form (\ref{eta}) for some $\gamma\in G(F)$.
Therefore
\begin{eqnarray}\lefteqn{
\frac{ H_F }{\lambda_U}  [\widehat{\co}_E^\times: U_T]
 \cdot \sum_{ \substack{\eta,\xi\in F^\times \\ \eta+\xi=1}  } B(-a,\eta,\xi;\Theta_{\mathfrak{r}}) \nonumber } \\ \label{Fourier compare 1}
& & = 2^{[F:\Q]}  |d|^{1/2} |a|   \sum_{ \substack{\gamma\in T(F)\backslash G(F) /T(F) \\ \gamma\mathrm{\ nondegenerate} } }\langle P_{\chi,\mathfrak{a}},P_\chi\rangle_U^\gamma \cdot e_\infty(a).
\end{eqnarray}

It remains to compare the  linking numbers at the two degenerate choices of $\gamma$  (i.e.  $\gamma\in B^\pm$) with the degenerate terms $A_0(a;\Theta_{\mathfrak{r}})$ and $A_1(a;\Theta_{\mathfrak{r}})$ of  (\ref{global kernel decomp}).  
First suppose $\gamma=\epsilon^\circ$ where $\epsilon^\circ $ satisfies $B^-=E\epsilon^\circ$, so that $(\eta,\xi)=(0,1)$.   Let $z\in\A_E^\times$ be such that $\epsilon_v^\circ =z_v \epsilon_v$ for every finite place $v$.  If $\chi\not=\chi^*$ then both $A_1(a;\Theta_{\mathfrak{r}})$ and $\langle P_{\chi,\mathfrak{a}}, P_\chi\rangle^\gamma_U$ vanish, by Lemmas \ref{cusp theta}  and \ref{orbital integrals}, respectively.  We therefore assume that $\chi=\chi^*$.  If $\chi$ is ramified then $B_v(a;E_{\mathfrak{r},s})=0$ for any $v\mid \mathfrak{c}$ by Proposition \ref{eisenstein coefficients} and the inequality $\ord_v(\mathfrak{ar}^{-1})=-\ord_v(\mathfrak{r})<0$.  Abbreviating  $\alpha=\left(\begin{matrix} a\delta^{-1} & \\ & 1 \end{matrix}\right)$, it follows that
$
W_{\mathfrak{r}, s}\left(\alpha h_T\right) =0
$ 
for any $T\subset S$ and so $A_1(a;\Theta_{\mathfrak{r}})=0$.
Similarly if $\chi$ is ramified then $P_{\chi,\mathfrak{a}}(\epsilon^\circ)=0$ by Lemma \ref{ramified degenerate link}, and so also $\langle P_{\chi,\mathfrak{a}}, P_\chi\rangle_{U,\gamma}=0$ by Lemma \ref{orbital integrals}.  We therefore assume that  $\chi$ is unramified.  
By (\ref{constant to section}), Proposition \ref{constant term}, and Lemma \ref{Lem:theta and eisenstein}, $C_\theta(\alpha h_T)=0$ unless $T=\emptyset$ or $S$, and so
 \begin{eqnarray*}
A_1(a;\Theta_{\mathfrak{r}})
&=&
\sum_{T\subset S} \overline{\chi}_T(\mathfrak{D})  C_\theta(\alpha h_T)  W_{\mathfrak{r},1/2}(\alpha h_T) \\
&=&
 B(a; E_{\mathfrak{r},1/2} )  C_\theta (\alpha)
+
\overline{\chi}(\mathfrak{D}) B(a; h_S E_{\mathfrak{r},1/2} )  C_{\theta}(\alpha h_S)\\
&=&
2\cdot B(a; E_{\mathfrak{r},1/2} )   C_\theta (\alpha) 
\end{eqnarray*}
where we have used  Propositions \ref{ES functional} and \ref{theta functional} for the third equality. Again using Proposition \ref{constant term} and Lemma \ref{Lem:theta and eisenstein} we find
\begin{eqnarray*}
C_\theta(\alpha)&=&
(-1)^{[F:\Q]} \nu(a\delta^{-1})|ad^{-1} \delta^{-1}|^{1/2} L^*(1,\omega) \\
B(a; E_{\mathfrak{r},1/2}) 
&=&
|r|^{1/2}B(ar^{-1};E_{\co_F,1/2}) \\
&=&
(-1)^{[F:\Q]} |dr|^{1/2}  \overline{\nu}(ar^{-1}\delta^{-1}) B(ar^{-1};\theta)
\end{eqnarray*}
where $r\co_F=\mathfrak{r}$ for $r\in \A^\times$ with $r_v=1$ at each archimedean $v$.
Therefore
$$
A_1(a;\Theta_{\mathfrak{r}}) = 2  \nu(\mathfrak{r})|ar\delta^{-1}|^{1/2} B(ar^{-1};\theta) L(1,\omega).
$$
On the other hand using Corollary \ref{unr final degen}, Lemma \ref{ramified degenerate link}, and 
$$
\chi(rz)=\nu(r)^2\nu(\mathrm{N}(z))= \nu(r)^2\nu(\mathrm{N}(\epsilon^\circ)^{-1}) =\nu(r)
$$ 
we find
$$
P_{\chi,\mathfrak{a}}(\epsilon^\circ)\cdot e_\infty(-a)= \nu(\mathfrak{r})|r|^{1/2}|a|^{-1/2} B(ar^{-1};\theta)
$$
and now (\ref{dirichlet functional}), (\ref{dirichlet value}),  and Lemma \ref{orbital integrals} imply that (for $\gamma=\epsilon^\circ$)
\begin{equation}\label{Fourier compare 3}
H_F\lambda_U^{-1}[\widehat{\co}_E^\times:U_T] \cdot A_1(-a;\Theta_{\mathfrak{r}})=
2^{[F:\Q]} |d|^{1/2} |a|  \langle P_{\chi,\mathfrak{a}}, P_\chi\rangle_U^\gamma \cdot e_\infty(a).
\end{equation}

A similar, but easier, argument also shows that (\ref{Fourier compare 3}) continues to hold if $\gamma=1$ and  $A_1$ is replaced by $A_0$. The theorem follows from this together with equation (\ref{Fourier compare 1}), equation  (\ref{linking decomposition}), and the decomposition (\ref{global kernel decomp}).
\end{proof}

We now construct a pairing $[P,Q]$ on CM-cycles of level $U$ taking values in the space of automorphic forms on $\GL_2(\A)$ as in \cite[(4.4.5)]{zhang2}. Endow the (finite)  set $S_U = G(F)\backslash G(\A_f)/U $ with the measure determined by
$$
\int_{S_U} \sum_{\gamma\in T(F)\backslash G(F)} P(\gamma g) \ dg =\int_{C_U} P(g)\ dg
$$
for any CM-cycle $P$ of level $U$.    For each $\mathfrak{a}$ prime to $\mathfrak{dr}$ there is a Hecke operator  $(T_\mathfrak{a}\phi)(g)= \sum \phi (gh)$
on $L^2(S_U)$ where the sum is over $h\in H(\mathfrak{a})/U$ as in \S \ref{ss:special cycles}.  For any $\phi \in L^2(S_U)$ we  have
$$
\int_{S_U} k_U(x,y) \phi (y)\ dy = \phi (x)
$$
and it follows  that that there is a decomposition $k_U(x,y)=\sum_{i=1}^\ell f'_i(x)\overline{f'_i(y)}$ where $\{f'_1,\dots,f'_\ell\}$ is any orthonormal basis for $L^2(S_U)$.   We choose this basis in such a way that  each $f'_i$ is a simultaneous eigenvector for every $T_\mathfrak{a}$ with $(\mathfrak{a},\mathfrak{dr})=1$.  The Jacquet-Langlands correspondence implies that for each $f_i'$ there is a (not necessarily unique) holomorphic automorphic form $f_i$ of weight $2$ on $\GL_2(\A)$ fixed by $K_1(\mathfrak{dr})$ having the same Hecke eigenvalues as $f_i'$.  Indeed, if $f_i'$ generates an infinite dimensional representation $\pi'$ of $G(\A)$ then take $f_i$ to be a newvector in the Jacquet-Langlands lift of $\pi'$.  If $f_i'$ generates a finite dimensional representation of $G(\A)$ then $f_i'(g)=\mu(\mathrm{N}(g))$ with $\mu$ some character of $\A^\times/F^\times$, and one takes  $f_i$ to be an Eisenstein series constructed from a function in the induced representation  $\mathcal{B}(\mu |\cdot|^{1/2},\mu |\cdot|^{-1/2})$. We may, and do, assume that  $\widehat{B}(\co_F,f_i)=1$ for every $i$.  For any CM-cycles $P$ and $Q$ of level $U$ we define a parallel weight $2$, holomorphic, $K_1(\mathfrak{dr})$-fixed automorphic form on $\GL_2(\A)$ 
$$
[P,Q]=\sum_{i=1}^\ell \left( \int_{C_U\times C_U}  P(x)f_i'(x)\overline{f'_i(y)Q(y)} \ dx\ dy \right) f_i.
$$
This form satisfies $\widehat{B}(\co_F,[P,Q])= \langle P,Q\rangle_U$ and,  for any ideal $\mathfrak{a}$ relatively prime to $\mathfrak{dr}$, $T_\mathfrak{a}\cdot [P,Q]=[P,T_\mathfrak{a} Q].$  Set  $\Psi=[P_\chi,P_\chi]$, an automorphic form of  central character $\chi_0^{-1}$ satisfying 
\begin{equation}\label{spectral form display}
\widehat{B}(\co_F;T_\mathfrak{a} \Psi)=\langle P_\chi,P_{\chi,\mathfrak{a}}\rangle_U.
\end{equation}

Let $\Pi'$ be the automorphic representation of $G(\A)$ whose Jacquet-Langlands lift is $\Pi$,  let $\phi^\chi_{\Pi'}$ be the  toric newvector in $\Pi'$ normalized by
$\int_{S_U} | \phi^\chi_{\Pi'} |^2  = 1$
and let $\Psi|_\Pi$ denote the projection of $\Psi$ to $\Pi$.   We may choose the basis $\{f_i'\}$ so  that $\phi^\chi_{\Pi'}= f'_1$. If we set $\mathcal{P}_\chi(g)=\sum_\gamma P_\chi(\gamma g)$ where the sum is over $\gamma\in T(F)\backslash G(F)$ then
$$
\widehat{B}(\co_F;\Psi|_\Pi) 
= \sum_{ \substack{1\le i\le \ell \\ \pi_i=\pi_1} } \left|\int_{S_U} \mathcal{P}_\chi(t) f'_i(t)\ dt \ \right|^2.
$$
The projection of $\overline{\mathcal{P}}_\chi$ to $\pi'_1$ is a toric newvector, hence a scalar multiple of $f'_1$, and so only the term $i=1$ contributes to the sum. It follows that
\begin{equation}\label{toric spectrum}
\widehat{B}(\co_F;\Psi|_\Pi) = 
  \left|\int_{C_U} P_\chi(t) \phi^\chi_{\Pi'} (t)\ dt \ \right|^2.
\end{equation}

\begin{Prop}\label{first holomorphic values}
Let $\phi_\Pi^\#$ be the orthogonal projection of the normalized newform $\phi_\Pi\in \Pi$ to the quasi-new line (defined in \S \ref{ss:quasi-new}).  Then
\begin{eqnarray*}\lefteqn{
2^{|S|}  H_F\lambda_U^{-1} [\widehat{\co}_E^\times: U_T]  \widehat{B}(\co_F;\phi_\Pi^{\#}  )
 L(1/2,\Pi\times\Pi_\chi)   } \\
&=&
|d|^{1/2} 2^{[F:\Q]}   ||\phi_\Pi^{\#} ||_{K_0(\mathfrak{dr})} ^2  \cdot 
\left|   \int_{C_U} P_\chi(t) \phi_{\Pi'}^\chi(t)\ dt \ \right|^2  
\end{eqnarray*}
in which $S$ is the set of prime divisors of $\mathfrak{d}$.
\end{Prop}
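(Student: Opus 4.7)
The plan is to compute the Petersson pairing $\langle\phi_\Pi^\#,\overline{\Theta_\mathfrak{r}}\rangle_{K_0(\mathfrak{dr})}$ in two ways.  By Proposition \ref{quasi-new kernels}, the projection of $\overline{\Theta_\mathfrak{r}}$ to $\Pi$ lies on the quasi-new line; write $\overline{\Theta_\mathfrak{r}}|_\Pi=\nu\phi_\Pi^\#$.  Spectral orthogonality then yields $\langle\phi_\Pi^\#,\overline{\Theta_\mathfrak{r}}\rangle_{K_0(\mathfrak{dr})}=\overline{\nu}\,\|\phi_\Pi^\#\|^2_{K_0(\mathfrak{dr})}$.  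On the other hand, expanding $\phi_\Pi^\#$ in the standard basis $\{R_b\phi_\Pi:\mathfrak{b}\mid\mathfrak{rs}^{-1}\}$ of $\Pi^{K_1(\mathfrak{r})}$ and applying (\ref{rankin integral}) termwise gives $\langle\phi_\Pi^\#,\overline{\Theta_\mathfrak{r}}\rangle_{K_0(\mathfrak{dr})}=\Lambda(\phi_\Pi^\#)\cdot L(1/2,\Pi\times\Pi_\chi)$, where $\Lambda$ is the linear functional of \S\ref{ss:quasi-new}.  Since the quasi-new line is $(\ker\Lambda)^\perp$, we have $\Lambda(\phi_\Pi^\#)=\Lambda(\phi_\Pi)=\prod_{v\mid\mathfrak{dc}}\gamma_{1/2,v}(1)=2^{|S|}$, the last equality using $B_v(1;\theta)=1$ from Proposition \ref{theta coefficients}.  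Equating the two computations,
$$\overline{\nu}\,\|\phi_\Pi^\#\|^2_{K_0(\mathfrak{dr})}\;=\;2^{|S|}\,L(1/2,\Pi\times\Pi_\chi).$$

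To express $\nu$ in terms of the toric period, I will combine Proposition \ref{generating series} with (\ref{spectral form display}) and the identity $B(a;\overline{\phi})=\overline{B(-a;\phi)}$ to obtain
$$B(a;\overline{\Theta_\mathfrak{r}})\;=\;K\cdot B(a;\Psi)\qquad\text{for every }\mathfrak{a}=a\co_F\text{ coprime to }\mathfrak{dr},$$
where $K=\frac{2^{[F:\Q]}|d|^{1/2}\lambda_U}{H_F[\widehat{\co}_E^\times:U_T]}$.  Thus $\overline{\Theta_\mathfrak{r}}-K\Psi$ has vanishing Fourier coefficients at every ideal prime to $\mathfrak{dr}$.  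Now, for any $v\in\pi^{K_1(\mathfrak{r})}$ the Hecke operator $T_\mathfrak{a}$ acts by the scalar $b_\mathfrak{a}(\pi)$, so $\widehat{B}(\mathfrak{a};v)=\mathrm{N}(\mathfrak{a})^{-1}b_\mathfrak{a}(\pi)\widehat{B}(\co_F;v)$; decomposing $\overline{\Theta_\mathfrak{r}}-K\Psi$ spectrally and using strong multiplicity one to separate the eigensystems of different $\pi$ forces $\widehat{B}(\co_F;(\overline{\Theta_\mathfrak{r}}-K\Psi)|_\pi)=0$ for every $\pi$ occurring.  For $\pi=\Pi$, using $\Psi|_\Pi=\mu\phi_\Pi$ with $\mu=\bigl|\int_{C_U}P_\chi(t)\phi_{\Pi'}^\chi(t)\,dt\bigr|^2$ from (\ref{toric spectrum}) and $\widehat{B}(\co_F;\phi_\Pi)=1$, this yields
$$\nu\,\widehat{B}(\co_F;\phi_\Pi^\#)\;=\;K\mu.$$

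Eliminating $\overline{\nu}$ between the two boxed identities, using that $K$, $\mu$, and $\|\phi_\Pi^\#\|^2_{K_0(\mathfrak{dr})}$ are real, produces $K\mu\,\|\phi_\Pi^\#\|^2_{K_0(\mathfrak{dr})}=2^{|S|}L(1/2,\Pi\times\Pi_\chi)\cdot\overline{\widehat{B}(\co_F;\phi_\Pi^\#)}$; since both sides of the proposition are real, the product $L(1/2,\Pi\times\Pi_\chi)\widehat{B}(\co_F;\phi_\Pi^\#)$ is itself real, so the conjugate may be replaced by $\widehat{B}(\co_F;\phi_\Pi^\#)$, and substituting the value of $K$ followed by rearrangement delivers the asserted identity.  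I expect the main technical obstacle to be the spectral-separation step used to derive the second displayed identity: passing from the vanishing of the good Fourier coefficients of $\overline{\Theta_\mathfrak{r}}-K\Psi$ to the vanishing of the $\co_F$-coefficient of each spectral component individually requires strong multiplicity one for the cuspidal spectrum, together with an analogous treatment of the residual and Eisenstein contributions.  The remainder of the argument is a bookkeeping of local Fourier coefficients at the archimedean and bad places.
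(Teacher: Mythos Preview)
Your approach is essentially the same as the paper's: both combine Proposition~\ref{generating series} with (\ref{spectral form display}) to match Whittaker coefficients of $\overline{\Theta}_{\mathfrak r}$ and $\Psi$ at all $\mathfrak a$ prime to $\mathfrak{dr}$, then use semi\-simplicity of the Hecke algebra (equivalently, strong multiplicity one) to project to $\Pi$, and finally invoke (\ref{rankin integral}) with $b=1$ together with (\ref{toric spectrum}).  The paper phrases the projection step as ``there is a polynomial $e_\Pi$ in the $T_{\mathfrak a}$ with $\overline{\Theta}_{\mathfrak r}|_\Pi=e_\Pi\overline{\Theta}_{\mathfrak r}$ and $\Psi|_\Pi=e_\Pi\Psi$'', which is exactly your spectral\-separation argument.

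There is one genuine weak point in your write\-up: the last step is circular.  You obtain $\overline{\nu}\,\|\phi_\Pi^\#\|^2 = 2^{|S|}L(1/2,\Pi\times\Pi_\chi)$ and $\nu\,\widehat B(\co_F;\phi_\Pi^\#)=K\mu$, and then remove the complex conjugate on $\widehat B(\co_F;\phi_\Pi^\#)$ by appealing to ``both sides of the proposition are real'' --- but the reality of the left side is precisely what you are trying to establish.  The paper organizes this differently: rather than pairing $\phi_\Pi^\#$ against $\overline{\Theta}_{\mathfrak r}$, it writes $\overline{\Theta}_{\mathfrak r}|_\Pi = \big(\widehat B(\co_F;\overline{\Theta}_{\mathfrak r}|_\Pi)/\widehat B(\co_F;\phi_\Pi^\#)\big)\phi_\Pi^\#$ directly, uses $\langle\phi_\Pi,\phi_\Pi^\#\rangle=\|\phi_\Pi^\#\|^2$, and then pairs $\phi_\Pi$ (not $\phi_\Pi^\#$) against $\overline{\Theta}_{\mathfrak r}$ so that (\ref{rankin integral}) applies immediately with $b=1$.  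This yields
\[
\widehat B(\co_F;\overline{\Theta}_{\mathfrak r}|_\Pi)\cdot \|\phi_\Pi^\#\|^2_{K_0(\mathfrak{dr})}
\;=\;2^{|S|}\,\widehat B(\co_F;\phi_\Pi^\#)\,L(1/2,\Pi\times\Pi_\chi),
\]
and substituting $\widehat B(\co_F;\overline{\Theta}_{\mathfrak r}|_\Pi)=K\mu$ from (\ref{projection coefficients}) and (\ref{toric spectrum}) gives the proposition with no reality hypothesis needed.  Reworking your final paragraph along these lines would close the gap; everything else in your outline matches the paper.
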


\begin{proof}
Let  $\overline{\Theta}_\mathfrak{r}|_\Pi$ and $\Psi|_\Pi$ denote the  projections of $\overline{\Theta}_\mathfrak{r}$ and $\Psi$ to $\Pi$.  Combining Proposition \ref{generating series}  and (\ref{spectral form display}) gives
$$
H_F \lambda_U^{-1} [\widehat{\co}_E^\times: U_T]
\cdot \widehat{B}( \co_F; T_\mathfrak{a} \overline{\Theta}_{\mathfrak{r}}  )
=  2^{[F:\Q]} |d|^{1/2}\widehat{B}(\co_F,T_\mathfrak{a}\Psi)  
$$
for all $\mathfrak{a}$ prime to $\mathfrak{dr}$.  The action of the operators $T_\mathfrak{a}$ with $(\mathfrak{a},\mathfrak{dr})=1$ on the space of all $K_1(\mathfrak{dr})$-fixed, holomorphic, parallel weight two automorphic forms on $\GL_2(\A)$ of central character $\chi_0^{-1}$ generates a semi-simple $\C$-algebra, and it follows from this and strong multiplicity one that there is a polynomial $e_\Pi$ in the Hecke operators $T_\mathfrak{a}$ such that $\overline{\Theta}_\mathfrak{r}|_\Pi=e_\Pi\cdot \overline{\Theta}_\mathfrak{r}$ and $\Psi|_\Pi=e_\Pi\cdot \Psi$.  We therefore deduce that
\begin{equation}\label{projection coefficients}
H_F \lambda_U^{-1}  [\widehat{\co}_E^\times: U_T]  \cdot \widehat{B}(\co_F;\overline{\Theta}_\mathfrak{r}|_\Pi)= 2^{[F:\Q]} |d|^{1/2} \widehat{B}(\co_F;\Psi|_\Pi).
\end{equation}

Under the decomposition $\Pi\iso\bigotimes_v\Pi_v$ the newform $\phi_\Pi$ is decomposable as a pure tensor $\phi_\Pi=\otimes \phi_{\Pi,v}$.  In the notation of \S \ref{ss:quasi-new} $\Lambda_v(\phi_{\Pi,v})\not=0$ for $v\mid\mathfrak{dc}$, and so $\phi_{\Pi,v}$ has nontrivial projection to the quasi-new line in $\Pi_v$.  It follows that $\phi^\#_\Pi\not=0$.  The form $\overline{\Theta}_\mathfrak{r}|_\Pi$ lies on the quasi-new line of $\Pi$ by Proposition \ref{quasi-new kernels}, and so if $\widehat{B}(\co_F;\phi_\Pi^\#)=0$ then also $\widehat{B}(\co_F; \overline{\Theta}_\mathfrak{r}|_\Pi)=0$. Using (\ref{toric spectrum}) and (\ref{projection coefficients}) we then see that both sides of the stated equality are $0$.  Therefore we may assume $\widehat{B}(\co_F;\phi_\Pi^\#)\not=0$ so that
$$
\overline{\Theta}_\mathfrak{r}|_\Pi = \frac{\widehat{B}(\co_F;\overline{\Theta}_\mathfrak{r}|_\Pi)}{  \widehat{B}(\co_F;\phi_\Pi^\#)  } \cdot \phi_\Pi^\#.
$$
Combining this with (\ref{rankin integral}) (with $b=1$) gives
\begin{eqnarray*}
\widehat{B}(\co_F;\overline{\Theta}_\mathfrak{r}|_\Pi )\cdot ||\phi_\Pi^\#||^2_{K_0(\mathfrak{dr})}  
&=&
\widehat{B}(\co_F;\overline{\Theta}_\mathfrak{r}|_\Pi )\cdot \langle \phi_\Pi , \phi_\Pi^\# \rangle_{K_0(\mathfrak{dr})}  \\
&=&
\widehat{B}(\co_F;\phi_\Pi^\#) \cdot  
\langle  \phi_\Pi,  \overline{\Theta}_\mathfrak{r} \rangle_{K_0(\mathfrak{dr})}  \\
&=& 
2^{|S|} \widehat{B}(\co_F;\phi_\Pi^{\#}  ) L(1/2,\Pi\times\Pi_\chi).
\end{eqnarray*}
The claim now follows from (\ref{toric spectrum}) and (\ref{projection coefficients}).
\end{proof}

\begin{Thm}\label{second holomorphic values}
Let $\phi_\Pi\in \Pi$ be the normalized newvector (in the sense of \S \ref{ss:automorphic forms})  and    let $\phi_{\Pi'}\in\Pi'$ be the newvector (in the sense of Definition \ref{JL-newvector}) normalized by $\int_{S_V}  | \phi_{\Pi'} |^2 =1$.  Then
$$
\frac{ L(1/2,\Pi\times\Pi_\chi) }{ ||\phi_\Pi ||^2_{K_0(\mathfrak{n})} }
=
\frac{2^{[F:\Q]} }{ H_{F,\mathfrak{s}} \sqrt{ \mathrm{N}_{F/\Q}  (\mathfrak{dc}^2) }}  \cdot \left|    \int_{C_V} Q_\chi(t) \phi_{\Pi'}(t)\ dt \ \right|^2  .
$$
where $H_{F,\mathfrak{s}}=[Z(\A_f):F^\times V_Z]$ is the order of the ray class group  of conductor $\mathfrak{s}$.
\end{Thm}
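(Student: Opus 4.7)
The plan is to derive Theorem \ref{second holomorphic values} from Proposition \ref{first holomorphic values} by converting each of the three \emph{exotic} objects on the right-hand side of that proposition --- the quasi-new projection $\phi_\Pi^\#$, the toric newvector $\phi_{\Pi'}^\chi$, and the level-$U$ cycle $P_\chi$ --- into their counterparts in the theorem ($\phi_\Pi$, $\phi_{\Pi'}$, $Q_\chi$). Each conversion is local at the finite places $v\mid\mathfrak{dc}$; elsewhere the objects already agree by construction of $V$, $U$, and the toric/quasi-new constructions. The whole argument therefore reduces to keeping track of local volume factors, matrix coefficient identities, and a single global index calculation.

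First I would handle the passage $P_\chi\rightsquigarrow Q_\chi$ together with $\phi_{\Pi'}^\chi\rightsquigarrow\phi_{\Pi'}$. The trace relation (\ref{cycle trace}) combined with the measure descent (\ref{measure descent}) gives
\[
\int_{C_U}P_\chi(t)\phi(t)\,dt = \frac{\lambda_V}{\lambda_U}\,[V_T:U_T]\int_{C_V}Q_\chi(t)\phi(t)\,dt
\]
for any $V$-right-invariant $\phi$. Since $\phi_{\Pi'}^\chi$ is only $U$-invariant, this descent must be carried out simultaneously with the change of vector. At each $v\mid\mathfrak{c}$ the uniqueness arguments following Definition \ref{JL-toric} (via Waldspurger--Gross-Prasad, and via Theorem 2.3.5 of \cite{zhang2} at places $v\mid\mathfrak{s}$) realize $\phi_{\Pi',v}^\chi$ as essentially the projection of $\phi_{\Pi',v}$ onto the $\overline{\chi}_v$-isotypic line under $E_v^\times$; at $v\mid\mathfrak{d}$ (where $v\nmid\mathfrak{c}$ and both spaces of fixed vectors are one-dimensional) the two newvectors already coincide. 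Pairing this projection formula against the $\chi$-isotypic cycle $P_\chi$ collapses the averaging and produces an explicit constant times $\int_{C_V}Q_\chi\phi_{\Pi'}\,dt$.

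Next I would compare $\phi_\Pi^\#$ with $\phi_\Pi$. At $v\mid\mathfrak{d}$ the local space $\Pi_v^{K_1(\mathfrak{r}_v)}$ is one-dimensional (since $\mathfrak{r}_v=\co_{F,v}$ there), so the projection to the quasi-new line is the identity at such $v$. The nontrivial local work is at $v\mid\mathfrak{c}$, where \S \ref{ss:quasi-new} characterizes the quasi-new line as the orthogonal complement of $\ker\Lambda_v$ inside the $K_1(\mathfrak{r}_v)$-fixed vectors, with explicit basis $\{R_{\varpi^k}\phi_{\Pi,v}:0\le k\le\ord_v(\mathfrak{r}\mathfrak{s}^{-1})\}$ and known functional values $\Lambda_v(R_{\varpi^k}\phi_{\Pi,v})=\gamma_{1/2,v}(\varpi^k)$. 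From these data I can compute $\widehat{B}(\co_F;\phi_\Pi^\#)$ and $\|\phi_\Pi^\#\|^2_{K_0(\mathfrak{dr})}$ relative to the normalizations $\widehat{B}(\co_F;\phi_\Pi)=1$ and $\|\phi_\Pi\|^2_{K_0(\mathfrak{n})}$; the ratio of Petersson norms also absorbs the index $[K_0(\mathfrak{n}):K_0(\mathfrak{dr})]$ coming from the volume normalization, since a form fixed by both subgroups has norms related only by this volume factor.

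Substituting the three conversion factors back into Proposition \ref{first holomorphic values}, the prefactor $|d|^{1/2}=\mathrm{N}_{F/\Q}(\mathfrak{d})^{-1/2}$ combines with the local Euler-type corrections involving $\mathrm{N}_{F/\Q}(\mathfrak{c})$ to produce $1/\sqrt{\mathrm{N}_{F/\Q}(\mathfrak{dc}^2)}$, while the global index $H_F\lambda_U^{-1}[\widehat{\co}_E^\times:U_T]$ reorganizes --- via Lemma \ref{theta factor}, the exact sequence defining the ray class group of conductor $\mathfrak{s}$, and the identification $V_Z=V\cap Z(\A_f)$ --- into the ray class number $H_{F,\mathfrak{s}}=[Z(\A_f):F^\times V_Z]$. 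I expect the main obstacle to be the local matrix coefficient bookkeeping at places $v\mid\mathfrak{s}$, where $\Pi_v\iso\Pi(\mu_v,\chi_{0,v}^{-1}\mu_v^{-1})$ is a ramified principal series: at these places the quasi-new line is the genuinely nontrivial ingredient and has no simple classical analogue, so one must verify that the constants extracted from the toric-newvector comparison in \S \ref{ss:toric} and those from the quasi-new projection in \S \ref{ss:quasi-new} are precisely matched by the claimed power of $\mathrm{N}_{F/\Q}(\mathfrak{c})$.
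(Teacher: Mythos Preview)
Your outline correctly identifies the three local conversions (quasi-new $\leftrightarrow$ new, toric $\leftrightarrow$ new, $P_\chi\leftrightarrow Q_\chi$) and that they are concentrated at $v\mid\mathfrak{dc}$; this is exactly how the paper begins \S\ref{level comparison}. The paper packages these conversions as products $\mathbf{a}_\Pi=\prod_{v\mid\mathfrak{dc}}\mathbf{a}_v(\alpha_v)$, $\mathbf{b}_\Pi$, $\mathbf{c}_\Pi$ of values of rational functions in the Satake parameter $\alpha_v=\mu_v(\varpi)$, depending only on $(F_v,E_v,\chi_v)$, and reduces the theorem to the identity $\kappa\,\mathbf{a}_\Pi\mathbf{c}_\Pi=\mathbf{b}_\Pi$ (your ``constants match'' statement). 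Where the two approaches diverge is in how this identity is established.

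You propose to compute each local factor directly and verify the match --- and you flag the places $v\mid\mathfrak{s}$ as the sticking point. The paper does \emph{not} attempt this. Instead it observes that $\mathbf{a}_v,\mathbf{b}_v,\mathbf{c}_v$ are rational functions independent of $\Pi$, and then evaluates the desired identity on the continuous family of weight-$0$ Eisenstein representations $\Pi_\tau=\Pi(|\cdot|^{\tau-1/2},\chi_0^{-1}|\cdot|^{1/2-\tau})$ constructed in \S\ref{ss:maass family}. For this family the target formula is proved directly by brute-force unfolding (Proposition \ref{continuous maass prelim} and Corollary \ref{continuous maass answer}), since $L(1/2,\Pi_\tau\times\Pi_\chi)=|L(\tau,\chi)|^2$ admits an explicit integral representation over $T(\A)/Z(\A)$. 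Comparing with the Maass analogue (\ref{descent three}) of (\ref{descent two}) and letting $\tau$ and the unramified-away-from-$\mathfrak{dc}$ part of $\chi$ vary forces $\kappa\prod\mathbf{a}_v\mathbf{c}_v=\prod\mathbf{b}_v$ as an identity of rational functions, which then specializes back to the holomorphic $\Pi$.

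Your approach would work if the local computations at $v\mid\mathfrak{s}$ can be closed out, but as you yourself note, that is genuinely the hard part, and your proposal stops short of doing it. The paper's deformation-to-Eisenstein trick buys exactly this: it converts an unpleasant finite local calculation into a soft argument using infinitely many test representations for which the answer is already known. The price is the detour through \S\ref{ss:maass} and \S\ref{ss:maass family}, which your direct approach would not need.
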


\begin{proof}
The proof is postponed until \S \ref{level comparison}.
\end{proof}


\subsection{Central values for Maass forms}
\label{ss:maass}


In addition to Hypothesis \ref{hyp} we assume that $\Pi_v$ is a weight zero principal series  for every archimedean place $v$, and that 
 $ \epsilon(1/2,\mathfrak{r})=(-1)^{[F:\Q]}.$   Thus the weight $0$ kernel of \S \ref{ss:zero kernel} satisfies $\Theta^*_{\mathfrak{r},s}=\Theta^*_{\mathfrak{r},1-s}$.  Let $B$ be the  (unique up to isomorphism)  totally indefinite quaternion algebra over $F$ satisfying (\ref{B ramification}) for every finite place $v$.   Let $\mathbb{S}=\mathrm{Res}_{\C/\R}\mathbb{G}_m$ and set $F_\infty=F\otimes_\Q\R$.  As $F_\infty$ is naturally an $\R$-algebra, 
$$
T_{/F_\infty}=T\times_{\Spec(F)}\Spec(F_\infty)
\hspace{1cm}
G_{/F_\infty}=G\times_{\Spec(F)}\Spec(F_\infty)
$$
are naturally algebraic groups over $\R$.  Fixing an embedding of real algebraic groups $\mathbb{S}\map{} T_{/F_\infty}$ the embedding $T\map{}G$ determines an embedding $x_0:\mathbb{S}\map{}G_{/F_\infty}$, and we let $X$ denote the $G(F_\infty)$-conjugacy class of $x_0$.  As $T(F_\infty)$ is the stabilizer of $x_0$ we may identify $X\iso G(F_\infty)/T(F_\infty)$.  Writing $\mathcal{H}=\C-\R$ and choosing an isomorphism $G(F_\infty)\iso \GL_2(\R)^{[F:\Q]}$, we may fix a point in $\mathcal{H}^{[F:\Q]}$ whose stabilizer under the action of $G(F_\infty)$ is $T(F_\infty)$.  This allows us to identify $X\iso \mathcal{H}^{[F:\Q]}$.  Endowing $\mathcal{H}$ with the usual hyperbolic volume form $y^{-2} dxdy$ we obtain a measure on $X$.  Define 
$$
S_U  = G(F)\backslash X\times G(\A_f)/U
$$
endowed with the quotient measure induced from that on  $G(\A_f)/U$ giving each coset volume $1$. 
The map $G(\A_f)\map{} X\times G(\A_f)$ defined by $g\mapsto (x_0,g)$ restricts to a function on $T(\A_f)$ and determines an embedding $C_U\map{}S_U$.

 If $\phi$ is a weight $0$ Maass form on $\GL_2(\A)$ with \emph{parameter}  $t_v$ in the sense of \cite[\S 4]{zhang3} at an archimedean place $v$  then we set
$$
B_v(a;\phi)=|a|_v^{1/2} \int_0^\infty e^{-\pi |a|_v(y+y^{-1}) }  y^{it_v} \ d^\times y.  
$$
Define $B_\infty(a;\phi)= \prod_{v\mid\infty} B_v(a;\phi)$ and define $\widehat{B}(\mathfrak{a};\phi)$ for $\mathfrak{a}=a\co_F$ by 
$$
B(a;\phi)=B_\infty(a;\phi)\cdot \widehat{B}(\mathfrak{a};\phi).
$$
Let $\Pi'$ be the automorphic representation of $G(\A)$ whose Jacquet-Langlands lift is $\Pi$, and let $\phi^\chi_{\Pi'}$ be the  toric newvector in $\Pi'$ normalized by
$\int_{S_U} |\phi^\chi_{\Pi'}|=1$

\begin{Prop}\label{first maass values}
Let $\phi_\Pi^\#$ be the orthogonal projection  of the normalized newform $\phi_\Pi\in \Pi$ to the quasi-new line in $\Pi$.   Then
\begin{eqnarray*}\lefteqn{
2^{|S|}  H_F\lambda_U^{-1} [\widehat{\co}_E^\times: U_T] \widehat{B}(\co_F;\phi_\Pi^{\#}  )
 L(1/2,\Pi\times\Pi_\chi)   } \\
&=&
|d|^{1/2} 4^{[F:\Q]}   ||\phi_\Pi^{\#} ||_{K_0(\mathfrak{dr})} ^2 \cdot 
\left| \int_{C_U} P_\chi(t) \phi_{\Pi'}^\chi(t)\ dt \ \right|^2  
\end{eqnarray*}
in which $S$ is the set of prime divisors of $\mathfrak{d}$.  
\end{Prop}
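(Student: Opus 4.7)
The plan is to mirror the proof of Proposition \ref{first holomorphic values} with the weight-zero kernel $\Theta^*_\mathfrak{r}$ of \S\ref{ss:zero kernel} in place of $\Theta_\mathfrak{r}$, and with the integral representation (\ref{maass rankin}) in place of (\ref{rankin integral}). The three ingredients required are: a generating series identity relating $B(-a;\Theta^*_\mathfrak{r})$ to height pairings of CM cycles on the totally indefinite quaternion algebra $B$, a spectral pairing $[P,Q]$ on CM cycles of level $U$ taking values in weight-zero automorphic forms on $\GL_2(\A)$, and the projection to $\Pi$ via strong multiplicity one and Jacquet-Langlands.

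First I would establish the Maass analogue of Proposition \ref{generating series}: for $\mathfrak{a}=a\co_F$ prime to $\mathfrak{c}$,
\[
\frac{H_F}{\lambda_U}[\widehat{\co}_E^\times : U_T]\cdot B(-a;\Theta^*_\mathfrak{r}) = 4^{[F:\Q]}|d|^{1/2}|a|\,\langle P_{\chi,\mathfrak{a}},P_\chi\rangle_U \cdot B_\infty(a;\Theta^*_\mathfrak{r})/|a|_\infty,
\]
suitably interpreted. At every finite place, the definitions of \S\ref{ss:zero kernel} give $B_v(a,\eta,\xi;\Theta^*_\mathfrak{r})=B_v(a,\eta,\xi;\Theta_\mathfrak{r})$, so Corollaries \ref{unramified links} and \ref{ramified links} convert the nonarchimedean local Fourier coefficients into local orbital integrals of $P_{\chi,\mathfrak{a},v}$ exactly as in the holomorphic case. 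At archimedean places one uses the explicit formula recorded in \S\ref{ss:zero kernel}: comparing $-4i|a|_v|\eta\xi|_v^{1/2}\omega_v(\delta)e^{-2\pi a_v(1-2\xi_v)}$ against the $2i|a|_v|\eta\xi|_v^{1/2}\omega_v(\delta)e_v(-a)$ of Proposition \ref{kernel coefficients}(d), and tracking the Whittaker-function convention $B_v(a;\phi)$ for weight-zero Maass forms, one finds that the archimedean factor is doubled; this is precisely what replaces $2^{[F:\Q]}$ by $4^{[F:\Q]}$. The degenerate contributions $A_0$ and $A_1$ are treated by the same argument as in Proposition \ref{generating series}, using Lemma \ref{Lem:theta and eisenstein} and the replacement of $\theta$ by $\theta_\chi$ in the definition of $\Theta^*_\mathfrak{r}$.

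Next I would construct the pairing $[P,Q]$: choose an orthonormal basis $\{f_i'\}$ of $L^2(S_U)$ consisting of simultaneous eigenvectors for every $T_\mathfrak{a}$ with $(\mathfrak{a},\mathfrak{dr})=1$ (now including contributions from the Eisenstein spectrum); via Jacquet-Langlands each $f_i'$ has the same Hecke eigenvalues as a weight-zero Maass form or Eisenstein series $f_i$ on $\GL_2(\A)$ of level $K_1(\mathfrak{dr})$, which I normalize so that $\widehat{B}(\co_F;f_i)=1$. Setting $\Psi=[P_\chi,P_\chi]$, exactly as in the holomorphic proof one has $\widehat{B}(\co_F;T_\mathfrak{a}\Psi)=\langle P_\chi,P_{\chi,\mathfrak{a}}\rangle_U$ and the analogue
\[
\widehat{B}(\co_F;\Psi|_\Pi) = \Bigl|\int_{C_U} P_\chi(t)\phi^\chi_{\Pi'}(t)\,dt\Bigr|^{2}
\]
of (\ref{toric spectrum}), since only the toric newvector contributes to the spectral sum. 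Strong multiplicity one produces a Hecke polynomial $e_\Pi$ with $\overline{\Theta^*_\mathfrak{r}}|_\Pi=e_\Pi\cdot\overline{\Theta^*_\mathfrak{r}}$ and $\Psi|_\Pi=e_\Pi\cdot\Psi$; combined with Step~1 this gives the analogue of (\ref{projection coefficients}) with $4^{[F:\Q]}$ in place of $2^{[F:\Q]}$.

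To conclude, apply (\ref{maass rankin}) at $b=1$ and $s=1/2$: the factor $\prod_{v\mid\mathfrak{dc}}\gamma_{1/2,v}(1)=2^{|S|}$ yields $\langle\phi_\Pi,\overline{\Theta^*_\mathfrak{r}}\rangle_{K_0(\mathfrak{dr})}=2^{|S|}L(1/2,\Pi\times\Pi_\chi)$. By Proposition \ref{quasi-new kernels}, $\overline{\Theta^*_\mathfrak{r}}|_\Pi$ lies on the quasi-new line, so either both sides of the stated identity vanish (when $\widehat{B}(\co_F;\phi_\Pi^\#)=0$), or $\overline{\Theta^*_\mathfrak{r}}|_\Pi$ is a computable scalar multiple of $\phi_\Pi^\#$; computing $\langle\phi_\Pi,\phi_\Pi^\#\rangle_{K_0(\mathfrak{dr})}=\|\phi_\Pi^\#\|^2_{K_0(\mathfrak{dr})}$ and combining with Step~2 yields the proposition. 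The main obstacle is the archimedean bookkeeping in Step~1: all nonarchimedean content is already subsumed in \S\ref{quaternion generalities}, and the entire passage from $2^{[F:\Q]}$ to $4^{[F:\Q]}$ must be extracted from the comparison of the archimedean factor of $\Theta^*_\mathfrak{r}$ with the weight-zero Whittaker convention $B_v(a;\phi)=|a|_v^{1/2}\int_0^\infty e^{-\pi|a|_v(y+y^{-1})}y^{it_v}\,d^\times y$.
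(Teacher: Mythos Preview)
Your outline follows the holomorphic template too literally, and this creates a genuine gap at the very point you flag as ``archimedean bookkeeping.'' In the totally indefinite setting the set $S_U=G(F)\backslash X\times G(\A_f)/U$ is a $[F:\Q]$-dimensional Shimura variety, not a finite set: the height pairing $\langle P_{\chi,\mathfrak{a}},P_\chi\rangle_U$ with constant multiplicity $m\equiv 1$ does not converge (for fixed $x,y\in G(\A_f)$ the sum defining $k_U(x,y)$ runs over infinitely many $\gamma$, since $G(F)/Z(F)$ is no longer discrete in $G(\A_f)/Z(\A_f)$), and $L^2(S_U)$ has genuine continuous spectrum, so one cannot simply pick a discrete orthonormal basis $\{f_i'\}$ and form $\Psi=[P_\chi,P_\chi]$ as a finite sum. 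Both of your Steps~1 and~2 therefore rest on objects that are not defined.

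The paper repairs this by building an archimedean multiplicity function $m^*_\infty(a,\gamma)=\prod_{v\mid\infty}m^*_v(a,\gamma_v)$, with $m^*_v(a,\gamma)=4e^{2\pi a_v(\xi-\eta)}$ on the appropriate sign region, directly into the pairing. The generating-series identity then reads
\[
\frac{H_F}{\lambda_U}[\widehat{\co}_E^\times:U_T]\cdot B(a;\Theta^*_\mathfrak{r})
=|d|^{1/2}|a|\sum_{\gamma\in T(F)\backslash G(F)/T(F)}\langle P_{\chi,\mathfrak{a}},P_\chi\rangle_U^\gamma\cdot m^*_\infty(a;\gamma),
\]
a convergent sum of (finite) linking numbers weighted by $m^*_\infty$. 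The factor $4^{[F:\Q]}$ does not come from a pointwise ratio of archimedean Whittaker coefficients; it is the eigenvalue in the integral-transform identity
\[
\int_{S_U}k_U(a;x,y)\,\phi(y)\,dy=\int_X m^*_\infty(a;y_\infty)\,\phi(xy_\infty)\,dy_\infty=4^{[F:\Q]}B_\infty(a;\phi)\cdot\phi(x)
\]
for Maass forms $\phi$, where $k_U(a;x,y)=\sum_\gamma \mathbf{1}_U(x_f^{-1}\gamma y_f)\,m^*_\infty(a;x_\infty^{-1}\gamma y_\infty)$. This identity is what replaces your orthonormal-basis construction: it yields the spectral decomposition of the kernel (as in \cite[Lemma~4.4.3]{zhang2} or \cite[\S16]{zhang3}), after which the projection-to-$\Pi$ and quasi-new-line arguments proceed exactly as in Proposition~\ref{first holomorphic values}.
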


\begin{proof}
Fix $a\in\A^\times$ and assume that $\mathfrak{a}=a\co_F$ is prime to $\mathfrak{c}$.  We abbreviate $\Theta^*_\mathfrak{r}=\Theta^*_{\mathfrak{r},1/2}$.  Suppose $v$ is an infinite place of $F$.  For each  $a\in\A^\times$, $\gamma\in  G(F_v)$, and $\eta, \xi$ as in (\ref{eta}) define the multiplicity function
$$
m^*_v(a,\gamma) = \left\{ \begin{array}{ll}
 4 e^{2\pi a_v(\xi-\eta)} &\mathrm{if\ } \xi a_v\le 0\mathrm{\ and\ }\eta a_v \ge 0 \\
0 &\mathrm{otherwise.} \end{array}\right.  
$$
If $\gamma\in G(F_\infty)$ set $m^*_\infty(a,\gamma)=\prod_{v\mid\infty}m^*_v(a,\gamma_v)$. 
Exactly as in  Proposition \ref{generating series}, using the formulas of  \S \ref{ss:zero kernel} to supplement those  of \S \ref{ss:kernel}, we find
\begin{equation}\label{maass match}
\frac{H_F}{\lambda_U}   [\widehat{\co}_E^\times: U_T]  \cdot B( a;\Theta^*_{\mathfrak{r}})=
 |d|^{1/2} |a|  \sum_{\gamma \in T(F)\backslash G(F)/T(F)} \langle P_{\chi,\mathfrak{a}},P_\chi \rangle_U^\gamma \cdot m_\infty^*(a;\gamma).
\end{equation}
The remainder of the proof is similar to that of Proposition \ref{first holomorphic values}; see \cite[\S 4.4]{zhang2} for details.  Briefly, for any Maass form $\phi$ on $S_U$ the kernel 
$$
k_U(a;x,y)= \sum_{\gamma\in G(F)/(Z(F)\cap U) } \mathbf{1}_{U}(x_f^{-1}\gamma y_f) m_\infty^*(a; x_\infty^{-1}\gamma y_\infty)
$$
satisfies
\begin{eqnarray*}
 \int_{S_U} k_U(a;x,y) \phi(y)\ dy 
&=&
\int_{X}  m_\infty^*(a;  y_\infty) \phi(x y_\infty)\ dy_\infty  \\
&=& 4^{[F:\Q]}  B_\infty(a;\phi)\cdot \phi(x).
\end{eqnarray*}
 Exactly as in \cite[Lemma 4.4.3]{zhang2} or \cite[\S 16]{zhang3} this leads to a spectral decomposition of the kernel $k_U(a;x,y)$, and the proposition follows from (\ref{maass match}), which is our analogue of \cite[(16.1)]{zhang3}, exactly as in \cite[\S 16]{zhang3}.  
 \end{proof}

\begin{Thm}\label{second maass values}
Let $\phi_\Pi\in \Pi$ be the normalized newvector (in the sense of \S \ref{ss:automorphic forms})  and    let $\phi_{\Pi'}\in\Pi'$ be the newvector (in the sense of Definition \ref{JL-newvector}) normalized by $\int_{S_V}  | \phi_{\Pi'} |^2 =1$.  Then
$$
\frac{ L(1/2,\Pi\times\Pi_\chi) }{  ||\phi_\Pi ||_{K_0(\mathfrak{n})}^2 }
=
\frac{4^{[F:\Q]} }{ H_{F,\mathfrak{s}}\sqrt{ \mathrm{N}_{F/\Q}  (\mathfrak{dc}^2) }} 
\left|     \int_{C_V} Q_\chi(t) \phi_{\Pi'}(t)\ dt \ \right|^2  
$$
where $H_{F,\mathfrak{s}}$ is the order of the ray class group of $F$ of conductor $\mathfrak{s}$.
\end{Thm}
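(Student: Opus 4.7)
The plan is to derive Theorem \ref{second maass values} from Proposition \ref{first maass values} by the same level-and-vector comparison that passes from Proposition \ref{first holomorphic values} to Theorem \ref{second holomorphic values} in \S\ref{level comparison}. Since the archimedean difference between the weight-$2$ and weight-$0$ situations has already been absorbed into the transition from $2^{[F:\Q]}$ to $4^{[F:\Q]}$ (compare the statements of Propositions \ref{first holomorphic values} and \ref{first maass values}), the remaining work is a purely non-archimedean comparison of data at level $U$ versus level $V$, together with the comparison of the quasi-new projection $\phi_\Pi^\#$ with the newvector $\phi_\Pi$.

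First I would relate the toric period on $C_U$ against $\phi_{\Pi'}^\chi$ to the toric period on $C_V$ against $\phi_{\Pi'}$. The relation (\ref{cycle trace}) between $Q_\chi$ and $P_\chi$, combined with the measure-descent formula (\ref{measure descent}) and the fact that $\phi_{\Pi'}$ is $V$-invariant, yields an identity of the shape
$$[V_T:U_T]\cdot\int_{C_V}Q_\chi(t)\phi_{\Pi'}(t)\,dt \;=\; \frac{\lambda_U}{\lambda_V}\int_{C_U}P_\chi(t)\phi_{\Pi'}(t)\,dt.$$
The right-hand integral must then be compared with $\int_{C_U}P_\chi(t)\phi_{\Pi'}^\chi(t)\,dt$. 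Because $P_\chi$ is $\chi$-isotypic on $T(\A_f)$, only the projection of $\phi_{\Pi'}$ onto the (unique) toric line survives, and at each place $v\mid\mathfrak{dr}$ this projection is a local scalar multiple of $\phi_{\Pi'}^\chi$ whose value is given by the explicit models of the toric newvector established in the proof of the lemma following Definition \ref{JL-toric}, together with the theorems of Waldspurger and Gross--Prasad cited there.

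Second, I would pass from $\phi_\Pi^\#$ to $\phi_\Pi$. Since $\phi_\Pi=\otimes\phi_{\Pi,v}$ is factorizable, the quasi-new projection is local at each $v\mid\mathfrak{dc}$, and the pair $(\widehat{B}(\co_F;\phi_\Pi^\#),\|\phi_\Pi^\#\|^2_{K_0(\mathfrak{dr})})$ is related to $\|\phi_\Pi\|^2_{K_0(\mathfrak{n})}$ by a product of local factors. These local factors are exactly those computed via the functional $\Lambda_v$ of \S\ref{ss:quasi-new}; explicit evaluation uses Proposition \ref{theta coefficients} and the classification of $K_1(\mathfrak{r}_v)$-fixed vectors from \cite[Prop.~2.3.1]{zhang2}. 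Their combined effect contributes the factor $\mathrm{N}_{F/\Q}(\mathfrak{dc}^2)^{-1/2}$ appearing in the theorem. Finally, a routine volume count on $Z(\A_f)/F^\times$ combines $H_F$, $[\widehat{\co}_E^\times:U_T]$, $[V_T:U_T]$, and $\lambda_U/\lambda_V$ into the ray class number $H_{F,\mathfrak{s}}$.

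The main obstacle is the local calculation at the places $v\mid\mathfrak{dc}$ where the toric newvector $\phi_{\Pi'}^\chi$ and the $V_v$-fixed newvector $\phi_{\Pi'}$ differ: one must diagonalize the $\co_{E,v}^\times$-action on the space of $U_v$-fixed vectors and isolate the $\overline{\chi}_v$-eigenline. At $v\mid\mathfrak{d}$ and $v\mid\mathfrak{c}\smallsetminus\mathfrak{s}$ this is controlled by \cite[Thms.~2.3.3, 2.3.5]{zhang2}, while at $v\mid\mathfrak{s}$ one uses the explicit principal-series model $\Pi_v\cong\Pi(\mu_v,\chi_{0,v}^{-1}\mu_v^{-1})$ from Hypothesis \ref{hyp}(b) together with the twist-by-$\nu_v$ trick used at the end of the lemma following Definition \ref{JL-toric}. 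Every such computation is identical to the holomorphic case, so once Theorem \ref{second holomorphic values} is proved in \S\ref{level comparison}, Theorem \ref{second maass values} follows by substituting Proposition \ref{first maass values} for Proposition \ref{first holomorphic values} in the same argument.
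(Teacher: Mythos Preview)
Your high-level plan is correct: both Theorem \ref{second holomorphic values} and Theorem \ref{second maass values} are proved simultaneously in \S\ref{level comparison}, and the non-archimedean comparison at places $v\mid\mathfrak{dc}$ is exactly what remains after Propositions \ref{first holomorphic values} and \ref{first maass values}. You have also correctly isolated the three ingredients (the $Q_\chi$/$P_\chi$ descent, the toric-vs-newvector comparison in $\Pi'$, and the $\phi_\Pi^\#$/$\phi_\Pi$ comparison in $\Pi$), and the volume bookkeeping leading to $H_{F,\mathfrak{s}}$ is as you describe.

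Where your proposal diverges from the paper is in the method for evaluating the combined local factor. You propose to carry out the local computations directly: diagonalize the $\co_{E,v}^\times$-action on $U_v$-fixed vectors, identify the $\overline{\chi}_v$-eigenline, and track the scalars case by case for $v\mid\mathfrak{d}$, $v\mid\mathfrak{c}\smallsetminus\mathfrak{s}$, $v\mid\mathfrak{s}$. The paper does \emph{not} do this. Instead it observes (following \cite[\S 17]{zhang3}) that each of the three comparisons is governed by a rational function $\mathbf{a}_v$, $\mathbf{b}_v$, $\mathbf{c}_v$ in the Satake parameter $\alpha_v$, and that these rational functions depend only on the local data $(F_v,E_v,\chi_v)$, not on $\Pi$. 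The two theorems then reduce to the single identity $\kappa\prod_v\mathbf{a}_v\mathbf{c}_v=\prod_v\mathbf{b}_v$ of rational functions. Rather than compute the $\mathbf{a}_v,\mathbf{b}_v,\mathbf{c}_v$, the paper verifies this identity by specializing $\Pi$ to the Eisenstein family $\Pi_\tau$ of \S\ref{ss:maass family}: equation (\ref{descent three}) is extended to $\Pi_\tau$ by a continuity argument, and then compared with Corollary \ref{continuous maass answer}, which was proved independently by a direct unfolding of $\mathcal{E}_\tau$. Varying $\tau$ (and varying $\chi$ away from $\mathfrak{dc}$) gives enough evaluation points to force the rational-function identity.

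So the logical flow is nearly the reverse of what you sketch: it is the brute-force Maass computation on the Eisenstein series (Proposition \ref{continuous maass prelim} and Corollary \ref{continuous maass answer}) that proves the local identity, and hence both theorems at once. Your direct-computation route is plausible in principle, but you have not actually executed the local diagonalizations, and the claim that their product is exactly $\mathrm{N}_{F/\Q}(\mathfrak{dc}^2)^{-1/2}$ is asserted rather than verified; the paper's indirect method sidesteps precisely this difficulty.
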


\begin{proof}
The proof is postponed until \S \ref{level comparison}.
\end{proof}


\subsection{A particular family of Maass forms}
\label{ss:maass family}


Fix a $\tau\in\C$ and if $\chi_0$ is trivial assume that $\tau\not=0,1$. Let $\Pi_\tau$ denote the (irreducible) weight zero principal series representation
$$
\Pi_\tau=\Pi(|\cdot|^{\tau-1/2}, \chi_0^{-1}|\cdot|^{1/2-\tau })
$$
of $\GL_2(\A)$ of conductor $\mathfrak{s}$ and central character $\chi_0^{-1}$.   We construct an Eisenstein series $\mathcal{E}_\tau\in\Pi_\tau$ as follows.  Define a Schwartz function $\Omega=\prod_v \Omega_v$ on $\A\times\A$ by
$$
\Omega_v (x,y )= \left\{\begin{array}{ll}
\mathbf{1}_{\co_{F,v}}(x)\mathbf{1}_{\co_{F,v}}(y) & \mathrm{if\ }v\nmid \mathfrak{s}\infty \\
 \chi_{0,v}^{-1} (y)  \mathbf{1}_{\mathfrak{s}_v }(x) \mathbf{1}_{\co_{F,v}^\times }(y)     & \mathrm{if\ }v\mid\mathfrak{s} \\
e^{-\pi (x^2+y^2)} &\mathrm{if\ }v\mid\infty.
\end{array}\right.
$$
The function
$$
\mathcal{F}_{\tau}(g)  =  |\det(g)|^{\tau} \int_{\A^\times} \Omega \big( [0,x]\cdot g  \big) |x|^{2\tau} \chi_0(x) \ d^\times x
$$
is a newvector in the induced representation $\mathcal{B}(|\cdot|^{\tau-1/2}, \chi_0^{-1}|\cdot|^{1/2-\tau })$ defined in \cite[\S 2.2]{zhang2} and therefore the Eisenstein series (initially defined for $\mathrm{Re}(\tau)\gg 0$ and continued analytically)
$$
\mathcal{E}_{\tau}(g)=\sum_{\gamma\in B(F)\backslash \GL_2(F)} \mathcal{F}_{\tau}(\gamma g)
$$
is a newvector in $\Pi_\tau$.  The discrepancy between $\mathcal{E}_\tau$ and the normalized newvector is determined by the following

\begin{Lem}\label{continuous maass normalization}
$$
\int_{\A^\times} B(a;\mathcal{E}_{\tau})\cdot |a|^{s-1/2}\ d^\times a=
\frac{|\delta|^{\tau-1/2}\epsilon(1/2,\chi_0) }{ \mathrm{N}_{F/\Q}(\mathfrak{s})^{2\tau-1/2}}  
L(s,\Pi_\tau)
$$
\end{Lem}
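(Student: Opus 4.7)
The plan is first to observe that $\mathcal{F}_\tau$ is a newvector in the induced representation $\mathcal{B}(|\cdot|^{\tau-1/2}, \chi_0^{-1}|\cdot|^{1/2-\tau})$, by verifying place-by-place from the explicit form of $\Omega_v$: at $v \nmid \mathfrak{s}\infty$ it is $\GL_2(\co_{F,v})$-spherical, at $v \mid \mathfrak{s}$ the support condition on $\Omega_v$ produces a $K_1(\mathfrak{s}_v)$-fixed vector transforming correctly under the central character, and at archimedean $v$ the Gaussian gives the spherical minimal-weight section. It follows that $\mathcal{E}_\tau$ is a newvector in $\Pi_\tau$, and hence by uniqueness equals a scalar multiple $c\cdot\phi_{\Pi_\tau}$ of the normalized newvector. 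Since $Z(s,\phi_{\Pi_\tau})=|\delta|^{1/2-s}L(s,\Pi_\tau)$, the lemma reduces to the identification
$$c = |\delta|^{\tau-1/2}\, \epsilon(1/2,\chi_0)\, \mathrm{N}_{F/\Q}(\mathfrak{s})^{1/2-2\tau}.$$

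To compute $c$ I would unfold the zeta integral via a Tate-type computation. Starting from the Fourier expansion of $\mathcal{E}_\tau$ and the standard Whittaker integral, one obtains
$$W_{\mathcal{E}_\tau}\!\left(\begin{pmatrix} y & \\ & 1 \end{pmatrix}\right) = |y|^\tau \int_{\A} \int_{\A^\times} \Omega(-ty,-tx)\, |t|^{2\tau}\, \chi_0(t)\, \psi(-x)\, d^\times t\, dx.$$
Substituting into $Z(s;\mathcal{E}_\tau)=\int_{\A^\times} W_{\mathcal{E}_\tau}(\mathrm{diag}(y,1))\,|y|^{s-1/2}\,d^\times y$ and changing variables $u=ty$, $v=tx$, and finally $t\mapsto t^{-1}$, the global integral factors as $\prod_v Z_v(s)$ with
$$Z_v(s) = \int_{F_v^\times}\int_{F_v} \Omega_v(-u,-v)\, |u|_v^{s+\tau-1/2}\!\left(\int_{F_v^\times} |t|_v^{s-\tau+1/2}\, \chi_{0,v}^{-1}(t)\, \psi_v(-vt)\, d^\times t\right)\! dv\, d^\times u,$$
a product of local Tate integrals bound together by $\Omega_v$.

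The heart of the calculation is the evaluation of these local integrals, in the spirit of Proposition \ref{eisenstein coefficients} but now with the central character $\chi_0$ in place of $\omega$. At $v \nmid \mathfrak{s}\infty$ the Schwartz function is $\mathbf{1}_{\co_{F,v}\times\co_{F,v}}$ and standard Tate computations, together with the self-dual measure identity (\ref{additive-multiplicative measures}), yield $Z_v(s)=|\delta|_v^{\tau-s}L_v(s,\Pi_\tau)$. At archimedean $v$ the Gaussian yields the same shape $|\delta|_v^{\tau-s}L_v(s,\Pi_\tau)$ as a product of $\Gamma$-factors. At $v\mid\mathfrak{s}$ the factor $\mathbf{1}_{\co_{F,v}^\times}(v)\chi_{0,v}^{-1}(v)$ in $\Omega_v$ collapses the $v$-integral to an inner Gauss sum $\int_{\co_{F,v}^\times}\chi_{0,v}^{-1}(t)\psi_v(-vt)\, d^\times t$, which by the standard computation of \cite[\S 3]{kudla-tate} evaluates to $|\delta|_v^{\tau-s}\,\epsilon_v(1/2,\chi_{0,v},\psi_v^0)\,\mathrm{N}(\mathfrak{s}_v)^{1/2-2\tau}\, L_v(s,\Pi_\tau)$. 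Multiplying across all places gives $Z(s;\mathcal{E}_\tau)=|\delta|^{\tau-s}\,\epsilon(1/2,\chi_0)\,\mathrm{N}_{F/\Q}(\mathfrak{s})^{1/2-2\tau}\,L(s,\Pi_\tau)$, and then $\int_{\A^\times}B(a;\mathcal{E}_\tau)|a|^{s-1/2}d^\times a = |\delta|^{s-1/2}Z(s;\mathcal{E}_\tau)$ produces the claimed formula. The main technical obstacle is the ramified local computation at $v\mid\mathfrak{s}$: tracking the precise powers of $\mathrm{N}(\mathfrak{s}_v)$ produced by the self-dual measure factors $|\delta|_v^{1/2}$ and by the successive changes of variable, and correctly converting between $\psi_v$- and $\psi_v^0$-normalized Gauss sums so as to extract the epsilon factor in the form of \cite[\S 3]{kudla-tate}, requires careful bookkeeping.
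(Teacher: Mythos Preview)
Your proposal is correct and follows essentially the same route as the paper: both compute the zeta integral of $\mathcal{E}_\tau$ by unfolding the Whittaker function into a product of local Tate-type integrals involving $\Omega_v$ and evaluating them place-by-place, with the ramified places $v\mid\mathfrak{s}$ producing the epsilon factor and the power of $\mathrm{N}_{F/\Q}(\mathfrak{s})$. Your framing in terms of determining the scalar $c$ relating $\mathcal{E}_\tau$ to the normalized newvector is a harmless repackaging of the same computation; the paper simply computes $\int_{F_v^\times} B_v(a;\mathcal{E}_\tau)|a|_v^{s-1/2}\,d^\times a$ directly at each place and multiplies.
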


\begin{proof}
As in \S \ref{ss:ES}, using
$$
B(a;\mathcal{E}_{\tau}) 
=
\int_{\A}\mathcal{F}_{\tau} \left(\begin{matrix} & 1 \\ -a \delta^{-1} & y\end{matrix}\right) \psi(-y)\ dy
$$
we see that  $B(a;\mathcal{E}_\tau)=\prod_v B_v(a;\mathcal{E}_\tau)$ where
$$
B_v(a;\mathcal{E}_\tau) =  |\delta |_v^{\tau-1/2} |a|_v^\tau \chi_{0,v}(\delta )  \int_{F_v}\psi^0_v( y)\int_{F_v^\times} \Omega_v( a x ,  xy ) |x|_v^{2\tau} \chi_{0,v}( x)\ d^\times x dy.
$$
If $v\nmid\mathfrak{s}\infty$ then a short calculation shows
$$
 \int_{F_v}\psi_v^0( y)\int_{F_v^\times} \Omega_v(ax,xy) |x|_v^{2\tau} \chi_{0,v}(x)\ d^\times x dy  
=
|\delta|_v^{1/2} \sum_{k=0}^{\ord_v(a)}  |\varpi^k|_v^{1-2\tau} \chi_{0,v}^{-1}(\varpi^k)
$$
from which we deduce
$$
\int_{F_v^\times} B_v(a;\mathcal{E}_\tau)\cdot  |a|_v^{s-1/2}\ d^\times a =
 \chi_{0,v}(\delta) |\delta|_v^{\tau-1/2} 
L_v(s, \overline{\chi}_0|\cdot|^{1/2-\tau}  ) L_v(s,|\cdot|^{\tau -1/2}_v). 
$$
If $v\mid\mathfrak{s}$ then choose $\sigma\in F_v^\times$ with $\sigma\co_{F,v}=\mathfrak{s}_v$.  We have
\begin{eqnarray*}\lefteqn{
 \int_{F_v}\psi_v^0( y)\int_{F_v^\times} \Omega_v(ax,xy) |x|_v^{2\tau} \chi_{0,v}(x)\ d^\times x\ dy   } \\
 &=&
\int_{F_v^\times}   \left[ \int_{F_v}  \psi_v^0( y) ( yx) \mathbf{1}_{\co_{F,v}^\times} ( yx)  \ dy \right] \mathbf{1}_{\mathfrak{s}_v}(ax) |x|_v^{2\tau} \chi_{0,v}(x)\ d^\times x \\
 &=&
 |\delta \sigma|_v^{1/2} \epsilon_v(\chi_0,\psi_v^0)
 \int_{F_v^\times}  
 \mathbf{1}_{\co_{F,v}^\times}(\sigma x^{-1})
  \mathbf{1}_{\mathfrak{s}_v}(ax) |x|_v^{2\tau-1}  \ d^\times x \\
   &=&
  |\delta |_v^{1/2}  |\sigma|_v^{2\tau-1/2} \epsilon_v(\chi_0,\psi_v^0)\mathbf{1}_{\co_{F,v}}(a).
\end{eqnarray*}
Therefore
$$
\int_{F_v^\times}  B_v(a;\mathcal{E}_\tau)\cdot |a|_v^{s-1/2}\ d^\times a  =
\chi_{0,v}(\delta  ) |\delta |_v^{\tau-1/2}  |\sigma|_v^{2\tau-1/2} \epsilon_v(\chi_0,\psi_v^0)
L_v(s, |\cdot|^{\tau-1/2}).
$$
If $v\mid\infty$ then 
\begin{eqnarray*}\lefteqn{
 \int_{F_v}\psi_v^0( y)\int_{F_v^\times} \Omega_v(ax,xy) |x|_v^{2\tau} \chi_{0,v}(x)\ d^\times x dy   } \\
 &=&
|\delta|_v^{1/2}  \int_{-\infty}^\infty    e^{2\pi i  y}   \int_{-\infty}^\infty    e^{-\pi x^2(a^2+ y^2)} |x|_v^{2\tau-1}  \ d^{\mathrm{Leb}} x d^{\mathrm{Leb}} y.
\end{eqnarray*}
We therefore have
\begin{eqnarray*}\lefteqn{
\chi_{0,v}(\delta^{-1}) |\delta|_v^{1/2-\tau}\int_{F_v^\times}  B_v(a;\mathcal{E}_\tau)\cdot |a|_v^{s-1/2}\ d^\times a  } \\
&=&
  \int_{\R^\times} \int_{-\infty}^\infty \left(  \int_{\R^\times}  |a|^{\tau + s-1/2} e^{-\pi x^2a^2} d^\times a \right) 
 e^{2\pi i  y}    e^{-\pi x^2 y^2} |x|^{2\tau}  \  d^{\mathrm{Leb}} y  \  d^\times x  \\
&=&
 G_1(s+\tau-1/2)    \int_{\R^\times}  \left( \int_{-\infty}^\infty  e^{- 2 \pi iyx}   e^{-  \pi y^2}  \  d^{\mathrm{Leb}} y \right) |x|^{s-\tau +1/2}   \  d^\times x   \\
&=&
G_1(s+\tau-1/2)  \int_{\R^\times}   e^{- \pi x^2}  |x|^{s-\tau +1/2}   \  d^\times x   \\
&=&
 G_1(s+\tau-1/2)  G_1(s-\tau +1/2).
\end{eqnarray*}
Combining these calculations proves the lemma.
\end{proof}

We now  assume that $\Pi_\tau$ satisfies Hypothesis \ref{hyp}, which is really just the  condition that  $\chi_v$ factors through $\mathrm{N}:E_v^\times\map{}F_v^\times$ for each $v\mid\mathfrak{s}$.    Choosing $\Pi=\Pi_\tau$ in the introduction to \S \ref{s:central value}, we wish to prove an analogue  (Corollary \ref{continuous maass answer}) of Theorem \ref{second maass values} for the noncuspidal representation $\Pi_\tau$ by brute force. Note that now  $\mathfrak{m}=\co_F$ and $\epsilon(1/2,\mathfrak{r})=(-1)^{[F:\Q]}$.  To put ourselves in the situation of \S \ref{ss:maass}, suppose $B$ is a split quaternion algebra over $F$ (so that (\ref{B ramification}) holds for all finite $v$) and as always fix an embedding $E\map{}B$.  Let $W$ be a two dimensional $F$-vector space on which $B$ acts on the left, and fix an isomorphism of $F$-vector spaces $W\iso F\times F$.  Writing elements of $W$ as row vectors,  there is an isomorphism  $\rho:B\iso M_2(F)$ determined by  $b\cdot [x,y]=[x,y]\cdot \rho(b)^t $, where  the action on the left is the action of $B$ on $W$, the action on the right is matrix multiplication, and the superscript $t$ indicates transpose. The element  $w_0=[0,1]\in W$ generates $W$ as a left $E$-module, and we define 
$$
L=\co_{\mathfrak{cs}^{-1}}\cdot w_0\hspace{1cm} L'=\co_{\mathfrak{c}}\cdot w_0.
$$
We may pick a $j\in \GL_2(\A)$ having the following properties:
\begin{enumerate}
\item
if $v\mid\mathfrak{s}$ then  $j_v$ satisfies $[0,1]\cdot j^{-1}_v=w_0$  and
$$
L_v= (\co_{F,v}\times\co_{F,v})\cdot j_v^{-1}\hspace{1cm} L_v'=(\mathfrak{s}_v \times\co_{F,v} )\cdot j_v^{-1},
$$
\item
if $v\nmid\mathfrak{s}$ is a finite place of $F$ then $j_f \cdot K_0(\mathfrak{m})\cdot j_f^{-1} = \rho(V_v)$,
\item
if $v$ is an archimedean place then $j_v\cdot \mathrm{SO}(F_v)\cdot j_v^{-1}$ is set of  norm one elements of $\rho(T(F_v))$.
\end{enumerate}
 For every automorphic form $\phi$ on $\GL_2(\A)$ we define an automorphic form  $\phi'$ on $G(\A)$ by $\phi'(g)=\phi(\rho(g)j)$.  The space $\Pi_\tau$ of automorphic forms on $\GL_2(\A)$ thereby determines a space $\Pi'_\tau$ of automorphic forms on $G(\A)$.  Of course $G\iso \GL_2$ and $\Pi_\tau'\iso \Pi_\tau$, but it is useful to maintain these notational distinctions.  Under the definition of \S \ref{ss:toric} $\Pi_\tau$ is  the  Jacquet-Langlands  lift of $\Pi'_\tau$ (a highly degenerate case).  If $\phi\in \Pi_\tau$ is a newvector in the sense of \S \ref{ss:automorphic forms} then $\phi'\in\Pi_\tau'$ is a newvector in the sense of \S \ref{ss:toric}.

\begin{Prop}\label{continuous maass prelim}
Normalize the Haar measures on $T(\A_f)$ and $Z(\A_f)$ to give $\widehat{\co}^\times_{\mathfrak{c}}$ and  $\widehat{\co}^\times_F$ each volume one, respectively, and give  $T(F)\backslash T(\A_f)/Z(\A_f)$ the induced quotient measure.  For every $\tau\in\C$
$$
\mathrm{N}_{F/\Q}\left(\mathfrak{dc}^2 \mathfrak{s}^{-2}  \right)^{\tau/2}   
\frac{1}{2^{[F:\Q]}}  L(\tau,\chi)=
\int_{T(F)\backslash T(\A_f)/Z(\A_f) } \chi(t) \mathcal{E}'_{\tau} (t)\ dt.
$$
\end{Prop}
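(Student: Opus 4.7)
The plan is a Hecke-style unfolding of $\mathcal{E}_\tau$ along the torus $T$. Since $w_0=[0,1]$ generates $W$ as a left $E$-module, $T(F)=E^\times$ acts simply transitively on $(W-0)/F^\times\cong\mathbb{P}^1(F)$ with stabilizer $Z(F)$, and the map $t\mapsto[w_0\rho(t)]$ identifies $T(F)/Z(F)$ with $B(F)\backslash\GL_2(F)$.  Consequently
$$
\mathcal{E}_\tau(\rho(t)j)=\sum_{t'\in T(F)/Z(F)}\mathcal{F}_\tau(\rho(t't)j).
$$
Plugging this into $I:=\int_{T(F)\backslash T(\A_f)/Z(\A_f)}\chi(t)\mathcal{E}'_\tau(t)\,dt$ and using $\chi|_{T(F)}=1$ and $Z(F)\subset Z(\A_f)$, the standard unfolding collapses the expression to
$$
I=\int_{T(\A_f)/Z(\A_f)}\chi(t)\mathcal{F}_\tau(\rho(t)j)\,dt.
$$

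Next I would substitute the integral defining $\mathcal{F}_\tau$, noting that $|\det\rho(t)j|^\tau=|N(t)|^\tau|\det j|^\tau$, to obtain a double integral over $t\in T(\A_f)/Z(\A_f)$ and $x\in\A^\times$ with integrand $|\det j|^\tau\chi(t)|N(t)|^\tau\Omega(xw_0\rho(t)j)\chi_0(x)|x|^{2\tau}$.  Under the $F$-linear identification $W\cong E$ sending $w_0\mapsto 1$, the right $\rho$-action by $t$ becomes scalar multiplication by (a Galois twist of) $t\in\A_E^\times$.  Combined with the substitution $u:=xt\in F_\infty^\times\cdot\A_{E,f}^\times$ and the identities $\chi|_{\A^\times}=\chi_0$ and $|N(x)|_F=|x|^2$ (yielding $\chi(u)|u|_E^\tau=\chi(t)|N(t)|^\tau\chi_0(x)|x|^{2\tau}$), the double integral factors as a product of local Tate-type integrals
$$
I=|\det j|^\tau\prod_{v\nmid\infty}\int_{E_v^\times}\Phi_v(u)\chi_v(u)|u|_{E_v}^\tau\,d^\times u\cdot\prod_{v\mid\infty}\int_{F_v^\times}\Omega_v(xw_0 j_v)|x|_v^{2\tau}\,d^\times x,
$$
where $\Phi_v(u):=\Omega_v(\iota_v(u)j_v)$ under the local identification $\iota_v:E_v\xrightarrow{\sim}W_v$.

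The remainder is a local calculation at each place.  At finite $v\nmid\mathfrak{dcs}$, $\Phi_v=\mathbf{1}_{\co_{E,v}}$ and $\chi_v$ is unramified, so Tate's Iwasawa-style integral gives $L_v(\tau,\chi)$.  At archimedean $v$, the Gaussian identity $\int_0^\infty e^{-\pi c x^2}x^{2\tau-1}\,dx=\tfrac{1}{2}(\pi c)^{-\tau}\Gamma(\tau)$ together with $G_2(\tau)=2(2\pi)^{-\tau}\Gamma(\tau)$ gives a specific constant multiple of $L_v(\tau,\chi)$; the collective archimedean constants, combined with the normalization of $j_v$ (the condition that $j_v\mathrm{SO}(F_v)j_v^{-1}$ equals the norm-one elements of $\rho(T(F_v))$), produce the factor $2^{-[F:\Q]}$.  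At ramified finite places $v\mid\mathfrak{dcs}$, the explicit forms of $\Omega_v$ together with the lattice conditions on $j_v$ (the filtration $L_v'\subset L_v$ at $v\mid\mathfrak{s}$) produce local factors that, combined with $|\det j|^\tau$, assemble to $\mathrm{N}(\mathfrak{dc}^2\mathfrak{s}^{-2})^{\tau/2}$ times $L_v(\tau,\chi)$.  The principal obstacle is the detailed local calculation at $v\mid\mathfrak{s}$, where the twisted Schwartz function $\Omega_v(x,y)=\chi_{0,v}^{-1}(y)\mathbf{1}_{\mathfrak{s}_v}(x)\mathbf{1}_{\co_{F,v}^\times}(y)$, the prescribed $j_v$ with $L_v=(\co_{F,v}\times\co_{F,v})\cdot j_v^{-1}$ and $L_v'=(\mathfrak{s}_v\times\co_{F,v})\cdot j_v^{-1}$, and the factorization $\chi_v=\nu_v\circ N$ (Hypothesis~\ref{hyp}) must combine to extract exactly the power $|\mathfrak{s}|_v^{-\tau}$.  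A minor subtlety is that the Galois twist in step two may effectively replace $\chi$ by $\chi^*$ in the unfolded integral, but $L(\tau,\chi)=L(\tau,\chi^*)$ so the stated formula is unaffected.
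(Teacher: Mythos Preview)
Your approach is essentially the same as the paper's: unfold via $T(F)/Z(F)\cong B(F)\backslash\GL_2(F)$, merge with the inner integral defining $\mathcal{F}_\tau$ to obtain local Tate integrals over $E_v^\times$ at finite places and over $F_v^\times$ at archimedean places, then compute place by place. The paper fills in the two points you flag as obstacles by interpreting $\Omega_v([0,1]\rho(t)j)$ at $v\mid\mathfrak{s}$ in terms of generators of $L_v'/\mathfrak{s}_vL_v$ (invoking Lemma~\ref{theta factor} to match $\chi_{0,v}^{-1}(y)$ with $\chi_v^{-1}(t)$), and by computing $|\det j|$ globally via the discriminant relation $4\Delta\det(j)^{-2}\co_F=\mathfrak{d}(\mathfrak{c}/\mathfrak{s})^2$, rather than attributing the factor $\mathrm{N}_{F/\Q}(\mathfrak{dc}^2\mathfrak{s}^{-2})^{\tau/2}$ to the ramified local integrals themselves.
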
  
  
  \begin{proof}
The restriction of $\mathcal{E}'_\tau$ to $T(\A_f)$ does not depend on the choice of embedding $E\map{}B$, and this embedding may be chosen so that
$$
\rho(\alpha+\beta\sqrt{-\Delta}) = \left(\begin{matrix} \alpha&\beta\Delta\\ -\beta &\alpha \end{matrix}\right)
$$
where   $E=F[\sqrt{-\Delta}]$ with $\Delta\in F$ totally positive.  As the embedding $\rho :T\map{}\GL_2$ identifies $Z(F)\backslash T(F)$ with $B(F)\backslash \GL_2(F)$ we have
$$
\int_{T(F)\backslash T(\A_f)/Z(\A_f) } \chi(t) \mathcal{E}'_{\tau} (t)\ dt  
=
\int_{T(\A_f)/Z(\A_f) } \chi(t) \mathcal{F}_{\tau}( \rho(t) j)\ dt.
$$
Combining this with
$$
\chi(t)\mathcal{F}_\tau(\rho(t) j) = 
  |\det(j)|^{\tau}   \int_{Z(\A)} \Omega \big(   [0,1] \cdot  \rho(tx) j  \big) |\mathrm{N}(tx)|^{\tau} \chi(tx) \ d x  \\
$$
we find
\begin{eqnarray*}
\int_{T(F)\backslash T(\A_f)/Z(\A_f)} \chi(t) \mathcal{E}'_{\tau} (t)\ dt  &=&   
  |\det(j)|^{\tau}  \int_{T(\A_f)} \Omega \big( [0,1] \cdot \rho(t) j \big) |\mathrm{N}(t)|^{\tau} \chi(t) \ dt \\ 
& &
 \cdot \prod_{v\mid\infty} \int_{F_v^\times} \Omega_v \big(  [0,1] \cdot x \big) |x|_v^{2\tau} \chi_{0,v}(x)\ d^\times x.
\end{eqnarray*}

We now compute the right hand side place-by-place. For an archimedean place $v$ we may take $j_v=\left(\begin{matrix}  \sqrt{\Delta_v} & \\ &1 \end{matrix}\right)$ so that 
$$
 \int_{F_v^\times} \Omega_v \big(  [0,1] \cdot x j  \big) |x|_v^{2\tau} \chi_{0,v}(x)\ d^\times x
=  \int_{-\infty}^\infty e^{-\pi x^2} |x|^{2\tau-1} \ d^{\mathrm{Leb}} x.
$$
The integral on the right is  $2^{\tau-1}G_2(\tau)=2^{\tau-1}L_v(\tau,\chi)$. If $v$ is a finite place of $F$ with $v\nmid\mathfrak{s}$ then 
\begin{eqnarray*}
\int_{T(F_v)}\Omega_v([0,1]\cdot \rho(t) j )|\mathrm{N}(t)|_v^{\tau}\chi_v(t)\ d t
&=&
\int_{T(F_v)}\mathbf{1}_{L_v}(\overline{t} \cdot w_0)|\mathrm{N}(t)|_v^{\tau}\chi_v(t)\ d t \\
&=&
\int_{T(F_v)}\mathbf{1}_{\co_{\mathfrak{c},v}}(\overline{t})|\mathrm{N}(t)|_v^{\tau}\chi_v(t)\ d t \\
&=&
\mathrm{Vol}(\co_{\mathfrak{c},v}^\times) \cdot L_v(\tau, \chi),
\end{eqnarray*}
the final equality by the argument  of \cite[p. 238]{zhang3}.  Finally suppose that $v\mid\mathfrak{s}$.    For any $t\in E_v^\times$ the value of  $\Omega_v([0,1]\cdot \rho(t) j)$ is nonzero if and only if 
$[0,1]\rho(t)j$ generates the $\co_{F,v}$-module $(\mathfrak{s}_v\times\co_{F,v})/(\mathfrak{s}_v\times\mathfrak{s}_v)$, and when this is the case $\Omega_v([0,1]\cdot \rho(t) j)=\chi^{-1}_{0,v}(y)$
where $y\in\co_{F,v}^\times$ satisfies $[0,1]\rho(t)j\in [0,y]+\mathfrak{s}_v^2$.  This condition is equivalent to  $t w_0$ being an $\co_{F,w}$-generator of  $L_v' /\mathfrak{s}_v L_v$, in which case  the $y\in\co_{F,v}^\times$ above satisfies $\overline{t} w_0 \in y w_0+\mathfrak{s}_v L_v$.  Thus $y\equiv\vartheta_v(\overline{t})\equiv \vartheta_v(t)\pmod{\mathfrak{s}_v}$ in the notation of \S \ref{ss:special cycles}.  By Lemma \ref{theta factor} $\chi^{-1}_{0,v}(y)=\chi^{-1}_v(t)$.
 As the generators of $\co_{\mathfrak{c},v}/\mathfrak{s}_v\co_{\mathfrak{cs}^{-1},v}$ are exactly the units of $\co_{\mathfrak{c},v}$ we find
\begin{eqnarray*}
\int_{T(F_v)}\Omega_v( [0,1]\cdot \rho(t) j )  \cdot  |\mathrm{N}(t)|_v^{\tau}\chi_v(t)\ d t
&=&
\int_{\co_{\mathfrak{c},v}^\times } \chi_v^{-1}(t) \cdot |\mathrm{N}(t)|_v^{\tau}\chi_v(t)\ d t  \\
&=&  \mathrm{Vol}(\co_{\mathfrak{c},v}^\times).
\end{eqnarray*}

It only remains to compute $\det(j)$.  From the relation 
$$
[(\co_F+\co_F\sqrt{-\Delta}) \cdot w_0]\cdot j^{-1} =\co_{\mathfrak{cs}^{-1}} \cdot w_0
$$
we find 
$$
4\Delta\det(j)^{-2} \co_F=\mathrm{disc}(\co_F+\co_F\sqrt{-\Delta}) \cdot \det (j)^{-2} \co_F=\mathrm{disc}(\co_{\mathfrak{cs}^{-1}} ) =\mathfrak{d}(\mathfrak{c/s})^2.
$$
Using $|\det (j) |_v^2=\Delta_v$ for $v\mid\infty$ we obtain
$2^{[F:\Q]}  |\det(j) | =   \sqrt{ \mathrm{N}(\mathfrak{dc}^2\mathfrak{s}^{-2})}.$
The proposition follows by combining these calculations.
\end{proof}

\begin{Cor}\label{continuous maass answer}
Suppose $\mathrm{Re}(\tau)=1/2$  and let $\phi_\tau\in\Pi_\tau$ be the normalized newvector.  Then
$$
L(1/2,\Pi_\tau\times\Pi_\chi)    = 
   \frac{4^{[F:\Q]}}  { \sqrt{\mathrm{N}_{F/\Q} (\mathfrak{dc}^2 )} }
 \left| \frac{1}{H_{F,\mathfrak{s} }} \int_{C_V }  Q_\chi(g)  \phi'_\tau(g) \ dg \right|^2 
$$
where  $S_V$ is the measure space of \S \ref{ss:maass} defined with $V$ in place of $U$.
\end{Cor}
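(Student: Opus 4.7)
The strategy is to reduce the corollary to Proposition \ref{continuous maass prelim} by realizing $\phi_\tau$ as a scalar multiple of the Eisenstein series $\mathcal{E}_\tau$ and exploiting the factorization of the Rankin--Selberg $L$-function on the critical line. For $\Pi_\tau = \Pi(|\cdot|^{\tau-1/2},\chi_0^{-1}|\cdot|^{1/2-\tau})$ the $L$-function factors as
$$L(s,\Pi_\tau\times\Pi_\chi)=L(s+\tau-\tfrac12,\chi)\cdot L(s+\tfrac12-\tau,\chi\cdot\chi_0^{-1}\!\circ\!\mathrm{N}_{E/F}).$$
The identity $\chi\chi^{*}=\chi_0\circ\mathrm{N}_{E/F}$ (immediate from $\chi|_{\A^\times}=\chi_0$) rewrites the second factor as $L(s+\tfrac12-\tau,(\chi^{*})^{-1})$, and the equality $L(s,\chi)=L(s,\chi^{*})$ together with $\bar\chi=\chi^{-1}$ and $\bar\tau=1-\tau$ collapses the product at $s=1/2$ to
$$L(1/2,\Pi_\tau\times\Pi_\chi)=L(\tau,\chi)\cdot\overline{L(\tau,\chi)}=|L(\tau,\chi)|^2.$$

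Comparing Lemma \ref{continuous maass normalization} with the defining relation $Z(s,\phi_\tau)=|\delta|^{1/2-s}L(s,\Pi_\tau)$ for the normalized newvector yields $\phi_\tau=\lambda\mathcal{E}_\tau$ with
$$\lambda=\mathrm{N}_{F/\Q}(\mathfrak{s})^{2\tau-1/2}\cdot|\delta|^{1/2-\tau}\cdot\epsilon(1/2,\chi_0)^{-1}.$$
Since $\mathrm{Re}(2\tau-\tfrac12)=\tfrac12$, $|\delta|$ is a positive real, and $|\epsilon(1/2,\chi_0)|=1$ by unitarity of $\chi_0$, this gives $|\lambda|^2=\mathrm{N}_{F/\Q}(\mathfrak{s})$.

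Since $Q_\chi$ is supported on the image of $T(\A_f)\to C_V$, and since $\chi(t)\phi'_\tau(t)$ is right-invariant under both $T(\A_f)\cap V$ (using Lemma \ref{theta factor} together with the right-$V$-invariance of the newvector) and $Z(\A_f)$ (using the central character $\chi_0^{-1}$ of $\Pi_\tau$), the integral $\int_{C_V}Q_\chi\phi'_\tau\,dg$ unfolds to a finite sum over $T(F)\backslash T(\A_f)/(T(\A_f)\cap V)\cdot Z(\A_f)$ weighted by the orbital volume $[T(F)\cap V:Z(F)\cap V]^{-1}$ from \S \ref{ss:heights}. On the other hand, the integral in Proposition \ref{continuous maass prelim}, whose measure is normalized so that $\widehat{\co}_\mathfrak{c}^\times$ and $\widehat{\co}_F^\times$ have unit mass, reduces to the same finite index set with weight one (using that $\mathrm{vol}(T(\A_f)\cap V)/\mathrm{vol}(V_Z)=1$, which follows since both indices compute $|(\co_F/\mathfrak{s})^\times|^{-1}$ via the surjectivity of $\vartheta$ on $\widehat{\co}_\mathfrak{c}^\times$). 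The ratio of the two expressions is precisely $H_{F,\mathfrak{s}}=[Z(\A_f):Z(F)V_Z]$, matching the normalizing $H_{F,\mathfrak{s}}^{-1}$ in the corollary.

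Combining these pieces and invoking Proposition \ref{continuous maass prelim} yields
$$\frac{1}{H_{F,\mathfrak{s}}}\int_{C_V}Q_\chi\phi'_\tau\,dg=\frac{\lambda}{2^{[F:\Q]}}\cdot\mathrm{N}_{F/\Q}(\mathfrak{dc}^2\mathfrak{s}^{-2})^{\tau/2}\cdot L(\tau,\chi),$$
and squaring absolute values collapses the $\tau$-dependent prefactors to $|\lambda|^2\cdot\mathrm{N}_{F/\Q}(\mathfrak{dc}^2\mathfrak{s}^{-2})^{1/2}=\sqrt{\mathrm{N}_{F/\Q}(\mathfrak{dc}^2)}$, which together with $4^{-[F:\Q]}$ and $|L(\tau,\chi)|^2=L(1/2,\Pi_\tau\times\Pi_\chi)$ assembles into the stated identity. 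The principal difficulty is the measure bookkeeping of the third paragraph: precisely verifying how the orbital measure on $C_V$ from \S \ref{ss:heights}, after restriction to the image of $T(\A_f)$ and descent through the appropriate double-quotient, compares with the product measure of Proposition \ref{continuous maass prelim} and produces the index $H_{F,\mathfrak{s}}$ via a class-number calculation entirely analogous to the level-comparison argument invoked in the proof of Theorem \ref{second holomorphic values}.
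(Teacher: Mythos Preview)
Your proof is correct and follows essentially the same route as the paper: use Lemma \ref{continuous maass normalization} to identify $\phi_\tau$ as a scalar multiple of $\mathcal{E}_\tau$, compare the measure on $C_V$ with the product measure on $T(F)\backslash T(\A_f)/Z(\A_f)$ of Proposition \ref{continuous maass prelim} to produce the factor $H_{F,\mathfrak{s}}$, and invoke $|L(\tau,\chi)|^2=L(1/2,\Pi_\tau\times\Pi_\chi)$. Your treatment is in fact more explicit than the paper's on two points: you spell out the Rankin--Selberg factorization via $\chi\chi^*=\chi_0\circ\mathrm{N}_{E/F}$ (the paper simply asserts $|L(\tau,\chi)|^2=L(1/2,\Pi_\tau\times\Pi_\chi)$), and you track the harmless factor $|\delta|^{1/2-\tau}$ in $\lambda$ which the paper's displayed formula for $\phi_\tau$ in terms of $\mathcal{E}_\tau$ appears to drop (it has unit modulus when $\mathrm{Re}(\tau)=1/2$, so the endpoint is unaffected).
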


\begin{proof}
Using $\widehat{\co}_\mathfrak{c}^\times/V_T\iso (\co_F/\mathfrak{s})^\times$, the measures on $T(F)\backslash T(\A_f)/Z(\A_f)$ and $C_V$  are related by
$$
\int_{C_V}  Q_\chi(t)\phi'_\tau(t)  \ dg = H_{F,\mathfrak{s}} \int_{T(F)\backslash T(\A_f)/Z(\A_f)}  Q_\chi(t)\phi'_\tau(t)\ dg
$$
while  Lemma \ref{continuous maass normalization} implies
$$
\epsilon(1/2,\chi_0) \cdot  \phi_\tau= \mathrm{N}_{F/\Q}(\mathfrak{s})^{2\tau-1/2} \cdot  \mathcal{E}_\tau.
$$
The corollary now follows immediately from   $\left|  L(\tau,\chi) \right|^2=   L(1/2,\Pi_\tau\times\Pi_\chi)$, Proposition \ref{continuous maass prelim}, and the fact that the restriction of $Q_\chi$ to $T(\A_f)$ is simply $\chi$.
\end{proof}


\subsection{Descent to low level}
\label{level comparison}


Assume that either $\Pi_v$ is a weight $2$ discrete series at each archimedean $v$ or that $\Pi_v$ is a weight $0$ principal series at each archimedean $v$.    In the weight $2$ case we assume that $\epsilon(1/2,\mathfrak{r})=1$ and $B$ is totally definite, as in \S \ref{ss:holomorphic values}, and in the weight $0$ case we assume that $\epsilon(1/2,\mathfrak{r})=(-1)^{[F:\Q]}$ and $B$ is totally indefinite, as in \S \ref{ss:maass}.    For each $v\mid\mathfrak{dc}$ the representation $\Pi_v$ is isomorphic to a principal series $\Pi(\mu_v,\chi_{0,v}^{-1}\mu_v^{-1})$ with $\mu_v$ unramified, and we set $\alpha_v=\mu_v(\varpi)$ for any uniformizer $\varpi$ of $F_v$.  By the argument of \cite[\S 17]{zhang3} for each $v\mid\mathfrak{dc}$ there are rational functions $\mathbf{a}_v$, $\mathbf{b}_v$, $\mathbf{c}_v$ which, crucially, depend only on the data $(F_v,E_v,\chi_v)$ and not on the representation $\Pi$, such that 
$$
\widehat{B}(\co_F;\phi_\Pi^\#)  =  \widehat{B}(\co_F;\phi_\Pi)\cdot \prod_{v\mid\mathfrak{dc}} \mathbf{a}_v(\alpha_v)
$$
and
$$
  || \phi_\Pi^\# ||_{ K_0(\mathfrak{dr}) }  = 
 ||\phi_\Pi||_{K_0(\mathfrak{n})}^2  \cdot \prod_{v\mid\mathfrak{dc}} \mathbf{b}_v(\alpha_v)
$$
where  $\phi_\Pi\in \Pi$ is the normalized newvector and $\phi_\Pi^\#\in\Pi$ is the projection of $\phi_\Pi$ to the quasi-new line.  Using (\ref{cycle trace}) in place of \cite[Lemma 17.2]{zhang3}, the rational function $\mathbf{c}_v$ is defined by the relation
$$
\frac{1}{||\phi_{\Pi'}||^2} \left|  \int_{C_U}  Q_\chi(g) \phi_{\Pi'}(g)\ dg \right|^2  
=
\frac{1}{||\phi^\chi_{\Pi'}||^2} \left|     \int_{C_U}  P_\chi(g) \phi_{\Pi'}^\chi(g)\ dg \right|^2  \cdot \prod_{v\mid\mathfrak{dc}} \mathbf{c}_v(\alpha_v)
$$
where $\Pi'$ is the automorphic representation of $G(\A)$ whose Jacquet-Langlands lift is $\Pi$, $\phi_{\Pi'}^\chi$ is a toric newvector in $\Pi'$ in the sense of Definition \ref{JL-toric},  $\phi_{\Pi'}\in \Pi'$ is a newvector in the sense of Definition \ref{JL-newvector}, and $||\cdot||$ is any $G(\A)$-invariant norm on $\Pi'$ (e.g. $||\cdot||^2=\int_{S_U}|\cdot|^2$).  If $v\nmid\mathfrak{s}$ then $\chi_{0,v}$ is unramfied and we must have $\mathbf{a}_v(\alpha_v)=\mathbf{a}_v(\alpha_v^{-1}\chi_{0,v}^{-1}(\varpi))$ due to the the isomorphism $\Pi(\mu_v,\chi_{0,v}^{-1}\mu_v^{-1})\iso \Pi(\chi_{0,v}^{-1}\mu_v^{-1},\mu_v)$, and similarly for $\mathbf{b}_v$ and $\mathbf{c}_v$.   Set 
$\mathbf{a}_\Pi=\prod_{v\mid\mathfrak{dc}} \mathbf{a}_v(\alpha_v)$
and define $\mathbf{b}_\Pi$ and $\mathbf{c}_\Pi$ similarly.  Proposition \ref{first holomorphic values} (for the weight $2$ case) and Proposition \ref{first maass values} (for the weight $0$ case) give
\begin{eqnarray*}  \lefteqn{
2^{|S|}    H_F\lambda_U^{-1} [\widehat{\co}_E^\times: U_T]    \widehat{B}(\co_F;\phi_\Pi^{\#}  )
 L(1/2,\Pi\times\Pi_\chi)  }\\
&=&
|d|^{1/2} 2^{f\cdot [F:\Q]}   ||\phi_\Pi^{\#} ||_{K_0(\mathfrak{dr})} ^2  \cdot 
 \frac{ \big|   \int_{C_U} P_\chi(g) \phi_{\Pi'}^\chi(g)\ dg \big|^2  }  {    \int_{S_U} |\phi^\chi_{\Pi'}(g)|^2\ dg  } . \nonumber
\end{eqnarray*}
where $f=1$ in the weight $2$ case and $f=2$ in the weight $0$ case. As $\widehat{B}(\co_F,\phi_\Pi)=1$ we find, using $\lambda_V H_{F,\mathfrak{s}}=H_F[\widehat{\co}^\times_\mathfrak{c} :V_T]$ and (\ref{measure descent}) (which holds also with $C_U$ and $C_V$ replaced by $S_U$ and $S_V$), that 
\begin{equation}\label{descent two}
\kappa \cdot \mathbf{a}_\Pi\mathbf{c}_\Pi    \cdot  \frac{L(1/2,\Pi\times\Pi_\chi) }  {  ||\phi_\Pi ||_{K_0(\mathfrak{n})}^2 }   =    \frac{ \mathbf{b}_\Pi \cdot 2^{f\cdot [F:\Q]} }{ H_{F,\mathfrak{s} }  \sqrt{\mathrm{N}_{F/\Q}(\mathfrak{dc}^2)}}  \cdot  \frac{ \left|  \int_{C_V} Q_\chi(t) \phi_{\Pi'}(t)\ dt \ \right|^2  }  {  \int_{S_V} |\phi_{\Pi'}(g)|^2\ dg  } . 
\end{equation}
Here  $\kappa=\prod_{v\mid\mathfrak{dc}}\kappa_v$ with
$$
\kappa_v= \frac{[\co_{E,v}^\times: U_{T,v}] }{ [\widehat{\co}^\times_{\mathfrak{c},v} : V_{T,v} ]  }
\cdot \frac{|c|_v}{  [V_v:U_v]  }  \left\{\begin{array}{ll} 2 &\mathrm{if\ }v\mid\mathfrak{d} \\ 1 &\mathrm{if\ }v\mid\mathfrak{c} \end{array}\right.
$$
where $c\in\A^\times$ satisfies $c\co_F=\mathfrak{c}$.

\begin{proof}[Proof of Theorems \ref{second holomorphic values} and \ref{second maass values}]
It follows from the definition of the quasi-new line that $\phi_\Pi^\#\not=0$ (in the notation of \S \ref{ss:quasi-new} we have $\Lambda_v(\phi_{\Pi,v})\not=0$ for each $v\mid\mathfrak{dr}$, and so $\phi_{\Pi,v}$ has nontrivial projection to the quasi-new line in $\Pi_v$), and hence $\mathbf{b}_\Pi\not=0$.  It therefore suffices by (\ref{descent two}) to prove that  $\kappa \cdot \mathbf{a}_\Pi \mathbf{c}_\Pi=  \mathbf{b}_\Pi$.    Let us suppose for the moment that $\Pi$ is of parallel weight $0$ and that $\mathfrak{m}=\co_F$.  Thus $\epsilon(1/2,\mathfrak{r})=(-1)^{[F:\Q]}$ and we are in the  situation of \S \ref{ss:maass}.  The quaternion algebra $B$ is split, and we let $\rho:G\iso \GL_2$ and   $j\in \GL_2(\A)$
be as in \S \ref{ss:maass family}.  Set $\Pi'=\Pi$ and for each $\phi\in \Pi$ set $\phi'(g)=\phi(\rho(g) j)$.
  Fix a Haar measure on $\GL_2(\A_f)$ and, as always, normalize the Haar measure on $Z(\A_f)$ to give $\widehat{\co}_F^\times$ volume $1$.   Define a Haar measure on $G(\A_f)$ by demanding that $\rho$ be an isomorphism of measure spaces.   For any $\phi\in\Pi$ we now have,  tediously keeping track of the normalizations of measures,
\begin{eqnarray*}
\int_{S_V} |\phi' |^2 
&=&  \mathrm{Vol}(V)^{-1}   \int_{G(F)\backslash  X\times G(\A_f)/ V } |\phi' |^2 \\
&=& \mathrm{Vol}(V)^{-1} \frac{1}{ [Z(F)\cap \widehat{\co}_F^\times: Z(F) \cap V ] }  \int_{G(F)\backslash  X\times G(\A_f)/ \widehat{\co}_F^\times } |\phi' |^2 \\
&=& \mathrm{Vol}(V)^{-1}   \frac{ [Z(\A_f):Z(F) \widehat{\co}_F^\times ]}{ [Z(F)\cap \widehat{\co}_F^\times: Z(F) \cap V ]   }   \int_{ G(F)\backslash       X\times G(\A_f)/ Z(\A_f) } |\phi'|^2.
  \end{eqnarray*}
  Using  $j K j^{-1}=\rho(V)$ and $V_Z=\{ x\in \widehat{\co}_F^\times\mid x\in 1+\widehat{s}\}$ we find that 
  $$
  \int_{S_V} |\phi' |^2 = H_F\lambda_V^{-1} ||\phi||_K^2 = H_{F, \mathfrak{s}} ||\phi||_{K_0(\mathfrak{n})}^2.
  $$
  We may now write (\ref{descent two}) as 
\begin{equation}\label{descent three}
\kappa \cdot \mathbf{a}_\Pi\mathbf{c}_\Pi    \cdot  L(1/2,\Pi\times\Pi_\chi)  =    \frac{ \mathbf{b}_\Pi \cdot 2^{f\cdot [F:\Q]} }{    \sqrt{\mathrm{N}_{F/\Q}(\mathfrak{dc}^2)}}  \cdot  \left| \frac{1}{H_{F,\mathfrak{s}} } \int_{C_V} Q_\chi(t) \phi_\Pi'(t)\ dt \ \right|^2  .
\end{equation}
 The point is that in this formulation no $L^2$ norms appear, and the statement of the formula makes sense even if $\Pi$ is noncuspidal.  The argument of 
\cite[\S 18]{zhang3} shows that the equality (\ref{descent three}) can be extended to the principal series  representation $\Pi_\tau$ of \S \ref{ss:maass family} for any $\tau\in\C$ with $\mathrm{Re}(\tau)=1/2$ (so that $\Pi_\tau$ is unitary), provided that $\chi$ does not factor through the norm map $\A_E^\times\map{}\A^\times$ (so that $\Pi_\chi$ is cuspidal by Lemma \ref{cusp theta} and (\ref{maass rankin}) still holds).

If for each $v\mid\mathfrak{dc}$ we let $q_v$ denote the cardinality of the residue field of $v$, then taking $\Pi=\Pi_\tau$ and $\phi_\Pi=\phi_\tau$ in  (\ref{descent three}) and comparing with Lemma \ref{continuous maass answer} (and still assuming that $\Pi_\chi$ is cuspidal) gives
$$
\prod_{v\mid\mathfrak{dc}}\kappa_v \mathbf{a}_v(q_v^{1/2-\tau}) \mathbf{c}_v(q_v^{1/2-\tau})
= \prod_{v\mid\mathfrak{dc}}  \mathbf{b}_v(q_v^{1/2-\tau}).
$$
As in the proof of \cite[Proposition 19.2]{zhang3}, letting $\tau$ vary and letting $\chi$ vary over characters which do not factor through the norm while holding the components $\chi_v$ for $v\mid\mathfrak{dc}$ fixed, we find the equality of rational functions
$\kappa \prod \mathbf{a}_v\mathbf{c}_v=\prod \mathbf{b}_v$ where each product is over all $v\mid\mathfrak{dc}$.
\end{proof}


\section{Central derivatives}
\label{derivatives}


In this section we relate the N\'eron-Tate heights of certain CM points on Shimura curves to derivatives of automorphic $L$-functions.  As in \cite{zhang2} the method is to compute the arithmetic intersection pairings of various CM-divisors and compare these intersection multiplicities to the Whittaker coefficients of the automorphic form $\Phi_\mathfrak{r}$ of \S \ref{ss:derivative}.  These intersection multiplicities decompose as a  sum of local intersection multiplicities, and the calculations of \S 5 and \S 6 of \cite{zhang2}  show that the calculation of  local multiplicities can be reduced to the calculation of  linking numbers of CM-cycles on totally definite quaternion algebras.     Fortunately for us, this reduction step is done in \cite{zhang2}  in a very general context, and includes not only on Shimura curves with arbitrary level structure but also  Shimura curves associated to the algebraic group $G$ below (as opposed to the group $G/Z$).  Thus we may cite from Zhang the crucial  Propositions \ref{Zhang's intersection I} and   \ref{bad zhang intersection} below, which reduce the local intersection theory at nonsplit primes to the calculations we have done in  \S \ref{quaternion generalities}.

Throughout \S \ref{derivatives} we assume that the representation $\Pi$ of \S \ref{notations} satisfies Hypothesis \ref{hyp}  and  that $\Pi_v$ lies in the discrete series of weight $2$ for every archimedean $v$.  Set $\mathfrak{r}=\mathfrak{mc}^2$ and assume that $\omega(\mathfrak{m})=(-1)^{[F:\Q]-1}$.  The epsilon factor of \S \ref{ss:kernel} then satisifies $\epsilon(1/2,\mathfrak{r})=-1$ and so  $L(1/2,\Pi\times\Pi_\chi)=0$ by the functional equation (\ref{kernel functional}) and the Rankin-Selberg integral representation (\ref{rankin integral}) with $b=1$.  Fix an archimedean place $w_\infty$ of $F$ and let $B$ be the  quaternion algebra over $F$ characterized by
 $$
B_v \mathrm{\ is\ split\ }\iff \epsilon_v(1/2,\mathfrak{r},\psi)=1\mathrm{\ or\ }v= w_\infty
 $$
for every  place $v$.  Thus  $B$ is indefinite at $w_\infty$ and definite at all other archimedean places. The reduced discriminant of $B$ divides $\mathfrak{m}$ and, as $E_v$ is a field whenever $B_v$ is nonsplit,  there is  an embedding $E\map{}B$ which we fix.  Let $G$, $T$, and $Z$ be the algebraic groups over $F$ defined at the beginning of \S \ref{quaternion generalities}.  For any ideal $\mathfrak{b}\subset \co_F$ let $\co_{\mathfrak{b}}=\co_F+\mathfrak{b}\co_{E}$ denote the order of $\co_E$ of conductor $\mathfrak{b}$. Fix an algebraic closure $F^\alg$ of $F$ containing $E$ and an embedding $F^\alg\hookrightarrow \C$ lying above $w_\infty$.

 General references for Shimura curves include \cite{carayol, milne1, milne2, nek-euler, zhang1, zhang2}.


\subsection{Shimura curves}
\label{shimura curves}


 Throughout \S \ref{shimura curves} we let $U$ be an arbitrary compact open subgroup of $G(\A_f)$.  The  chosen embedding $E\map{}\C$ determines an isomorphism of real algebraic groups $\mathbb{S}\iso T\times_F\R$, where $\mathbb{S}=\mathrm{Res}_{\C/\R}\mathbb{G}_m$.  The embedding $T\map{}G$ therefore determines an embedding of real algebraic groups
$$
x_0:\mathbb{S}\map{} G\times_F\R\map{} (\mathrm{Res}_{F/\Q} G)\times_\Q \R.
$$ 
Let  $X$ be the $G(\R)$-conjugacy class of $x_0$ in the set of all such embeddings.  If $F\not=\Q$ or if $B\not\iso M_2(F)$ we define a compact Riemann surface
\begin{equation}\label{complex points}
X_U(\C) = G(F)\backslash X\times G(\A_f) /U.
\end{equation}
For $x\in X$ and $g\in G(\A_f)$ let $[x,g]$ denote the image of $(x,g)$ in $X_U(\C)$.
If $F=\Q$ and $B$ is split then the right hand side of (\ref{complex points}) is noncompact, and $X_U(\C)$ is defined as the usual compactification of the right hand side obtained by adjoining finitely many cusps.   The connected components of $X_U(\C)$ are indexed by the set
$$
Z_U(\C) = Z(F)^+\backslash Z(\A_f)/\mathrm{N}(U)
$$
where $Z(F)^+\subset Z(F)\iso F^\times$ is the subgroup of totally positive elements and $\mathrm{N}(U)$ is the image of $U$ under the reduced norm $G(\A_f)\map{}Z(\A_f)$.  The canonical map $X_U(\C)\map{}Z_U(\C)$ is given by $[x,g]\mapsto \mathrm{N}(g)$.

Let $X_U$ denote Shimura's canonical model of $X_U(\C)$ over $\Spec(F)$.  Let $F_U/F$ be the abelian extension of $F$ which, under the reciprocity map of class field theory, has $\Gal(F_U/F)\iso Z_U(\C)$. The component map $X_U(\C)\map{}Z_U(\C)$ arises from a morphism of $F$-schemes $X_U\map{}Z_U$ where $Z_U$ is (noncanonically) isomorphic to $\Spec(F_U)$.   For each geometric point $\alpha:\Spec(F^\alg)\map{} Z_U$ define a smooth connected projective curve over $F^\alg$
$$
X_U^\alpha= X_U\times_{Z_U} \Spec(F^\alg).
$$
The \emph{Jacobian} $J_U$ of $X_U$ is the abelian variety over $F$ defined by
$$
J_U=\mathrm{Res}_{Z_U/F} (\mathrm{Pic}^0_{X_U/Z_U})
$$
so that the geometric fiber of $J_U$ decomposes as 
$$
J_U\times_F F^\alg \iso \prod_{\alpha\in Z_U(F^\alg)} J_U^\alpha
$$
where $J_U^\alpha$ is the Jacobian of $X_U^\alpha$.  There is a $\Gal(F^\alg/F)$ invariant function 
$$
\mathrm{Hg}:X_U(F^\alg)\map{}J_U(F^\alg)\otimes_\Z\Q,
$$ 
the \emph{Hodge embedding}, described in detail in \cite[\S 3.5]{cornut-vatsal}.  Briefly,   Zhang \cite[\S 6.2]{zhang2} constructs the \emph{Hodge class}  $\mathcal{L}\in \mathrm{Pic}(X_U)\otimes_\Z\Q$ having degree $1$ on every geometric component.  Each  $P\in X_U(F^\alg)$ determines a geometric point $\alpha\in Z_U(F^\alg)$, and we let $\mathcal{L}_P$ denote the restriction of $\mathcal{L}$ to $X_U^{\alpha}$.  Letting $\co(P)\in\mathrm{Pic}(X_U\times_F F^\alg)$ denote the class of $P$ we define 
$$
\mathrm{Hg}(P)=\co(P)\otimes\mathcal{L}_P^{-1}\in J_U^{\alpha}(F^\alg)\otimes_\Z\Q.
$$

 For any finite extension $L/F$ the N\'eron-Tate height on $J_U(L)$ is denoted by $\langle\cdot,\cdot\rangle^{\mathrm{NT}}_{U,L}$.  The \emph{normalized} N\'eron-Tate height on $J_U(F^\alg)$ is defined by
$$
\langle x,y\rangle^{\mathrm{NT}}_U=\frac{1}{[L:F]}\langle x,y\rangle^{\mathrm{NT}}_{U,L}
$$
where $L$ is any finite extension of $F$ large enough that $x$ and $y$ are defined over $L$.  Fix two points $P,Q\in X_U(F^\alg)$ and choose a finite Galois extension $L/F$ large enough that $P$ and $Q$ are both defined over $L$.   To compute the N\'eron-Tate pairing of $\mathrm{Hg}(P)$ and $\mathrm{Hg}(Q)$ we  use the arithmetic intersection theory of  Gillet-Soul\'e \cite{gillet-soule1, gillet-soule2} as in \S 5.3 and \S 6.1 of \cite{zhang2}.  Suppose  that $U$ is small enough that $X_U$ admits a canonical regular model $\underline{X}_U$, proper and flat over $\co_F$, as in \cite[\S 1.2.5]{zhang1}.  Let $\underline{Z}_U$ be the normalization of $\Spec(\co_F)$ in $Z_U$, so that $\underline{Z}_U\iso \Spec(\co_{F_U})$ (noncanonically) and the component map $X_U\map{}Z_U$ extends to a map of $\co_F$-schemes $\underline{X}_U\map{}\underline{Z}_U$.  As $Z_U(L)\not=\emptyset$ there are $[F_U:F]$ distinct embeddings $F_U\map{}L$, and so $[F_U:F]$ distinct morphisms $\Spec(\co_L) \map{}\underline{Z}_U$.  Let  $\mathcal{Z}_U$ denote the disjoint union of $[F_U:F]$ copies of $\Spec(\co_L)$ so that $\mathcal{Z}_U$ is naturally an $\co_L$-scheme which admits an $\co_F$-morphism  $\mathcal{Z}_U\map{}\underline{Z}_U$. Let $\mathcal{X}_U$ be the minimal resolution of singularities of the $\co_L$-scheme $\underline{X}_U\times_{\underline{Z}_U} \mathcal{Z}_U.$  The scheme $\mathcal{X}_U$ has generic fiber $X_U\times_F L$ and is a disjoint union of $[F_U:F]$ proper and flat curves over $\co_L$ indexed by $Z_U(F^\alg)$, each with geometrically connected generic fiber.  The Hodge class $\mathcal{L}$ on  $X_U$ admits a natural extension to  $\underline{X}_U$  \cite[\S 4.1.4]{zhang1} which we  pull back to a class  $\mathcal{L}\in \mathrm{Pic}(\mathcal{X}_U)\otimes_\Z\Q$.  For each embedding $i:L\map{}\C$ the Riemann surface $(\mathcal{X}_U\times_{\co_L}\C)(\C)$  has a canonical volume form $\mu$ which on each connected component has total volume $1$ and whose pull back to the upper half-plane (under any such parametrization) is a multiple of the hyperbolic volume form $y^{-2}dxdy$.  By \cite[Theorem I.4.2]{lang-arakelov} there is a Hermitian metric $\rho_i$, unique up to scaling, on the pull-back of $\mathcal{L}$ to $\mathcal{X}_U\times_{\co_L}\C$ whose Chern form is $\mu$.  Letting $\rho$ denote the tuple $(\rho_i)$ indexed by embeddings $i$ as above, the pair $\widehat{\mathcal{L}}=(\mathcal{L},\rho)$ is then an element of $\widehat{\mathrm{Pic}}(\mathcal{X}_U)$ as in \cite[\S 6.1]{zhang2}.

Going back to the point $P\in X_U(L)$, let $\mathcal{X}_U^\alpha$ be the connected component of $\mathcal{X}_U$ containing $P$.  The \emph{arithmetic closure} (as in \cite[\S 6.1]{zhang2} or \cite[\S 9]{zhang3}) $\widehat{P}\in\widehat{\mathrm{Div}}(\mathcal{X}_U)$ of $P$ with respect to $\widehat{\mathcal{L}}$ is a pair $\widehat{P}=(\mathcal{P}+D_P,g_P)$ where $\mathcal{P}$ is the Zariski closure of $P$ on $\mathcal{X}_U$ and $g_P=(g_{P,i})$ is a tuple indexed by embeddings $i:L\map{}\C$ with $g_{P,i}$  a smooth function on the complement of $P$ in $(\mathcal{X}_U\times_{\co_L}\C)(\C)$ such  that $2\cdot g_{P,i}$ is a Green's function for $P$ with respect to $\mu$ (in the sense of  \cite[\S II.1]{lang-arakelov}) on the component indexed by $\alpha$, and is identically $0$ on the other components.  Lang and Zhang use different normalizations for Green's functions, hence the factor of $2$; our $g_P$ is Zhang's $g(P,\cdot)$.  Finally $D_P$ is a vertical divisor on $\mathcal{X}^\alpha_U$ chosen so that $\mathcal{P}+D_P$ has trivial intersection multiplicity with every vertical divisor, and so that for any finite place $w$ of $L$ the restriction of $\mathcal{L}$ to the sum of the components of $D_P$ above $w$ has degree $0$.  One defines $\widehat{Q}=(\mathcal{Q}+D_Q,g_Q)$ in the same way. The Hodge index theorem now tells us that 
$$
\langle \mathrm{Hg}(P),\mathrm{Hg}(Q)\rangle^{\mathrm{NT}}_{U}= \frac{-1}{[L:F]} \langle \widehat{P}-\widehat{\mathcal{L}}_P,\widehat{Q}-\widehat{\mathcal{L}}_Q\rangle_{\mathcal{X}_U}^{\mathrm{Ar}}
$$
where $\widehat{\mathcal{L}}_P$ is the restriction of $\widehat{\mathcal{L}}$ to the component of $\mathcal{X}_U$ containing $P$ (and similarly with $P$ replaced by $Q$) and the pairing on the right is the Gillet-Soule arithmetic intersection pairing on $\widehat{\mathrm{Pic}}(\mathcal{X}_U)$ defined by \cite[(9.3)]{zhang3}.

For each place $w$ of $F$ fix an extension $w^\alg$ to $F^\alg$.  As we assume that $P\not=Q$ there is a decomposition of the arithmetic intersection pairing as a sum of local Green's functions
$$
\langle \widehat{P},\widehat{Q}\rangle^{\mathrm{Ar}}_{\mathcal{X}_U}=
  \sum_w\sum_{\sigma\in\Gal(L/F) }   d_w\cdot g(P^\sigma,Q^\sigma)_{U,w^\alg}
$$
where the sum is over all places of $F$ and terms on the right are as follows.  If $w\mid\infty$ then $d_w=1$ and $g(P,Q)_{U,w^\alg}=g_{P,i}(Q)$ where $i:L\map{}\C$ is the embedding determined by $w^\alg$.  If $w$ is nonarchimedean then $d_w=\log q_w$ where $q_w$ is the size of the residue field of $w$, and
$$
g(P,Q)_{U,w^\alg}= e(L_{w^\alg}/F_w)^{-1} i_{w^\alg}(\mathcal{P}+D_P,\mathcal{Q}+D_Q)_{\mathcal{X}_U}
$$  
where $e(L_{w^\alg}/F_w)$ is the ramification index and $i_{w^\alg}(\cdot,\cdot)_{\mathcal{X}_U}$ is the intersection pairing on $\mathcal{X}_U\times_{\co_L}\co_{L,w^\alg}$ defined in \cite[III.2]{lang-arakelov}  for divisors with no common components and extended in \cite[III.3]{lang-arakelov} to divisors with common vertical components.  The Green's function $g(P,Q)_{U,w^\alg}$ does not depend on the choice of $L$ and extends bi-additively to a Hermitian pairing on divisors with complex coefficients on $X_U\times_F F^\alg$ having disjoint support.

If $U$ is not sufficiently small in the sense of \cite[\S 1.2.5]{zhang1} then choose $U'\subset U$ which is sufficiently small and define
$$
g(P,Q)_{U,w^\alg}=\frac{1}{\deg(\pi)} g(\pi^*P,\pi^*Q)_{U',w^\alg}
$$
where $\pi:X_{U'}\map{}X_U$ is the degeneracy map with $\deg(\pi)=[F^\times U:F^\times U']$.  This does not depend on the choice of sufficiently small $U'$.


\subsection{Special cycles and Hecke correspondences}


For the remainder of \S \ref{derivatives} we let $U$ and  $V$ denote the compact open subgroups of $G(\A_f)$ constructed in \S \ref{ss:special cycles} and recall that we constructed there CM cycles $P_\chi$ and  $P_{\chi,\mathfrak{a}}$ of level $U$ (for $\mathfrak{a}$ any ideal of $\co_F$ prime to $\mathfrak{c}$) and a CM cycle $Q_\chi$ of level $V$. Let $\epsilon_v\in B_v$ be the element of Lemma \ref{choice of epsilon} used in the construction of $U$, and note that $U_v$ is a maximal compact open subgroup of $G(F_v)$ for $v\nmid\mathfrak{dr}\infty$.   For $\mathfrak{a}$ prime to $\mathfrak{dr}$ there are algebraic Hecke correspondence $T_{\mathfrak{a}}^\Pic$ and $T_{\mathfrak{a}}^\Alb$ on $X_U$ characterized by their action on points of $X_U(\C)$
$$
T_{\mathfrak{a}}^\Pic    [x,g] = \sum_{h\in  U\backslash H(\mathfrak{a})  } [x,gh^{-1}] 
\hspace{1cm} 
T_{\mathfrak{a}}^\Alb    [x,g] = \sum_{h\in  H(\mathfrak{a})/U  } [x,gh],
$$
where $H(\mathfrak{a})$ was defined in \S \ref{ss:special cycles}.  We also have diamond automorphisms of $X_U$ defined by
$$
\langle \mathfrak{a}\rangle^\Pic  [x,g]=[x, g a^{-1} ]
\hspace{1cm}
\langle \mathfrak{a}\rangle^\Alb  [x,g]=[x, g a ]
$$ 
where $a\in\A^\times$ satisfies $a\co_F=\mathfrak{a}$ and $a_v=1$ for $v\mid\infty$.  Restricting  $T_{\mathfrak{a}}^\Pic$, $T_{\mathfrak{a}}^\Alb$ and the diamond automorphisms to divisors on $X_U$ which have degree zero on every geometric component  we obtain endomorphisms, denoted  the same way,  of $J_U$.

We view the set of CM points of level $U$ on $G$ as a subset of $X_U(\C)$ using the injection  $C_U\map{}X_U(\C)$ defined by $T(F) g U\mapsto [x_0,g]$.  By Shimura's reciprocity law \cite[\S 12]{milne2} all points of $C_U$ are defined over the maximal abelian extension of $E$ in $\C$ and satisfy
$$
[x_0,g]^\sigma =[x_0,t^{-1}g]
$$
where $\sigma=[t ,E]$ is the arithmetic Artin symbol of $t$ as in \cite[\S 5.2]{shimura}.  Any CM-cycle $P$ of level $U$ can be written as a sum of characteristic functions of CM points, and so can be viewed as a divisor (with complex coefficients) on $X_U\times_F F^\alg$ in an obvious way.    Setting $P=[x_0,1]$ we then have
$$
P_\chi=\sum_{t\in T(F)\backslash T(\A_f)/U_T} \overline{\chi(t)} \cdot P^{[t,E]}.
$$
This divisor is  rational over the abelian extension $E_\chi/E$ cut out by $\chi$.  As divisors on $X_U\times_F E_\chi$ we have $T_{\mathfrak{a}}^\Pic P_\chi=P_{\chi,\mathfrak{a}}$ and $\langle \mathfrak{a}\rangle^\Pic P_\chi=\chi_0(\mathfrak{a}) P_\chi$.

For $\mathfrak{a}$ prime to $\mathfrak{dr}$ let $P^0_{\chi,\mathfrak{a}}$ denote the restriction of $P_{\chi,\mathfrak{a}}$ to the complement of the image of $T(\A_f)\map{}C_U$.  In particular $P^0_{\chi,\mathfrak{a}}$ and $P_\chi$ have disjoint support.  Fix $a\in\A^\times$ with $a\co_F=\mathfrak{a}$ and define
$$
r_\chi(\mathfrak{a})=\prod_{v\nmid\infty}|a|_v^{-1/2}B_v(a;\theta).
$$
We note that $r_\chi$ is a derivation of $\Pi_{\chi}\otimes|\cdot |^{1/2}$ in the sense of \cite[Definition 3.5.3]{zhang2}.  Exactly as in \cite[Lemma 6.2.1]{zhang2}, (using our Corollaries \ref{unr final degen} and \ref{ram final degen} to evaluate $P_{\chi,\mathfrak{a}}(1)$ instead of \cite[Lemma 4.2.1]{zhang2}) we have
\begin{equation}\label{make disjoint}
P_{\chi,\mathfrak{a}}=P^0_{\chi,\mathfrak{a}}+r_\chi(\mathfrak{a})\cdot P_\chi.
\end{equation}


\subsection{Intersections at nonsplit primes away from $\mathfrak{dr}$}
\label{ss:good intersections}


 Suppose $w\nmid\mathfrak{dr}$ is a finite place of $F$ which is inert in $E$ and fix a place $w^\alg$ of $F^\alg$ above $w$.  Note that the quaternion algebra $B_w$ is split and, as $R_w=\co_{E,w}+\co_{E,w}\epsilon_w$ is a maximal order of $B_w$,  $U_w=R_w^\times$ is a maximal compact open subgroup $G(F_w)$. We wish to compute  $g(P_\chi,P^0_{\chi,\mathfrak{a}})_{U,w^\alg}$.  Let $\tilde{B}$ be the totally definite quaternion algebra obtained from $B$ by interchanging invariants at $w_\infty$ and $w$.  That is, $\tilde{B}$ is defined by $\{\mathrm{places\ }v \mathrm{\ of\ }F \mid \tilde{B}_v\not\iso B_v \} = \{ w,w_\infty\}.$ As $E_v$ is a field for every place $v$ at which $\tilde{B}$ is nonsplit, we may fix an embedding $E\map{} \tilde{B}$.   Denote by $\tilde{G}$ the algebraic group over $F$ defined by $\tilde{G}(A)=(\tilde{B}\otimes_F A)^\times$.

 For each finite place $v\not=w$ fix an isomorphism $\sigma_v:G(F_v)\iso \tilde{G}(F_v)$ compatible with the embeddings of $T(F_v)$ into $G(F_v)$ and $\tilde{G}(F_v)$ and define
$$
\tilde{\epsilon}_v=\sigma_v(\epsilon_v) 
\hspace{1cm}
\tilde{U}_v=\sigma_v(U_v).
$$
Pick $\tilde{\epsilon}_w\in \tilde{B}_w$ so that $E_w\tilde{\epsilon}_w=\tilde{B}_w^-$ and 
$\ord_w(\tilde{\mathrm{N}}(\tilde{\epsilon}_w))=1$, where $\tilde{\mathrm{N}}$ is the reduced norm on $\tilde{B}_w$.  
Then $\tilde{R}_w=\co_{E,w}+\co_{E,w}\tilde{\epsilon}_w$ is the unique maximal order in $\tilde{B}_w$, and we define $\tilde{U}_w=\tilde{R}_w^\times$.  
Define a function $\sigma_w:G(F_v)\map{}\tilde{G}(F_w)/\tilde{U}_w$ by $\sigma_w(g)=\tilde{g}\tilde{U}_w$ for any $\tilde{g}\in \tilde{G}(F_w)$ satisfying
 $ \ord_w(\mathrm{N}(g))=\ord_w(\tilde{\mathrm{N}}(\tilde{g})).$  Set $\tilde{U}=\prod_v\tilde{U}_v$, a compact open subgroup of $\tilde{G}(\A_f)$.   Taking the product of the $\sigma_v$ we obtain a map of left $T(\A_f)$-sets  $\sigma : G(\A_f)/U \map{  }  \tilde{G}(\A_f)/\tilde{U}$
and a push-forward map $f\mapsto\sigma_*f$ from finitely supported functions on $ G(\A_f)/U $ to finitely supported functions on $\tilde{G}(\A_f)/\tilde{U}$ defined by 
$$
 (\sigma_*f)(x)=\sum_{ \sigma(y)=x } f(y).
 $$
As the natural projection $ G(\A_f)/U \map{  }C_U$ has finite fibers, any CM-cycle of level $U$ may be viewed as a finitely supported function on $G(\A_f)/U$.  The push-forward is then a left $T(F)$-invariant function on $\tilde{G}(\A_f)/\tilde{U}$, and so there is an induced push-forward $\sigma_*$ from CM-cycles on $G$ of level $U$ to CM-cycles on $\tilde{G}$ of level $\tilde{U}$.

 Fix a uniformizer $\varpi$ of $F_w$ and for each $k\ge 0$ let $A_k= \co_{F,w}+\varpi^k\co_{E,w}$.  For each $x\in C_U$ define the $w$-\emph{conductor} of $x=T(F)gU$ to be the integer $k$ determined by
$$
A_k^\times=g_wU_w g_w^{-1}  \cap T(F_w).
$$

\begin{Prop}\label{Zhang's intersection I} 
Suppose that $P$ and $Q$ are disjoint CM-cycles of level $U$ with $P$ supported on points of $w$-conductor $k$ and $Q$ supported on points of $w$-conductor $0$. Then 
$$
g(P, Q )_{U, w^\alg}=
\sum_{\gamma\in T(F)\backslash \tilde{G}(F)/ T(F)}  
\langle \sigma_*P , \sigma_*Q \rangle_{\tilde{U}}^\gamma  \cdot M_k(\gamma)
$$
 where  
 $$
M_k(\gamma)= \left\{\begin{array}{ll}
  \frac{ \ord_w(\xi\varpi)}{2} &\mathrm{if\ }k=0\mathrm{\ and\ }\xi\not=0 \\
  0& \mathrm{if\ }k=0\mathrm{\ and\ }\xi=0 \\
 { [\co_{E,w}^\times: A_k^\times]^{-1}} & \mathrm{if\ }k>0.
 \end{array}\right.
$$
\end{Prop}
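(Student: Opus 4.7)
The plan is to deduce this directly from Zhang's local intersection formula at nonsplit primes of good reduction, developed in \cite[\S 5]{zhang2}, which reduces arithmetic intersection multiplicities on Shimura curves to linking numbers of CM-cycles on the associated totally definite quaternion algebra.  The main input is the $p$-adic uniformization of $\mathcal{X}_U$ at $w$: since $w\nmid\mathfrak{dr}$ is inert in $E$, the defining property of $B$ forces $B_w$ to be split while $\tilde{B}_w$ is the unique division quaternion algebra over $F_w$, and hence the formal completion of $\mathcal{X}_U$ along its fiber above $w^\alg$ is identified with a double quotient of Drinfeld's formal upper half-plane $\widehat{\Omega}_w$ by $\tilde{G}(F)$ and $\tilde{U}$.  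Under this identification the map $\sigma$ of \S\ref{ss:good intersections} describes the passage from a CM-point of $X_U$ to the away-from-$w$ part of its supersingular reduction datum.

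Next I would decompose the local intersection number $i_{w^\alg}(\mathcal{P}+D_P,\mathcal{Q}+D_Q)_{\mathcal{X}_U}$, in parallel with (\ref{linking decomposition}), as a sum over double cosets in $T(F)\backslash \tilde{G}(F)/T(F)$.  For each $\gamma$ the contribution factors as the linking number $\langle\sigma_*P,\sigma_*Q\rangle_{\tilde{U}}^\gamma$ times a purely local intersection multiplicity $M_k(\gamma)$ in $\widehat{\Omega}_w$.  The vertical corrections $D_P$, $D_Q$ and the Hodge twist $\widehat{\mathcal{L}}$ do not alter this formula, because on the $\chi$-isotypic piece the CM divisors already have trivial degree on every geometric component, so the orthogonality conditions defining $D_P$ and $D_Q$ contribute nothing to the intersection on the support of $\sigma_*P$ and $\sigma_*Q$.

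The main obstacle is the evaluation of $M_k(\gamma)$, which is Zhang's core calculation.  When $k=0$ both cycles are supported on canonical liftings of supersingular formal $\co_{F,w}$-modules with multiplication by $\co_{E,w}$, and Gross's theory of quasi-canonical liftings gives the intersection length $\ord_w(\xi\varpi)/2$ in the universal deformation space whenever $\xi\neq 0$; the case $\xi=0$ corresponds to identical liftings and is excluded by the disjointness of $P$ and $Q$.  When $k>0$ the stabilizer of the relevant CM-point shrinks from $\co_{E,w}^\times$ to $A_k^\times$, and following the bookkeeping of \cite[\S 5]{zhang2} the resulting index is absorbed into the factor $[\co_{E,w}^\times:A_k^\times]^{-1}$.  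The remaining point to check is that Zhang's computations, performed under slightly different conventions and for trivial central character, adapt verbatim to our setting; this is immediate because $w\nmid\mathrm{N}(\mathfrak{C})$ forces $\chi_w$ to be unramified, and the orbital integrals appearing in the reduction step reduce to those already treated in our Corollaries \ref{unramified links} and \ref{unr final degen}.
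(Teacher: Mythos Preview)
The paper itself does not prove this proposition; its proof reads in full ``See Lemmas 5.5.2 and 6.3.5 of \cite{zhang2}.''  So any correct outline is already more than the paper provides, and the question is only whether your sketch is accurate.

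There is a genuine confusion in your first paragraph.  You invoke $p$-adic (Cherednik--Drinfeld) uniformization and assert that the formal completion of $\mathcal{X}_U$ along the fiber at $w$ is a double quotient of Drinfeld's $\widehat{\Omega}_w$.  That description applies only at places where $B$ is \emph{ramified}, i.e.\ where the Shimura curve has totally degenerate reduction.  Here you yourself note that $B_w$ is split; moreover $w\nmid\mathfrak{dr}$, so $U_w$ is maximal and $X_U$ has \emph{good} reduction at $w$.  In this regime there is no Drinfeld upper half-plane in sight.  What actually happens (and what Zhang's \S 5 uses) is that the CM points, having CM by the inert field $E_w$, specialize to the \emph{supersingular} locus of the good-reduction special fiber; that locus is finite and indexed by $\tilde{G}(F)\backslash \tilde{G}(\A_f)/\tilde{U}$, and the formal completion of the integral model at a supersingular point is the universal deformation space of a height-two formal $\co_{F,w}$-module.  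The map $\sigma$ records the supersingular reduction of a CM point, and $\tilde{B}$ enters because the endomorphism algebra of a supersingular point is (locally at $w$) the nonsplit quaternion algebra.

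Once you replace the Drinfeld picture with the correct supersingular/Serre--Tate picture, the rest of your outline is on target: the local Green's function is computed by intersection lengths in the universal deformation ring, Gross's theory of quasi-canonical liftings gives exactly the multiplicities $M_k(\gamma)$ (conductor $0$ versus conductor $k$ liftings meeting to the stated orders), and the decomposition over $T(F)\backslash\tilde{G}(F)/T(F)$ follows from grouping pairs of CM points by the relative position $\gamma$ of their endomorphism embeddings.  Your remark about vertical divisors $D_P$, $D_Q$ being irrelevant is also correct here, but for a simpler reason than you give: at a place of good reduction the special fiber is irreducible on each geometric component, so no vertical correction is needed at all.
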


\begin{proof}
See Lemmas 5.5.2 and 6.3.5 of \cite{zhang2}.
\end{proof}

Suppose $\mathfrak{a}$ is an ideal of $\co_F$ prime to $\mathfrak{dr}$.
For any finite place $v$ we may replace $B_v$ by $\tilde{B}_v$ and $\epsilon_v$ by $\tilde{\epsilon}_v$ everywhere in \S \ref{ss:unramified local calculations} and \S \ref{ss:ramified local calculations}, giving a function  $\tilde{P}_{\chi,\mathfrak{a},v}$ on $\tilde{G}(F_v)/\tilde{U}_v$.  Taking the product over all finite $v$ gives a CM-cycle $\tilde{P}_{\chi,\mathfrak{a}}$ of level $\tilde{U}$ on $\tilde{G}$.  When $\mathfrak{a}=\co_F$ we omit it from the notation.
Define an ideal $\mathfrak{e}$  of $\co_F$ by $\ord_v(\mathfrak{e})=\ord_v(\mathrm{N}(\tilde{\epsilon}_v))$ for all finite places $v$, so that
\begin{equation}\label{r bump}
\ord_v(\mathfrak{e})=\ord_v(\mathfrak{r})  +  
\left\{ \begin{array}{ll}
1 &\mathrm{if\ }v=w \\
0&\mathrm{otherwise.}
\end{array}\right.
\end{equation}

 \begin{Prop}\label{nearly good derivatives}
Suppose $\mathfrak{a}$ is prime to $\mathfrak{c}$.  There is a constant $\kappa$, independent of $\mathfrak{a}$, such that
$$
g(P^0_{\chi,\mathfrak{a}},P_\chi)_{U,w^\alg} = \kappa \cdot r_\chi(\mathfrak{a})+
\sum_{\gamma\in T(F)\backslash \tilde{G}(F)/T(F)} \langle \tilde{P}_{\chi,\mathfrak{a}}, \tilde{P}_{\chi} \rangle_{\tilde{U}}^\gamma   \cdot  m_\mathfrak{a}(\gamma)
$$
where 
$$
m_\mathfrak{a}(\gamma)= \frac{1}{2} \left\{\begin{array}{ll}
\ord_w(\xi\mathfrak{a})+1 & \mathrm{if\ }\xi\not=0 \mathrm{\ and\ } \ord_w(\xi\mathfrak{a})\mathrm{\ is\ odd\ and\ nonnegative} \\
\ord_w(\mathfrak{a}) & \mathrm{if\ }\xi =0 \mathrm{\ and\ } \ord_w(\mathfrak{a})\mathrm{\ is\ even\ and\ nonnegative} \\
0&\mathrm{otherwise.}
\end{array}\right.
$$
\end{Prop}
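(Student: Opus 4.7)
The plan is to compute $g(P^0_{\chi,\mathfrak{a}},P_\chi)_{U,w^\alg}$ by reducing it to a sum of linking numbers on $\tilde G$ via Proposition \ref{Zhang's intersection I}. Since $U_w=R_w^\times$ is maximal in $G(F_w)$, every point in the orbit $T(F)\backslash T(\A_f)U/U$---which is the support of $P_\chi$---has $w$-conductor zero. Writing $P_{\chi,\mathfrak{a}}=\sum_{k\geq 0}P_{\chi,\mathfrak{a}}^{(k)}$ for the decomposition by $w$-conductor, the slices with $k\geq 1$ are automatically disjoint from $P_\chi$, while by (\ref{make disjoint}) the difference $P_{\chi,\mathfrak{a}}^{(0)}-r_\chi(\mathfrak{a})P_\chi$ precisely excises those points of $P_{\chi,\mathfrak{a}}^{(0)}$ that lie in the support of $P_\chi$, leaving a conductor-zero cycle disjoint from $P_\chi$.

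I would then apply Proposition \ref{Zhang's intersection I} to each disjoint piece and sum over $k$, obtaining
$$g(P^0_{\chi,\mathfrak{a}},P_\chi)_{U,w^\alg}=\sum_{k\geq 0}\sum_{\gamma}\langle\sigma_*(P_{\chi,\mathfrak{a}}^{(k)}-\delta_{k,0}\,r_\chi(\mathfrak{a})P_\chi),\sigma_* P_\chi\rangle_{\tilde U}^{\gamma}\cdot M_k(\gamma),$$
where $\gamma$ runs over $T(F)\backslash\tilde G(F)/T(F)$. The map $\sigma_v$ is an isomorphism for every finite $v\neq w$, so under the factorization (\ref{orbital decomp}) of linking numbers into products of local orbital integrals, the contributions away from $w$ coincide with those appearing in $\langle\tilde P_{\chi,\mathfrak{a}},\tilde P_\chi\rangle_{\tilde U}^\gamma$. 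The entire identity thereby reduces to a purely local statement at the place $w$.

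The main step is then the combinatorial identity: for each nondegenerate $\gamma$ with invariants $(\eta,\xi)$,
$$\sum_{k\geq 0}M_k(\gamma)\cdot O_{\tilde U_w}^\gamma\bigl(\sigma_{w,*}(P_{\chi,\mathfrak{a},w}^{(k)}-\delta_{k,0}\,r_\chi(\mathfrak{a})P_{\chi,w})\bigr)=m_\mathfrak{a}(\gamma)\cdot O_{\tilde U_w}^\gamma(\tilde P_{\chi,\mathfrak{a},w}).$$
I would compute the left-hand side by a local lattice analysis describing how the Hecke operator $T_\mathfrak{a}$ distributes $P_\chi$ over points of varying $w$-conductor, following the good-prime arguments of Zhang, and the right-hand side from Proposition \ref{Prop:unramified link I} applied to $\tilde\epsilon_w$ in place of $\epsilon_w$. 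Note that $\ord_w(\mathfrak{e})=\ord_w(\mathfrak{r})+1=1$ by (\ref{r bump}), since $w\nmid\mathfrak{r}$; thus Proposition \ref{Prop:unramified link I} is nonzero precisely when $\ord_w(\eta\mathfrak{a})$ and $\ord_w(\xi\mathfrak{a})-1$ are even and nonnegative, which matches the parity condition defining $m_\mathfrak{a}(\gamma)$.

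The constant $\kappa$ encodes the local self-intersection $g(P_\chi,P_\chi)_{U,w^\alg}$---together with the Hodge-class correction built into the arithmetic closure---that is produced, after the subtraction in (\ref{make disjoint}), as a multiple of $r_\chi(\mathfrak{a})$ whose coefficient depends only on local data at $w$ and not on $\mathfrak{a}$. The principal obstacle I foresee is the verification of the combinatorial identity at $w$, and especially the degenerate case $\xi=0$ (where $\gamma\in T(F)$ and the orbital integrals collapse): here the cancellation between $P_{\chi,\mathfrak{a}}^{(0)}$ and $r_\chi(\mathfrak{a})P_\chi$ must be tracked precisely, so as to produce the asserted coefficient $\tfrac{1}{2}\ord_w(\mathfrak{a})$ without generating spurious $\mathfrak{a}$-dependent contributions that would have to be reabsorbed into $\kappa$.
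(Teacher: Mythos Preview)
Your overall strategy---decompose $P_{\chi,\mathfrak{a}}$ by $w$-conductor, subtract $r_\chi(\mathfrak{a})P_\chi$ from the conductor-$0$ piece via (\ref{make disjoint}), and apply Proposition \ref{Zhang's intersection I} slice by slice---is exactly the paper's approach. But you miss the paper's key simplification and misidentify the source of $\kappa$.

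The paper does \emph{not} reduce to a local orbital-integral identity at $w$. Instead it proves the global equality of cycles $\sigma_*\mathfrak{P}_k = c_k\,\tilde P_{\chi,\mathfrak{a}}$, where $c_k=[\co_{E,w}^\times:A_k^\times]$ if $\ord_w(\mathfrak{a})-k$ is even and nonnegative, and $c_k=0$ otherwise. This follows from the decomposition $G(F_w)=\bigsqcup_k T(F_w)h_kU_w$ with $\ord_w(\mathrm{N}(h_k))=k$: the pushforward $\sigma_w$ collapses each $H_w^k(\mathfrak{a})$ to a single coset of $\tilde U_w$ (since $\tilde B_w$ is division), counting with multiplicity $c_k$. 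Once you know $\sigma_*\mathfrak{P}_k=c_k\tilde P_{\chi,\mathfrak{a}}$, the entire proposition reduces to the elementary verification $\sum_k c_k M_k(\gamma)=m_\mathfrak{a}(\gamma)$ for those $\gamma$ with $\langle\tilde P_{\chi,\mathfrak{a}},\tilde P_\chi\rangle_{\tilde U}^\gamma\neq0$. This handles degenerate and nondegenerate $\gamma$ uniformly, so your ``principal obstacle'' dissolves: for $\xi=0$ one invokes Lemma \ref{unr degenerate} to see $\ord_w(\mathfrak{a})$ must be even, and then $\sum c_kM_k(\gamma)=\#\{1\le k\le\ord_w(\mathfrak{a}):k\text{ even}\}=\tfrac12\ord_w(\mathfrak{a})$.

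Your description of $\kappa$ is incorrect. It is not a Shimura-curve self-intersection with Hodge correction; it is simply $-\sum_\gamma\langle\sigma_*P_\chi,\sigma_*P_\chi\rangle_{\tilde U}^\gamma\,M_0(\gamma)$, a finite sum of linking numbers on the totally definite algebra $\tilde B$, visibly independent of $\mathfrak{a}$. No arithmetic-closure or Hodge-class data enters this proposition.
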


\begin{proof}
This is our analogue of \cite[Lemma 6.3.5]{zhang2}.  Decompose
$$
P^0_{\chi,\mathfrak{a}}=\sum_{k=0}^\infty \mathfrak{P}^0_k
\hspace{1cm}
P_{\chi,\mathfrak{a}}=\sum_{k=0}^\infty \mathfrak{P}_k
$$
where $\mathfrak{P}^0_k$ is the restriction of $P^0_{\chi,\mathfrak{a}}$ to points of $w$-conductor $k$, and similarly for $\mathfrak{P}_k$. By (\ref{make disjoint})
$$
\mathfrak{P}_k=\mathfrak{P}_k^0+ \left\{\begin{array}{ll}
r_\chi(\mathfrak{a})P_\chi &\mathrm{if\ }k=0 \\
0&\mathrm{otherwise}\end{array}\right.
$$
and Proposition \ref{Zhang's intersection I} gives
\begin{eqnarray*}
g(P^0_{\chi,\mathfrak{a}}, P_\chi)_{U,w^\alg}
  &=&
 \sum_{\gamma\in T(F)\backslash \tilde{G}(F)/T(F)} 
 \sum_{k=0}^\infty \langle \sigma_*\mathfrak{P}_k, \sigma_*P_\chi \rangle_{\tilde{U}}^\gamma \cdot M_k(\gamma)  \\
& & -  r_\chi(\mathfrak{a})\sum_{\gamma\in T(F)\backslash \tilde{G}(F)/T(F)}  \langle \sigma_*P_\chi, \sigma_* P_\chi\rangle_{\tilde{U}}^ \gamma \cdot M_0(\gamma).
\end{eqnarray*}

The next claim is that 
$\sigma_*\mathfrak{P}_k=c_k\tilde{P}_{\chi,\mathfrak{a}}$
where 
$$
c_k= \left\{ \begin{array}{ll} 
{[\co_{E,w}^\times: A_k^\times]  }& {\mathrm{ if\ } \ord_w(\mathfrak{a})-k \mathrm{\ is\ even\ and\ nonnegative}}  \\ 
0 &\mathrm{ otherwise.}
\end{array}\right.
$$
To prove this define
\begin{eqnarray*}
H_w^k(\mathfrak{a}) &=& \{h\in H_w(\mathfrak{a}) \mid h U_wh^{-1}\cap T(F_w)=A_k^\times\}  \\
H^k(\mathfrak{a})  &=&  \{h\in H(\mathfrak{a}) \mid h_w\in H^k_w(\mathfrak{a})\} \\
\tilde{H}(\mathfrak{a}) &=&\tilde{H}_w(\mathfrak{a}) \cdot  \prod_{v\not= w}\sigma_v( H_v(\mathfrak{a}))
\end{eqnarray*}
where $\tilde{H}_w(\mathfrak{a})=\{ h\in \tilde{R}_w\mid \tilde{\mathrm{N}}(h)\co_F=\mathfrak{a}_v  \}$. The CM-cycles in question are now given by
\begin{eqnarray*}
\mathfrak{P}_k (g) &=& \chi_0(\mathfrak{a}) \sum_{t\in  T(\A_f)/U_T }\chi(t) 
\mathbf{1}_{ H^k(\mathfrak{a}) } (t^{-1}g)  \\
\tilde{P}_{\chi,\mathfrak{a}} (g) &=& \chi_0(\mathfrak{a}) \sum_{t\in T(\A_f)/U_T }\chi(t) 
\mathbf{1}_{\tilde{H}(\mathfrak{a}) } (t^{-1}g) .
\end{eqnarray*}
 As in the proof of \cite[Lemma 6.3.5]{zhang2} there is a decomposition 
$$
G(F_w)=\bigsqcup_{k=0}^\infty T(F_w)h_k U_w
$$
where each $h_k\in R_w$ satisfies $\ord_w(\mathrm{N}(h_k))=k$ and $h_k U_wh_k^{-1}\cap T(F_w)=A_k^\times$.  Fixing a uniformizer $\varpi\in F_w^\times$ we therefore find
$$
H_w^k(\mathfrak{a}) =\left\{   \begin{array}{ll} \varpi^{\frac{\ord_w(\mathfrak{a})-k}{2}} \co_{E,w}^\times h_k U_w &\mathrm{if\ }\ord_w(\mathfrak{a})-k\mathrm{\ is\ even\ and\ nonnegative}  \\
\emptyset &\mathrm{otherwise.} \end{array}\right.
$$
From this it follows that $\#(H_w^k(\mathfrak{a})/U_w)=c_k$.  Write $H_w^k(\mathfrak{a})=\sqcup_{i=1}^{c_k} s_i U_w$.    For any $t\in T(\A_f)$ we have  $\sigma_w(t s_i)= t\tilde{H}_w(\mathfrak{a})$,  and hence  $\sigma_* \mathbf{1}_{ t H^k(\mathfrak{a}) }   =  c_k \cdot \mathbf{1}_{t \tilde{H}(\mathfrak{a}) }$
from which $\sigma_*\mathfrak{P}_k=c_k\tilde{P}_{\chi,\mathfrak{a}}$ follows immediately.

It follows from the above that
$$
 \sum_{k=0}^\infty \langle \sigma_*\mathfrak{P}_k, \sigma_*P_\chi \rangle_{\tilde{U}}^\gamma \cdot M_k(\gamma) =  \langle \tilde{P}_{\chi,\mathfrak{a}}, \tilde{P}_\chi \rangle_{\tilde{U}}^\gamma  \cdot
 \sum_{k=0}^\infty  c_k \cdot M_k(\gamma).
$$
Assume $\langle\tilde{P}_{\chi,\mathfrak{a}},\tilde{P}_\chi\rangle^\gamma_{\tilde{U}}\not=0$.   Suppose first that $\gamma$ is nondegenerate. In particular $O^\gamma_{\tilde{U}}(\tilde{P}_{\chi,\mathfrak{a},w})\not=0$ by (\ref{orbital decomp}), and so Proposition \ref{Prop:unramified link I} implies that $\ord_w(\eta\mathfrak{a})$ and $\ord_w(\xi\mathfrak{a})-1$ are both even and nonnegative.  If $\ord_w(\mathfrak{a})$ is odd then $\ord_w(\eta)$ is odd, and as $\eta+\xi=1$ we must have $\ord_w(\xi)=0$.  Thus
\begin{equation}\label{stupid multiplicity match}
\sum_{k=0}^\infty c_k\cdot M_k(\gamma)= \# \{ k\mid 1\le k\le \ord_w(\mathfrak{a}), k\mathrm{\ odd} \} = m_\mathfrak{a}(\gamma).
\end{equation}
If $\ord_w(\mathfrak{a})$ is even then 
$$
\sum_{k=0}^\infty c_k\cdot M_k(\gamma) = \frac{\ord_w(\xi)+1}{2} +  \#\{ k\mid 1\le k\le \ord_w(\mathfrak{a}), k\mathrm{\ even} \} =m_\mathfrak{a}(\gamma).
$$
Now suppose $\gamma$ is degenerate, so that $\tilde{P}_{\chi,\mathfrak{a}}(\gamma)\not=0$ by Lemma \ref{orbital integrals}.  If $\xi=0$ then we may assume $\gamma=1$ so that Lemma \ref{unr degenerate} implies $\ord_w(\mathfrak{a})$ is even.  Thus 
$$
\sum c_k\cdot M_k(\gamma)=\#\{ k\mid 1\le k\le \ord_w(\mathfrak{a}), k\mathrm{\ even}\} =m_\mathfrak{a}(\gamma).
$$
If $\xi=1$ then similarly $\ord_w(\mathfrak{ae}^{-1})=\ord_w(\mathfrak{a})-1$ is even and so again (\ref{stupid multiplicity match}) holds.  
\end{proof}

\begin{Cor}\label{good inert derivatives}
Suppose $\mathfrak{a}$ is prime to $\mathfrak{dr}$. Then 
$$
 2^{[F:\Q]+1}   \log |\varpi|_w \cdot  g (P_\chi, P^0_{\chi,\mathfrak{a}})_{U,w^\alg}  
=
  [\widehat{\co}^\times_E:  U_T] H_F\lambda_U^{-1}
 \cdot \mathrm{N}(\mathfrak{a}) \widehat{B}^w(\mathfrak{a};\Phi_{\mathfrak{r}})+ A(\mathfrak{a})
 $$
 where $A(\mathfrak{a})$ is a derivation of $\Pi_{\overline{\chi}}\otimes|\cdot|^{1/2}$ in the sense of \cite[Definition 3.5.3]{zhang2}.
 \end{Cor}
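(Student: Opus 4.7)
The strategy mirrors Proposition \ref{generating series}, but with the derivative $\Theta'_\mathfrak{r}$ replacing $\Theta_\mathfrak{r}$ and the totally definite algebra $\tilde B$ (definite at $w$ and $w_\infty$) replacing $B$. The starting point is Proposition \ref{nearly good derivatives}; using the Hermitian property $g(P_\chi, P^0_{\chi,\mathfrak{a}})_{U,w^\alg} = \overline{g(P^0_{\chi,\mathfrak{a}}, P_\chi)_{U,w^\alg}}$ I would write $g(P_\chi, P^0_{\chi,\mathfrak{a}})_{U,w^\alg}$ as $\overline{\kappa\cdot r_\chi(\mathfrak{a})}$ plus a sum over $\gamma \in T(F)\backslash \tilde G(F)/T(F)$ of conjugated linking numbers weighted by $m_\mathfrak{a}(\gamma)$, then treat nondegenerate and degenerate $\gamma$ separately.

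For nondegenerate $\gamma$, I would apply (\ref{orbital decomp}) to factor each linking number as $[Z(\A_f):Z(F)\tilde U_Z]$ times a product of local orbital integrals. At every finite $v \neq w$ the isomorphism $\sigma_v$ identifies the data of $\tilde B$ with that of $B$, so Corollaries \ref{unramified links} and \ref{ramified links} convert each local orbital integral into $B_v(a,\eta,\xi;\Theta_\mathfrak{r})/(|a|_v|d|_v^{1/2}\tau_v(\gamma))$, with an extra factor $[\co_{E,v}^\times:\co_{F,v}^\times U_{T,v}]^{-1}$ at places dividing $\mathrm{N}(\mathfrak{C})$ that combines with $[Z(\A_f):Z(F)\tilde U_Z]$ to assemble $H_F\lambda_U^{-1}[\widehat{\co}_E^\times:U_T]$, exactly as in the proof of Proposition \ref{generating series}. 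The global identity $\prod_v\tau_v(\gamma) = 1$ of Lemma \ref{global tau}, together with its archimedean values $\tau_v(\gamma) = \omega_v(\delta)\cdot i\cdot|\eta\xi|_v^{1/2}$, contributes the factor $i^{-[F:\Q]}\omega_\infty(\delta)^{-1}|\eta\xi|_\infty^{-1/2}$, which pairs against the archimedean prefactor $(-2i)^{[F:\Q]}\omega_\infty(\delta)|\eta\xi|_\infty^{1/2}$ in (\ref{projection coefficient}).

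The essential matching occurs at $w$. From (\ref{r bump}) we have $\mathfrak{e}_w = \varpi$, so Proposition \ref{Prop:unramified link I} (inert case) yields $O^\gamma_{\tilde U}(\tilde P_{\chi,\mathfrak{a},w}) \neq 0$ precisely when $\ord_w(\eta\mathfrak{a})$ is even nonnegative and $\ord_w(\xi\mathfrak{a})$ is odd and $\ge 1$; this is exactly the condition $\mathrm{Diff}_\mathfrak{r}(\eta,\xi) = \{w\}$ imposed in (\ref{projection coefficient}), using that nonvanishing of the remaining $B_v(a,\eta,\xi;\Theta_\mathfrak{r})$ forces $\mathrm{Diff}_\mathfrak{r}(\eta,\xi)\setminus\{w\}$ to be empty. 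Comparing Propositions \ref{Prop:unramified link I} and \ref{derivative coefficients}(i) and using $\log|\xi ar^{-1}\varpi|_w = (\ord_w(\xi\mathfrak{a})+1)\log|\varpi|_w = 2m_\mathfrak{a}(\gamma)\log|\varpi|_w$ (since $r_w=1$) gives the key local identity
\begin{equation*}
2 m_\mathfrak{a}(\gamma)\log|\varpi|_w \cdot |a|_w|d|_w^{1/2}\tau_w(\gamma) O^\gamma_{\tilde U}(\tilde P_{\chi,\mathfrak{a},w}) = B_w(a,\eta,\xi;\Theta'_\mathfrak{r}).
\end{equation*}
The double count $T(F)\backslash\tilde G(F)/T(F) \leftrightarrow \{(\eta,\xi)\}$ of Lemma \ref{eta parametrization}, applied to $\tilde B$ (whose ramification differs from that of $B$ only at $w$ and $w_\infty$), identifies exactly the set of pairs appearing in (\ref{projection coefficient}). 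The product $\prod_v|a|_v|d|_v^{1/2} = \mathrm{N}(\mathfrak{a})^{-1}|d|^{1/2}$ contributes the $\mathrm{N}(\mathfrak{a})$ on the right; the factor $2$ from the multiplicity combines with the archimedean factor $2^{[F:\Q]}$ from Proposition \ref{kernel coefficients}(iv) to give $2^{[F:\Q]+1}\log|\varpi|_w$ on the left; taking conjugates matches $B_v$ and $B_w'$ against their complex conjugates in (\ref{projection coefficient}).

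The degenerate contributions feed into $A(\mathfrak{a})$. The term $\overline{\kappa r_\chi(\mathfrak{a})}$ is a derivation of $\Pi_{\overline\chi}\otimes|\cdot|^{1/2}$ by construction of $r_\chi$. For $\gamma \in \tilde B^+\cup \tilde B^-$, Lemma \ref{orbital integrals} reduces each linking number to $\tilde P_{\chi,\mathfrak{a}}(\gamma)$, and Corollaries \ref{unr final degen} and \ref{ram final degen} express these evaluations as products of Whittaker coefficients of $\theta$ at shifted arguments, again of the form of a derivation of $\Pi_{\overline\chi}\otimes|\cdot|^{1/2}$ after conjugation. Absorbing all such terms into $A(\mathfrak{a})$ completes the proof. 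The main obstacle is not conceptual but combinatorial: tracking signs, powers of $2$ and $i$, the $\omega_\infty(\delta)$, and the auxiliary factors at places $v\mid\mathrm{N}(\mathfrak{C})$ carefully enough to verify the multiplicative constants agree precisely; all the substantive input is contained in Propositions \ref{Prop:unramified link I}, \ref{derivative coefficients}, and \ref{nearly good derivatives}.
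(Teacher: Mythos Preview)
Your proposal is correct and follows essentially the same approach as the paper's proof: both start from Proposition \ref{nearly good derivatives}, factor the nondegenerate linking numbers via (\ref{orbital decomp}), match the local orbital integrals at $v\neq w$ to $B_v(a,\eta,\xi;\Theta_\mathfrak{r})$ via Corollaries \ref{unramified links} and \ref{ramified links}, match the $w$-contribution to $B_w(a,\eta,\xi;\Theta'_\mathfrak{r})$ via Propositions \ref{Prop:unramified link I} and \ref{derivative coefficients}, use Lemma \ref{eta parametrization} on $\tilde B$ to identify the range of summation with $\mathrm{Diff}_\mathfrak{r}(\eta,\xi)=\{w\}$, and absorb the degenerate terms into a derivation of $\Pi_{\overline\chi}\otimes|\cdot|^{1/2}$ via Lemma \ref{orbital integrals} and Corollaries \ref{unr final degen}, \ref{ram final degen}. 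The only cosmetic difference is that you make the Hermitian flip $g(P_\chi,P^0_{\chi,\mathfrak{a}})=\overline{g(P^0_{\chi,\mathfrak{a}},P_\chi)}$ explicit, whereas the paper silently passes between $\langle\tilde P_{\chi,\mathfrak{a}},\tilde P_\chi\rangle^\gamma_{\tilde U}$ and $\langle\tilde P_\chi,\tilde P_{\chi,\mathfrak{a}}\rangle^\gamma_{\tilde U}$.
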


\begin{proof}
Fix a nondegenerate $\gamma\in \tilde{G}(F)$ and an $a\in\A^\times$ with $a\co_F=\mathfrak{a}$.
For any place $v$ of $F$, Lemma \ref{eta parametrization} and the definition of $\tilde{B}$ give
$$
\omega_v(-\eta\xi)
= \epsilon_v(1/2,\mathfrak{r}) \cdot
\left\{\begin{array}{ll}
-1 &\mathrm{if\ }v=w\\
1 &\mathrm{if\ }v\not=w.
\end{array}\right.
$$
Thus $\mathrm{Diff}_\mathfrak{r}(\eta,\xi)=\{w\}$, and conversely a pair $\eta,\xi\in F^\times$ with $\eta+\xi=1$ arises from  some choice of nondegenerate $\gamma\in \tilde{G}(F)$ if and only if $\mathrm{Diff}_\mathfrak{r}(\eta,\xi)=\{w\}$.
Comparing  Propositions \ref{derivative coefficients} and \ref{Prop:unramified link I}, and recalling (\ref{r bump}), we find
$$
B_w(a,\eta,\xi;\Theta'_{\mathfrak{r}})
=|a|_w \tau_w(\gamma)\cdot O_{\tilde{U}}^\gamma( \tilde{P}_{\chi,\mathfrak{a},w})
\cdot m_\mathfrak{a}(\gamma) \log|\varpi^2|_w.
$$
On the other hand for any finite place $v\not=w$ we have, using (\ref{r bump}) and  Corollaries \ref{unramified links} and \ref{ramified links},
$$
[\co_{E,v}^\times:\co_{F,v}^\times U_{T,v}] B_v(a,\eta,\xi;\Theta_{\mathfrak{r}})
=|a|_v \tau_v(\gamma)\cdot O_{\tilde{U}}^\gamma( \tilde{P}_{\chi,\mathfrak{a},v }).
$$
Using   (\ref{projection coefficient}), Lemma \ref{global tau}, and (\ref{orbital decomp}) we find
$$
[\widehat{\co}_E^\times:U_T] H_F \lambda_U^{-1} \cdot \mathrm{N}(\mathfrak{a})   \widehat{B}^w(\mathfrak{a},\Phi_\mathfrak{r})=
2^{[F:\Q] + 1} \log |\varpi|_w \sum  \langle \tilde{P}_\chi,\tilde{P}_{\chi,\mathfrak{a}}\rangle_{\tilde{U}}^\gamma \cdot m_\mathfrak{a}(\gamma) 
$$
where the sum is over all nondegenerate $\gamma\in T(F)\backslash \tilde{G}(F)/T(F)$.  If $\gamma$ is degenerate then
$ \langle  \tilde{P}_\chi , \tilde{P}_{\chi,\mathfrak{a}}\rangle_{\tilde{U}}^\gamma \cdot m_\mathfrak{a}(\gamma)$ is a derivation of $\Pi_{\overline{\chi}}\otimes|\cdot|^{1/2}$ (using Lemma \ref{orbital integrals} and Corollaries \ref{unr final degen} and \ref{ram final degen}).  Thus the claim follows from Proposition \ref{nearly good derivatives}.
\end{proof}


\subsection{Intersections at nonsplit primes dividing $\mathfrak{dr}$}
\label{ss:bad intersections}


Suppose that $w$ is a place of $F$ which is nonsplit in $E$ with $w\mid\mathfrak{dr}$ and fix a place $w^\alg$ of $F^\alg$ above $w$.  Again let $\tilde{B}$ be the quaternion algebra over $F$ obtained from $B$ by interchanging invariants at $w$ and $w_\infty$, so that  $\{\mathrm{places\ }v \mathrm{\ of\ }F \mid \tilde{B}_v\not\iso B_v \} = \{ w,w_\infty\}.$  Fix an embedding $E\map{}\tilde{B}$ and for each finite place $v\not= w$ let $\sigma_v$ and $\tilde{\epsilon}_v$ be as in \S \ref{ss:good intersections}.  Choose $\tilde{\epsilon}_w$ so that $\tilde{B}_w^-=E_w\tilde{\epsilon}_w$
and 
$$
\ord_w(\mathrm{N}(\tilde{\epsilon}_w))=\ord_w(\mathfrak{r}) + \left\{\begin{array}{ll}
1 &\mathrm{if\ }w\nmid\mathfrak{d} \\
0 & \mathrm{otherwise.}
\end{array}\right.
$$
Let $\mathfrak{a}$ be prime to $\mathfrak{dr}$.
As in \S \ref{ss:good intersections}, for any finite place $v$ we may repeat the constructions of \S \ref{ss:unramified local calculations} and \S \ref{ss:ramified local calculations} with $B$ replaced by $\tilde{B}$ and $\epsilon_v$ replaced by $\tilde{\epsilon}_v$, giving a compact open subgroup $\tilde{U}_v\subset \tilde{G}(\A_f)$ and a function  $\tilde{P}_{\chi,\mathfrak{a},v}$ on $\tilde{G}(F_v)/\tilde{U}_v$ for each $v$.  Taking the product over all finite $v$ gives a CM-cycle $\tilde{P}_{\chi,\mathfrak{a}}$ of level $\tilde{U}$.    

Define the $w$-\emph{special} CM points of level $U$, denoted $C_U^0$, to be the image of
$$
T(F_w)\times G(\A_f^w)\map{} C_U
$$
 where $\A_f^w=\{x\in \A_f\mid x_w=0\}$. By a $w$-special CM cycle we mean a CM cycle supported on $w$-special points.  Define $C^0_{\tilde{U}}$ similarly, and note that there are bijections
 $$
 C^0_U\iso T^0(F)\backslash G(\A_f^w)/U^w\iso T^0(F)\backslash \tilde{G}(\A_f^w)/\tilde{U}^w\iso C^0_{\tilde{U}}
 $$
 where $U^w=\prod_{v\not=w}U_v$ and similarly for $\tilde{U}^w$, and $T^0(F)$ is defined as
 $$
 T(F)\cap U_w=   T(F)\cap (1+\mathfrak{c}\co_{E,w})^\times  =T(F)\cap\tilde{U}_w.
  $$  
 Thus we may identify $w$-special cycles of level $U$ with $w$-special cycles of level $\tilde{U}$, and we denote this bijection by $P\mapsto\sigma_*P$.  As $\mathfrak{a}$ is prime to $\mathfrak{dr}$, $\ord_w(\mathfrak{a})=0$ and it follows from the construction that $P_{\chi,\mathfrak{a}}$ is $w$-special.  It is easy to see that  $\sigma_*P_{\chi,\mathfrak{a}}=\tilde{P}_{\chi,\mathfrak{a}}$ (as one only needs to check equality locally at $v\not=w$).

\begin{Prop}\label{bad zhang intersection}
Suppose $P$ and $Q$ are $w$-special CM cycles of level $U$ with disjoint support.  There is a locally constant function (independent of $P$ and $Q$) $K(x,y)$ on $\tilde{G}(F)\backslash \tilde{G}(\A_f)$ such that 
\begin{eqnarray*}
g(P,Q)_{U,w^\alg} &=& 
\sum_{\gamma\in T(F)\backslash \tilde{G}(F)/T(F)} \langle \sigma_*P, \sigma_*Q \rangle_{\tilde{U}}^\gamma \cdot M(\gamma) \\
& & +
\int_{[T(F) \backslash \tilde{G}(\A_f)]^2 } (\sigma_*P)(x) K(x,y)\overline{(\sigma_*Q)(y)}\ dx\ dy
\end{eqnarray*}
where 
$$
M(\gamma) =  \left\{ \begin{array}{ll} \frac{\ord_w(\xi) }{2} & \mathrm{if\ }\xi\not=0 \mathrm{\ and\ } \ord_w(\xi)>0 \\
0&\mathrm{otherwise.} \end{array}\right.
$$
\end{Prop}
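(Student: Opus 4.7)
The plan is to follow Zhang's analysis in \cite[\S 5.4, \S 6.4]{zhang2} essentially verbatim, treating the arithmetic intersection at the ``bad'' prime $w$ via the Cerednik-Drinfeld style $p$-adic uniformization of the integral model $\underline{X}_U$ by the definite quaternion algebra $\tilde{B}$ obtained from $B$ by swapping invariants at $w$ and $w_\infty$.  The essential input is that $w$ is nonsplit in $E$ and $w\mid\mathfrak{dr}$: either the Shimura curve $X_U$ has bad reduction at $w$ (when $B_w$ is split and $\tilde{B}_w$ is ramified) or the cycles carry nontrivial level structure at $w$ (when $w\mid\mathfrak{c}$), and in both cases the geometric fibers of $\underline{X}_U$ over $w^\alg$ admit a uniformization by arithmetic data on $\tilde{G}$.

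First I would compute the naive intersection multiplicity $i_{w^\alg}(\mathcal{P},\mathcal{Q})$ of the Zariski closures of $P$ and $Q$.  Under the uniformization these closures meet only at finitely many geometric closed points of the special fiber at $w^\alg$, and the multiplicity at each such point is governed by the relative position, in the Bruhat-Tits tree of $\tilde{G}(F_w)$ (or its ramified analogue), of the two lattice classes representing the points.  If $\gamma\in\tilde{G}(F)$ represents the double coset containing the pair, the multiplicity works out to $\ord_w(\mathrm{N}(\gamma^-)/\mathrm{N}(\gamma))/2=\ord_w(\xi)/2$ on nondegenerate contributions (with the degenerate cases $(\eta,\xi)\in\{(1,0),(0,1)\}$ contributing trivially because then $\ord_w(\xi)\le 0$).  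Summing over double cosets  $\gamma\in T(F)\backslash\tilde{G}(F)/T(F)$ and using that for $w$-special $P$ and $Q$ the orbital-integral expression for $\langle\sigma_*P,\sigma_*Q\rangle^\gamma_{\tilde{U}}$ produced in (\ref{orbital decomp}) is valid at every finite place $v\ne w$ gives the main term in the stated formula.

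Next I would account for the deviation between $i_{w^\alg}(\mathcal{P},\mathcal{Q})$ and the Green function $g(P,Q)_{U,w^\alg}$, which is computed using the modified divisors $\mathcal{P}+D_P$ and $\mathcal{Q}+D_Q$.  By construction the vertical divisors $D_P, D_Q$ on $\mathcal{X}_U\times_{\co_L}\co_{L,w^\alg}$ are chosen (i) to kill intersection with vertical components and (ii) to normalize the Hodge class $\mathcal{L}$.  Since the vertical components above $w^\alg$, together with their intersection matrices, are indexed by a finite set on which $\tilde{G}(\A_f)$ acts through a finite quotient of $\tilde{G}(F)\backslash\tilde{G}(\A_f)/\tilde{U}$, the resulting correction depends on $(P,Q)$ only through $(\sigma_*P,\sigma_*Q)$ and is a bilinear functional factoring through $C_{\tilde{U}}\times C_{\tilde{U}}$.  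This is precisely the integral against a locally constant kernel $K(x,y)$ on $\tilde{G}(F)\backslash\tilde{G}(\A_f)\times\tilde{G}(F)\backslash\tilde{G}(\A_f)$, independent of $P$ and $Q$ by construction.

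The main obstacle will be verifying that Drinfeld's uniformization and the vertical divisor analysis of \cite{zhang2} apply to our specific compact open $U$.  When $w\mid\mathfrak{d}$ or $w\mid\mathfrak{m}$ (so that $w\nmid\mathfrak{c}$) the component $U_w$ is the unit group of the order $R_w=\co_{E,w}+\co_{E,w}\epsilon_w$, and Zhang's setup applies directly.  When $w\mid\mathfrak{c}$ the component $U_w$ is smaller, being the kernel of $R_w^\times\map{}(\co_{E,w}/\mathfrak{c}\co_{E,w})^\times$ given by $b\mapsto b^+$; however, because a $w$-special cycle is supported on points $T(F)gU$ whose image in $T(F_w)\backslash G(F_w)/U_w$ is trivial (equivalently $g_w\in T(F_w)U_w$), the local intersection at $w$ only sees the coinvariants under $T(F_w)$ and one reduces again to the order-level setting treated by Zhang.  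With this verification the proposition becomes a direct transcription of \cite[Prop.~5.4.8]{zhang2} in the case $w\mid\mathfrak{dm}$ and \cite[Prop.~6.4.5]{zhang2} in the case $w\mid\mathfrak{c}$.
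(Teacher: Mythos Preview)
Your proposal is essentially correct and is a reasonable elaboration of exactly the approach the paper takes: the paper's own proof of this proposition is the single line ``See Lemmas 6.3.7 and 6.3.8 of \cite{zhang2}'', and your sketch summarizes the content of those lemmas (naive intersection via $p$-adic uniformization giving the $M(\gamma)$ term, vertical-divisor corrections packaged into the locally constant kernel $K$). One small point: you cite Propositions 5.4.8 and 6.4.5 of \cite{zhang2}, whereas the paper points to Lemmas 6.3.7 and 6.3.8; you should reconcile those references, since the paper explicitly relies on Zhang having done the reduction ``in a very general context'' covering arbitrary level structure, which is what removes the obstacle you flag in your final paragraph.
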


\begin{proof}
See Lemmas 6.3.7 and 6.3.8 of \cite{zhang2}.
\end{proof}

 \begin{Prop}\label{first bad derivatives}
If $\mathfrak{a}$ is prime to $\mathfrak{dr}$ then
\begin{eqnarray*}
g(P^0_{\chi,\mathfrak{a}},P_\chi)_{U,w^\alg} &=& 
\kappa \cdot r_\chi(\mathfrak{a}) +
\sum_{\gamma\in T(F)\backslash \tilde{G}(F)/T(F)} \langle \tilde{P}_{\chi,\mathfrak{a}}, \tilde{P}_{\chi} \rangle_{\tilde{U}}^\gamma \cdot m(\gamma) \\
& & +
\int_{ [T(F) \backslash \tilde{G}(\A_f)]^2 } \tilde{P}_{\chi,\mathfrak{a}}(x) K(x,y)\overline{\tilde{P}_\chi(y)}\ dx\ dy
\end{eqnarray*}
where $K(x,y)$ is a locally constant function on $[\tilde{G}(F)\backslash \tilde{G}(\A_f)]^2$ and
$$
m(\gamma)= \frac{1}{2} \left\{\begin{array}{ll}
\ord_w(\xi\mathfrak{r}^{-1})+1 & \mathrm{if\ }\xi\not=0, \ord_w(\xi)\ge 0, \mathrm{\ and\ }w\mid \mathfrak{r}  \\
\ord_w(\xi\mathfrak{d}) &\mathrm{if\ }\xi\not=0, \ord_w(\xi)\ge 0, \mathrm{\ and\ }w\mid \mathfrak{d}  \\
0&\mathrm{otherwise.}
\end{array}\right.
$$
\end{Prop}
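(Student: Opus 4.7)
The strategy parallels Proposition \ref{nearly good derivatives}, substituting Proposition \ref{bad zhang intersection} for Proposition \ref{Zhang's intersection I}. Both $P_{\chi,\mathfrak a}$ and $P_\chi$ are $w$-special, since the factors at $v=w$ in the constructions of \S \ref{ss:special cycles} are supported on $T(F_w)U_w$; hence so is $P^0_{\chi,\mathfrak a}=P_{\chi,\mathfrak a}-r_\chi(\mathfrak a)P_\chi$, by (\ref{make disjoint}), and $P^0_{\chi,\mathfrak a}$ has support disjoint from $P_\chi$ by construction. Apply Proposition \ref{bad zhang intersection} to the pair $(P^0_{\chi,\mathfrak a},P_\chi)$ and expand using $\sigma_*P_{\chi,\mathfrak a}=\tilde P_{\chi,\mathfrak a}$, $\sigma_*P_\chi=\tilde P_\chi$, and $\sigma_*P^0_{\chi,\mathfrak a}=\tilde P_{\chi,\mathfrak a}-r_\chi(\mathfrak a)\tilde P_\chi$. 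The resulting expression is the sum of a $\gamma$-sum against the multiplicity $M(\gamma)$ and a kernel integral, each in the pair $(\tilde P_{\chi,\mathfrak a},\tilde P_\chi)$, minus $r_\chi(\mathfrak a)$ times the analogous quantities built from $(\tilde P_\chi,\tilde P_\chi)$; the latter depend only on $\tilde P_\chi$ and hence contribute $\kappa_0\cdot r_\chi(\mathfrak a)$ for a constant $\kappa_0$.

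The core of the argument is to replace the multiplicity $M(\gamma)=\tfrac{1}{2}\ord_w(\xi)$ of Proposition \ref{bad zhang intersection} by the claimed $m(\gamma)$ inside the $\gamma$-sum. Combine (\ref{orbital decomp}) with the local orbital integral formulas for $\tilde P_{\chi,w}$ furnished by Propositions \ref{Prop:unramified link I}, \ref{Prop:unr ram link}, and \ref{ram link I} applied to the algebra $\tilde B$; the relevant local data is that $\ord_w(\mathrm{N}(\tilde\epsilon_w))=\ord_w(\mathfrak r)+1$ when $w\nmid\mathfrak d$ and $=0$ when $w\mid\mathfrak d$. A case-by-case check shows that whenever $\langle \tilde P_{\chi,\mathfrak a}, \tilde P_\chi\rangle^\gamma_{\tilde U}\neq 0$ for nondegenerate $\gamma$, the invariant $\ord_w(\xi)$ is forced into a range on which
$$
M(\gamma)-m(\gamma)=c_w, \qquad 2c_w = \begin{cases}\ord_w(\mathfrak r)-1 & \text{if } w\mid\mathfrak r, \\ -\ord_w(\mathfrak d) & \text{if } w\mid\mathfrak d,\end{cases}
$$
a constant depending only on $w$ (the two cases are disjoint since $\mathfrak r$ and $\mathfrak d$ are coprime).

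Consequently the nondegenerate part of the $M$-weighted sum equals the nondegenerate part of the $m$-weighted sum plus $c_w\cdot\sum_{\gamma\ \mathrm{nondeg}}\langle \tilde P_{\chi,\mathfrak a}, \tilde P_\chi\rangle^\gamma_{\tilde U}$. Augmenting this last sum by its degenerate terms (which by Lemma \ref{orbital integrals} and Corollaries \ref{unr final degen}, \ref{ram final degen} are proportional to $r_\chi(\mathfrak a)$, and so can be collected into $\kappa\cdot r_\chi(\mathfrak a)$), relation (\ref{linking decomposition}) identifies the total with $\langle \tilde P_{\chi,\mathfrak a}, \tilde P_\chi\rangle^1_{\tilde U}$, i.e.\ with an integral of $\tilde P_{\chi,\mathfrak a}\otimes\overline{\tilde P_\chi}$ against the kernel $k^1_{\tilde U}$ of (\ref{kernel def}) with $m\equiv 1$. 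Adding $c_w\cdot k^1_{\tilde U}$ to the kernel already supplied by Proposition \ref{bad zhang intersection} produces the final locally constant $K(x,y)$, while any degenerate contributions to the $m$-sum, which are similarly proportional to $r_\chi(\mathfrak a)$, are folded into $\kappa\cdot r_\chi(\mathfrak a)$.

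The principal obstacle is the case-by-case verification that $M(\gamma)-m(\gamma)=c_w$ whenever the linking number fails to vanish: the three disjoint possibilities $w\mid\mathfrak c$, $w\mid\mathfrak m$, and $w\mid\mathfrak d$ must each be checked against the corresponding nonsplit behavior of $w$ in $E$ and, when $w\mid\mathfrak m$, against the split/nonsplit behavior of $B_w$ that determines whether $\tilde B_w$ is ramified. Once the local bookkeeping is completed, the absorption of the constant shift into the kernel and the collection of degenerate terms into $\kappa\cdot r_\chi(\mathfrak a)$ are formal.
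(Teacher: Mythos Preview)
Your approach is correct and lands in the same place as the paper's, but the route is more computational. The paper observes directly that $m'=m-M$ is \emph{locally constant} on $\tilde G(F_w)$ (both $m$ and $M$ have the same singular behavior $\tfrac12\ord_w(\xi)$ as $\xi\to 0$), and then uses the $w$-special support of $\tilde P_{\chi,\mathfrak a}$ and $\tilde P_\chi$: since these cycles are supported on the image of $\{1\}\times\tilde G(\A^w)$ in $C_{\tilde U'}$ for any sufficiently small $\tilde U'_w$, the kernel $k^{m'}_{\tilde U'}$ restricted to this support equals the kernel $k^\mu_{\tilde U'}$ for the constant $\mu=m'(1)$. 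This yields $\langle\tilde P_{\chi,\mathfrak a},\tilde P_\chi\rangle^{m'}_{\tilde U}=\langle\tilde P_{\chi,\mathfrak a},\tilde P_\chi\rangle^{\mu}_{\tilde U}$ without ever invoking the local orbital integral formulas, and the constant-multiplicity kernel $k^\mu_{\tilde U}$ is automatically $\tilde G(F)$-bi-invariant. Your case-by-case verification via Propositions \ref{Prop:unramified link I}, \ref{Prop:unr ram link}, \ref{ram link I} that $M(\gamma)-m(\gamma)=c_w$ whenever the linking number is nonzero is exactly the explicit computation of this constant $\mu=-c_w$, so the two arguments are equivalent; the paper's version simply avoids the bookkeeping by appealing to local constancy and $w$-specialness. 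Your handling of the degenerate terms (absorbing them either into $\kappa\cdot r_\chi(\mathfrak a)$ or into the full constant-multiplicity pairing) is also correct, though note that in the paper's formulation the degenerate $\gamma$ are automatically included in the constant-multiplicity kernel and need no separate treatment.
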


\begin{proof}
It follows from (\ref{make disjoint}) and Proposition \ref{bad zhang intersection} that the claim holds if one replaces  $m(\gamma)$ with $M(\gamma)$. Thus if we set $m'=m-M$  it suffices to show that
$$
\sum_{\gamma\in T(F)\backslash \tilde{G}(F)/T(F)} \langle \tilde{P}_{\chi,\mathfrak{a}}, \tilde{P}_{\chi} \rangle_{\tilde{U}}^\gamma \cdot m'(\gamma) 
=
\int_{T(F) \backslash \tilde{G}(\A_f) } \tilde{P}_{\chi,\mathfrak{a}}(x) k(x,y)\overline{\tilde{P}_\chi(y)}\ dx\ dy
$$ 
for $k$ some locally constant function on $\tilde{G}(F)\backslash \tilde{G}(\A_f)$.
Note that $m'$ is locally constant for the topology on $G(F)$ induced from $G(F_w)$ (i.e. $m$ and $M$ have the same singularity near $\xi=0$) and let $\tilde{U}'_w\subset\tilde{U}_w$ be small enough that $m'$ is a constant, $\mu$, on $\tilde{U}'_w$. Let $\tilde{U}'$ be the subgroup obtained by shrinking the $w$-component of $\tilde{U}$ from $\tilde{U}_w$ to $\tilde{U}_w'$.  The crucial point is that on the image of $\{1\}\times \tilde{G}(\A^w)\map{}C_{\tilde{U}'}$ we have
$$
k_{\tilde{U}'}^{m'}(x,y)= k_{\tilde{U}'}^\mu(x,y)
$$
where $k_{\tilde{U}'}^\mu$ is the kernel (\ref{kernel def}) constructed with constant multiplicity function $\mu$.  The $w$-special CM-cycles $\tilde{P}_{\chi,\mathfrak{a}} $ and $\tilde{P}_\chi$ are supported on the image of $T(F_w)\times \tilde{G}(\A^w)$  in $C_{\tilde{U}'}$, which equals the image of  $\{1\}\times \tilde{G}(\A^w)$ as $T(F_w) \subset T(F) \tilde{U}_w'  $.  Therefore the pairings (\ref{pairing}) satisfy  
$$
\langle \tilde{P}_{\chi,\mathfrak{a}},  \tilde{P}_\chi\rangle^{m'}_{\tilde{U}'}  
=
\langle \tilde{P}_{\chi,\mathfrak{a}},  \tilde{P}_\chi\rangle^{\mu}_{\tilde{U}'},
$$
and it follows  that $\langle \tilde{P}_{\chi,\mathfrak{a}},  \tilde{P}_\chi\rangle^{m'}_{\tilde{U}}  =\langle \tilde{P}_{\chi,\mathfrak{a}},  \tilde{P}_\chi\rangle^{\mu}_{\tilde{U}}$ (replacing $\tilde{U}'$ by $\tilde{U}$ changes each pairing by a constant depending on the normalizations of measures in \S \ref{ss:heights} but not on the multiplicity function).  As the multiplicity function $\mu$ is constant the kernel  $k^\mu_{\tilde{U}}$ is right $\tilde{G}(F)$-invariant, and we take $k=k_{\tilde{U}}^\mu$.
\end{proof}

\begin{Cor}\label{bad inert derivatives}
Define a function $\mathcal{P}_{\overline{\chi}}$ on $S_{\tilde{U}}=\tilde{G}(F)\backslash \tilde{G}(\A_f)/\tilde{U}$ by
$$
\mathcal{P}_{\overline{\chi}}(g)=\sum_{\gamma\in T(F)\backslash G(F)}  \overline {\tilde{P}_{\chi}(\gamma g) }.
$$
For any  $\mathfrak{a}$ prime to $\mathfrak{dr}$
\begin{eqnarray*}\lefteqn{
 2^{[F:\Q]+1}  |d|^{1/2}  \log |\varpi|_w \cdot  g(  P_\chi, P^0_{\chi,\mathfrak{a}} )_{U,w^\alg}   }\\
 &= & 
  [\widehat{\co}^\times_E:  U_T]  H_F\lambda_U^{-1}  \cdot  
\mathrm{N}(\mathfrak{a})  \widehat{B}^w(\mathfrak{a} ;\Phi_\mathfrak{r}  )   
+ A(\mathfrak{a}) +   \int_{\tilde{G}(F)\backslash \tilde{G}(\A_f) } (T_\mathfrak{a}\mathcal{P}_{\overline{\chi}} )(x) \cdot g(x) \ dx  
\end{eqnarray*}
where $A(\mathfrak{a})$ is a derivation of $\Pi_{\overline{\chi}}\otimes|\cdot|^{1/2}$, $g(x)$ is a locally constant function on $\tilde{G}(F)\backslash \tilde{G}(\A_f)$, and $T_\mathfrak{a}$ is the Hecke operator on $L^2(S_{\tilde{U}})$ defined in \S \ref{ss:holomorphic values}.
\end{Cor}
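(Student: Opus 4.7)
The plan is to mimic the derivation of Corollary \ref{good inert derivatives} starting from Proposition \ref{first bad derivatives}, by identifying each of the three terms on its right-hand side with a piece of the asserted formula. The constant term $\kappa \cdot r_\chi(\mathfrak{a})$ is, by the definition of $r_\chi$ following (\ref{make disjoint}) and \cite[Definition 3.5.3]{zhang2}, a derivation of $\Pi_{\overline{\chi}} \otimes |\cdot|^{1/2}$, so it is absorbed into $A(\mathfrak{a})$.

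For the sum over double cosets, separate nondegenerate from degenerate $\gamma$. For degenerate $\gamma$, Lemma \ref{orbital integrals} together with Corollaries \ref{unr final degen} and \ref{ram final degen} shows that $\langle \tilde{P}_{\chi,\mathfrak{a}},\tilde{P}_\chi\rangle^\gamma_{\tilde{U}} \cdot m(\gamma)$ is again a derivation of $\Pi_{\overline{\chi}} \otimes |\cdot|^{1/2}$, so these also contribute to $A(\mathfrak{a})$. For nondegenerate $\gamma$, the same argument as in the proof of Corollary \ref{good inert derivatives} forces $\mathrm{Diff}_\mathfrak{r}(\eta,\xi) = \{w\}$ via Lemma \ref{eta parametrization} and the definition of $\tilde{B}$, and every such $(\eta,\xi)$ arises from a nondegenerate $\gamma \in \tilde{G}(F)$. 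Apply the orbital decomposition (\ref{orbital decomp}) and match place-by-place: at finite $v \neq w$, Corollaries \ref{unramified links} and \ref{ramified links} convert $|a|_v \tau_v(\gamma) \cdot O^\gamma_{\tilde{U}}(\tilde{P}_{\chi,\mathfrak{a},v})$ into $B_v(a,\eta,\xi;\Theta_\mathfrak{r})$ up to the expected index and volume factors, while at $v\mid\infty$ the final clause of Proposition \ref{kernel coefficients} combined with Lemma \ref{global tau} contributes the $(-2i)^{[F:\Q]} \omega_\infty(\delta)$ factor and the $|\eta\xi|_\infty^{1/2}$ factor appearing in (\ref{projection coefficient}).

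The essential new computation is local at $v = w$. By the prescription for $\tilde{\epsilon}_w$, $\ord_w(\mathrm{N}(\tilde{\epsilon}_w))$ is shifted by $+1$ relative to $\ord_w(\mathfrak{r})$ exactly when $w \nmid \mathfrak{d}$; this shift toggles the parity conditions in Propositions \ref{Prop:unramified link I}, \ref{Prop:unr ram link}, and \ref{ram link I}, applied to $\tilde{P}_{\chi,\mathfrak{a},w}$, so that the local orbital integral is nonzero precisely when $B_w(a,\eta,\xi;\Theta'_\mathfrak{r})$ from Proposition \ref{derivative coefficients} is nonzero. One then checks case-by-case (inert with $\chi_w$ unramified; inert with $\chi_w$ ramified; ramified) that
$$
m(\gamma) \cdot |a|_w \tau_w(\gamma) |d|_w^{1/2} \cdot O^\gamma_{\tilde{U}}(\tilde{P}_{\chi,\mathfrak{a},w}) \cdot \log|\varpi|_w^{-1}
$$
reproduces the explicit formula for $B_w(a,\eta,\xi;\Theta'_\mathfrak{r})$ divided by the same index and volume factors as in the good-place case. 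Indeed the identification of $\frac{1}{2}(\ord_w(\xi\mathfrak{r}^{-1})+1)$ or $\frac{1}{2}\ord_w(\xi\mathfrak{d})$ with $\log|\xi a r^{-1}\varpi|_w / \log|\varpi|_w$ or $\log|\xi a d|_w/\log|\varpi|_w$ is exactly what is needed. Assembling all local contributions and applying Lemma \ref{global tau} yields the expression (\ref{projection coefficient}) for $\widehat{B}^w(\mathfrak{a};\Phi_\mathfrak{r})$, producing the first main term on the right-hand side with the correct normalizing constant $[\widehat{\co}_E^\times : U_T] H_F \lambda_U^{-1} \mathrm{N}(\mathfrak{a})$ and prefactor $2^{[F:\Q]+1}|d|^{1/2}\log|\varpi|_w$.

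For the remaining double integral in Proposition \ref{first bad derivatives}, use the relation $\tilde{P}_{\chi,\mathfrak{a}} = T_\mathfrak{a} \tilde{P}_\chi$ (valid since $\mathfrak{a}$ is prime to $\mathfrak{dr}$ so $\tilde{U}_v$ is maximal there) and unfold the outer integration from $T(F)\backslash \tilde{G}(\A_f)$ to $\tilde{G}(F)\backslash \tilde{G}(\A_f)$, converting $\tilde{P}_\chi$ into $\mathcal{P}_{\overline{\chi}}$ via summation over $T(F)\backslash \tilde{G}(F)$; the inner integral $g(x) = \int K(x,y)\overline{\tilde{P}_\chi(y)}\,dy$ is then locally constant on $\tilde{G}(F)\backslash \tilde{G}(\A_f)$ by the local constancy of $K$. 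The main obstacle will be the simultaneous bookkeeping of the three subcases at $v=w$ (each with a distinct explicit formula from Proposition \ref{derivative coefficients}) and the correct propagation of the normalizing indices $[\co_{E,v}^\times : \co_{F,v}^\times \tilde{U}_{T,v}]$ through Lemma \ref{global tau}, since the $w$-component of $\tilde{U}_T$ differs from $U_T$ in a way that must be tracked to match the global prefactor $[\widehat{\co}_E^\times : U_T]$.
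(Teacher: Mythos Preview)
Your approach is essentially the same as the paper's, which simply says the corollary is deduced from Proposition \ref{first bad derivatives} ``exactly as in the proof of Corollary \ref{good inert derivatives}'' and records the explicit choice
\[
g(x)=\int_{T(F)\backslash \tilde{G}(\A_f)} \overline{K(x,y)}\,\tilde{P}_\chi(y)\ dy.
\]
Your expansion of that argument (separating degenerate and nondegenerate $\gamma$, matching the local orbital integral at $w$ against Proposition \ref{derivative coefficients} via the multiplicity $m(\gamma)$, and unfolding the double integral) is exactly what is required.

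One point to tighten: note that the corollary has $g(P_\chi,P^0_{\chi,\mathfrak{a}})$ while Proposition \ref{first bad derivatives} computes $g(P^0_{\chi,\mathfrak{a}},P_\chi)$, and these are complex conjugates of one another. Taking the conjugate of the double integral in Proposition \ref{first bad derivatives} before unfolding is what produces $\overline{\tilde{P}_{\chi,\mathfrak{a}}(x)}$ in the $x$-variable, so that the sum over $T(F)\backslash \tilde{G}(F)$ yields $T_\mathfrak{a}\mathcal{P}_{\overline{\chi}}$ rather than its conjugate; this also explains the $\overline{K(x,y)}$ in the paper's formula for $g(x)$ versus the unconjugated $K(x,y)$ in your proposal. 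The same conjugation is implicit in the proof of Corollary \ref{good inert derivatives} (where the linking numbers appear in the order $\langle \tilde{P}_\chi,\tilde{P}_{\chi,\mathfrak{a}}\rangle^\gamma_{\tilde{U}}$) and is consistent with the complex conjugates built into the definition (\ref{projection coefficient}) of $\widehat{B}^w(\mathfrak{a};\Phi_\mathfrak{r})$. Your final concern about $[\widehat{\co}_E^\times:\tilde{U}_T]$ versus $[\widehat{\co}_E^\times:U_T]$ is unnecessary: the construction of $\tilde{U}_v$ gives $\tilde{U}_{T,v}=U_{T,v}$ at every finite place, including $v=w$.
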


\begin{proof}
This is deduced from Proposition \ref{first bad derivatives} exactly as in the proof of Corollary \ref{good inert derivatives}, taking
$$
g(x)=\int_{T(F)\backslash \tilde{G}(\A_f)} \overline{K(x,y)}  \tilde{P}_\chi(y)   \ dy.
$$
\end{proof}


\subsection{Archimedean intersections}


Let $w$ be an  archimedean place of $F$ and choose a place $w^\alg$ of $F^\alg$ above $w$.  If $w=w_\infty$ is the archimedean place at which $B$ is  split then set $\tilde{B}=B$.  If $w\not=w_\infty$ then let $\tilde{B}$ be the quaternion algebra obtained from $B$ be interchanging invariants at $w$ and $w_\infty$ as in \S \ref{ss:good intersections}.  As in \S \ref{ss:good intersections} fix an embedding $E\map{}\tilde{B}$ and, for every finite place $v$ of $F$, choose $\sigma_v:B_v\iso \tilde{B}_v$ compatible with the embeddings of $E_v$ into $B_v$ and $\tilde{B}_v$. Define $\tilde{\epsilon}_v=\sigma_v(\epsilon_v)$, set $\tilde{U}_v=\sigma_v(U_v)$, and let $\sigma_*$ denote the induced isomorphism from CM cycles of level $U$ on $G$ to CM cycles of level $\tilde{U}$ on $\tilde{G}$.

For  $\gamma\in \tilde{G}(F)$ view $\xi\in F$ as a real number using the embedding $F\map{}\R$ determined by $w$ and define
\begin{equation*}
m_s(\gamma)=\left\{ \begin{array}{ll}
Q_s(1-2 \xi ) & \mathrm{if\ }\xi<0 \\
0&\mathrm{otherwise,}
\end{array}\right.
\end{equation*} 
where $Q_s$ is defined by \cite[(6.3.3)]{zhang2}, and  a function on $\tilde{G}(\A_f)\times \tilde{G}(\A_f)$ 
$$
k_{\tilde{U}}^s(x,y)= \sum_{\gamma\in \tilde{G}(F)/(Z(F)\cap \tilde{U}) } 
\mathbf{1}_{\tilde{U}}(x^{-1} \gamma y) m_s( \gamma) .
$$
We now recall the statement of \cite[Lemma 6.3.1]{zhang2}.  For any distinct points $P,Q\in C_U$ the sum defining $k_{\tilde{U}}^s(\sigma_*P,\sigma_*Q)$ is convergent for $\mathrm{Re}(s)>0$ and extends to a meromorphic function in a neighborhood of $s=0$ with a simple pole at $s=0$.  Thus for any CM-cycles $P$ and $Q$ of level $U$  the pairing $\langle \sigma_*P,\sigma_*Q\rangle_{\tilde{U}}^{m_s}$ of (\ref{pairing}) has meromorphic continuation with a pole of order at most $1$ at $s=0$, and moreover
$$
g(P,Q)_{U,w^\alg} = \mathrm{const}_{s\to 0} \langle \sigma_*P,\sigma_*Q\rangle_{\tilde{U}}^{m_s}.
$$
In particular, if $\mathfrak{a}$ is prime to $\mathfrak{dr}$ then
\begin{equation}\label{archimedean intersection}
g(P^0_{\chi,\mathfrak{a}},P_\chi)_{U,w^\alg} = 
\mathrm{const}_{s\to 0}
\sum_{\gamma\in T(F)\backslash \tilde{G}(F)/T(F)} \langle \tilde{P}^0_{\chi,\mathfrak{a}}, \tilde{P}_{\chi} \rangle_{\tilde{U}}^\gamma   \cdot  m_s(\gamma)
\end{equation}
where $\tilde{P}^0_{\chi,\mathfrak{a}}=\sigma_* P^0_{\chi,\mathfrak{a}}$ is the cycle defined by replacing $U$ by $\tilde{U}$ and $B$ by $\tilde{B}$ in the definition of $P_{\chi,\mathfrak{a}}^0$, and similarly for $\tilde{P}_\chi$.

\begin{Cor}\label{archimedean derivatives}
For any  $\mathfrak{a}$  prime to $\mathfrak{dr}$
$$
 -2^{  [F:\Q]+1 }   |d|^{1/2}   g(P_\chi,P^0_{\chi,\mathfrak{a}} )_{U,w^\alg}   
=
  [\widehat{\co}^\times_E:  U_T] H_F\lambda_U^{-1}\mathrm{N}(\mathfrak{a})\cdot \mathrm{const}_{s\to 0} \widehat{B}^w(s,\mathfrak{a};\Phi_\mathfrak{r}) 
 $$
 up to  a derivation of $\Pi_{\overline{\chi}}\otimes|\cdot|^{1/2}$.
\end{Cor}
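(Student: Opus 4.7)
The plan is to start from equation (\ref{archimedean intersection}), which presents $g(P^0_{\chi,\mathfrak{a}},P_\chi)_{U,w^{\alg}}$ as the constant term at $s=0$ of a sum over $\gamma\in T(F)\backslash\tilde G(F)/T(F)$ of linking numbers weighted by $m_s(\gamma)=Q_s(1-2\xi_w)\cdot\mathbf{1}_{\xi_w<0}$. I would first apply (\ref{make disjoint}) to replace $\tilde P^0_{\chi,\mathfrak{a}}$ by $\tilde P_{\chi,\mathfrak{a}}-r_\chi(\mathfrak{a})\tilde P_\chi$: the second piece contributes $r_\chi(\mathfrak{a})$ times an $\mathfrak{a}$-independent quantity, and since $r_\chi$ is a derivation of $\Pi_{\overline\chi}\otimes|\cdot|^{1/2}$ the whole contribution is absorbed into the allowed error term. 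The degenerate $\gamma$ contributions (those with $(\eta,\xi)\in\{(1,0),(0,1)\}$) are likewise handled exactly as in the proofs of Corollaries \ref{good inert derivatives} and \ref{bad inert derivatives}, via Lemma \ref{orbital integrals} together with Corollaries \ref{unr final degen} and \ref{ram final degen}.

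For the nondegenerate contribution, each linking number factorizes via (\ref{orbital decomp}) as
\[
\overline{\tilde P_\chi(1)}\cdot[Z(\A_f):Z(F)\tilde U_Z]\cdot\prod_{v\nmid\infty}O^\gamma_{\tilde U}(\tilde P_{\chi,\mathfrak{a},v}).
\]
At each finite place the data $(\tilde B_v,\tilde\epsilon_v,\tilde U_v)$ is the image of $(B_v,\epsilon_v,U_v)$ under the isomorphism $\sigma_v$, so Corollaries \ref{unramified links} and \ref{ramified links} transfer verbatim and convert each finite-place orbital integral into the local Whittaker coefficient $B_v(a,\eta,\xi;\Theta_\mathfrak{r})$. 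Using $\prod_v\tau_v(\gamma)=1$ and the archimedean values $\tau_v(\gamma)=i\,\omega_v(\delta)|\eta\xi|_v^{1/2}$ from Lemma \ref{global tau}, the finite-place product recombines into $\prod_{v\nmid\infty}\overline{B_v(a,\eta,\xi;\Theta_\mathfrak{r})}$ times the archimedean prefactor $i^{[F:\Q]}\omega_\infty(\delta)|\eta\xi|_\infty^{1/2}$, which matches the shape of (\ref{holomorphic infinity}) once the global constants $|d|^{1/2}$, $H_F\lambda_U^{-1}$ and $[\widehat{\co}_E^\times:U_T]$ are tracked as in the proofs of Corollaries \ref{good inert derivatives} and \ref{bad inert derivatives}.

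The hard part, and the only genuinely archimedean input, will be the local identification at $w$ between the two multiplicity functions:
\[
\mathrm{const}_{s\to 0}\,Q_s(1-2\xi_w)\;=\;\mathrm{const}_{\sigma\to 0}\,M_\sigma(\xi_w),
\]
at least up to an additive constant that gets absorbed into a derivation of $\Pi_{\overline\chi}\otimes|\cdot|^{1/2}$. This is the classical comparison between the Legendre function $Q_s$ arising from the archimedean Green's function on $\mathcal H^{[F:\Q]}$ and the Mellin-type integral $M_\sigma$ defining the archimedean component of the holomorphic projection (\ref{holomorphic infinity}); it goes back to Gross--Zagier \cite{gross-zagier} in the $F=\Q$ case and is carried out in the general totally real setting in \cite{zhang2}. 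Once this identification is in place, the overall sign $-1$ on the left-hand side emerges naturally---it replaces the $\log|\varpi|_w$ of Corollaries \ref{good inert derivatives} and \ref{bad inert derivatives}, which has no analogue at an archimedean place---and the claimed equality follows.
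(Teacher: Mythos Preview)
Your proposal is correct and follows essentially the same route as the paper. The paper runs the computation from the analytic side (starting from the definition (\ref{holomorphic infinity}) of $\widehat{B}^w(s,\mathfrak{a};\Phi_\mathfrak{r})$, applying Corollaries \ref{unramified links}, \ref{ramified links}, Lemma \ref{global tau}, and (\ref{orbital decomp})) toward the geometric side, whereas you start from (\ref{archimedean intersection}) and go the other way; but the ingredients and their roles are identical. One small sharpening: the precise archimedean comparison, which the paper cites as \cite[Lemma 6.4.1]{zhang2}, is that the constant term at $s\to 0$ is unchanged when $M_s(\gamma)$ is replaced by $-2\,m_s(\gamma)=-2\,Q_s(1-2\xi_w)$, so the factor $-2$ (not merely an absorbable additive constant) is what simultaneously produces the minus sign and the extra power of $2$ on the left-hand side.
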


\begin{proof}
Suppose $\mathrm{Re}(\sigma)>0$ and, for any $\gamma\in\tilde{G}(F)$, write $M_s(\gamma)=M_s(\xi_w)$ where the $M_\sigma$ on the right is the function on $\R$ defined in \S \ref{ss:derivative}.   Combining (\ref{holomorphic infinity}) with Corollaries \ref{unramified links} and \ref{ramified links}, and arguing as in the proof of Corollary \ref{good inert derivatives}, we find
\begin{eqnarray*}\lefteqn{
[\widehat{\co}_E^\times:U_T\widehat{\co}_F^\times] \mathrm{N}(\mathfrak{a})\widehat{B}^w(s,\mathfrak{a};\Phi_\mathfrak{r})  } \\
& &= (-2i)^{[F:\Q]} \omega_\infty(\delta) |d|^{1/2}\sum |\eta\xi|_\infty^{1/2} M_s(\gamma)\prod_{v\nmid\infty} \overline{\tau_v(\gamma)}  \cdot  \overline{O_v^\gamma(\tilde{P}_{\chi,\mathfrak{a},v})}
\end{eqnarray*}
where the sum is over all nondegenerate $\gamma\in T(F)\backslash \tilde{G}(F)/T(F)$.  By Lemma \ref{global tau} we have $$\prod_{v\nmid\infty}\tau_v(\gamma)=\omega_\infty(\delta)(-i)^{[F:\Q]}|\eta\xi|_\infty^{-1/2},$$ and combining this with (\ref{orbital decomp}) gives
$$
[\widehat{\co}_E^\times:U_T] H_F\lambda_U^{-1} \mathrm{N}(\mathfrak{a})\widehat{B}^w(s,\mathfrak{a};\Phi_\mathfrak{r})  
= 2^{[F:\Q]}  |d|^{1/2}\sum \langle \tilde{P}_\chi, \tilde{P}_{\chi,\mathfrak{a}} \rangle_{\tilde{U}}^\gamma \cdot M_s(\gamma)
$$
where the sum is again over all nondegenerate $\gamma$ as above.  By the argument in the proof of  \cite[Lemma 6.4.1]{zhang2} the constant term as $s\to 0$ is unchanged if we replace $M_s(\gamma)$ by $-2 m_s(\gamma)$.  Adding in the terms corresponding to the two degenerate choices of $\gamma$ add derivations of $\Pi_{\overline{\chi}}\otimes |\cdot|^{1/2}$, as in the proof of Corollary \ref{good inert derivatives}, and replacing $\tilde{P}_{\chi,\mathfrak{a}}$ by $\tilde{P}^0_{\chi,\mathfrak{a}}$ also adds a derivation of $\Pi_{\overline{\chi}}\otimes |\cdot|^{1/2}$, by (\ref{make disjoint}) with $P$ replaced by $\tilde{P}$.  Thus the claim follows from (\ref{archimedean intersection}).
\end{proof}


\subsection{The twisted Gross-Zagier theorem}


Let $\mathbb{T}$ denote the $\Z$-algebra generated by the Hecke operators $T_\mathfrak{a}$ and the nebentype operators $(\langle \mathfrak{a} \rangle \phi)(g)=\phi(ga)$, where $a\co_F=\mathfrak{a}$ and $a_v=1$ for $v\mid\infty$, acting on holomorphic automorphic forms on $\GL_2(\A)$ of parallel weight $2$ and level $K_1(\mathfrak{dr})$.  Let $\phi_\Pi$ denote the normalized newform in $\Pi$.  The $\C$-algebra $\mathbb{T}_\C=\mathbb{T}\otimes_\Z\C$ is semi-simple, and we let $\mathbb{T}_\Pi$ be the maximal summand of $\mathbb{T}_\C$ in which 
$$
T_\mathfrak{a} = \widehat{B}( \co_F ; T_\mathfrak{a}\phi_\Pi)
\hspace{1cm}
\langle\mathfrak{a}\rangle= \chi_0^{-1} (\mathfrak{a}) .
$$
Let $e_\Pi$ be the idempotent in $\mathbb{T}_\C$ satisfying $e_\Pi\mathbb{T}_\C=\mathbb{T}_\Pi$.
It follows from the Jacquet-Langlands correspondence and the Eichler-Shimura theory that there is a ring homomorphism $\mathbb{T}\map{}\mathrm{End}(J_U)$ taking $T_\mathfrak{a} \mapsto T_{\mathfrak{a}}^\Alb$ and $\langle\mathfrak{a}\rangle \mapsto \langle\mathfrak{a}\rangle^\Alb$, and so $\mathbb{T}_\C$ acts on $J_U(E_\chi)\otimes_\Z\C$.

\begin{Prop}\label{pre-GZ}
Abbreviating $P_{\chi,\Pi}=e_\Pi\cdot \mathrm{Hg}(P_\chi)$,
$$
\frac{ 2^{|S|} H_F [\widehat{\co}_E^\times:U_T]  }{ \lambda_U ||\phi_\Pi^\#||_{K_0(\mathfrak{dr})}^2 }  \widehat{B}(\co_F, \phi_\Pi^\#) L'(1/2,\Pi\times\Pi_\chi)  
 = 2^{[F:\Q]+1} |d|^{1/2} \langle P_{\chi,\Pi} ,  P_{\chi,\Pi}  \rangle^{\mathrm{NT}}_U.
$$
\end{Prop}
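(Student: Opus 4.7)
The plan is to express both sides of the identity as values of a single Hecke generating series in $\mathfrak{a}$, indexed by ideals prime to $\mathfrak{dr}$, and then to project to the $\Pi$-isotypic component. On the geometric side, I apply the Hodge index theorem together with the Gillet-Soul\'e arithmetic intersection theory recalled in \S \ref{shimura curves} to write $\langle \mathrm{Hg}(P_\chi),\mathrm{Hg}(T_\mathfrak{a}^\Alb P_\chi)\rangle^{\mathrm{NT}}_U$ as $-[L:F]^{-1}$ times a sum of local Green's functions $g(P_\chi,P_{\chi,\mathfrak{a}})_{U,w^\alg}$ over all places $w$ of $F$. Using (\ref{make disjoint}) I  replace $P_{\chi,\mathfrak{a}}$ by its disjoint version $P_{\chi,\mathfrak{a}}^0$; the correction $r_\chi(\mathfrak{a})\cdot P_\chi$ is absorbed by the Hodge-class subtraction in the height formula and contributes only a derivation of $\Pi_{\overline{\chi}}\otimes|\cdot|^{1/2}$, which will be killed by the $\Pi$-projection.

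Next I  evaluate each local Green's function via Corollaries \ref{good inert derivatives}, \ref{bad inert derivatives}, and \ref{archimedean derivatives}. At split places the local Green's function vanishes because the relevant linking sum is indexed by nondegenerate $\gamma$ with $\mathrm{Diff}_\mathfrak{r}(\eta,\xi)=\{w\}$, an empty set for split $w$ by Remark \ref{single difference}. Assembling the nonsplit contributions and invoking Proposition \ref{holomorphic coefficients}, the resulting generating series equals, up to the universal constants $-2^{[F:\Q]+1}|d|^{1/2}$ and $\lambda_U^{-1}H_F[\widehat{\co}_E^\times:U_T]$, the Fourier coefficient $\mathrm{N}(\mathfrak{a})\widehat{B}(\mathfrak{a};\Phi_\mathfrak{r})$ of the holomorphic projection, modulo three correction terms: a derivation of $\Pi_{\overline{\chi}}\otimes|\cdot|^{1/2}$; derivations of principal series (the term $D(\mathfrak{a})$ of Proposition \ref{holomorphic coefficients}); and the additional linear functionals appearing in Corollary \ref{bad inert derivatives}.

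Now I project to the $\Pi$-isotypic component using $e_\Pi\in\mathbb{T}_\C$. Because $\Pi$ is cuspidal, derivations of principal series  vanish under $e_\Pi$; and the derivation of $\Pi_{\overline{\chi}}\otimes|\cdot|^{1/2}$ also vanishes, since this representation is non-cuspidal (its central character is incompatible with that of $\Pi$, or one compares conductors).  The main obstacle is the control of the extra terms from Corollary \ref{bad inert derivatives}, which have the form
$$
\int_{\tilde{G}(F)\backslash \tilde{G}(\A_f)} (T_\mathfrak{a}\mathcal{P}_{\overline{\chi}})(x) \cdot g(x)\ dx
$$
for $\tilde{G}$ the quaternion group obtained by interchanging invariants at a nonsplit $w\mid\mathfrak{dr}$ and $w_\infty$. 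The integral defines a Hecke-equivariant functional on the automorphic representation generated by $\mathcal{P}_{\overline{\chi}}$, and my plan is to show its $\Pi$-isotypic projection contributes only a further derivation. This follows by the Jacquet-Langlands correspondence, which identifies the $\Pi$-component of $\mathcal{P}_{\overline{\chi}}$ with a toric newvector in the lift $\tilde{\Pi}'$ (unique by \S \ref{ss:toric}), together with the observation that $g$ is not $V$-invariant at $w$, so that its pairing against the toric newvector factors through a principal-series quotient; this is the quaternionic analog of the argument of \cite[\S 6.4]{zhang2} in the $S=1$ case.

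With all error terms absorbed into $e_\Pi$-derivations, the generating series identity reads
$$
-2^{[F:\Q]+1}|d|^{1/2}\sum_w g(P_\chi,P^0_{\chi,\mathfrak{a}})_{U,w^\alg} \bigg|_\Pi = \lambda_U^{-1}H_F[\widehat{\co}_E^\times:U_T]\cdot \mathrm{N}(\mathfrak{a})\widehat{B}(\mathfrak{a};\Phi_\mathfrak{r}|_\Pi).
$$
Evaluating at $\mathfrak{a}=\co_F$ and using the quasi-new line identification  $\Phi_\mathfrak{r}|_\Pi=(\widehat{B}(\co_F;\Phi_\mathfrak{r}|_\Pi)/\widehat{B}(\co_F;\phi_\Pi^\#))\cdot \phi_\Pi^\#$  from Proposition \ref{quasi-new kernels} together with the Rankin-Selberg identity $\langle \phi_\Pi,\Phi_\mathfrak{r}\rangle_{K_0(\mathfrak{dr})}=2^{|S|}L'(1/2,\Pi\times\Pi_\chi)$ (this is (\ref{rankin integral}) at $b=1$, differentiated), exactly as in Proposition \ref{first holomorphic values}, produces the stated formula once the constants are collected.
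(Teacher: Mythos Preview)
Your overall architecture matches the paper's: express both sides as a Hecke generating series in $\mathfrak{a}$, decompose the N\'eron--Tate height into local Green's functions, invoke Corollaries \ref{good inert derivatives}, \ref{bad inert derivatives}, \ref{archimedean derivatives} together with Proposition \ref{holomorphic coefficients}, and project by $e_\Pi$. The endgame using Proposition \ref{quasi-new kernels} and the differentiated Rankin--Selberg identity is also correct.

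There is, however, a genuine gap in your treatment of the extra integrals
\[
\int_{\tilde{G}(F)\backslash \tilde{G}(\A_f)} (T_\mathfrak{a}\mathcal{P}_{\overline{\chi}})(x)\, g(x)\, dx
\]
coming from Corollary \ref{bad inert derivatives}. Your proposed mechanism --- that $g$ fails to be $V$-invariant at $w$ and hence the pairing ``factors through a principal-series quotient'' --- is not a valid argument: nothing about the invariance level of $g$ forces the $\Pi$-component of this integral to be a derivation, and the function $g$ is essentially uncontrolled. The paper's argument is entirely different and does not analyze $g$ at all. Instead it shows that $e_\Pi\cdot\mathcal{P}_{\overline{\chi}}=0$ in $L^2(S_{\tilde U})$. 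The point is that if $e_\Pi\mathcal{P}_{\overline{\chi}}$ were nonzero it would be a vector in $\tilde\Pi_w$ (the local component at $w$ of the Jacquet--Langlands transfer of $\Pi$ to $\tilde G$) on which $T(F_w)$ acts via $\chi_w^{-1}$; but $\Pi'_w$ (the transfer to $G$) already admits such a vector, namely the toric newvector of \S\ref{ss:toric}. Since $B_w\not\cong\tilde B_w$, the dichotomy of Saito--Tunnell--Waldspurger forbids both local representations from carrying a $\chi_w^{-1}$-eigenvector for $T(F_w)$, yielding a contradiction. This is the missing idea; once $e_\Pi\mathcal{P}_{\overline{\chi}}=0$ the entire integral vanishes after projection and there is nothing further to control.

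A secondary inaccuracy: at split finite places the local Green's functions do not simply vanish. Remark \ref{single difference} concerns the Fourier coefficients $B(a,\eta,\xi;\Theta'_\mathfrak{r})$ on the analytic side, not the geometric intersection numbers. The correct statement (as in the paper, citing \cite[Lemma 6.3.4]{zhang2}) is that the split-place Green's functions contribute only derivations of principal series and of $\Pi_{\overline{\chi}}\otimes|\cdot|^{1/2}$, which are then annihilated by $e_\Pi$ via \cite[Proposition 4.5.1]{zhang1}.
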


\begin{proof}
This follows easily from the formulae of the previous subsections, exactly as in \cite[\S 6.4]{zhang2}, "Conclusion of the Proof of Theorem 1.3.2".  We quickly sketch the argument.

Suppose $\mathfrak{a}$ is prime to $\mathfrak{dr}$.  Using the argument of \cite[Lemma 6.2.2]{zhang2}, up to sums of derivations of principal series and $\Pi_{\overline{\chi}}\otimes |\cdot|^{1/2}$ we have 
\begin{eqnarray*}
\langle T_{\mathfrak{a}}^\Alb \mathrm{Hg}(P_\chi) , \mathrm{Hg}(P_\chi) \rangle_U^{\mathrm{NT}} &=& 
\langle \mathrm{Hg}(P_\chi) , T_{\mathfrak{a}}^\Pic \mathrm{Hg}(P_\chi) \rangle_U^{\mathrm{NT}}  \\
&=&
\langle \mathrm{Hg}(P_\chi) , \mathrm{Hg}(P_{\chi,\mathfrak{a}}) \rangle_U^{\mathrm{NT}} \\
&=&
- \sum_w d_w \cdot  g(P_\chi,P_{\chi,\mathfrak{a}}^0)_{U,w^\alg}
\end{eqnarray*}
where the sum is over all places $w$ of $F$, and where for each $w$ we fix an extension $w^\alg$ to $F^\alg$. Exactly as in \cite[Lemma 6.3.4]{zhang2} the  nonarchimedean places $w$ which split in $E$ contribute derivations of principal series and $\Pi_{\overline{\chi}}\otimes |\cdot|^{1/2}$, and so we may omit such places in the above summation.  Combining Corollaries \ref{good inert derivatives}, \ref{bad inert derivatives}, and \ref{archimedean derivatives} with Proposition \ref{holomorphic coefficients} we find
$$
2^{[F:\Q]+1}|d|^{1/2} \langle T_{\mathfrak{a}}^\Alb \mathrm{Hg}(P_\chi) , \mathrm{Hg}(P_{\chi}) \rangle_U^{\mathrm{NT}}
=
[\widehat{\co}_E^\times:U_T] H_F\lambda_U^{-1}  \widehat{B}(\co_F;T_\mathfrak{a} \Phi_\mathfrak{r})
$$
up to a sum of derivations of principal series, derivations of $\Pi_{\overline{\chi}}\otimes |\cdot|^{1/2}$, and functions of the form 
\begin{equation}\label{nearby error}
  \int_{\tilde{G}(F)\backslash \tilde{G}(\A_f) } (T_\mathfrak{a}\mathcal{P}_{\overline{\chi}} )(x) \cdot g(x) \ dx  
\end{equation}
 for $w\mid\mathfrak{dr}$ as in Corollary \ref{bad inert derivatives}.  
 
 Let us consider (\ref{nearby error}) in more detail.  Fix $w\mid\mathfrak{dr}$ and let $\tilde{U}$, $\tilde{G}$, and so on be as in \S \ref{ss:bad intersections}.  Let $S_{\tilde{U}}=\tilde{G}(F)\backslash \tilde{G}(\A_f)/\tilde{U}$ as in \S \ref{ss:holomorphic values}.  It follows from the Jacquet-Langlands correspondence that the $\C$-algebra generated by the operators $T_\mathfrak{a}$ acting on $L^2(S_{\tilde{U}})$ is a quotient of $\mathbb{T}_\C$.   Thus it makes sense to form  $e_\Pi\cdot \mathcal{P}_{\overline{\chi}}\in L^2(S_{\tilde{U}})$, which is nothing more than the projection of $\mathcal{P}_{\overline{\chi}}$ to the automorphic representation $\tilde{\Pi}$ of $\tilde{G}(\A)$ whose Jacquet-Langlands lift is $\Pi$.
By construction the function $ e_\Pi\cdot \mathcal{P}_{\overline{\chi}}$ has character $\chi_w^{-1}$ under  right multiplication by $T(F_w)$.  On the other hand, if $\Pi'$ is the automorphic representation of $G(\A)$ whose Jacquet-Langlands lift is $\Pi$ then $\Pi'$ contains a nonzero vector on which $T(F_w)$ acts through $\chi_w^{-1}$ (as $\Pi'_w$ admits a toric newvector in the sense of \S \ref{ss:toric}).  Thus if $e_\Pi\mathcal{P}_{\overline{\chi}}\not=0$ we would have nonzero vectors in both $\tilde{\Pi}_w$ and $\Pi'_w$ on which $T(F_w)$ acts through $\chi_w^{-1}$.  This contradicts results of Saito, Tunnell, and  Waldspurger (as described in \cite[\S 10]{gross-representations} or \cite[Proposition 1.1]{gross-prasad}, and using \cite[Lemme 9(iii)]{waldspurger} to relate $T(E_w)$-invariants to $T(E_w)$-coinvariants), and so  $e_\Pi\mathcal{P}_{\overline{\chi}}=0$.  

We now deduce, using \cite[Proposition 4.5.1]{zhang1} for the vanishing of derivations of principal series and theta series, that
$$
2^{[F:\Q]+1}|d|^{1/2} \langle e_\Pi \mathrm{Hg}(P_\chi) , \mathrm{Hg}(P_{\chi}) \rangle_U^{\mathrm{NT}}
=
[\widehat{\co}_E^\times:U_T] H_F\lambda_U^{-1}  \widehat{B}(\co_F; e_\Pi \Phi_\mathfrak{r}).
$$
As $e_\Pi\Phi_\mathfrak{r}$ is the projection of $\Phi_\mathfrak{r}$ to $\Pi$, the proof now follows from 
$$
\widehat{B}(\co_F; e_\Pi \Phi_\mathfrak{r}) \cdot ||\phi_\Pi^\#|| _{K_0(\mathfrak{dr})}^2=2^{|S|} \widehat{B}(\co_F;\phi_\Pi^\#) L'(1/2,\Pi\times\Pi_\chi)
$$
as in the proof of Proposition \ref{first holomorphic values}. 
\end{proof}

As above there is a ring homomorphism $\mathbb{T}\map{}\mathrm{End}(J_V)$ taking $T_\mathfrak{a} \mapsto T_{\mathfrak{a}}^\Alb$ and $\langle\mathfrak{a}\rangle \mapsto \langle\mathfrak{a}\rangle^\Alb$, and so $\mathbb{T}_\C$ acts on $J_V(E_\chi)\otimes_\Z\C$.

\begin{Thm}\label{thm:twisted gz}
Abbreviate $Q_{\chi,\Pi} =e_\Pi\mathrm{Hg}(Q_\chi) \in J_V(E_\chi)\otimes_\Z\C$.
$$
\frac{  L'(1/2,\Pi\times\Pi_\chi) }{  ||\phi_\Pi||_{K_0(\mathfrak{n})}^2  }=
 \frac{2^{[F:\Q]+1}  }{ H_{F,\mathfrak{s}}  \sqrt{\mathrm{N}_{F/\Q}(\mathfrak{dc}^2)}} \langle Q_{\chi,\Pi},  Q_{\chi,\Pi}  \rangle^{\mathrm{NT}}_V.
$$
\end{Thm}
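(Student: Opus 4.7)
The natural starting point is Proposition \ref{pre-GZ}, which already establishes the desired identity at the auxiliary level $U$, with the CM cycle $P_\chi$ in place of $Q_\chi$. Thus the plan is to descend from level $U$ to level $V$, mirroring the argument of \S \ref{level comparison} that derived Theorem \ref{second holomorphic values} from Proposition \ref{first holomorphic values}. I would write both sides of Proposition \ref{pre-GZ} in terms of the newform $\phi_\Pi$ (rather than its quasi-new projection $\phi_\Pi^\#$) and in terms of $Q_\chi$ (rather than $P_\chi$), using the rational local factors $\mathbf{a}_v(\alpha_v)$, $\mathbf{b}_v(\alpha_v)$, $\mathbf{c}_v(\alpha_v)$ and the constants $\kappa_v$ already introduced in \S \ref{level comparison}.

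Concretely, using $\widehat{B}(\co_F;\phi_\Pi)=1$, one has $\widehat{B}(\co_F;\phi_\Pi^\#)=\mathbf{a}_\Pi$ and $\|\phi_\Pi^\#\|_{K_0(\mathfrak{dr})}^2=\|\phi_\Pi\|_{K_0(\mathfrak{n})}^2\cdot\mathbf{b}_\Pi$. For the geometric side, one must produce a height-theoretic analogue of the period identity that defined $\mathbf{c}_v$: namely,
\[
\langle P_{\chi,\Pi},P_{\chi,\Pi}\rangle^{\mathrm{NT}}_U
= \langle Q_{\chi,\Pi},Q_{\chi,\Pi}\rangle^{\mathrm{NT}}_V \cdot \prod_{v\mid\mathfrak{dc}}\mathbf{c}_v(\alpha_v)^{-1}\cdot (\text{normalization factors}).
\]
The key input here is equation (\ref{cycle trace}), which identifies the pushforward of $P_\chi$ under the degeneracy map $\pi:X_U\to X_V$ with a multiple of $Q_\chi$. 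Together with the projection formula for Néron-Tate heights under $\pi_*$ and $\pi^*$, and the fact that at each $v\mid\mathfrak{dc}$ the local contributions involve only the local data $(F_v,E_v,\chi_v,\Pi_v)$, this should produce exactly the same rational function $\mathbf{c}_v(\alpha_v)$ that arises in \S \ref{level comparison}. Combining these manipulations with the identity $\kappa\cdot\mathbf{a}_\Pi\mathbf{c}_\Pi=\mathbf{b}_\Pi$ proved there, together with $\lambda_V H_{F,\mathfrak{s}}=H_F[\widehat{\co}^\times_{\mathfrak{c}}:V_T]$, converts Proposition \ref{pre-GZ} into Theorem \ref{thm:twisted gz}.

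The main obstacle I expect is verifying that the local factor relating $\langle P_{\chi,\Pi},P_{\chi,\Pi}\rangle^{\mathrm{NT}}_U$ to $\langle Q_{\chi,\Pi},Q_{\chi,\Pi}\rangle^{\mathrm{NT}}_V$ is indeed the \emph{same} $\mathbf{c}_v(\alpha_v)$ that appears in the toric period comparison of \S \ref{level comparison}, since $\mathbf{c}_v$ was originally introduced to relate $L^2$-pairings of newvectors and toric newvectors on a totally \emph{definite} $B$, whereas the Shimura curve setting here uses a totally indefinite (at $w_\infty$) quaternion algebra. The rescue is that $\mathbf{c}_v$ depends only on the local Hecke module structure of $\Pi_v\iso\Pi'_v$, which is the same irrespective of the global sign of $B$ at $w_\infty$; the local identity can be checked on the spherical Hecke algebra side by writing $\langle\cdot,\cdot\rangle^{\mathrm{NT}}$ spectrally and extracting the same projector as in the quasi-new/toric-new comparison. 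Once this compatibility is in hand, the cancellation is automatic and no new analytic input is required beyond what \S \ref{level comparison} already provides.
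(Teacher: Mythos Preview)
Your proposal is correct and follows essentially the same route as the paper: start from Proposition \ref{pre-GZ}, rewrite both sides using the local factors $\mathbf{a}_\Pi$, $\mathbf{b}_\Pi$, $\mathbf{c}_\Pi$ of \S \ref{level comparison}, relate $\langle P_{\chi,\Pi},P_{\chi,\Pi}\rangle^{\mathrm{NT}}_U$ to $\langle Q_{\chi,\Pi},Q_{\chi,\Pi}\rangle^{\mathrm{NT}}_V$ via $\pi^*$ and the projection formula (the paper cites \cite[\S 17]{zhang3} for exactly the compatibility you flag as the main obstacle), and then invoke $\kappa\,\mathbf{a}_\Pi\mathbf{c}_\Pi=\mathbf{b}_\Pi$. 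Your observation that $\mathbf{c}_v$ depends only on the local data $(F_v,E_v,\chi_v,\Pi_v)$ at finite $v\mid\mathfrak{dc}$, and hence is insensitive to the archimedean invariants of $B$, is precisely the point.
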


\begin{proof}
Recall the constants $\mathbf{a}_\Pi$, $\mathbf{b}_\Pi$, and $\mathbf{c}_\Pi$ of  \S \ref{level comparison}. The argument of \cite[\S 17]{zhang3} gives the first equality of
$$
\langle P_{\chi,\Pi},P_{\chi,\Pi} \rangle^{\mathrm{NT}}_{U}\cdot \mathbf{c}_\Pi=
\langle \pi^* Q_{\chi,\Pi},\pi^*Q_{\chi,\Pi} \rangle^{\mathrm{NT}}_{U} =
\deg(\pi)\cdot 
\langle Q_{\chi,\Pi},  Q_{\chi,\Pi} \rangle^{\mathrm{NT}}_{V} 
$$
where $\pi^*:J_V\map{}J_U$ is the morphism induced by the natural projection $\pi: X_U\map{}X_V$ of degree $[F^\times V:F^\times U] = [V:U] \lambda_V\lambda_U^{-1}$.  It therefore follows from Proposition \ref{pre-GZ} that
$$ 
\mathbf{a}_\Pi \mathbf{c}_\Pi  \frac{ 2^{|S|} H_F [\widehat{\co}_E^\times:U_T]  }{   [V:U] \lambda_V }   \frac{ L'(1/2,\Pi\times\Pi_\chi)  }{ ||\phi_\Pi^\#||_{K_0(\mathfrak{n})}^2 }
 =  \frac{ \mathbf{b}_\Pi 2^{[F:\Q]+1} }{\sqrt{ \mathrm{N}_{F/\Q}(\mathfrak{d}) }} \langle Q_{\chi,\Pi} ,  Q_{\chi,\Pi}  \rangle^{\mathrm{NT}}_V
$$
and so the theorem follows from the equality of rational functions $\kappa \prod \mathbf{a}_v\mathbf{c}_v=\prod \mathbf{b}_v$  proved in  \S \ref{level comparison}.
\end{proof}

\bibliographystyle{plain}

\begin{thebibliography}{10}

\bibitem{BD-L-functions}
M.~Bertolini and H.~Darmon.
\newblock The {$p$}-adic {$L$}-functions of modular elliptic curves.
\newblock In {\em Mathematics unlimited---2001 and beyond}, pages 109--170.
  Springer, Berlin, 2001.

\bibitem{BD-iwasawa}
M.~Bertolini and H.~Darmon.
\newblock Iwasawa's main conjecture for elliptic curves over anticyclotomic
  {$\mathbb Z\sb p$}-extensions.
\newblock {\em Ann. of Math. (2)}, 162(1):1--64, 2005.

\bibitem{carayol}
H.~Carayol.
\newblock Sur la mauvaise r\'eduction des courbes de {S}himura.
\newblock {\em Compositio Math.}, 59(2):151--230, 1986.

\bibitem{conrad}
B.~Conrad.
\newblock Gross-{Z}agier revisited.
\newblock In {\em Heegner points and Rankin $L$-series}, volume~49 of {\em
  Math. Sci. Res. Inst. Publ.}, pages 67--163. Cambridge Univ. Press,
  Cambridge, 2004.
\newblock With an appendix by W. R. Mann.

\bibitem{cornut-vatsal2}
C.~Cornut and V.~Vatsal.
\newblock C{M} points and quaternion algebras.
\newblock {\em Doc. Math.}, 10:263--309 (electronic), 2005.

\bibitem{cornut-vatsal}
C.~Cornut and V.~Vatsal.
\newblock Nontriviality of rankin-selberg {L}-functions and {CM} points.
\newblock Preprint at \texttt{www.institut.math.jussieu.fr/$\sim$cornut}.

\bibitem{gelbart}
S.~Gelbart.
\newblock {\em Lectures on the {A}rthur-{S}elberg trace formula}, volume~9 of
  {\em University Lecture Series}.
\newblock American Mathematical Society, Providence, RI, 1996.

\bibitem{gelbart-jacquet}
S.~Gelbart and H.~Jacquet.
\newblock Forms of {${\rm GL}(2)$} from the analytic point of view.
\newblock In {\em Automorphic forms, representations and $L$-functions (Proc.
  Sympos. Pure Math., Oregon State Univ., Corvallis, Ore., 1977), Part 1},
  Proc. Sympos. Pure Math., XXXIII, pages 213--251. Amer. Math. Soc.,
  Providence, R.I., 1979.

\bibitem{gillet-soule1}
H.~Gillet and C.~Soul{\'e}.
\newblock Arithmetic intersection theory.
\newblock {\em Inst. Hautes \'Etudes Sci. Publ. Math.}, (72):93--174 (1991),
  1990.

\bibitem{gross-values}
B.~Gross.
\newblock Heights and the special values of {$L$}-series.
\newblock In {\em Number theory (Montreal, Que., 1985)}, volume~7 of {\em CMS
  Conf. Proc.}, pages 115--187. Amer. Math. Soc., Providence, RI, 1987.

\bibitem{gross-modular}
B.~Gross.
\newblock Local orders, root numbers, and modular curves.
\newblock {\em Amer. J. Math.}, 110(6):1153--1182, 1988.

\bibitem{gross-representations}
B.~Gross.
\newblock Heegner points and representation theory.
\newblock In {\em Heegner points and Rankin $L$-series}, volume~49 of {\em
  Math. Sci. Res. Inst. Publ.}, pages 37--65. Cambridge Univ. Press, Cambridge,
  2004.

\bibitem{gross-prasad}
B.~Gross and D.~Prasad.
\newblock Test vectors for linear forms.
\newblock {\em Math. Ann.}, 291(2):343--355, 1991.

\bibitem{gross-zagier}
B.~Gross and D.~Zagier.
\newblock Heegner points and derivatives of {$L$}-series.
\newblock {\em Invent. Math.}, 84(2):225--320, 1986.

\bibitem{howard-derivatives}
B.~Howard.
\newblock Central derivatives of $L$-functions in {H}ida families.
\newblock {\em Math. Ann.}, 399(4):803--818, 2007.

\bibitem{howard-hida}
B.~Howard.
\newblock Variation of {H}eegner points in {H}ida families.
\newblock {\em Invent. Math.}, 167(1):91--129, 2007.
 

\bibitem{jacquet-langlands}
H.~Jacquet and R.~P. Langlands.
\newblock {\em Automorphic forms on {${\rm GL}(2)$}}.
\newblock Springer-Verlag, Berlin, 1970.
\newblock Lecture Notes in Mathematics, Vol. 114.

\bibitem{jordan-livne}
B.~Jordan and R.~Livn{\'e}.
\newblock Integral {H}odge theory and congruences between modular forms.
\newblock {\em Duke Math. J.}, 80(2):419--484, 1995.

\bibitem{kudla-tate}
S.~Kudla.
\newblock Tate's thesis.
\newblock In {\em An Introduction to the Langlands Program (Jerusalem, 2001)},
  pages 109--131. Birkh\"auser Boston, Boston, MA, 2003.

\bibitem{lang-arakelov}
S.~Lang.
\newblock {\em Introduction to {A}rakelov theory}.
\newblock Springer-Verlag, New York, 1988.

\bibitem{lubotzky}
A.~Lubotzky.
\newblock {\em Discrete groups, expanding graphs and invariant measures},
  volume 125 of {\em Progress in Mathematics}.
\newblock Birkh\"auser Verlag, Basel, 1994.
\newblock With an appendix by Jonathan D. Rogawski.

\bibitem{MW}
B.~Mazur and A.~Wiles.
\newblock Class fields of abelian extensions of {$\mathbb{Q}$}.
\newblock {\em Invent. Math.}, 76(2):179--330, 1984.

\bibitem{milne1}
J.~Milne.
\newblock Canonical models of (mixed) {S}himura varieties and automorphic
  vector bundles.
\newblock In {\em Automorphic forms, Shimura varieties, and $L$-functions,
  Vol.\ I (Ann Arbor, MI, 1988)}, volume~10 of {\em Perspect. Math.}, pages
  283--414. Academic Press, Boston, MA, 1990.

\bibitem{milne2}
J.~Milne.
\newblock Introduction to {S}himura varieties.
\newblock In {\em Harmonic analysis, the trace formula, and Shimura varieties},
  volume~4 of {\em Clay Math. Proc.}, pages 265--378. Amer. Math. Soc.,
  Providence, RI, 2005.

\bibitem{miyake}
T.~Miyake.
\newblock {\em Modular Forms}.
\newblock Springer-Verlag, Berlin, 1989.
\newblock Translated from the Japanese by Yoshitaka Maeda.

\bibitem{nek-euler}
Nekov\'{a}\v{r}.
\newblock The euler system method for {CM} points on {S}himura curves.
\newblock Preprint available at \texttt{www.math.jussieu.fr/$\sim$nekovar}.

\bibitem{nek}
J.~Nekov\'{a}\v{r}.
\newblock Selmer Complexes.
\newblock {\em Ast\'{e}risque} 310, 2006.


\bibitem{shimura}
G.~Shimura.
\newblock {\em Introduction to the arithmetic theory of automorphic functions}.
\newblock Publications of the Mathematical Society of Japan, No. 11. Iwanami
  Shoten, Publishers, Tokyo, 1971.

\bibitem{gillet-soule2}
C.~Soul{\'e}.
\newblock {\em Lectures on {A}rakelov geometry}, volume~33 of {\em Cambridge
  Studies in Advanced Mathematics}.
\newblock Cambridge University Press, Cambridge, 1992.
\newblock With the collaboration of D. Abramovich, J.-F.\ Burnol and J. Kramer.

\bibitem{vatsal-values}
V.~Vatsal.
\newblock Special values of anticyclotomic {$L$}-functions.
\newblock {\em Duke Math. J.}, 116(2):219--261, 2003.

\bibitem{vatsal}
V.~Vatsal.
\newblock Special value formulae for {R}ankin {$L$}-functions.
\newblock In {\em Heegner points and Rankin $L$-series}, volume~49 of {\em
  Math. Sci. Res. Inst. Publ.}, pages 165--190. Cambridge Univ. Press,
  Cambridge, 2004.

\bibitem{waldspurger}
J.-L. Waldspurger.
\newblock Quelques propri\'et\'es arithm\'etiques de certaines formes
  automorphes sur {${\rm GL}(2)$}.
\newblock {\em Compositio Math.}, 54(2):121--171, 1985.

\bibitem{weil}
A.~Weil.
\newblock {\em Basic number theory}.
\newblock Classics in Mathematics. Springer-Verlag, Berlin, 1995.
\newblock Reprint of the second (1973) edition.

\bibitem{zhang2}
S.-W. Zhang.
\newblock Gross-{Z}agier formula for {${\rm GL}\sb 2$}.
\newblock {\em Asian J. Math.}, 5(2):183--290, 2001.

\bibitem{zhang1}
S.-W. Zhang.
\newblock Heights of {H}eegner points on {S}himura curves.
\newblock {\em Ann. of Math. (2)}, 153(1):27--147, 2001.

\bibitem{zhang3}
S.-W. Zhang.
\newblock Gross-{Z}agier formula for {$\rm GL(2)$} {II}.
\newblock In {\em Heegner points and Rankin $L$-series}, volume~49 of {\em
  Math. Sci. Res. Inst. Publ.}, pages 191--214. Cambridge Univ. Press,
  Cambridge, 2004.

\end{thebibliography}

\end{document}